\newcommand{\ie}{{\it i.e. }}
\newcommand{\cf}{{\it cf. }}
\newcommand{\eg}{{\it e.g. }}
\newcommand{\loccit}{{\it loc. cit. }}
\newcommand{\resp}{{\it resp. }}
\newcommand{\un}{\mathbb{1}}
\newcommand{\A}{\mathbf{A}}
\newcommand{\C}{\mathbf{C}}
\renewcommand{\L}{\mathbb{L}}
\newcommand{\T}{\mathbb{T}}
\newcommand{\N}{\mathbf{N}}
\renewcommand{\P}{\mathbf{P}}
\newcommand{\Q}{\mathbf{Q}}
\newcommand{\V}{\mathcal{V}}
\newcommand{\Z}{\mathbf{Z}}
\newcommand{\sA}{\mathcal{A}}
\newcommand{\sB}{\mathcal{B}}
\newcommand{\sC}{\mathcal{C}}
\newcommand{\sD}{\mathcal{D}}
\newcommand{\sE}{\mathcal{E}}
\newcommand{\sH}{\mathcal{H}}
\newcommand{\sI}{\mathcal{I}}
\newcommand{\sM}{\mathcal{M}}
\newcommand{\sN}{\mathcal{N}}
\newcommand{\sO}{\mathcal{O}}
\newcommand{\sR}{\mathcal{R}}
\newcommand{\sS}{\mathcal{S}}
\newcommand{\sV}{\mathcal{V}}
\newcommand{\sW}{\mathcal{W}}
\newcommand{\uH}{\underline{H}}
\newcommand{\Spec}{\operatorname{Spec}}
\newcommand{\sAb}{\operatorname{\mathcal{A}{\sf b}}}
\newcommand{\id}{\operatorname{id}}
\newcommand{\Id}{\operatorname{Id}}
\newcommand{\Tr}{\operatorname{Tr}}
\newcommand{\Ker}{\operatorname{Ker}}
\newcommand{\IM}{\operatorname{Im}}
\newcommand{\ind}{{\operatorname{ind}}}
\newcommand{\add}{{\operatorname{add}}}
\newcommand{\adm}{{\operatorname{adm}}}
\newcommand{\tr}{{\operatorname{tr}}}
\newcommand{\Hom}{\operatorname{Hom}}
\newcommand{\End}{\operatorname{End}}
\newcommand{\Mod}{\text{\rm Mod--}}
\newcommand{\Pic}{\operatorname{Pic}}
\newcommand{\cl}{{c\ell}}
\newcommand{\Add}{{\operatorname{\bf Add}}}
\newcommand{\Ex}{{\operatorname{\bf Ex}}}
\newcommand{\car}{\operatorname{char}}
\newcommand{\rat}{{\operatorname{rat}}}
\newcommand{\alg}{{\operatorname{alg}}}
\renewcommand{\hom}{{\operatorname{hom}}}
\newcommand{\hun}{{\operatorname{hun}}}
\newcommand{\hum}{{\operatorname{hum}}}
\newcommand{\num}{{\operatorname{num}}}
\newcommand{\mult}{{\operatorname{mult}}}
\newcommand{\rig}{{\operatorname{rig}}}
\newcommand{\tnil}{{\operatorname{tnil}}}
\newcommand{\Cat}{\operatorname{\bf Cat}}
\newcommand{\Corr}{\operatorname{Corr}}
\newcommand{\Sm}{\operatorname{Sm}}
\newcommand{\ab}{{\operatorname{ab}}}
\newcommand{\proj}{{\operatorname{proj}}}
\renewcommand{\Vec}{\operatorname{Vec}}
\newcommand{\Weil}{\operatorname{\bf Weil}}
\newcommand{\weil}{\sW}
\newcommand{\Ad}{\operatorname{Ad}}
\newcommand{\Wl}{\operatorname{Wl}}
\newcommand{\op}{{\text{\rm op}}}
\newcommand{\eff}{{\text{\rm eff}}}
\newcommand{\sat}{{\text{\rm sat}}}
\newcommand{\by}{\xrightarrow}
\newcommand{\iso}{\by{\sim}}
\newcommand{\inj}{\hookrightarrow}
\newcommand{\surj}{\rightarrow\!\!\!\!\!\rightarrow}
\renewcommand{\lim}{\varprojlim}
\newcommand{\gr}{{\operatorname{gr}}}
\renewcommand{\qed}{\hfill $\Box$\medskip}
\newcommand{\fake}[1]{\overset{\boldsymbol{\cdot}}{#1}}
\renewcommand{\phi}{\varphi}
\renewcommand{\epsilon}{\varepsilon}
\newcounter{spec}
\newenvironment{thlist}{\begin{list}{\rm{(\roman{spec})}}%
{\usecounter{spec}\labelwidth=20pt\itemindent=0pt\labelsep=10pt}}%
{\end{list}}%
\numberwithin{equation}{section}
\newtheorem{Thm}{Theorem}
\newtheorem{Prop}{Proposition}
\newtheorem{thm}{Theorem}[subsection]
\newtheorem{lemma}[thm]{Lemma}
\newtheorem{prop}[thm]{Proposition}
\newtheorem{cor}[thm]{Corollary}
\newtheorem{cons}[thm]{Construction}
\theoremstyle{definition}
\newtheorem{defn}[thm]{Definition}
\newtheorem{nota}[thm]{Notation}
\newtheorem{rk}[thm]{Remark}
\newtheorem{rks}[thm]{Remarks}
\newtheorem{ex}[thm]{Example}
\newtheorem{exs}[thm]{Examples}
\newtheorem{warning}[thm]{Warning}
\begin{document}
\title{Universal Weil cohomology}
\author{Luca Barbieri-Viale}
\address{Dipartimento di Matematica ``F. Enriques'', Universit{\`a} degli Studi di Milano\\  Via C. Saldini, 50\\ I-20133 Milano\\ Italy}
\email{luca.barbieri-viale@unimi.it}
\author{Bruno Kahn}
\address{CNRS, Sorbonne Université and Université Paris Cité, IMJ-PRG\\ Case 247\\4 place
Jussieu\\75252 Paris Cedex 05\\France}
\email{bruno.kahn@imj-prg.fr}
\begin{abstract}
We construct a new Weil cohomology for smooth projective varieties over a field, universal among Weil cohomologies with values in rigid additive tensor categories.  A similar universal problem for Weil cohomologies with values in rigid abelian tensor categories also has a solution. We give a variant for Weil cohomologies satisfying more axioms, like Weak and Hard Lefschetz. As a consequence, we get a different construction of André's category of motives  for motivated correspondences and show that it has a universal property. 

This theory extends over suitable bases.
\end{abstract}
\keywords{Weil cohomology, algebraic cycle, motive}
\subjclass[2020]{18F99, 14F99}
\maketitle
\tableofcontents
\newpage 

\section{Introduction}
The main result of this article is:

\begin{Thm}\label{tmain} Over any field $k$, there exists a universal Weil cohomology.
\end{Thm}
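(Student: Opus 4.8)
The plan is to realise the universal Weil cohomology as the tautological cohomology with values in a category constructed by ``generators and relations'' inside the $2$-category of rigid additive tensor categories (taken $\Q$-linear, so that cycle classes behave). So I would first fix the target-free notion precisely: a \emph{Weil cohomology with values in a rigid additive tensor category $\sA$} is a symmetric monoidal functor $H$ from a correspondence category of $\SmProj(k)$ to $\sA$, together with, for each $X$ of dimension $d$, orthogonal idempotents $\pi^i_X\in\End_\sA(H(X))$ with $\sum_i\pi^i_X=\id_{H(X)}$ and $\pi^i_X=0$ for $i\notin[0,2d]$ --- the K\"unneth projectors, so that $H^i(X):=\IM\pi^i_X$ --- and with cycle class maps $\operatorname{CH}^j(X)\to\Hom_\sA(\un,H^{2j}(X))$, subject to the usual axioms: functoriality and multiplicativity of the $\pi$'s and of $\gamma$, the normalisations $H(\Spec k)=\un$ and $H^2(\P^1)=:\L$ invertible, the K\"unneth isomorphism $H(X)\otimes H(Y)\iso H(X\times Y)$ matching the $\pi$'s, and Poincar\'e duality (for $X$ geometrically connected, $H^{2d}(X)\iso\L^{\otimes d}$ and the cup-product pairing $H^i(X)\otimes H^{2d-i}(X)\to\L^{\otimes d}$ perfect). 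A morphism of such is a monoidal functor respecting all this structure; Theorem~\ref{tmain} then asserts that this $2$-category has an initial object $(\sW(k),\h)$.

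To build $(\sW(k),\h)$ I would proceed in layers, each with its own universal property. Start from the $\Q$-linear rigid symmetric monoidal category $\Corr(k)$ of degree-zero Chow correspondences on $\SmProj(k)$; pass to its pseudo-abelian (Karoubi) envelope; formally invert the Lefschetz object $\L$. Then freely adjoin the K\"unneth projectors: for each $X$, writing $d=\dim X$, add endomorphisms $\pi^i_X$ of $h(X)$, $0\le i\le 2d$, subject to the relations making them orthogonal, complete ($\sum_i\pi^i_X=\id$), compatible with pull-back ($f^*\pi^i_Y=\pi^i_X f^*$) and with the product (the K\"unneth components of $h(X\times Y)=h(X)\otimes h(Y)$ being $\sum_{i+j=n}\pi^i_X\otimes\pi^j_Y$), forcing cycle classes of codimension $j$ to factor through the $\pi^{2j}$-part, and --- crucially for rigidity --- imposing $(\pi^i_X)^\vee=\pi^{2d-i}_X$ up to the Tate twist. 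Finally, adjoin inverses to the trace maps $\IM\pi^{2d}_X\to\L^{\otimes d}$ and to the Poincar\'e-duality pairings (making them perfect). Call the result $\sW(k)$, and set $\h\colon\SmProj(k)^{\op}\to\sW(k)$, $X\mapsto h(X)$, graded by $h^i(X):=\IM\pi^i_X$.

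Universality is then immediate from the universal properties of the successive steps: a Weil cohomology $H\colon\SmProj(k)^{\op}\to\sA$ is in particular a monoidal functor on $\Corr(k)$, extends over the Karoubi envelope and $\L^{-1}$, and --- since by hypothesis $\sA$ already carries the K\"unneth projectors of $H$ satisfying all the imposed relations, as well as the required inverses --- extends uniquely, up to unique monoidal isomorphism, to a $\Q$-linear symmetric monoidal functor $\bar H\colon\sW(k)\to\sA$ with $\bar H\circ\h\cong H$ compatibly with gradings, cycle classes and duality; this is exactly initiality of $(\sW(k),\h)$. It remains to check that $\h$ is genuinely a Weil cohomology: the equational axioms hold by construction, the K\"unneth isomorphism for $\h$ is the canonical identification $h(X\times Y)=h(X)\otimes h(Y)$, the trace and duality isomorphisms hold because their inverses were adjoined, and $\sW(k)$ stays rigid because the relations $(\pi^i_X)^\vee=\pi^{2d-i}_X$ extend the duality functor to the new morphisms. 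Non-degeneracy --- $\sW(k)\neq 0$ and $\h$ not collapsing --- follows by producing one model: $\ell$-adic cohomology for $\ell\neq\car k$, or crystalline cohomology, is a Weil cohomology with rigid additive values, hence gives a nonzero monoidal functor out of $\sW(k)$.

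The main obstacle is not the bare existence, which is formal, but coherence and control. One must check that the layered construction genuinely stays inside rigid additive tensor categories: that Karoubi completion, inversion of $\L$, the free adjunction of the $\pi^i_X$, and the formal inversion of the trace and duality maps are mutually compatible, and in particular that the duals forced by Poincar\'e duality on the pieces $h^i(X)$ agree with the duals coming from $h(X)$ and with the K\"unneth decomposition of $h(X\times Y)$ --- which demands a careful, and somewhat lengthy, bookkeeping of the relation system on the projectors. Equally, one wants enough explicit description of the $\Hom$-groups of $\sW(k)$ for the result to be usable: to identify $\sW(k)$ concretely, to derive the abelian-valued and Weak and Hard Lefschetz variants, and to compare it with Andr\'e's category of motives for motivated correspondences. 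That is where the substantive work lies.
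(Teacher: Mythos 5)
Your strategy --- adjoin K\"unneth idempotents $\pi^i_X$ to the endomorphism rings of Chow motives and then localise --- is genuinely different from the paper's, which never introduces projectors: it starts from the $\otimes$-category $\Corr(k,\V)\times\N$, so that the graded pieces $(X,i)$ are separate objects from the outset, freely adjoins the K\"unneth \emph{maps} $\boxtimes$ via Levine's universal lax $\otimes$-functor (Theorem \ref{thm1m}), additivises, and then inverts a small set of morphisms. The decisive simplification you are missing is Proposition \ref{p1.3}: a Weil cohomology is \emph{equivalent} to a strong additive $\otimes$-functor $H^*:\Corr(k,\V)\to\sC^{(\Z)}$ satisfying just three conditions ($H^1(\P^1)=0$ with $H^2(\P^1)$ invertible, $\IM H^*\subset\sC^{\N}$, and $\un\iso H^0(X)$ for $X$ geometrically connected); Poincar\'e duality, the trace and the cycle class maps are then automatic from the rigidity of $\Corr(k,\V)[\L,\L^{-1}]$ and strong monoidality. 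So nothing like ``inverting a pairing'' has to be performed --- which is just as well, since making a pairing perfect is not a Gabriel--Zisman localisation at a morphism until one already knows the relevant duals exist. The relation-bookkeeping you defer to the end is exactly what this reformulation eliminates.

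Beyond that, there are two genuine gaps. First, signs: a Weil cohomology is symmetric monoidal only for the Koszul constraint on $\sC^{(\Z)}$ (Remark \ref{r2}); the ungraded total functor $X\mapsto\bigoplus_i H^i(X)$ is monoidal but \emph{not} symmetric, because the swap acts by $(-1)^{ij}$ on $H^i\otimes H^j$. Your definition asks for a symmetric monoidal $H:\Corr(k)\to\sA$ whose structure map is the K\"unneth isomorphism and whose projectors are multiplicative; taken literally this excludes $\ell$-adic and Betti cohomology (look at $H^1(C)\otimes H^1(C)$ for a curve $C$), so your non-degeneracy argument fails and the object you build does not solve the intended universal problem. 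Repairing this forces you into the graded formulation anyway. Second, your relation list is incomplete and partly ill-posed: $\pi^1_{\P^1}=0$ must be imposed separately (it does not follow from the remaining axioms for a general target, cf.\ Remark \ref{r3.1a}); the compatibility $\gamma\circ\pi^i_X=\pi^{i+2r}_Y\circ\gamma$ must be stated for correspondences of arbitrary degree $r$ once $\L$ is inverted; and you have not verified that the freely adjoined idempotents together with the duality relation $(\pi^i_X)^\vee=\pi^{2d-i}_X$ (suitably twisted) yield a consistent, nonzero rigid additive category. Until that consistency check is carried out, the construction is a programme rather than a proof; the paper's route via $\Corr\times\N$ is designed precisely so that no such check is needed.
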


We wish the story were so simple; actually we have a commutative square of universal Weil cohomologies:
\begin{equation}\label{eqmany}
\begin{CD}
(\sW,W)@>>> (\sW_\ab,W_\ab)\\
@VVV @VVV\\
(\sW^+,W^+)@>>> (\sW_\ab^+,W_\ab^+)
\end{CD}
\end{equation}
(see \S\S \ref{s1.1a} and \ref{intro1.2} below for the notation).
Here, the top left is universal for Weil cohomologies verifying a standard list of axioms, with values in rigid $\Q$-linear symmetric monoidal categories; same for the top right, but replacing additive by abelian. The bottom row is similar, except that we impose extra axioms, the most important being a weak and a strong Lefschetz property.

To muddy the water a little more, but to add flexibility to our construction, we work as in \cite{A} with respect to a given class of smooth projective varieties verifying certain stability conditions: this class was not displayed in \eqref{eqmany} for simplicity. 

The history of this line of investigation is well-known: it goes back to Grothendieck and is excellently summarised by Serre in \cite{serre-motifs}. His survey shows two things: firstly, how the issue of the relationships between the Weil cohomologies known at the time is the genesis of Grothendieck's theory of motives. Secondly, that he was not so much interested in the universal problem we solve here as in the Tannakian aspect, yielding motivic Galois groups (see \cite[VI.A.4.2]{saa} and the two quotations from \emph{Récoltes et Semailles} in \cite{serre-motifs}). As Serre writes, Grothendieck did expect the category $\sM_\num(k)$ of motives modulo numerical equivalence to be abelian, semi-simple and initial with respect to all (vector space-valued) Weil cohomologies, and he knew that this would follow from the standard conjectures he formulated in  \cite{gro-standard} (see \cite{kdix}, \cite[VI, Appendix]{saa}). The semi-simplicity of $\sM_\num(k)$ has now been proven by Jannsen \cite{jannsen3} independently of the standard conjectures, but its initiality in the above sense remains dependent on them (and essentially equivalent to them). 

One might therefore expect that the categories of \eqref{eqmany} resolve this issue by mapping naturally to $\sM_\num(k)$. Surprisingly, we shall see  that it is not the case unless\dots\ one assumes some conjectures! 

Let us now describe the contents of the article.

\subsection{Construction}\label{s1.1a} In Section \ref{s4}, we lay the ground to formulate Theorem \ref{tmain}. Starting from an admissible class $\sV$ of smooth projective $k$-varieties  (Definition \ref{d3.1}), we give in Definition \ref{d1.1} our  axioms for a Weil cohomology $H$ on $\sV$ with values in a  $\Q$-linear symmetric monoidal category $\sC$. Shorthand notation: $(\sC,H)$ (see Notation \ref{not1}). There are two reformulations: one in terms of Chow motives (Proposition \ref{p1.1}), and one in terms of Chow correspondences (Proposition \ref{p1.3}). It is the latter which is used to construct the pair $(\sW,W)$ of \eqref{eqmany} in Theorem \ref{thm2}, by generators and relations. For generators, the main input is the construction of Theorem \ref{thm1m} due to Levine. We apply it to the $\otimes$-category $\Corr\times \N$, where $\Corr$ is the category of Chow correspondences: this is the key new idea in this work. Then we get to $(\sW,W)$ step by step. To pass from $\sW$ to $\sW_\ab$, we just apply the $2$-functor $T$ from \cite{BVK} (Corollary \ref{cor:thm2}). 

\begin{Prop}\label{P1}\

\noindent a)  (Proposition \ref{prop:alg}) If $k$ is separably closed and $\sV$ contains curves, $W$ factors through algebraic equivalence.\\
b)  (Remark \ref{rk:tweil}) Without any condition on $k$ and $\sV$, $W_\ab$ factors through Voevodsky's smash-nilpotence equivalence. 
\end{Prop}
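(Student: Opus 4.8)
The plan is to work through the reformulation of Proposition~\ref{p1.3}, by which $W$ is the same datum as a $\otimes$-functor out of the category $\Corr$ of Chow correspondences (constrained by the relations encoding the axioms of Definition~\ref{d1.1}); it then suffices to show that $W$, resp.\ $W_\ab$, annihilates every algebraically trivial, resp.\ smash-nilpotent, correspondence, the desired factorisations being automatic because in either case such correspondences form a $\otimes$-ideal in $\Corr$.

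For a), I would invoke the usual description of algebraic triviality: a correspondence is algebraically equivalent to $0$ exactly when it lies in the subgroup generated by composites $Z\circ(\iota_{c_0}-\iota_{c_1})$, where $C$ is a smooth projective curve --- geometrically connected, since $k$ is separably closed --- where $c_0,c_1$ are closed points of $C$ of equal degree, with classes $\iota_{c_i}\in CH^1(C)$ regarded as correspondences $\Spec k\to C$, and where $Z$ is a Chow correspondence out of $C$; that $\sV$ contains curves lets us take $C$, and the products needed to form $Z$, within $\sV$. Since $W$ is a $\otimes$-functor, $W(Z\circ(\iota_{c_0}-\iota_{c_1}))=W(Z)\circ(W(\iota_{c_0})-W(\iota_{c_1}))$, so it is enough to prove $W(\iota_{c_0})=W(\iota_{c_1})$. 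For this I would read off from Definition~\ref{d1.1} that cycle classes respect the grading, so that $W(\iota_{c_i})$ is, up to Tate twist, a morphism $\un\to W^2(C)$, and that Poincaré duality together with the trace axiom for the geometrically connected curve $C$ exhibit $W^2(C)$ as an invertible object with a canonical trivialisation under which $W(\iota_{c_i})$ is multiplication by $\deg(c_i)$. As $\deg(c_0)=\deg(c_1)$, this gives $W(\iota_{c_0})-W(\iota_{c_1})=0$, hence $W(\gamma)=0$ for algebraically trivial $\gamma$.

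For b), Corollary~\ref{cor:thm2} presents $(\sW_\ab,W_\ab)$ as the image of $(\sW,W)$ under the $2$-functor $T$ of \cite{BVK}, so $\sW_\ab$ is a rigid $\Q$-linear \emph{abelian} $\otimes$-category and $W_\ab$ is again a $\otimes$-functor on $\Corr$. If $\gamma$ is smash-nilpotent, say $\gamma^{\otimes n}=0$ in $\Corr$, then $W_\ab(\gamma)^{\otimes n}=W_\ab(\gamma^{\otimes n})=0$, and it remains to observe that in a rigid abelian $\otimes$-category any morphism $f$ with $f^{\otimes n}=0$ vanishes. This is formal: $-\otimes M$ admits $-\otimes M^{\vee}$ as both a left and a right adjoint, hence is exact, whence tensor powers of epimorphisms and of monomorphisms are again such and $\IM(f^{\otimes n})=(\IM f)^{\otimes n}$; and a non-zero object $N$ satisfies $N^{\otimes n}\neq0$, because the triangle identities exhibit $N$ as a retract of $N^{\otimes n}\otimes(N^{\vee})^{\otimes(n-1)}$, which would be $0$ otherwise. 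Applying this to $N=\IM W_\ab(\gamma)$ yields $W_\ab(\gamma)=0$.

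I expect the only non-formal ingredient to be the curve input to part a): that Definition~\ref{d1.1} is strong enough to recover, for a geometrically connected smooth projective curve $C$, the classical description of $W^2(C)$ as a canonically trivialised invertible object carrying the class of a closed point to multiplication by its degree --- equivalently, that our abstract axioms encode the full force of Poincaré duality and of the trace (orientation) map on curves. This, and not the $\otimes$-categorical bookkeeping, is the step to watch; it is also where the two hypotheses of a) enter, $k$ being separably closed so that connected smooth curves stay geometrically connected, and $\sV$ containing curves so that the witnessing curve is available inside the class we work over.
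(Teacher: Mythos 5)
Your proposal is correct and follows essentially the paper's own route: part a) is the Weil--Bloch reduction to $\Pic^0$ of a curve followed by the observation that Axioms (v)--(vi) force $\cl^1_C$ to factor through the degree map once $\Tr_C$ is invertible --- the paper phrases this last step as a push-forward to $\P^1\times\pi_0(C)$ so as to cover the general normalised case, whereas you exploit that over separably closed $k$ every connected curve is geometrically connected, which is exactly why that hypothesis appears in the statement. Part b) is the paper's argument verbatim, except that you supply a proof of the reducedness of rigid abelian $\otimes$-categories ($f^{\otimes N}=0\Rightarrow f=0$), which the paper simply quotes as Lemma \ref{l2.1} c); your retract argument for $N^{\otimes n}\neq 0$ is the standard one and is sound.
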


Assuming that $\sV$ is stable under taking hyperplane sections, we introduce in  Definition \ref{d5.2b} the extra properties of Weil cohomologies we alluded to above, and prove the analogue of Theorem \ref{thm2} and Corollary \ref{cor:thm2} (the bottom row of \eqref{eqmany}) in Theorem \ref{thm2m}. Besides the Lefschetz properties (Definition  \ref{d5.2a}), we incorporate two others: normalised character (Definition \ref{d3.4}) and Albanese invariance (Definition \ref{defalbinv}). The latter is explicitly considered in Kleiman's first article on the standard conjectures \cite{kdix} and not much elsewhere; the former seems to have been overlooked in the literature (except in \cite[Def. 3.41]{zetaL}), while it is natural and necessary. All are verified by the classical Weil cohomologies of Definition \ref{d3.2}. For want of a better terminology, we call the Weil cohomologies having these properties \emph{tight}.

\subsection{Varying the Weil cohomology} \label{intro1.2} Given  $(\sC,H)$, we can ``push-forward'' $H$ through any additive $\otimes$-functor $F:\sC\to \sD$, getting another Weil cohomology $(\sD,F_*H)$ (see \eqref{eq.fu}). The inverse process is more familiar when $F$ is faithful, and is usually called ``enrichment''  (think of Hodge versus Betti cohomology, $\ell$-adic representations versus $\ell$-adic cohomology as such). We show in Theorem \ref{thm2bis} that any $H$ admits an \emph{initial} enrichment $\weil_H$, which is a quotient of $\sW$; if $H$ is abelian-valued, it has similarly an ``ab-initial'' enrichment $\weil_H^\ab$ which is a localisation of $\sW_\ab$ (\emph{ibid.}). The same holds in the tight context (Theorem \ref{thm2tris}). 

The situation is especially interesting when $H$ is \emph{classical} (Definition \ref{d3.2}). We then have a picture analogous to \eqref{eqmany} (see also \eqref{eq+abnat}):
\begin{equation}\label{eqnottoomany}
\begin{CD}
(\sW_H)^\natural@>\iota^\natural >> \sW_H^\ab\\
@V\epsilon^\natural VV @V\epsilon^\ab VV\\
(\sW_H^+)^\natural @>\iota^{+,\natural}>> \sW_H^{\ab,+}\\
@V\rho VV\\
\sM^A_H
\end{CD}
\end{equation}
where $(\ )^\natural$ means pseudo-abelian completion and $\sM^A_H$ is André's category of motives for motivated cycles attached to $H$ \cite{A,A2}. 

\begin{Thm}[Theorems \ref{t6.4},  \ref{thm2tris}, \ref{p5.5}, \ref{p8.3}]\label{T2} 
The $\otimes$-functors $\epsilon^\ab$ and $\rho$ are $\otimes$-equivalences, and $\iota^{+,\natural}$ is a $\otimes$-equivalence if and only if $\sM^A_H$ is abelian. This holds in characteristic $0$, and then $\weil_H^\ab$ is semi-simple.
\end{Thm}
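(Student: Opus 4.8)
The plan is to prove the three statements in turn --- first that $\epsilon^\ab$ is a $\otimes$-equivalence, then the identification of $(\sW_H^+)^\natural$ with $\sM^A_H$ through $\rho$, and finally the ``if and only if'' for $\iota^{+,\natural}$ together with its unconditional instance in characteristic $0$ --- using the universal properties of $\sW_H^\ab$ and $\sW_H^+$ from Theorems~\ref{thm2bis} and \ref{thm2tris} and André's work \cite{A,A2}. For $\epsilon^\ab$: since $H$ is classical, its target $\sC$ already satisfies all the tight axioms of Definitions~\ref{d5.2a}, \ref{d3.4} and \ref{defalbinv} --- this is built into Definition~\ref{d3.2}. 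By construction $\sW_H^\ab$ is the Serre quotient of the abelian category $\sW_\ab$ by the $\otimes$-ideal killed by the realisation, so the induced exact $\otimes$-functor $\bar R\colon\sW_H^\ab\to\sC$ is faithful, hence conservative and mono/epi-reflecting. Each tight axiom asserts that some morphism built functorially from the $\h^i(X)$ --- an iterate of the Lefschetz operator, a Gysin or restriction map for a hyperplane section, an Albanese comparison, or an equality of endomorphisms of $\un$ --- is an isomorphism (resp.\ a mono, an epi, an equality); since it holds in $\sC$ and $\bar R$ reflects it, it holds in $\sW_H^\ab$. Thus $\sW_H^\ab$ is itself an abelian tight enrichment of $(\sC,H)$, so the initiality of $\sW_H^{\ab,+}$ provides an exact $\otimes$-functor $\sW_H^{\ab,+}\to\sW_H^\ab$ over $W$ and over $\sC$; this functor and $\epsilon^\ab$ are mutually quasi-inverse because each of the two composites is an endofunctor over $W$ and over $\sC$ of a category with the appropriate initiality, hence isomorphic to the identity.

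For $\rho$: by construction $\sW_H^+$ is $\sW^+$ modulo the $\otimes$-ideal of morphisms killed by the realisation $R\colon\sW^+\to\sC$, so it is equivalent to the subcategory $\operatorname{im}(R)$ of $\sC$ with objects the $H(X)$ and Hom-groups the images of those of $\sW^+$. Now morphisms of $\sW^+$ are --- by Levine's presentation (Theorem~\ref{thm1m}) of $\sW$ applied to $\Corr\times\N$, together with the relations imposed in Theorem~\ref{thm2m} --- generated under composition, tensor and addition by Chow correspondences and by the morphisms $\Lambda$ adjoined to invert the hard Lefschetz operators; under $R$ the former go to algebraic cohomology classes and the latter to the inverse Lefschetz operators. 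Hence $\operatorname{im}(R)$ is exactly the subcategory of $\sC$ whose Hom-groups are generated by algebraic cycles and the Lefschetz star operators $\ast$ (using Kleiman's expression of $\ast$ through the $\Lambda$'s and the Künneth projectors, the latter supplied by the weak Lefschetz and normalised-character axioms): this is precisely André's category of motivated correspondences. Passing to pseudo-abelian completions yields $(\sW_H^+)^\natural\simeq\sM^A_H$, and unwinding the construction identifies this equivalence with $\rho$; it is essentially surjective because both sides are generated by the $\h(X)$ and their Tate twists, and faithful because the common realisation to $\sC$ is. I expect this Hom-group identification to be the main obstacle: one must verify that imposing the tight axioms on $\sW$ adjoins exactly the operators $\Lambda$ --- no fewer, no more --- so that the resulting image algebra matches André's definition on the nose, and that $H$-homological equivalence is the correct relation on both sides.

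For $\iota^{+,\natural}$: if it is a $\otimes$-equivalence then $(\sW_H^+)^\natural$, being equivalent (via $\rho$) to $\sM^A_H$ and to the abelian category $\sW_H^{\ab,+}$, is abelian. Conversely, if $\sM^A_H$ --- hence $(\sW_H^+)^\natural$ --- is abelian, then it is an abelian tight enrichment of $(\sC,H)$ and therefore carries the same initiality as $\sW_H^{\ab,+}$ among such; the two are then canonically equivalent, and the endofunctor argument above shows the equivalence is $\iota^{+,\natural}$. This gives the ``if and only if''. Finally, in characteristic $0$ André proves \cite{A} that $\sM^A_H$ is abelian --- indeed semisimple Tannakian --- for classical $H$, the point being that the Lefschetz operators are motivated, which makes Jannsen's semisimplicity argument available; hence $\iota^{+,\natural}$ is a $\otimes$-equivalence, and concatenating the equivalences already established yields $\weil_H^\ab=\sW_H^\ab\simeq\sW_H^{\ab,+}\simeq(\sW_H^+)^\natural\simeq\sM^A_H$, which is semisimple.
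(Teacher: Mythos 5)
Your treatment of $\epsilon^\ab$ and $\rho$ follows essentially the paper's route. For $\epsilon^\ab$: the ab-initial enrichment $\sW_H^\ab$ is already tight because the faithful exact realisation to $\sC$ is conservative and so reflects the isomorphisms required by the tight axioms (this is the second half of Lemma \ref{l8.1}), whence $\sW_H^\ab$ also enjoys the universal property of $\sW_H^{\ab,+}$. For $\rho$: fullness holds because the morphisms of $\sM^A_H$ are generated by algebraic classes and the inverses $\Lambda$ of the Lefschetz operators, both of which lie in the image of $\sW_H^+$, and essential surjectivity after $(\ )^\natural$ follows from density (Lemma \ref{l2.2}). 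One remark: your ``no fewer, no more'' worry is only half a worry. The containment $\IM(\sW_H^+)\subseteq \sM^A_H$ is automatic, since the comparison functor $\sW_H^+\to\sM^A_H$ is produced by the universal property of $\sW_H^+$ once one checks that $(\sM^A_H,H_A)$ is itself tight (Lemma \ref{l8.2}); only fullness (``no fewer'') requires the generation argument.

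The genuine gap is in the ``if'' direction for $\iota^{+,\natural}$. You assert that if $\sM^A_H$ is abelian then it ``is an abelian tight enrichment of $(\sC,H)$'' and conclude by ab-initiality of $\sW_H^{\ab,+}$. But an abelian enrichment in the sense of Definition \ref{relhab} is a morphism of $\Ex^\rig_*$, i.e.\ the comparison functor must be \emph{exact}, and abelianness of $\sM^A_H$ does not by itself make the faithful functor $\theta_H=\iota_H^{+,\natural}\circ\rho_H^{-1}:\sM^A_H\to\sW_H^\ab$ (equivalently the realisation $\sM^A_H\to\sC$) exact. Without exactness the universal property you invoke simply does not apply to this functor. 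This is precisely where the paper's Theorem \ref{p8.3} invokes Lemma \ref{l2.1} b) (a faithful $\otimes$-functor between \emph{connected} rigid abelian $\otimes$-categories is automatically exact), after noting that $\sM^A_H$ and $\sW_H^\ab$ are connected --- which holds for classical $H$ because their centres are domains sitting inside the field $Z(\sC)$, and are absolutely flat by Lemma \ref{l2.1} d). Your characteristic-$0$ conclusion survives independently of this, since semisimplicity of $\sM^A$ makes every additive functor out of it exact; but the general ``if and only if'' needs the connectedness argument to be supplied.
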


That $\rho$ is an equivalence gives a completely different construction of $\sM^A_H$, and provides it with a universal property. The statement in characteristic $0$ is true because of \cite[Th. 0.4]{A}. 

Note that $\weil_H^\ab$ has the same universal property as the one in \cite[Th. 1.7.13]{hms} and \cite[Th. 2.20]{BVHP}, so we recover that construction in a different way. In particular, Theorem \ref{T2} provides an extension of \cite[Prop. 10.2.1]{hms} from characteristic $0$ to any characteristic.

\subsection{Conjectures on algebraic cycles} We now enter the realm of the standard, and less standard, conjectures. How do they interact with the present constructions? Since we allow ourselves to vary the class $\sV$ and since some of these conjectures are true in certain cases, and as in \cite[beg. §2]{kdix}, we adopt here the terminology ``conditions'' instead.

The first and most obvious condition to study is Condition C, algebraicity of the Künneth projectors. Another one is Condition D: that homological equivalence (for the Weil cohomology under study) agrees with numerical equivalence.\footnote{This ``standard conjecture'' is sometimes attributed to Grothendieck. In fact, it is not mentioned in \cite{gro-standard} and goes back at least to Tate in \cite[p. 97]{woodshole}, for $\ell$-adic cohomology.} For classical Weil cohomologies, it is known that D implies C;  the proof is really $D\Rightarrow B\Rightarrow C$, where $B$ is a standard conjecture ``of Lefschetz type''. 

We don't know any proof of the implication $D \Rightarrow C$ for a general Weil cohomology. To get it, we didn't see another way than to pass through a generalisation of the standard conjecture B. (Anyone who knows a direct proof should contact us immediately.) In order to express this ``standard condition'', we need a Hard Lefschetz property: besides Theorem \ref{T2}, this is the main reason to introduce and study tight Weil cohomologies.

The ``yoga'' of the standard conjectures, \ie the interplay between conjectures B, C and D (and a conjecture A), as well as the ``Hodge positivity'' conjecture, is studied in detail by Kleiman in \cite{kdix} and \cite{kst} for a Weil cohomology with values in vector spaces over a field (``traditional'' in the sense of Definition \ref{d3.2}). The good news is that most of this formalism goes through in our generalised context, basically without change. This is done in \S\S \ref{s8.5} and \ref{s8.6}; the only places where we cannot use Kleiman's arguments is where he employs the Cayley-Hamilton theorem, for the proofs of D $\Rightarrow$ B and of the independence of B from the choice of a polarisation. In the first case (Theorem \ref{t8.1} (2)) we replace it by an argument due to Smirnov \cite{sm}, but in the second case  (Theorem \ref{t8.1} (2)) we have to restrict to enrichments of traditional Weil cohomologies. For the Hodge positivity, see \S \ref{s8.7}. In \S \ref{s8.8} we also study ``fullness conditions'', generalising the Hodge and Tate conjectures in the style of \cite[Ch. 7]{andremotifs}, and their interplay with the standard conditions. 

The reader may feel uncomfortable with the idea of playing with all these conjectures for Weil cohomologies which are more general than those considered traditionally; at least for abelian-valued Weil cohomologies, this is justified by Voevodsky's conjecture \cite[Conj. 4.2]{voe} in view of Proposition \ref{P1} b). These conjectures clarify the rather complex picture of this paper: we urge the reader to look at \S \ref{s9.1} for details.

\enlargethispage*{40pt}

\subsection{Abelian varieties} This is a case where many conjectures are known and where, therefore, results from the previous subsection apply partially. We develop this in \S \ref{s7.4}. In particular, the category  $\sW$ in this case is (up to idempotent completion) of the form $\sM_\sim$ for a very explicit adequate equivalence $\sim$ (Theorem \ref{t9.1}). 

\subsection{Variation over a base} The present theory extends to smooth projective schemes over suitable bases without change: this is briefly explained in \S \ref{s10}. This gives it a flexibility which may be useful in future applications.

\subsection{Graded Weil cohomologies} In the appendix, we show that Theorem \ref{tmain} implies a similar statement for a suitable version of  Saavedra's $\Z$-graded cohomology theories from \cite[VI.A.1.1]{saa}.

\subsection{What is not done here} We list here some lines of investigation that we haven't attempted to follow in this paper.

\subsubsection{} Links with the Tannakian picture.

\subsubsection{} Relationship with the new cohomology theory proposed by Ayoub in \cite{AyW},   
and with Scholze's conjecture  \cite{scholze} on the existence of a generalised Weil cohomology over the algebraic closure of a finite field with values in the semisimple $\Q$-linear tensor category of representations of Kottwitz gerbes ‘‘that practically behaves like a universal cohomology theory''.

\subsubsection{} A triangulated or $\infty$-theoretical version covering the mixed Weil cohomologies of Cisinski-D\'eglise \cite{CisDeg}, see also \cite{AyW2}. This would allow in particular to use integral coefficients, which would be artificial in the present paper since a Künneth isomorphism for cohomology groups only holds up to torsion.

\subsection{Acknowledgements} Our debt to Grothendieck goes without saying. We are also indebted to Nori's construction of an abelian category of mixed motives (see \cite{hms}): it was our initial motivation for this work and we intend to pursue a search for a universal version of it, in the present style. Finally we are indebted to André's work on motives, especially \cite{A}, and not only for Theorem \ref{T2}: we borrowed several of his ideas, like varying the class of smooth projective ``models'' with which we work in Definition \ref{d3.1}, and streamlining Jannsen's construction of the categories of pure motives (see Footnote \ref{fn5}).

Each author gratefully acknowledges the support of the other's institution for several back-and-forth visits since August 2021, having led to the completion of this work.

\section{Reminders on $\otimes$-categories}
\subsection{Terminology}\label{Term} Here we adopt the terminology of \cite[VII, \S 1 and XI, \S\S 1,2]{mcl}, with some minor modifications.
 
For categories, we say \emph{$\otimes$-category} for unital symmetric monoidal category, and \emph{monoidal (\resp $\otimes$-)category without unit} for a monoidal (\resp symmetric monoidal) category in which no unit structure is provided.

A \emph{$\otimes$-functor} $ (F, \mu, \eta ): (\sC,\otimes_\sC , \un_\sC)\to (\sD,\otimes_\sD, \un_\sD)$ between $\otimes$-categ\-or\-ies is given by a functor $F: \sC \to \sD$,  a natural transformation
$$\mu_{X, Y}: F (X)\otimes_\sD F(Y)\to F(X\otimes_\sC Y)$$ 
compatible with the associativity constraints and the symmetry isomorphisms, and a morphism $\eta :  \un_\sD \to F(\un_\sC)$ satisfying the unitality condition. A $\otimes$-functor is \emph{strong} if $\mu$ and $\eta$ are isomorphisms and \emph{strict} if $\mu$ and $\eta$ are equalities. 

In this paper, we shall use the three types of $\otimes$-functors: to avoid ambiguities, we 
write \emph{lax} $\otimes$-functor instead $\otimes$-functor.

A \emph{$\otimes$-natural transformation} between $\otimes$-functors is a natural transformation which it is compatible with the $\mu$'s and the units.

An \emph{additive $\otimes$-category} is a $\otimes$-category which is additive and such that the tensor product is biadditive. An additive $\otimes$-functor between additive $\otimes$-categories is a (lax, strong, strict) $\otimes$-functor which is additive. This tensor structure carries over canonically to the pseudo-abelian completion.

\subsection{Notation} \label{Not}
Let $\Cat^\otimes$ be the $2$-category of $\otimes$-categories, lax  $\otimes$-functors and $\otimes$-natural isomorphisms.
 Let $\Add^\otimes$ be the 2-category of additive $\otimes$-categories,  strong  additive $\otimes$-functors and $\otimes$-natural isomorphisms.  Let $\Ex^\otimes$ be the 2-category of abelian $\otimes$-categories, exact strong  $\otimes$-functors and $\otimes$-natural isomorphisms. Let  $\Add^\rig$ (\resp $\Ex^\rig$) be the 2-full, 1-full subcategory of $\Add^\otimes$ (\resp $\Ex^\otimes$) given  by rigid categories. 

Recall that for $\sC\in \Add^\otimes$,  $Z(\sC):=\End_\sC(\un)$ is a commutative ring \cite[I.1.3.3.1]{saa}, and that the category $\sC$ is $Z(\sC)$-linear. 

\begin{defn}\label{d2.1} Let $\sA\in \Ex^\rig$. We say that $\sA$ is \emph{connected} if $Z(\sA)$ is a field.
\end{defn}

\subsection{Useful tools} The following results, which were already used in \cite{BVK}, will be used here several times so we recall them for quotation purposes.

\begin{lemma}\label{l2.1} Let $\sA,\sB\in \Ex^\rig$ with $\sA$ connected (Definition \ref{d2.1}).\\
a) \cite[Prop. 1.19]{delmil} Any exact $\otimes$-functor $F:\sA\to\sB$ is faithful. In particular, if $F$ is a localisation it is an equivalence.\\
b) \cite[Th. 2.4.1 and Rk. 2.4.2]{CEOP} The converse is true if $\sB$ is also connected.\\
c) \cite[Prop. 3.5 b)]{BVK} $\sA$ and $\sB$ are reduced: for any morphism $f$ and any $N>0$, $f^{\otimes N}=0$ $\Rightarrow$ $f=0$.\\
d)  \cite[Prop. 4.2  and Th. 4.18]{standard-schur}. $Z(\sB)$ is absolutely flat;  there is a $1-1$ correspondence between the Serre $\otimes$-ideals of $\sB$ (\ie Serre subcategories closed under external tensor product) and the ideals of $Z(\sB)$.
\end{lemma}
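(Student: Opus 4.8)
\emph{Plan.} All four assertions are quoted from the literature, so a complete proof amounts to checking that the hypotheses of the cited statements hold in our setting; what is worth recording here is how the items interlock. The real engine is d); granting it, I would derive a) and c) by short formal arguments (although they were obtained independently in \cite{delmil} and \cite{BVK}), and I would simply invoke b) as a black box from \cite{CEOP}.

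\emph{Item a).} Given an exact strong $\otimes$-functor $F\colon\sA\to\sB$, the full subcategory $\Ker F\subset\sA$ of objects killed by $F$ is stable under subobjects, quotients and extensions (because $F$ is exact) and under $-\otimes X$ for every $X\in\sA$ (because $F$ is strong monoidal), so it is a Serre $\otimes$-ideal. By d) applied to $\sA$ it corresponds to an ideal of $Z(\sA)$, and $Z(\sA)$ is a field since $\sA$ is connected; thus $\Ker F$ is $0$ or all of $\sA$, and it is not all of $\sA$ because $F(\un)\cong\un\neq 0$. Hence $\Ker F=0$, so $F$ is faithful, and if $F$ is moreover a Serre localisation its kernel is then $0$, so it is an equivalence. (Alternatively, the dichotomy for $\Ker F$ is the rigidity argument of \cite[Prop. 1.19]{delmil}.)

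\emph{Item c).} Factor $f\colon X\to Y$ through its image, $X\twoheadrightarrow\IM(f)\hookrightarrow Y$. As $-\otimes Z$ is exact in a rigid abelian $\otimes$-category, $f^{\otimes N}$ factors as $X^{\otimes N}\twoheadrightarrow\IM(f)^{\otimes N}\hookrightarrow Y^{\otimes N}$, so $f^{\otimes N}=0$ if and only if $\IM(f)^{\otimes N}=0$. The full subcategory $\sN\subset\sB$ of objects $Z$ with $Z^{\otimes N}=0$ for some $N$ is a Serre $\otimes$-ideal, closure under extensions being a pigeonhole count ($Z'^{\otimes a}=0=Z''^{\otimes b}$ forces $Z^{\otimes(a+b-1)}=0$). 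Under the correspondence of d), $\sN$ matches the nilradical of $Z(\sB)$: indeed $x^{\otimes N}=x^N$ for $x\in Z(\sB)$, so $\IM(x)$ is $\otimes$-nilpotent exactly when $x$ is nilpotent. But $Z(\sB)$ is absolutely flat by d), hence reduced, so its nilradical vanishes; therefore $\sN=0$, that is, no nonzero object of $\sB$ is $\otimes$-nilpotent, which is the reducedness of $\sB$ — and a fortiori of $\sA$.

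\emph{Item b) and the obstacle.} Part b), the converse implication in this setting, I would quote verbatim from \cite[Th. 2.4.1 and Rk. 2.4.2]{CEOP}; its proof rests on the structure theory of tensor categories and I would not reprove it. This leaves d) — the absolute flatness of $Z(\sB)$ and the bijection between Serre $\otimes$-ideals of $\sB$ and ideals of $Z(\sB)$, from \cite[Prop. 4.2 and Th. 4.18]{standard-schur} — as the single substantial input. I expect it to be the main obstacle: unlike a) and c), it has no short self-contained derivation, and essentially the whole lemma rests on it.
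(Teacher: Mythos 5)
This lemma is stated in the paper purely as a quotation of known results --- no proof is given beyond the four citations --- so there is nothing internal to compare your argument against; what you have written is a correct reorganisation that derives a) and c) from d). Two small points on your derivations. In a), passing from ``$\Ker F=0$ as a class of objects'' to faithfulness on morphisms needs the one-line remark (which you use implicitly in c) but not in a)) that exactness gives $F(\IM f)=\IM F(f)$, so $F(f)=0$ forces $\IM f\in\Ker F$; and the dichotomy argument tacitly assumes $\sB\ne 0$, i.e.\ $\un_\sB\ne 0$, which is needed for the conclusion anyway. In c), your identification of the Serre $\otimes$-ideal $\sN$ of $\otimes$-nilpotent objects with the nilradical of $Z(\sB)$ under the correspondence of d) is fine provided that correspondence is the expected one ($\sS\mapsto\{x\in Z(\sB)\mid \IM(x)\in\sS\}$), which is indeed the form it takes in \cite[Th. 4.18]{standard-schur}. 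The one caveat worth flagging is logical rather than mathematical: d) is by far the deepest input, and its proof in \cite{standard-schur} itself rests on Deligne--Milne-style rigidity arguments close to a), so as a foundation for the literature your ordering would be circular; within this paper it is harmless, since all four items are in any case imported as black boxes, exactly as you say.
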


In the next lemma, $\sC\in \Add^\rig$; recall the $\otimes$-ideal $\sN\subset\sC$ of negligible morphisms, \cf \cite[7.1]{AK2}.

\begin{lemma}[\protect{\cite[Th. 1 a)]{AK3}}]\label{lak} Suppose that there exists a finite extension $L/K$ and a  $K$-linear $\otimes$-functor  $F:\sC\to \sA$ to a $L$-linear rigid $\otimes$-category $\sA\in \Ex^\rig$ in which Homs are finite $L$-dimensional. Then  $\sC/\sN$ is semi-simple and the only $\otimes$-ideal $\sI$ de $\sC$ such that $\sC/\sI$ is semi-simple is $\sI=\sN$.
\end{lemma}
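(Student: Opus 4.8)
The plan is to argue object by object with endomorphism rings, using the functor $F$ only to import two facts from $\sA$: that the relevant $\Hom$-groups become finite-dimensional, and that the categorical trace of a nilpotent endomorphism vanishes. Write $K=\End_\sC(\un)$, which I take to be a field of characteristic $0$, as in the intended applications; then $F$ is injective on $K$, and, being a $\otimes$-functor between rigid categories, it respects duals and traces. Recall that $f$ is negligible iff $\tr(gf)=0$ for every $g$ in the opposite direction, and that $\sN$ is a $\otimes$-ideal.

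For the first assertion I fix $X\in\sC$, put $E=\End_\sC(X)$, and let $E'$ be the image of $E$ in $\End_\sA(FX)$; since $\Hom_\sA$-groups are finite-dimensional over $L$ and $[L:K]<\infty$, $E'$ is a finite-dimensional $K$-algebra. Equip $E'$ with the symmetric associative $K$-bilinear form $B(a,b)=\tr_\sA(ab)$, whose radical $J=\{a\in E' : B(a,E')=0\}$ is an ideal of $E'$. Using that $F$ is injective on $K$ and compatible with traces, one checks that for $f\in E$ one has $Ff\in J\iff f\in\sN(X,X)$; since $E\to E'$ is surjective this yields $\End_{\sC/\sN}(X)=E/\sN(X,X)\cong E'/J$. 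The crux is that $\operatorname{rad}(E')\subseteq J$: for $r\in\operatorname{rad}(E')$ and $a\in E'$, the element $ar\in\operatorname{rad}(E')$ is nilpotent, hence nilpotent in $\End_\sA(FX)$, and $\tr_\sA(ar)=0$. This last point is where the abelian structure of $\sA$ is used: filter $FX$ by the finite chain $\Ker(ar)\subseteq\Ker((ar)^2)\subseteq\cdots\subseteq FX$; the map $ar$ sends each term into the previous one, hence acts as $0$ on the successive quotients, and additivity of the trace along a filtration (a standard property of traces in a rigid abelian $\otimes$-category) gives $\tr_\sA(ar)=0$. Thus $B(a,r)=\tr_\sA(ar)=0$ for all $a\in E'$, so $r\in J$; hence $E'/J$ is a quotient of the semi-simple algebra $E'/\operatorname{rad}(E')$, so semi-simple. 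The same computation with $\Hom(X,Y)$ in place of $E$ shows that all $\Hom$-groups of $\sC/\sN$ are finite-dimensional over $K$, so $\sC/\sN$ is semi-simple.

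For the uniqueness I would first note that $\sN$ is the unique maximal proper $\otimes$-ideal of $\sC$: it is proper since $\tr(\id_\un)=1\neq 0$; and if $\sI$ is a proper $\otimes$-ideal then $\sI(\un,\un)$ is a proper ideal of the field $K$, hence $0$, so $\tr(gf)\in\sI(\un,\un)=0$ for $f\in\sI(X,Y)$ and every $g\colon Y\to X$, i.e. $\sI\subseteq\sN$. Now let $\sI$ be any $\otimes$-ideal with $\sC/\sI$ semi-simple; I may assume $\sI$ proper, so $\sI\subseteq\sN$. The quotient functor $\sC\to\sC/\sI$ is a strict $\otimes$-functor and carries negligible morphisms to negligible morphisms, so it maps $\sN$ into the negligible ideal of $\sC/\sI$. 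But the latter is $0$: in a semi-simple rigid $\otimes$-category over a field of characteristic $0$, any nonzero morphism $\bar f$ factors through a simple object $S$ and admits some $\bar g$ with $\bar g\bar f$ a nonzero idempotent of image $S$, whence $\tr(\bar g\bar f)=\dim S\neq 0$ (a simple object in a semi-simple $\otimes$-category has nonzero dimension in characteristic $0$). Therefore $\sN\subseteq\sI$, and $\sI=\sN$.

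The main obstacle is the vanishing of $\tr_\sA$ on nilpotent endomorphisms used above: this can fail in an ``abstract'' rigid $\otimes$-category, and the hypothesis on $F$ is there precisely to transport the computation into the abelian category $\sA$, where additivity of traces along a filtration does the job. The finiteness of the $\Hom_\sA$-groups is equally indispensable, as it is what makes $E'$ --- and hence $\operatorname{rad}(E')$ --- finite-dimensional, so nilpotent. One should also keep in mind that ``$\otimes$-functor'' must be understood as compatible with duality, which is automatic for strong $\otimes$-functors between rigid categories, and that characteristic $0$ enters only through the non-vanishing of dimensions of simple objects, so in positive characteristic the statement would require modification.
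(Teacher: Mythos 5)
The paper offers no proof of Lemma \ref{lak} --- it is imported verbatim from \cite{AK3} --- so the comparison is with the argument of \emph{loc.\ cit.} Your first paragraph is essentially that argument: transport the trace form along $F$, identify $\End_{\sC/\sN}(X)$ with $E'/J$, and use that the radical of the finite-dimensional algebra $E'$ is nilpotent together with the vanishing of $\tr_\sA$ on nilpotent endomorphisms (which does hold in a rigid abelian $\otimes$-category, by the d\'evissage you sketch, since $\otimes$ is biexact there). This part is correct, and note that it nowhere uses characteristic $0$.

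The uniqueness part has a genuine gap at its last step, namely the assertion that a nonzero morphism in a semi-simple rigid $\otimes$-category cannot be negligible ``because a simple object has nonzero dimension in characteristic $0$''. That claim is unjustified: it is true when $\End(S)=K$, but for a simple $S$ with $\End(S)=\Delta$ a larger division algebra one only knows $\dim S=\tau(1)$ for some trace functional $\tau$ on $\Delta$, and nothing forces $\tau(1)\neq 0$; your chosen $\bar g$ yields $\tr(\bar g\bar f)=\dim S$, which may vanish even though $\bar f\notin\sN$. The correct --- and characteristic-free --- point is that in a semi-simple category with $\End(\un)=K$ a field the trace form on $\End(S)$ is \emph{nondegenerate}, not merely nonzero at $\id_S$: under the duality isomorphisms it becomes the composition pairing $\Hom(\un,S\otimes S^\vee)\times\Hom(S\otimes S^\vee,\un)\to K$, which factors through the $\un$-isotypic component $\un^{\oplus m}$ of $S\otimes S^\vee$ (with $m=\dim_K\End(S)>0$, since $\Hom(\un,T)=0$ for the other isotypic pieces $T$) and is there the standard perfect pairing on $K^m$. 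Hence $\sN=0$ in any semi-simple rigid $\otimes$-category with $\End(\un)$ a field, in any characteristic; with this substitute your reduction ($\sI\subseteq\sN$ by maximality of $\sN$, then the image of $\sN_\sC$ lands in $\sN_{\sC/\sI}=0$) goes through and matches the structure of \cite{AK3}. As a by-product, the characteristic-$0$ caveat in your closing paragraph is unnecessary.
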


We shall use the following results repeatedly.

\begin{lemma}[\protect{\cite[Th. 6.1 and Cor. 6.2]{BVK}}]\label{bvk}
Let $\sC\in \Add^\rig$. Then the $2$-functor
\[\sA\mapsto \Add^\rig(\sC,\sA)\]
from $\Ex^\rig$ to $\Cat$ is $2$-representable by a category $T(\sC)$. If $\sC$ is abelian, the canonical $\otimes$-functor
\[\lambda_\sC:\sC\to T(\sC)\]
is faithful; if $\sC$ is further semi-simple, $\lambda_\sC$ is an equivalence of categories.
\end{lemma}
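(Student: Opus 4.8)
The plan is to exhibit $T(\sC)$ as the reflection of $\sC$ along the forgetful $2$-functor $\Ex^\rig\to\Add^\rig$, built in two stages: an \emph{abelianisation} followed by a \emph{rigidification}. For the first stage I would use Freyd's free abelian category: for an (essentially small) additive category $\sC$ there is an abelian category $A(\sC)$ with a fully faithful additive functor $m\colon\sC\to A(\sC)$, landing in projective–injective objects, such that every object of $A(\sC)$ is a cokernel of a morphism between objects of $m(\sC)$, and such that precomposition with $m$ is an equivalence from exact functors $A(\sC)\to\sB$ to additive functors $\sC\to\sB$, compatibly with natural transformations, for every abelian $\sB$. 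The symmetric monoidal structure of $\sC$ transports to $A(\sC)$: one defines $Y\otimes Y'$ by the usual right-exact formula on presentations by objects of $m(\sC)$ (Day convolution of finitely presented functors), pins down associativity, symmetry and unitality using uniqueness in the universal property, and checks that $m$ becomes strong symmetric monoidal. A further bookkeeping step, comparing the two right-exact-in-each-variable functors $Y,Y'\mapsto\tilde F(Y\otimes Y')$ and $Y,Y'\mapsto\tilde F(Y)\otimes\tilde F(Y')$ on presentations and using biexactness of $\otimes$ on any rigid $\sA$, upgrades Freyd's equivalence to one between exact strong $\otimes$-functors $A(\sC)\to\sA$ and strong additive $\otimes$-functors $\sC\to\sA$, for $\sA\in\Ex^\rig$.

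Since $A(\sC)$ is typically not rigid — a cokernel of a map of dualisable objects need not be dualisable — I would then rigidify. Let $\mathcal S\subseteq A(\sC)$ be the intersection of the kernels of all the functors $\tilde F\colon A(\sC)\to\sA$ attached as above to pairs $(\sA,F)$ with $\sA\in\Ex^\rig$ and $F\in\Add^\rig(\sC,\sA)$. This $\mathcal S$ is a Serre subcategory stable under $\otimes$ with arbitrary objects, so $T(\sC):=A(\sC)/\mathcal S$ is an abelian $\otimes$-category, the quotient $\pi\colon A(\sC)\to T(\sC)$ is exact and strong monoidal, and I put $\lambda_\sC:=\pi\circ m$. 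By construction each $\tilde F$ kills $\mathcal S$ and hence factors uniquely as $\bar F\circ\pi$ with $\bar F\colon T(\sC)\to\sA$ exact strong monoidal and $\bar F\circ\lambda_\sC\cong F$; conversely every exact strong $\otimes$-functor out of $T(\sC)$ arises this way, and since each object of $T(\sC)$ is a cokernel of a morphism of $\lambda_\sC(\sC)$, the assignment $G\mapsto G\circ\lambda_\sC$ is fully faithful on $2$-cells as well. So, granted that $T(\sC)\in\Ex^\rig$, it $2$-represents $\sA\mapsto\Add^\rig(\sC,\sA)$.

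The hard part is to prove that $T(\sC)$ \emph{is} rigid. The key observation is that the family $\{\bar F\}$ is jointly faithful on $T(\sC)$: if a morphism is killed by every $\bar F$ then so is its image, which therefore lies in $\bigcap\ker\bar F=0$. Given $\bar Y\in T(\sC)$, write $\bar Y=\Coker(\lambda_\sC(f))$ with $f\colon X_1\to X_0$ in $\sC$ and put $\bar Y':=\Ker(\lambda_\sC(f^\vee))$, where $f^\vee\colon X_0^\vee\to X_1^\vee$ is the $\sC$-dual of $f$. Using right-exactness of $\otimes$ and the standard identity $\mathrm{ev}_{X_0}\circ(1\otimes f)=\mathrm{ev}_{X_1}\circ(f^\vee\otimes1)$, the evaluation of $X_0$ descends to a morphism $e\colon\bar Y'\otimes\bar Y\to\un$ in $T(\sC)$, and dually the coevaluation of $X_0$ descends to $c\colon\un\to\bar Y\otimes\bar Y'$. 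In any $\sA\in\Ex^\rig$ every object is dualisable, so $\bar F(\bar Y)=\Coker(F(f))$ is dual to $\Ker(F(f^\vee))=\bar F(\bar Y')$, with structure maps that — modulo the canonical identification of duals and using that a strong $\otimes$-functor of rigid categories preserves duals and their evaluation/coevaluation — are exactly $\bar F(e)$ and $\bar F(c)$. Hence the triangle identities for $(\bar Y,\bar Y',e,c)$ hold after each $\bar F$, and by joint faithfulness they hold in $T(\sC)$; thus $\bar Y$ is dualisable and $T(\sC)$ is rigid. Two side issues I would treat explicitly: a set-theoretic one — the class of pairs $(\sA,F)$ is large, handled via well-poweredness of $A(\sC)$ and a solution-set reduction — and the compatibility of the monoidal structure with the Serre quotient.

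For the two supplementary assertions: if $\sC$ is abelian then $\sC\in\Ex^\rig$ and $\id_\sC\in\Add^\rig(\sC,\sC)$, so the universal property produces an exact strong $\otimes$-functor $R\colon T(\sC)\to\sC$ with $R\circ\lambda_\sC\cong\id_\sC$; in particular $\lambda_\sC$ is faithful. If $\sC$ is moreover semi-simple, then every additive functor out of $\sC$ is exact, so $\Add^\rig(\sC,\sA)$ and $\Ex^\rig(\sC,\sA)$ coincide naturally in $\sA$; thus $\sC$ and $T(\sC)$ represent the same $2$-functor, and bi-Yoneda gives an equivalence $\sC\simeq T(\sC)$ matching the universal elements, i.e. $\lambda_\sC$ is an equivalence.
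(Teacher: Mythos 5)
The paper does not actually prove this lemma: it imports it wholesale from \cite[Th.\ 6.1 and Cor.\ 6.2]{BVK}, so you are reconstructing the proof of a cited result. Your overall architecture --- Freyd's free abelian category with its induced tensor structure, the quotient by the intersection of the kernels of all induced exact $\otimes$-functors to rigid abelian targets, joint faithfulness of the residual family $\{\bar F\}$ to verify the triangle identities, and the retraction (resp.\ automatic-exactness) arguments for the two supplementary assertions --- is the right one and is in the spirit of the cited construction. The final paragraph (faithfulness of $\lambda_\sC$ when $\sC$ is abelian, equivalence when it is moreover semi-simple) is correct as written.

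There are, however, two genuine gaps in the rigidity argument, which is the hard core of the theorem. First, it is not true that every object of Freyd's free abelian category $A(\sC)$ is a cokernel of a morphism between objects of $m(\sC)$; that description applies to the category of finitely presented functors, which is abelian only when $\sC$ has weak kernels. In $A(\sC)$ a general object is only the homology of a three-term complex in $m(\sC)$, i.e.\ a kernel of a map between cokernels of maps in $m(\sC)$. Since dualisable objects of an abelian $\otimes$-category are not closed under kernels or images, your argument as written establishes dualisability only for the objects $\Coker(\lambda_\sC(f))$ and must be iterated one more step (kernels of maps between such cokernels) before it covers all of $T(\sC)$. Second, the coevaluation does not ``descend dually'': $\otimes$ is a priori only right exact and $\bar Y'$ is a kernel, so $\bar Y\otimes\bar Y'$ need not be a subobject of $\bar Y\otimes X_0^\vee$; worse, $(1_{X_0}\otimes f^\vee)\circ\mathrm{coev}_{X_0}$ is the name of $f$ and is not zero, so $\mathrm{coev}_{X_0}$ itself does not factor through $X_0\otimes\bar Y'$. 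What does work is to check that the composite $\un\to X_0\otimes X_0^\vee\to\bar Y\otimes X_0^\vee$ lands in $\Ker(1_{\bar Y}\otimes\lambda_\sC(f^\vee))$, and then to prove that the canonical comparison map $\bar Y\otimes\bar Y'\to\Ker(1_{\bar Y}\otimes\lambda_\sC(f^\vee))$ is invertible; the latter is precisely a fragment of the left exactness of $\bar Y\otimes-$ that you are trying to establish, and it must be extracted from the joint conservativity of the exact functors $\bar F$ (joint faithfulness makes them reflect zero objects, hence, being exact, isomorphisms) before the triangle identities can even be formulated. Making these two points explicit is where the real work of \cite{BVK} lies; as written, the ``dually'' step would fail.
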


\begin{rk} The assumption semi-simple can be weakened to split  \cite[Def. 5.2]{BVK}.
\end{rk}

\begin{lemma}[\protect{\cite[Th. 6.3]{standard-schur}}]\label{t6.1} Let $\sC\in\Add^\rig$ and let $\mathbb{I}$ be a $\otimes$-ideal of $\sC$. Then we have a $\otimes$-equivalence $T(\sC)/\sI\iso T(\sC/\mathbb{I})$, where $\sI$ is the Serre $\otimes$-ideal generated by the $\IM \lambda_\sC(f)$ for $f\in \mathbb{I}$ and the left hand side is the corresponding Serre localisation.\end{lemma}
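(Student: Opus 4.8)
The plan is to exploit the $2$-representability of $T$ (Lemma \ref{bvk}) together with the universal property of Serre localisation, and to check that both sides corepresent the same $2$-functor on $\Ex^\rig$. Fix $\sC\in\Add^\rig$ and a $\otimes$-ideal $\mathbb I\subset\sC$, and let $q:\sC\to\sC/\mathbb I$ be the projection. First I would identify the target $2$-functor: for $\sA\in\Ex^\rig$, a strong additive $\otimes$-functor $G:\sC\to\sA$ factors (necessarily uniquely) through $q$ if and only if $G(f)=0$ for every $f\in\mathbb I$; so $\Add^\rig(\sC/\mathbb I,\sA)$ is the full $2$-subcategory of $\Add^\rig(\sC,\sA)$ on those $G$ killing $\mathbb I$. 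By Lemma \ref{bvk} applied to both $\sC$ and $\sC/\mathbb I$, this says precisely that $T(\sC/\mathbb I)$ corepresents the $2$-functor $\sA\mapsto\{H\in\Ex^\rig(T(\sC),\sA):H\circ\lambda_\sC(f)=0\ \forall f\in\mathbb I\}$, where I have used that composition with $\lambda_\sC$ is the bijection translating functors out of $\sC$ into exact $\otimes$-functors out of $T(\sC)$.

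Next I would identify the left-hand side. By Lemma \ref{l2.1} d) (or the general theory of Serre localisations of abelian $\otimes$-categories), exact $\otimes$-functors out of $T(\sC)/\sI$ are exactly the exact $\otimes$-functors $H:T(\sC)\to\sA$ that kill every object of $\sI$; since $\sI$ is the Serre $\otimes$-ideal generated by the objects $\IM\lambda_\sC(f)$, $f\in\mathbb I$, and since an exact functor kills a subquotient-generated Serre subcategory iff it kills the generators, this is the same as: $H$ kills $\IM\lambda_\sC(f)$ for all $f\in\mathbb I$. Finally, for an exact functor $H$, killing the image object $\IM\lambda_\sC(f)$ is equivalent to $H(\lambda_\sC(f))=0$ (the morphism factors through its image, and conversely its image is a subobject of source and target). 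Hence $T(\sC)/\sI$ corepresents the very same $2$-functor on $\Ex^\rig$ as $T(\sC/\mathbb I)$. By the $2$-Yoneda lemma the two are canonically $\otimes$-equivalent, and unwinding the identifications shows the equivalence is compatible with the projections from $T(\sC)$, i.e.\ it sends $\lambda_{\sC/\mathbb I}\circ q$ to the composite $T(\sC)\to T(\sC)/\sI$.

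The main obstacle I anticipate is the bookkeeping around the passage between \emph{morphisms} killed and \emph{objects} killed: one must be careful that the Serre $\otimes$-ideal $\sI$ generated by the images $\IM\lambda_\sC(f)$ is exactly the class of objects forced to die, and in particular that no extra objects (arising from extensions, subquotients, or external tensor products with the $\IM\lambda_\sC(f)$) impose conditions beyond $H(\lambda_\sC(f))=0$. This is where one uses that $\sI$ is by definition the \emph{smallest} such Serre $\otimes$-ideal, so that the localisation $T(\sC)\to T(\sC)/\sI$ has exactly the universal property ``invert nothing, kill precisely the $\IM\lambda_\sC(f)$ and whatever that entails''; combined with the elementary fact that an exact functor annihilates $\IM\lambda_\sC(f)$ iff it annihilates the morphism $\lambda_\sC(f)$, the two universal properties match on the nose. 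A secondary, purely formal point to get right is that $\lambda_{\sC/\mathbb I}\circ q$ indeed factors through $T(\sC)/\sI$ — this is immediate since $\lambda_{\sC/\mathbb I}(q(f))=0$ for $f\in\mathbb I$ and the target is abelian — which pins down the equivalence and makes it canonical rather than merely existing.
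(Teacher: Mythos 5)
Your universal-property argument is sound and is essentially the intended one: the paper gives no proof of this lemma but quotes it from \cite{standard-schur}, where it is established by precisely this comparison of the $2$-functors corepresented on $\Ex^\rig$ by $T(\sC/\mathbb{I})$ (functors out of $T(\sC)$ killing the morphisms $\lambda_\sC(f)$, $f\in\mathbb{I}$) and by $T(\sC)/\sI$ (exact $\otimes$-functors killing the objects $\IM\lambda_\sC(f)$ and hence the Serre $\otimes$-ideal they generate), the two conditions agreeing because an exact functor kills $\IM g$ iff it kills $g$. The one step you leave implicit is that the Serre localisation $T(\sC)/\sI$ is itself an object of $\Ex^\rig$ — i.e.\ that a Serre $\otimes$-ideal quotient of a rigid abelian $\otimes$-category is again rigid abelian — without which it cannot corepresent anything on $\Ex^\rig$; this is the standard fact the paper invokes elsewhere (e.g.\ via \cite[Prop.\ 4.5]{BVK} in the proof of Theorem \ref{thm2bis}), so your argument is complete once it is cited.
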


\begin{lemma}[\protect{\cite[Prop. 2.2.8]{krause}}]\label{l2.3} Let $\sC\subseteq \sC’$ be Serre subcategories of an abelian category $\sA$. Then $\sA/\sC’$ is a Serre localisation of $\sA/\sC$, with kernel $\sC’/\sC$. 
\end{lemma}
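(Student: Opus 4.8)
The plan is to deduce this from the $2$-universal property of Serre (Gabriel) quotients rather than to manipulate calculi of fractions directly. Recall that for a Serre subcategory $\sB$ of an abelian category $\sA$ the quotient functor $q_\sB\colon\sA\to\sA/\sB$ is exact, kills $\sB$ (that is, $q_\sB(X)=0$ for every $X\in\sB$), and is $2$-universal with these properties: for every abelian category $\sD$, composition with $q_\sB$ is an equivalence between the category of exact functors $\sA/\sB\to\sD$ and the category of exact functors $\sA\to\sD$ that vanish on $\sB$.

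The first step is to check that the essential image $\sC'/\sC$ of $\sC'$ in $\sA/\sC$ --- equivalently, the full subcategory of $\sA/\sC$ whose objects are the objects of $\sA$ lying in $\sC'$ --- is a Serre subcategory of $\sA/\sC$: one verifies closure under isomorphism in $\sA/\sC$, under subobjects, quotients and extensions, using the description of subobjects and extensions in a Gabriel quotient together with the hypothesis that $\sC\subseteq\sC'$ are both Serre in $\sA$. Granting this, the key point is the elementary identity: an exact functor $G\colon\sA\to\sD$ vanishes on $\sC'$ if and only if $G$ vanishes on $\sC$ and the induced exact functor $\bar G\colon\sA/\sC\to\sD$ vanishes on $\sC'/\sC$ (both directions are immediate from $\sC\subseteq\sC'$ and from the fact that every object of $\sC'/\sC$ is isomorphic in $\sA/\sC$ to $q_\sC(X)$ for some $X\in\sC'$). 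Composing the two universal properties --- first $\sA\to\sA/\sC$, then $\sA/\sC\to(\sA/\sC)/(\sC'/\sC)$ --- we conclude that both $(\sA/\sC)/(\sC'/\sC)$, equipped with $q_{\sC'/\sC}\circ q_\sC$, and $\sA/\sC'$, equipped with $q_{\sC'}$, $2$-represent the same $2$-functor $\sD\mapsto\{\text{exact functors }\sA\to\sD\text{ vanishing on }\sC'\}$. By uniqueness of representing objects ($2$-Yoneda) they are canonically equivalent as abelian categories, and the equivalence $e\colon\sA/\sC'\iso(\sA/\sC)/(\sC'/\sC)$ satisfies $e\circ q_{\sC'}\cong q_{\sC'/\sC}\circ q_\sC$.

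It then remains to identify the kernel. Let $\tilde p\colon\sA/\sC\to\sA/\sC'$ be the unique exact functor with $\tilde p\circ q_\sC=q_{\sC'}$ (it exists since $q_{\sC'}$ kills $\sC$); this is the functor exhibiting $\sA/\sC'$ as a quotient of $\sA/\sC$. From $e\circ\tilde p\circ q_\sC=e\circ q_{\sC'}\cong q_{\sC'/\sC}\circ q_\sC$ and the uniqueness of the factorisation through $q_\sC$ we get $e\circ\tilde p\cong q_{\sC'/\sC}$, hence $\ker\tilde p=\ker q_{\sC'/\sC}=\sC'/\sC$ since equivalences preserve kernels. Thus $\tilde p$ realises $\sA/\sC'$ as the Serre localisation of $\sA/\sC$ with kernel $\sC'/\sC$, which is the assertion.

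The only real difficulty here is bookkeeping rather than mathematics: one must make sure that ``$\sC'/\sC$'' genuinely denotes a Serre subcategory of $\sA/\sC$, and that the two universal properties compose compatibly enough that the equivalence intertwines \emph{all} the quotient functors in sight --- it is only then that the kernel computation transports correctly. A parallel, equally valid route avoids $2$-categories entirely: realise $\sA/\sC'$ as the localisation of $\sA$ at the class $\Sigma'$ of morphisms with kernel and cokernel in $\sC'$, and $\sA/\sC$ as the localisation at the smaller class $\Sigma\subseteq\Sigma'$ of morphisms with kernel and cokernel in $\sC$; localisation in stages gives $\sA[\Sigma'^{-1}]=(\sA[\Sigma^{-1}])[\bar\Sigma'^{-1}]$, and one checks that $\bar\Sigma'$, the image of $\Sigma'$ in $\sA/\sC$, defines the same localisation as the class of morphisms of $\sA/\sC$ with kernel and cokernel in $\sC'/\sC$, which identifies $(\sA[\Sigma^{-1}])[\bar\Sigma'^{-1}]$ with $(\sA/\sC)/(\sC'/\sC)$ --- with essentially the same underlying verifications.
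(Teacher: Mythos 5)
Your argument is correct. Note that the paper itself gives no proof of this lemma --- it is quoted verbatim from Krause \cite[Prop. 2.2.8]{krause} --- so there is nothing in the text to compare against except the citation; your proof via the $2$-universal property of the Gabriel quotient is the standard one and is essentially how the result is established in the literature. Two small remarks on economy. First, the cleanest way to see that $\sC'/\sC$ is a Serre subcategory of $\sA/\sC$ is not to verify closure under subobjects, quotients and extensions by hand (which forces you into the description of subobjects in a Gabriel quotient), but to observe that $q_{\sC'}$ kills $\sC$, hence factors as $\tilde p\circ q_\sC$ for a unique exact $\tilde p:\sA/\sC\to\sA/\sC'$, and that $\sC'/\sC$ is exactly $\Ker\tilde p$ (using $q_{\sC'}(X)=0\iff X\in\sC'$); the kernel of an exact functor between abelian categories is automatically a Serre subcategory, and it is automatically isomorphism-closed in $\sA/\sC$. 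This removes the only step of your write-up that is genuinely fiddly. Second, with that reading the final kernel computation is built in, and the remaining content is precisely your ``key identity'' plus the composition of the two universal properties, which you state correctly. The alternative route via localisation in stages at the classes $\Sigma\subseteq\Sigma'$ also works, but the comparison of $\bar\Sigma'$ with the class of morphisms of $\sA/\sC$ having kernel and cokernel in $\sC'/\sC$ is exactly the kind of verification the universal-property argument lets you skip, so the first route is preferable.
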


For the last lemma, we use the following definition:

\begin{defn}\label{d2.2}
An additive functor  $\iota:\sC'\to\sC$ between additive categories is \emph{dense} (or essentially surjective up to idempotents)  if any object of $\sC$ is isomorphic to a direct summand of an object of $\iota(\sC')$.
\end{defn}

\begin{lemma}\label{l2.2} Let $F:\sC\to \sD$ be a fully faithful additive functor between additive categories, with $\sC$ pseudo-abelian. If there exists a full  subcategory $\sC'\subseteq \sC$ such that $F_{|\sC'}$ is dense, then $F$ is essentially surjective.
\end{lemma}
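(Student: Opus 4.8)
The plan is to prove essential surjectivity by leveraging the interplay between pseudo-abelianness of $\sC$, full faithfulness of $F$, and density of $F_{|\sC'}$. First I would take an arbitrary object $D \in \sD$. By the density hypothesis applied to $F_{|\sC'}$, there is an object $C' \in \sC'$ and an idempotent $e \colon F(C') \to F(C')$ in $\sD$ such that $D$ is isomorphic to the image of $e$ (i.e. $D \oplus D' \cong F(C')$ with $e$ the corresponding projector); here I am using that $\sD$ need not be pseudo-abelian, but the \emph{particular} summand $D$ does split off $F(C')$ by hypothesis, so the idempotent $e$ genuinely has an image in $\sD$. Next, since $F$ is full and $C' \in \sC$, the morphism $e$ lifts to a morphism $\tilde e \colon C' \to C'$ in $\sC$; and since $F$ is faithful, the relation $e^2 = e$ forces $\tilde e^2 = \tilde e$, so $\tilde e$ is an idempotent in $\sC$.

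Now I would invoke that $\sC$ is pseudo-abelian: the idempotent $\tilde e$ splits, giving an object $C \in \sC$ together with maps realising $C$ as the image of $\tilde e$, so $C \oplus C'' \cong C'$ in $\sC$ with $\tilde e$ the projector onto $C$. Applying the additive functor $F$ (which preserves direct sums and carries the splitting of $\tilde e$ to a splitting of $e = F(\tilde e)$), I get that $F(C)$ is the image of $e$ in $\sD$. Since the image of an idempotent is unique up to canonical isomorphism, $F(C) \cong D$. This shows every object of $\sD$ is isomorphic to one in the image of $F$, i.e. $F$ is essentially surjective.

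The one point requiring care — and the closest thing to an obstacle — is making sure the idempotent $e$ on $F(C')$ actually arises from the density statement: density only tells us $D$ is \emph{a} direct summand of some $F(C')$, which is exactly the datum of such a splitting, hence of the projector $e$. Once that is in hand, the argument is purely formal. A secondary subtlety is that $F$ full is used on the nose (we need \emph{every} morphism $F(C') \to F(C')$, in particular $e$, to lift), and $F$ faithful is used to transport the idempotent identity back along $F$; neither the density subcategory $\sC'$ nor $\sC$ itself is assumed to sit inside $\sD$ in any compatible way beyond what $F$ provides, so all lifting must go through fullness and faithfulness as stated. No pseudo-abelianness of $\sD$ is needed or available.
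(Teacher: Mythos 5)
Your proof is correct and follows essentially the same route as the paper: lift the idempotent $e$ on $F(C')$ along the fully faithful $F$, split it in the pseudo-abelian category $\sC$, and observe that $F$ carries the splitting to one of $e$, so the image lands on $D$. The extra care you take about the uniqueness of the image of an idempotent is fine but not a departure from the paper's argument.
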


\begin{proof} Let $D\in \sD$. By hypothesis, there exists $C'\in \sC'$ such that $D$ is isomorphic to a direct summand of $F(C')$. Let $e=e^2\in \End_\sD(F(C'))$ be an idempotent such that $\IM e$ is isomorphic to $D$. Then $e=F(e')$ where $e'$ is an idempotent of $\End_\sC(C')$; if $D'=\IM e'$, then $F(D')=\IM e$.
\end{proof}

\section{K\"unneth formula}
\subsection{A universal construction}

The following theorem is one of our main tools in this paper.

\begin{thm}\label{thm1m}
 Let $(\sC, \times, 1)$ be a $\otimes$-category. Then there exists a lax $\otimes$-functor $(i,\boxtimes,\upsilon ): \sC \to \sC^\kappa$ such that for any other lax $\otimes$-functor $(H, \mu,\eta): \sC\to \sD$ there is a unique  strict $\otimes$-functor $F^\kappa_H: \sC^\kappa\to \sD$ such that $F^\kappa_H i=H$, $F^\kappa_H(\boxtimes)=\mu$ 
and $F^\kappa_H(\upsilon)=\eta$.
\end{thm}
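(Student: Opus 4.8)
The plan is to construct $\sC^\kappa$ explicitly by generators and relations, in the spirit of a free construction, and then verify the universal property directly. The objects of $\sC^\kappa$ will be formal finite ``tensor words'' $X_1 \boxtimes \cdots \boxtimes X_n$ (including the empty word, which becomes the unit $\upsilon$-target), where each $X_j \in \Ob(\sC)$, modulo no relations on objects beyond reindexing forced by the associativity and symmetry constraints one wishes $\boxtimes$ to satisfy strictly. Equivalently, $\Ob(\sC^\kappa)$ is the free commutative monoid on $\Ob(\sC)$, with $\boxtimes$ given by concatenation (hence automatically strictly associative, unital and symmetric). The point of the construction — and the reason $i$ is only \emph{lax} — is that $i(X) \boxtimes i(Y)$ must be the two-letter word $(X,Y)$, which is \emph{not} the same object as $i(X \times Y)$; there is merely a canonical morphism $\boxtimes_{X,Y}\colon i(X)\boxtimes i(Y)\to i(X\times Y)$, and similarly $\upsilon\colon (\text{empty word})\to i(1)$. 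So $i$ is the identity on objects composed with the inclusion of length-one words, and it is lax-monoidal with structure maps the formal morphisms $\boxtimes$ and $\upsilon$.

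The morphisms of $\sC^\kappa$ are then the delicate part. I would define $\Hom_{\sC^\kappa}$ as the free ``PROP-like'' completion: formally adjoin, for each $\sC$-morphism $f\colon X\to Y$, a generator $i(f)\colon (X)\to (Y)$; adjoin the structure morphisms $\boxtimes_{X,Y}$ and $\upsilon$ above; allow all formal $\boxtimes$-tensor products and composites of these; and quotient by exactly the relations needed to make (a) $i$ a functor, (b) $\boxtimes$ a bifunctor (interchange law), (c) the $\boxtimes_{X,Y}$ and $\upsilon$ into a \emph{lax} $\otimes$-structure on $i$ — i.e.\ the naturality squares of $\boxtimes$ in $X$ and $Y$, the associativity pentagon relating $\boxtimes_{X\times Y, Z}\circ(\boxtimes_{X,Y}\boxtimes \id)$ with $\boxtimes_{X,Y\times Z}\circ(\id\boxtimes \boxtimes_{Y,Z})$ via $i$ of the associator of $\sC$, the two unitality triangles involving $\upsilon$, and the symmetry compatibility relating $\boxtimes_{X,Y}$, $\boxtimes_{Y,X}$ and $i$ of the symmetry of $\sC$. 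One must be slightly careful that $\sC$'s own associativity/unit/symmetry constraints are non-identity, so these relations are stated with the corresponding $i$(constraint) inserted; but since they are isomorphisms in $\sC$, the relations are well-posed. This yields a genuine $\otimes$-category $\sC^\kappa$ (symmetry strict, by construction) together with a lax $\otimes$-functor $(i,\boxtimes,\upsilon)$.

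For the universal property, given $(H,\mu,\eta)\colon\sC\to\sD$, define $F^\kappa_H$ on objects by $F^\kappa_H(X_1\boxtimes\cdots\boxtimes X_n) = H(X_1)\otimes_\sD\cdots\otimes_\sD H(X_n)$ — well-defined and \emph{strictly} monoidal because $\otimes_\sD$, though not strictly associative, is being evaluated on a chosen fixed bracketing (say left-normed) of a word, and concatenation of words corresponds on the nose to nothing until we check morphisms; the strictness of $F^\kappa_H$ as a $\otimes$-functor means precisely $F^\kappa_H(A\boxtimes B)=F^\kappa_H(A)\otimes_\sD F^\kappa_H(B)$ with identity structure maps, which holds with the left-normed convention once one also sends the empty word to $\un_\sD$. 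On generating morphisms set $F^\kappa_H(i(f)) = H(f)$, $F^\kappa_H(\boxtimes_{X,Y}) = \mu_{X,Y}$, $F^\kappa_H(\upsilon)=\eta$, and extend by composition and $\otimes_\sD$. That this respects the defining relations of $\sC^\kappa$ is exactly the statement that $(H,\mu,\eta)$ is a lax $\otimes$-functor (functoriality of $H$, naturality of $\mu$, the hexagon/pentagon and unit axioms for $\mu,\eta$, and the symmetry compatibility). Uniqueness is forced: any $\otimes$-functor $G$ with $Gi=H$, $G(\boxtimes)=\mu$, $G(\upsilon)=\eta$ must agree with $F^\kappa_H$ on all generators, and strictness pins it down on all objects and on $\boxtimes$-composites, hence everywhere. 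The main obstacle I anticipate is purely bookkeeping: setting up the morphism category so that the relations are simultaneously (i) strong enough to make $i$ lax-monoidal and $\boxtimes$ strictly associative/symmetric, and (ii) weak enough that the natural assignment from an arbitrary lax $(H,\mu,\eta)$ actually descends — in other words, checking coherence (a MacLane-style argument) so that no hidden relation is needed and no needed relation is missing. Once the presentation is shown to be coherent, the universal property is formal.
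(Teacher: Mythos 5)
Your overall strategy --- a free $\otimes$-category built from formal tensor words, with freely adjoined morphisms $\boxtimes_{X,Y}$ and $\upsilon$, modulo exactly the relations expressing functoriality of $i$, bifunctoriality of $\boxtimes$, and the lax-monoidality axioms --- is precisely the paper's (which follows Levine), and your verification of the universal property on generators is the right argument. The genuine problem is your strictification of the monoidal structure on objects: you take $\Ob(\sC^\kappa)$ to be the free \emph{commutative} monoid on $\Ob(\sC)$, so that $\boxtimes$ is strictly associative, unital \emph{and symmetric}. This is incompatible with the existence of a strict $F^\kappa_H$ into a general (non-strict) $\sD$, on two counts. First, associativity: with a left-normed convention, $F^\kappa_H(A)\otimes_\sD F^\kappa_H(B)$ and $F^\kappa_H(A\boxtimes B)$ are different objects of $\sD$ as soon as $B$ has length $\ge 2$ (take $A=(X)$, $B=(Y_1,Y_2)$: one gets $HX\otimes(HY_1\otimes HY_2)$ versus $(HX\otimes HY_1)\otimes HY_2$), so $F^\kappa_H$ is not strict; no bracketing convention repairs this, since a strict functor out of a strictly associative source forces $\otimes_\sD$ to be strictly associative (with identity associator) on its image. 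Second, and fatally, symmetry: every $\otimes$-functor, lax or strict, must satisfy $F(c^{\sC^\kappa}_{A,B})\circ\mu_{A,B}=\mu_{B,A}\circ c^\sD_{FA,FB}$. If $c^{\sC^\kappa}_{A,B}=\id$ (which is what ``symmetry strict, by construction'' means once $A\boxtimes B=B\boxtimes A$) and $\mu=\id$, this forces $c^\sD_{FA,FB}=\id$, false in essentially every target of interest; it is a standard fact that the symmetry of a symmetric monoidal category cannot be strictified to the identity. Concretely, your own relation $\boxtimes_{Y,X}=i(c_{X,Y})\circ\boxtimes_{X,Y}$ would be sent by $F^\kappa_H$ to $\mu_{Y,X}=H(c_{X,Y})\circ\mu_{X,Y}$, contradicting the lax symmetry axiom $\mu_{Y,X}\circ c^\sD_{HX,HY}=H(c_{X,Y})\circ\mu_{X,Y}$ whenever $c^\sD\ne\id$, so $F^\kappa_H$ does not descend to your quotient.

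The repair is the construction the paper actually sketches: take the objects of $\sC^\kappa$ to be formal \emph{bracketed} words, with $(X,Y)$ and $(Y,X)$ distinct objects and with freely adjoined associativity, unit and symmetry \emph{isomorphisms} (the genuinely free $\otimes$-category on $\sC$), then adjoin $\boxtimes_{X,Y}$, $\upsilon$ and impose your relations (a)--(c). On such a $\sC^\kappa$ one defines $F^\kappa_H$ by recursion on the tree structure --- $F(X)=H(X)$ on length-one words, $F(A\boxtimes B)=F(A)\otimes_\sD F(B)$, $F(\emptyset)=\un_\sD$ --- which is strict by construction and sends the formal constraints to those of $\sD$; your verification on generating morphisms and your uniqueness argument then go through verbatim.
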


\begin{proof}[Sketch of proof]
The version without units of this theorem is proven in \cite[Part II I.2.4.3]{levine}, see also \cite[Part I I.1.4.3]{levine}: it is obtained from the free $\otimes$-category without unit $(\bar\sC, \otimes)$ on $\sC$ together with freely adjoined morphisms $\boxtimes_{X, Y}: X\otimes Y\to X \times Y$ for $X, Y\in \sC$,  modulo relations providing naturality, associativity and commutativity. The same construction certainly works with units, \emph{mutatis mutandis}; anyway, as Ross Street pointed out, this is a special case of a much more general theorem \cite[Th. 3.5]{BKP} (see \loccit, 6.1 for the link with $\otimes$-categories).
\end{proof}

\subsection{Künneth structures}\label{s1.2} In \cite[Part II Chap. I Def. 2.4.1]{levine}, Levine says \emph{external product} instead of lax $\otimes$-functor, by analogy with external products in cohomology. This reflects in the terminology of this subsection.

\begin{defn}\label{d1} a) Let $(\sC, \times),(\sD, \otimes)$ be two  monoidal categories without unit, and let $(M, +)$ be a commutative semi-group. Let $H = \{H^i\}_{i\in M}$ be a family of functors $H^i:\sC\to \sD$.  An \emph{$M$-graded external product on $H$} is a family $\kappa = \{\kappa^{i,j}\}_{i, j\in M}$ of natural transformations, as shown in the variables $X,Y\in \sC$
\[\kappa^{i,j}_{X,Y}:H^i(X)\otimes  H^j(Y)\to H^{i+j}(X\times Y)\]
which are compatible with the associativity constraints. If $\sC$ and $\sD$ are (pre)additive say that $(H, \kappa)$ is \emph{additive} if all $H^i$'s are additive.\\

 b) We say that $(H,\kappa)$ is \emph{strong} if, for any $X,Y\in \sC$ and $k\in M$, the coproduct $\coprod_{i+j=k} H^i(X)\otimes H^j(Y)$ exists in $\sD$ and the corresponding morphism
\[\coprod_{i+j=k} H^i(X)\otimes  H^j(Y)\to H^{k}(X\times Y)\]
is an isomorphism.\\

c) Let $(\sC, \times, 1)$ be monoidal, $(\sD, \otimes, \un)$ be additive monoidal, and let $(M, +, 0)$ be a commutative monoid. 
A \emph{unital $M$-graded external product} $(H, \kappa, \upsilon)$ is an $M$-graded external product $(H,\kappa)$ such that $H^i(1) =0$ if $i\ne 0$, provided with an additional isomorphism $\upsilon^0 : \un  \iso H^0(1)$
such that the diagram
\[\begin{CD}
\un\otimes H^i(X)@>\sim >> H^i(X)\\
@V\wr VV @V\wr VV\\
H^0(1)\otimes H^i(X) @>\sim >> H^i(1 \times X)
\end{CD}\]
and the symmetric diagram commute for all $X,i$.
\end{defn}

\begin{rk}\label{r1} Consider $M$ as a discrete category.  Then we have two ways to convert the family $H=\{H^i\}_{i\in M}$ into a single functor: 
\begin{enumerate}
\item $H^*:\sC\to \sD^M$, $H^*(X)(i)=H^i(X)$,
\item $\tilde H:\sC\times M\to \sD$, $\tilde H(X, i) = H^ i(X)$.
\end{enumerate}

In (1), assume first that coproducts indexed by $M$ exist in $\sD$; then $\sD^M$ inherits a monoidal structure without unit by the rule
\begin{equation}\label{eq3.1}
(A\otimes B)_k= \coprod_{i+j=k} A_i\otimes B_j 
\end{equation}
where we identify an object $A\in \sD^M$ with a family of objects $A_i\in \sD$ for $i\in M$. Note that $(\sD^M, \otimes)$ automatically inherits an associativity constraint and $(H^*, \kappa)$ is an external product
$$H^*(X)\otimes H^*(Y)\to H^*(X\times Y).$$ 
The conditions to be strong and unital in b) and c) of Definition \ref{d1} amount to say that $H^*$ is a strong (unital) monoidal functor in the usual sense, see \S \ref{Term}. 
Note that the object of  $\sD^M$ with value $\un$ for $i=0$ and $0$ otherwise is a unit object. 
Thus $H$ is unital if and only if $H^*$ is strongly compatible with units. 

If $\sD$ does not admit all coproducts indexed by $M$, \eqref{eq3.1} still makes sense if 
\begin{itemize}
\item either at least one of $A,B$, say $A$, has finite support (\ie the set of $i$'s such that $A_i\ne 0$ is finite);
\item or $M=\N$. 
\end{itemize}

The first condition is of course verified if $H^*$ takes values in $\sD^{(M)}=\{A\in \sD^M\mid A_i=0\text{ for all but finitely many $i$'s}\}$.

In (2), provide $\sC\times M$ with the monoidal structure $(X,i)\times (Y,j)=(X\times Y,i+j)$. If $1$ is a unit for $(\sC, \times)$ and if $(M, +, 0)$ is a monoid, then $(1, 0)$ is a unit for $(\sC\times M, \times)$. 

If $\sC$ is preadditive, then $\sC\times M$ is made preadditive by setting the group of morphisms between $(C,i)$  and $(C',j)$ if $i\ne j$ to be zero. If $(H,\kappa)$ is additive, $\tilde H$ extends canonically to an additive functor on $\sC\times M$  with its natural additive monoidal structure. 
We can then translate a): an (additive) $M$-graded external product on $H$ is an (additive) lax $\otimes$-stucture on $\tilde H$.  Unitality of $H$ in the sense of c) is equivalent to the strong unitality of $\tilde H$, \ie $\un\iso \tilde H(1, 0)$, and $\tilde H(1, i)=0$ for $i\neq 0$. 

Suppose that $\sD$ is additive. Then $\tilde H$ factors through the additive hull of $\sC\times M$, which is nothing else than $\sC^{(M)}$ (send $(X,i)\in \sC\times M$ to $X[i]\in \sC^{(M)}$ where $X[i]_j=X$ if $i=j$ and $0$ otherwise). Thus, in this case, an $M$-graded external product from $\sC$ to $\sD$ is the same as a lax monoidal functor $\sC^{(M)}\to \sD$.
\end{rk}

\begin{defn}\label{d3} Let $(\sC,\times,1)\in \Cat^\otimes$, $(\sD,\otimes,\un)\in \Add^\otimes$ and $M=\Z$. A \emph{Künneth product} is a unital $\Z$-graded external product $(H, \kappa,\upsilon)$  
 satisfying the following condition: for any $X,Y\in \sC$ and any $i,j\in \Z$, the diagram
\[\begin{CD}
H^i(X)\otimes  H^j(Y) @>\kappa^{i,j}>> H^{i+j}(X\times  Y)\\
@V  (-1)^{ij} c VV @VH^{i+j}(c) VV\\
H^j(Y)\otimes H^i(X) @>\kappa^{j,i}>> H^{i+j}(Y\times  X)
\end{CD}\]
commutes, where $c$ denotes both commutativity constraints. We say that $(H, \kappa,\upsilon)$ \emph{satisfies the Künneth formula} if it is strong in the sense of Definition \ref{d1} b). 
\end{defn}

\begin{rk} \label{r2}
As a sequel to Remark \ref{r1}, in (1) Definition \ref{d3} amounts to requiring that $H^*$ is symmetric monoidal for the symmetric monoidal structure on $\sD^\Z$ given by the symmetry $c_{A,B}: A\otimes B\iso B\otimes A$ such that
\[(c_{A,B})_{| A_i\otimes B_j} =  (-1)^{ij}c_{A_i,B_j}.\]
(One can check that this rule does define a symmetric monoidal structure on $\sD^\Z$.) See also Remark \ref{r1} for the case where $\sD$ does not have enough coproducts. We call this the \emph{Koszul constraint}.

In (2), if $\sC$ is preadditive, we provide $\sC\times \Z$ with the commutativity constraint
\[c_{(X,i),(Y,j)}= (-1)^{ij}c_{X,Y}: (X,i)\times (Y,j) \to (Y,j)\times(X,i)\]
where $c$ is the commutativity constraint of $\sC$.

If $\sD$ is additive, this amounts as in Remark \ref{r1} to a lax $\otimes$-functor $\sC^{(\Z)}\to \sD$ for the above Koszul commutativity constraint in $\sC^{(\Z)}$, this functor being strong if and only if $H$ verifies the Künneth formula.
\end{rk}

\begin{rk}\label{r3.3}If $\sC\in \Add^\otimes$, then $\sC^{(\Z)}$, provided with the unital monoidal structure of Remark \ref{r1}, has two symmetric structures: the Koszul constraint of Remark \ref{r2} and the naïve commutativity constraint which does not include the signs of the Koszul rule. The latter will not be used in this text. If $\sC\in \Add^\rig$ then $\sC^{(\Z)}$ is an object of $ \Add^\rig$, and of $ \Ex^\rig$ if $\sC\in \Ex^\rig$.
 
The ``direct sum'' functor
\[\bigoplus:\sC^{(\Z)}\to \sC; \quad (C_i)\mapsto \bigoplus C_i\]
is strong monoidal and unital, but  it is not symmetric for the Koszul constraint. The next subsection deals with this issue.
\end{rk}

\subsection{Gradings}

\begin{defn}\label{d3.5} a) Let $F:\sC\to \sD$ be an additive functor between additive categories, and let $M$ be a set. An \emph{$M$-grading} of $F$ (with finite support) is a factorisation of $F$ into
\[\sC\by{F^*} \sD^{(M)}\by{\bigoplus} \sD\]
where $\bigoplus$ is the direct sum functor. In particular, an $M$-grading of the identity functor of $\sC$ is a section of the direct sum functor; we call it a \emph{weight grading} of $\sC$ and say that $C\in \sC$ is \emph{of weight $m$} if $F^n C=0$ for $n\ne m$.\\
b) Suppose $\sC,\sD\in \Add^\otimes$, $F\in \Add^\otimes(\sC,\sD)$, and $M=\Z$. A \emph{$\Z$-$\otimes$-grading} of $F$ is a $\Z$-grading of $F$ in which $F^*$ is a strong $\otimes$-functor for the naïve commutativity constraint of Remark \ref{r3.3}. If $F$ is the identity functor of $\sC=\sD$, we call this a \emph{weight $\otimes$-grading}.
\end{defn}

\begin{rks} \label{r3.2} a) In Definition \ref{d3.5} a), if $\sC,\sD$ are abelian and $F$ is exact, then $F^*$ is automatically exact: indeed, all $F^m$ for $m\in M$ are exact because a direct summand of an exact sequence is an exact sequence.\\
b)  One can use a weight $\otimes$-grading on $\sC\in \Add^\otimes$ as in Definition \ref{d3.5} b) to change its commutativity constraint by introducing the Koszul rule, \ie multiplying the original commutativity constraint between an object of weight $i$ and and object of weight $j$ by $(-1)^{ij}$. This notation is involutive.
\end{rks}

\begin{nota}\label{n3.1} We write $\fake{\sC}$ for the $\otimes$-category deduced from $\sC$ as in Remark \ref{r3.2} b).
\end{nota}

\begin{lemma}\label{l3.4} a) In Definition \ref{d3.5} a), suppose that 
\[\sD(F^m(C),F^n(C'))=0 \text{ for } C,C'\in \sC \text{ and } m\ne n.\]
If $F$ is dense in the sense of Definition \ref{d2.2}, then $F^*$ extends to a unique $M$-grading of $\sD$, which is a weight $\otimes$-grading in the situation of Definition \ref{d3.5} b).\\ 
b) In Definition \ref{d3.5} a), suppose $F$ faithful. For $C\in \sC$ and $m\in M$, write $\pi^m_C$ for the projector of $\End_\sD(F(C))$ with image $F^m(C)$.  Then the full subcategory of $\sC$
\[\sC'=\{C\in \sC\mid \pi^m_C\in \End_{\sC}(C)\ \forall\ m\in M\}\]
is additive, stable under direct summands, and $\sC'\in \Add^\otimes$ in the situation of Definition \ref{d3.5} b). If $\sC'$ is pseudo-abelian (\eg if $\sC$ is), $F^*_{|\sC'}$ factors  uniquely through a weight grading $\sC'\to (\sC')^{(M)}$, which is a weight $\otimes$-grading in the situation of Definition \ref{d3.5} b).
\end{lemma}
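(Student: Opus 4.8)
The plan is to build the grading, in both parts, by \emph{splitting the canonical projectors} attached to $F^*$. For each $C$ the decomposition $F(C)=\bigoplus_m F^m(C)$ provided by $F^*$ gives idempotents $\pi^m_C\in\End_\sD(F(C))$ with image $F^m(C)$; moreover, since $F$ is the composite $\sC\xrightarrow{F^*}\sD^{(M)}\xrightarrow{\bigoplus}\sD$, the functor $F$ carries \emph{every} morphism $f$ of $\sC$ to a morphism that is ``block-diagonal'' for these decompositions, \ie $F(f)=\bigoplus_m F^m(f)$. The role of the hypotheses is to propagate this. In a), the vanishing $\sD(F^m(C),F^n(C'))=0$ for $m\neq n$ forces \emph{every} morphism of $\sD$ between objects of the form $F(C)$ — hence every idempotent of $\End_\sD(F(C))$ — to be block-diagonal. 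In b), faithfulness of $F$ lets me pull back to $\sC$ both the block-diagonal decomposition of $F(f)$ and the orthogonal decomposition $\id_{F(C)}=\sum_m\pi^m_C$.

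\emph{Part a).} Given $D\in\sD$, I choose (by density) an object $C$ realising $D$ as a summand of $F(C)$ via $\iota\colon D\to F(C)$, $p\colon F(C)\to D$ with $p\iota=\id_D$, and set $e=\iota p$. Hom-orthogonality writes $e=\sum_m e_m$ with $e_m\in\End_\sD(F^m(C))$ idempotent, and I put $G^m(D):=\IM(e_m)$, a summand of $F^m(C)$. A morphism $\phi\colon D\to D'$ lifts to $\tilde\phi=\iota'\phi p\colon F(C)\to F(C')$, which is block-diagonal and satisfies $e'\tilde\phi e=\tilde\phi$, so each degree-$m$ component $\tilde\phi_m$ induces $G^m(\phi)\colon G^m(D)\to G^m(D')$. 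It is then a routine check that $G^*$ is a functor, independent of the presentation up to canonical isomorphism, that $\bigoplus_m G^m(D)\cong D$ and $G^*\circ F=F^*$; so $G^*$ is a weight grading of $\sD$ extending $F^*$. For uniqueness, any other such $\bar G^*$ satisfies $\bigoplus\bar G^*(e)=e$ and $\bar G^*F=F^*$, which force $\bar G^*(e)=(e_m)_m$ and hence $\bar G^*(D)\cong G^*(D)$. In the $\Z$-$\otimes$ situation, for summands $D_i\subseteq F(C_i)$ the idempotent $e_1\otimes e_2$ on $F(C_1)\otimes F(C_2)\cong F(C_1\otimes C_2)$ is, by biadditivity of $\otimes$ and the strong $\otimes$-structure of $F^*$, block-diagonal with degree-$k$ part $\bigoplus_{i+j=k}(e_1)_i\otimes(e_2)_j$; taking images gives $G^k(D_1\otimes D_2)\cong\bigoplus_{i+j=k}G^i(D_1)\otimes G^j(D_2)$ compatibly with the constraints, so $G^*$ is a weight $\otimes$-grading for the na\"ive constraint of Remark \ref{r3.3}.

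\emph{Part b).} I set $\sC'=\{C\mid\pi^m_C\in\End_\sC(C)\ \text{for all }m\}$, where ``$\pi^m_C\in\End_\sC(C)$'' means $\pi^m_C=F(\sigma)$ for a (unique, by faithfulness) $\sigma\in\End_\sC(C)$. That $\sC'$ is a full additive $\otimes$-subcategory closed under summands follows from: additivity of $F^*$ gives $\pi^m_{C_1\oplus C_2}=\pi^m_{C_1}\oplus\pi^m_{C_2}$; if $C_1$ is a $\sC$-summand of $C\in\sC'$ with projector $\rho_1$, then $\rho_1$ and $\pi^m_C$ commute (both block-diagonal), so $\pi^m_{C_1}$ is $F$ of the endomorphism of $C_1$ induced by $\pi^m_C$; the strong $\otimes$-structures of $F$ and $F^*$ give $F(\pi^k_{C_1\otimes C_2})=\sum_{i+j=k}F(\pi^i_{C_1})\otimes F(\pi^j_{C_2})$, which descends to $\sC$ by faithfulness; and $\un_\sC\in\sC'$ because $F^*$ is strongly unital. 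If $\sC$ is pseudo-abelian then so is $\sC'$, since idempotents split in $\sC$ and the summands land in $\sC'$ by the stability just shown. Assuming $\sC'$ pseudo-abelian: faithfulness applied to $\sum_m\pi^m_{F(C)}=\id$ and $\pi^m_{F(C)}\pi^n_{F(C)}=\delta_{mn}\pi^m_{F(C)}$ shows $\{\pi^m_C\}$ is an orthogonal decomposition of $\id_C$ in $\End_\sC(C)$; splitting it gives $\iota^m(C):=\IM(\pi^m_C)\in\sC'$ with $C=\bigoplus_m\iota^m(C)$. For $f\colon C\to C'$ in $\sC'$, faithfulness together with block-diagonality of $F(f)$ give $\pi^m_{C'}f\pi^n_C=\delta_{mn}\pi^m_{C'}f\pi^m_C$ and $f=\sum_m\pi^m_{C'}f\pi^m_C$, so $f$ decomposes componentwise into $f_m\colon\iota^m(C)\to\iota^m(C')$ functorially, and $\iota^*\colon C\mapsto(\iota^m(C))_m$ is a functor $\sC'\to(\sC')^{(M)}$ sectioning $\bigoplus$ with $F\circ\iota^m=F^m$; it is unique because $F$ of the projectors of any such grading must equal $\pi^m_{F(C)}$. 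Finally the $\otimes$-compatibility for the na\"ive constraint is essentially tautological: $\sC'$ is a $\otimes$-subcategory, and the identity $F(\pi^k_{C_1\otimes C_2})=\sum_{i+j=k}F(\pi^i_{C_1})\otimes F(\pi^j_{C_2})$ (pulled back to $\sC$) yields $\iota^k(C_1\otimes C_2)=\bigoplus_{i+j=k}\iota^i(C_1)\otimes\iota^j(C_2)$, the remaining coherences descending from those of $F^*$ by faithfulness of $F$.

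Most of the above is formal; the one step I expect to need real care is in part a), namely checking that $G^m(D)=\IM(e_m)$ is genuinely an object of $\sD$, \ie that the idempotents $e_m$ split in $\sD$. This is automatic when $\sD$ is pseudo-abelian — the situation in all the applications in this paper — and in general one would state the conclusion after passing to the idempotent completion. The naturality and coherence verifications, by contrast, are routine once the block-diagonality observations of the first paragraph are in place.
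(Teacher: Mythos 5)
Your argument is correct and follows the same route as the paper's: the orthogonality hypothesis makes every morphism between objects in the image of $F$ block-diagonal, so in a) one grades the full subcategory on the $F^m(C)$'s and extends to all of $\sD$ by density (splitting the degree components of the idempotents), while in b) the centrality of the $\pi^m_C$'s among block-diagonal endomorphisms gives stability under direct summands and the grading comes from splitting the orthogonal idempotents in the pseudo-abelian $\sC'$. The caveat you flag at the end of a) --- that the components $e_m$ of a split idempotent need not themselves split unless $\sD$ is pseudo-abelian --- is a genuine point, and is exactly what the paper's phrase ``extends to $\sD$ by density'' passes over in silence.
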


\begin{proof} a) Let $\sD'$ be the full subcategory of $\sD$ given by the $F^m(C)$'s. The hypothesis implies that $F^*$ extends to a unique functor $\sD'\to \sD^{(M)}$, which then extends to $\sD$ by density.

In b), the stability properties of $\sD'$ are obvious except perhaps the stability under direct summands: this holds because the $\pi^m_C$'s are central idempotents in $\End_{\sD^{(M)}}(F^*(C))\subseteq \End_\sD(F^*(C))$. The grading is then defined by sending $C$ to $(\IM \pi^m_C)_{m\in M}$.
\end{proof}

\begin{lemma}\label{l3.2} Let $\sC\in \Add^\otimes$, let $\hat{\sC}=\Mod\sC$ be its additive dual provided with its canonical $\otimes$-structure (``Day convolution''), and let $y_\sC:\sC\to \hat{\sC}$ be the additive Yoneda functor: it is a strong $\otimes$-functor. Then any dualisable object $X$ of $\hat{\sC}$ is a direct summand of an object of the form $y_\sC(C)$.
\end{lemma}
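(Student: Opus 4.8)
The plan is to exploit the defining adjunction/universal property of the additive category of modules $\hat{\sC}=\Mod\sC$: it is the free cocompletion of $\sC$ under (filtered) colimits, equivalently the category of additive presheaves, and the Yoneda embedding $y_\sC$ exhibits every object $M\in\hat{\sC}$ as a colimit of representables. Concretely, write $M=\colim_i y_\sC(C_i)$ for a (small, filtered) diagram; the idea is then to show that when $M$ is dualisable the colimit "collapses" so that $M$ is a retract of a single $y_\sC(C)$.

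First I would record the standard fact that in any closed symmetric monoidal category, a dualisable (rigid) object is \emph{compact} relative to the monoidal unit in the sense that $\hat{\sC}(X,-)\cong \hat{\sC}(\un,X^\vee\otimes -)$ commutes with the colimits that define it; more precisely, since $y_\sC$ is strong monoidal (as stated in the lemma) and $\un_{\hat\sC}=y_\sC(\un_\sC)$ is representable, the functor $\hat{\sC}(\un_{\hat\sC},-)$ is the evaluation-at-$\un_\sC$ functor, which preserves all colimits (colimits in presheaf categories are computed pointwise). Hence for dualisable $X$ the functor $\hat{\sC}(X,-)\simeq \hat{\sC}(\un,X^\vee\otimes-)\simeq \ev_{\un}(X^\vee\otimes-)$ preserves filtered colimits — in particular $X$ is a compact object of $\hat\sC$.

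Next I would apply this to the canonical presentation $X=\colim_i y_\sC(C_i)$ (filtered colimit of representables — this is exactly the statement that $\hat\sC$ is the free filtered-cocompletion, or just the co-Yoneda lemma applied in the additive setting). Compactness of $X$ means $\id_X\in\hat{\sC}(X,X)=\hat\sC(X,\colim_i y_\sC(C_i))=\colim_i\hat{\sC}(X,y_\sC(C_i))$ factors through some $\hat\sC(X,y_\sC(C_i))$, i.e. there is an index $i$, a morphism $s:X\to y_\sC(C_i)$, and a morphism $r:y_\sC(C_i)\to X$ (the structure map of the colimit) with $r\circ s=\id_X$. That exhibits $X$ as a direct summand of the representable $y_\sC(C_i)$, which is the desired conclusion.

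The \textbf{main obstacle} is making the "free cocompletion / co-Yoneda" step precise in the \emph{additive}, not merely the ordinary, setting: one must check that every additive presheaf is canonically a filtered colimit of representable additive presheaves (the category of elements must be replaced by an appropriate additive/linear analogue so that the colimit is filtered), and that colimits and the closed structure (Day convolution) interact as in the non-additive case — in particular that $\ev_{\un_\sC}$ is cocontinuous and that $X^\vee\otimes-$ is cocontinuous, which holds because $\otimes$ is a left adjoint in each variable (closedness of Day convolution). None of these are deep, but they are the points where care is needed; alternatively, one can sidestep filteredness by citing the general fact that in a compactly generated closed symmetric monoidal category the dualisable objects are compact, together with the observation that the representables compactly generate $\hat\sC$.
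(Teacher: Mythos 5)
Your first step---dualisability implies that $\hat{\sC}(X,-)\simeq \ev_{\un_\sC}(X^\vee\otimes -)$ is cocontinuous---is exactly the paper's argument. The gap is in the second step. The co-Yoneda lemma presents an arbitrary $M\in\hat{\sC}$ as a colimit of representables, but \emph{not} as a filtered one: $\hat\sC$ is the free cocompletion of $\sC$ under all small colimits, not its Ind-completion, and the additive presheaves that are filtered colimits of representables are exactly the flat ones (Lazard's theorem in its many-object form). For $\sC$ the additive hull of the one-object category $\Z$, the module $\Z/2$ admits no such presentation. Correspondingly, compactness in the sense of ``$\Hom(X,-)$ preserves filtered colimits'' is genuinely weaker than being a retract of a representable: $\Z/2$ is a compact object of $\Mod\Z$ but is not a direct summand of any free module. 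So the retraction you extract does not exist from compactness alone; the ``main obstacle'' you flag is not a technical point to be checked but a false statement, and the alternative you propose (compact generation by representables) fails for the same reason.

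The repair is to use the full strength of what your first paragraph actually proves: $\hat{\sC}(X,-)$ preserves \emph{all} small colimits, not merely filtered ones, since both $X^\vee\otimes-$ and $\ev_{\un_\sC}$ do. Hence $X$ is projective ($\Hom(X,-)$ preserves cokernels, so is exact) and small ($\Hom(X,-)$ preserves arbitrary coproducts). Choose an epimorphism $\bigoplus_{i\in I}y_\sC(C_i)\surj X$, which always exists; projectivity splits it, and smallness forces the section $X\to\bigoplus_{i\in I}y_\sC(C_i)$ to factor through a finite subsum, which equals $y_\sC\bigl(\bigoplus_{i\in F}C_i\bigr)$ because $\sC$ is additive and $y_\sC$ preserves finite direct sums. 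This is in substance the content of the reference [AK2, Prop.~1.3.6~f)] that the paper invokes to finish its own (otherwise identical) proof.
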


\begin{proof} Since $X$ is dualisable, it is compact, because $\un_{\hat{\sC}}$ is compact (as the Yoneda image of $\un_\sC$) and $\otimes_{\hat{\sC}}$ commutes with arbitrary colimits as a left adjoint of the internal Hom. The conclusion now follows from \cite[Prop. 1.3.6 f)]{AK2}.
\end{proof}

Let $\sC\in \Add^\otimes$. Then $\sC^{(\Z)}\in \Add^\otimes$  by Remark \ref{r3.3}. Observe that $\widehat{\sC^{(\Z)}}$ is canonically $\otimes$-equivalent to $\hat{\sC}^\Z$. Applying Lemma \ref{l3.2}, we get Part a) of the following lemma.

\begin{lemma}\label{l3.3} a) Any dualisable object $X$ of $\hat{\sC}^\Z$ is a direct summand of an object of the form $y_\sC(C)$ for $C\in \sC^{(\Z)}$.\\
b) If $B\in \sC$ and $i\in \Z$, write $B[i]$ for the object $(B_j)_{j\in \N}$ of $\sC^{\Z}$ such that $B_i=B$ and $B_j=0$ for $j\ne i$. Then $C=\bigoplus_{i\in \Z} C_i[i]$ for any $C\in \sC^{(\Z)}$. Moreover, $C$ is dualisable $\iff$ $C_i[i]$ is dualisable for all $i$ $\iff$ $C_i$ is dualisable in $\sC$ for all $i$.
\end{lemma}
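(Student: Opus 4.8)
Part a) has already been reduced to Lemma \ref{l3.2} in the text via the identification $\widehat{\sC^{(\Z)}}\simeq\hat{\sC}^\Z$, so the real content is Part b). I would proceed in three steps.

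\emph{Step 1: the decomposition $C=\bigoplus_i C_i[i]$.} By definition of $\sC^{(\Z)}$, an object $C$ is a family $(C_i)_{i\in\Z}$ with $C_i=0$ for all but finitely many $i$, so the finite direct sum $\bigoplus_{i\in\Z}C_i[i]$ makes sense in $\sC^{(\Z)}$ (and in $\hat\sC^\Z$), and it is canonically isomorphic to $C$ because in each degree $j$ the $j$-th component of the right-hand side is $\bigoplus_i (C_i[i])_j = C_j$. This is purely formal.

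\emph{Step 2: $C$ dualisable $\iff$ each $C_i[i]$ dualisable.} Here I would use that in a $\otimes$-category the class of dualisable objects is closed under finite direct sums and under direct summands (the dual of $\bigoplus X_k$ is $\bigoplus X_k^\vee$ with the evident evaluation/coevaluation; and a retract of a dualisable object is dualisable — standard, and in our setting also covered by the ambient rigidity discussion). Since each $C_i[i]$ is a direct summand of $C$ and $C=\bigoplus C_i[i]$ is a finite direct sum of them, both implications follow.

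\emph{Step 3: $C_i[i]$ dualisable in $\sC^{(\Z)}$ (equivalently in $\hat\sC^\Z$) $\iff$ $C_i$ dualisable in $\sC$.} The point is that the functor $B\mapsto B[i]$ and the functor $B\mapsto B[-i]$ are inverse equivalences $\sC\to\sC$ (shift of grading), and the composite $B[i]\otimes B'[-i]$ lands in degree $0$, where $(-)[0]:\sC\hookrightarrow\sC^{(\Z)}$ is a full strong $\otimes$-embedding onto the degree-$0$ part; more directly, $(\ )[0]$ admits a $\otimes$-retraction (the degree-$0$ component functor $C\mapsto C_0$, which is strong monoidal for \eqref{eq3.1} up to the shift bookkeeping). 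A strong $\otimes$-functor preserves dualisable objects and a $\otimes$-retraction reflects them, so $C_i$ is dualisable in $\sC$ iff $C_i[0]$ is dualisable in $\sC^{(\Z)}$ iff (applying the shift equivalence $[\,i\,]$) $C_i[i]$ is dualisable. One must note the Koszul signs of Remark \ref{r2} play no role here since dualisability is insensitive to the choice of symmetry constraint among those making $\otimes$ the same bifunctor.

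\emph{Main obstacle.} None of the steps is deep; the only thing requiring a little care is making Step 3 precise — i.e. exhibiting the shift as a genuine $\otimes$-auto-equivalence of $\sC^{(\Z)}$ with the convolution monoidal structure \eqref{eq3.1} (it is \emph{not} strong monoidal, it twists degrees), so the cleanest formulation is via the pair of strong $\otimes$-functors $(\ )[0]\colon\sC\to\sC^{(\Z)}$ and the degree-$0$ projection $\sC^{(\Z)}\to\sC$, whose composite is the identity, together with the evident natural isomorphism $C_i[i]\otimes C_i^\vee[-i]\cong (\text{unit})\oplus(\cdots)$ exhibiting the dual when $C_i$ has a dual in $\sC$. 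Once that bookkeeping is set up the equivalences in b) are immediate.
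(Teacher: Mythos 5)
Your proposal is correct and follows essentially the same route as the paper, whose entire proof of b) is that the decomposition and the first equivalence are obvious and that for the second ``the unit and counit do not change'' --- i.e.\ $C_i^\vee[-i]$ is a dual of $C_i[i]$ with the same unit and counit, which is exactly the direct check you sketch at the end of Step 3. One caution: the degree-$0$ component functor $\sC^{(\Z)}\to\sC$ is only \emph{lax} monoidal for \eqref{eq3.1} (since $(C\otimes D)_0=\bigoplus_{i+j=0}C_i\otimes D_j$), so the phrase ``a $\otimes$-retraction reflects dualisability'' is not available as stated; but this is harmless because your direct exhibition of the dual, together with reading off the triangle identities in the unique nonzero degree for the converse, settles Step 3 without it.
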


\begin{proof}[Proof of b)]
The first claim is obvious, so is the first equivalence and the second is easily checked: the unit and counit do not change.
\end{proof}

\subsection{Relative additive completion}\label{s4.5} Let $F:\sC\to \sD$ be a functor between two categories, with $\sC$ preadditive.
Consider the category $^{F}\sD$ whose objects are functors $G:\sD\to \sE$ such that $\sE$ is preadditive and $G\circ F$ additive, and morphisms $(\sE,G)\to (\sE',G')$ are additive functors $H:\sE\to \sE'$ such that $G'=H\circ G$.

\begin{prop}\label{p4.3} The category $^{F}\sD$ has an initial object ${\rm Add}(F)$.
\end{prop}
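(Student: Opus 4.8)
The strategy is a standard ``generators and relations'' construction, adapted to carry the functor $F$ along. First I would build a candidate category $\sE_0$ as follows: take the objects of $\sD$ as objects, and freely adjoin finite biproducts and a $\Z$-module (or at least abelian group) structure on each Hom-set, subject only to the additivity relations \emph{and} the relations forced by requiring that the composite with $F$ be additive. Concretely, $\sE_0$ is the quotient of the free preadditive category on the underlying graph of $\sD$ (with formal finite direct sums adjoined) by the two-sided ideal generated by: (i) the relations making composition bilinear and the biproduct projections/injections behave correctly; and (ii) for each pair of parallel arrows $u,v$ in $\sC$ with a common arrow $w$ such that $F$ should send sums to sums, the relation $F(u) + F(v) = F(u+v)$ read inside $\sE_0$ — more precisely, since $\sC$ is already preadditive, for every $f,g \in \sC(X,Y)$ we impose $G_0F(f) + G_0F(g) = G_0F(f+g)$ and $G_0F(0) = 0$ and $G_0F(-f) = -G_0F(f)$, where $G_0\colon \sD \to \sE_0$ is the tautological functor. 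Call this quotient ${\rm Add}(F)$ and $G = G_0$ the tautological functor; it visibly lies in ${}^{F}\sD$.

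Next I would verify the universal property. Given any $(\sE',G') \in {}^{F}\sD$, the functor $G'\colon \sD \to \sE'$ extends uniquely to the free preadditive-with-biproducts category, because $\sE'$ is preadditive with finite direct sums (any preadditive category admitting the relevant universal construction — here one may need $\sE'$ to have finite biproducts, or else work with the preadditive envelope that formally adds them, which does not change additive functors out of it). The relations of type (i) are satisfied in $\sE'$ automatically since $\sE'$ is genuinely preadditive; the relations of type (ii) are satisfied precisely because $G' \circ F$ is assumed additive. Hence $G'$ factors through ${\rm Add}(F)$ via a functor $H\colon {\rm Add}(F) \to \sE'$, which is additive by construction (it respects the adjoined additive structure), and $H$ is the unique such functor because ${\rm Add}(F)$ is generated as a preadditive category by the image of $G$. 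This gives a unique morphism $({\rm Add}(F), G) \to (\sE', G')$ in ${}^{F}\sD$, establishing initiality.

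\textbf{Main obstacle.} The delicate point is bookkeeping around finite biproducts: the definition of ${}^{F}\sD$ only requires $\sE$ preadditive, not additive, so when I ``freely adjoin biproducts'' I must check that an additive functor $H\colon \sE \to \sE'$ to a merely preadditive $\sE'$ still exists and is unique — i.e. that adjoining formal biproducts to the target of $G$ is harmless because additive functors automatically preserve whatever finite biproducts exist and a preadditive category need not have them. The clean fix is to define ${\rm Add}(F)$ with formal finite direct sums built in (so it is genuinely additive), observe that the forgetful comparison $\sE' \hookrightarrow \sE'^{\oplus}$ (adding biproducts to $\sE'$) is fully faithful and that $G'$ composed with this inclusion still satisfies the hypotheses, factor through ${\rm Add}(F)$, and then note that the essential image lands back in $\sE'$ up to the canonical equivalence — or, more simply, check directly that an additive functor out of a category generated by objects of $\sD$ under biproducts is determined by its restriction to $\sD$, so uniqueness is immediate and existence only uses that $\sE'$ has the biproducts of objects actually needed, which it does since those are biproducts of objects in the image of $G'$ and additive functors create no obstruction. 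I would spell this out carefully but expect no real surprises; it is a routine — if slightly fussy — instance of a left adjoint to a forgetful functor, and could alternatively be cited from the enriched-category literature (left Kan extension along $\sD \to {\rm Add}(\sD)$ relative to the constraint imposed by $F$).
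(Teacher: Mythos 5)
Your core construction --- the free preadditive category on $\sD$ modulo the congruence generated by the relations $[F(f+g)]=[F(f)]+[F(g)]$ --- is exactly the paper's proof, and that part is fine (one small point: you should take the free preadditive category on the \emph{category} $\sD$, not on its underlying graph, so that the composition law of $\sD$ is retained and only the abelian-group structure on Homs is freely added). The genuine problem is your decision to build formal finite biproducts into ${\rm Add}(F)$. The Proposition only asks the targets $\sE$ to be \emph{preadditive}, and an additive functor between preadditive categories (i.e.\ a homomorphism on Hom-groups) automatically preserves every biproduct that exists in its source, since a biproduct is characterised by the equations $p_i\iota_j=\delta_{ij}$, $\sum\iota_ip_i=\id$. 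So if your ${\rm Add}(F)$ contains a formal object $X\oplus Y$, any additive $H:{\rm Add}(F)\to\sE'$ over $\sD$ must send it to a biproduct of $G'(X)$ and $G'(Y)$ in $\sE'$, and such a biproduct need not exist. Concretely: take $\sC$ empty (so the condition on $G\circ F$ is vacuous), $\sD$ the one-object one-morphism category, and $\sE'$ the one-object preadditive category with endomorphism ring $\Z$; there are no integers with $b_ia_i=1$ and $b_ia_j=0$ for $i\ne j$, so $\sE'$ has no biproduct of its object with itself, and hence there is \emph{no} additive functor from your ${\rm Add}(F)$ to $\sE'$ at all. Your candidate is therefore not initial in $^{F}\sD$.

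Neither of your proposed fixes repairs this. Factoring $G'$ through $\sE'\hookrightarrow\sE'^{\oplus}$ yields a functor whose value on $X\oplus Y$ is the formal biproduct $G'(X)\oplus G'(Y)$ in $\sE'^{\oplus}$, which does not lie in the essential image of $\sE'$ in general; and the assertion that ``$\sE'$ has the biproducts of objects actually needed'' is exactly what fails for a bare preadditive $\sE'$, as the example shows. The correct move is simply not to adjoin biproducts at this stage: the quotient of the free preadditive category $\Z\sD$ by the congruence generated by $[F(f+g)]-[F(f)]-[F(g)]$ is already initial, because mapping out of it into a preadditive $\sE'$ requires nothing about biproducts. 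The version with biproducts is precisely the Corollary that follows the Proposition in the paper; there the targets are required to be additive, the obstruction disappears, and one obtains the result by passing to the additive hull of ${\rm Add}(F)$ afterwards.
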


\begin{proof} Let $\Z \sD$ be the free preadditive category on $\sD$, and let $\sE_0$ be the category with same objects as $\Z \sD$ (or $\sD$) quotiented by the congruence \cite[p. 52]{mcl} generated by the relations $[F(f+g)]-[F(f)]-[F(g)]$ for $f,g$ parallel morphisms of $\sC$. Let $G_0:\sD \to \sE_0$ be the induced functor. The initiality of ${\rm Add}(F)=(\sE_0,G_0)$ is immediate.
\end{proof}

\begin{cor} Proposition \ref{p4.3} remains true when replacing preadditive by additive in the condition on $\sE$.
\end{cor}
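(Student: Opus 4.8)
The plan is to mimic the proof of Proposition \ref{p4.3} essentially verbatim, with one adjustment: instead of starting from the free \emph{preadditive} category $\Z\sD$, we start from the free \emph{additive} category on $\sD$, that is, the additive hull $(\Z\sD)^\natural$ of the linearization (formally adjoin finite direct sums to $\Z\sD$). Concretely, let $\sD^\add$ denote this free additive category on $\sD$, equipped with the canonical functor $j:\sD\to\sD^\add$, and characterized by the fact that for any additive category $\sE$, composition with $j$ induces an equivalence between additive functors $\sD^\add\to\sE$ and arbitrary functors $\sD\to\sE$. Now define $\sE_0$ to be $\sD^\add$ quotiented by the congruence (as an additive category, i.e.\ the ideal) generated by the morphisms $[F(f+g)]-[F(f)]-[F(g)]$ for parallel $f,g$ in $\sC$, and let $G_0:\sD\to\sE_0$ be the composite $\sD\by{j}\sD^\add\to\sE_0$.

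First I would check that $\sE_0$ is additive and $G_0\circ F$ is additive: the first holds because quotienting an additive category by an ideal stays additive, and the second holds by construction, exactly as in Proposition \ref{p4.3}. Then, given any additive category $\sE$ and any functor $G:\sD\to\sE$ with $G\circ F$ additive, the universal property of $\sD^\add$ produces a unique additive functor $\sD^\add\to\sE$ extending $G$; since $G\circ F$ is additive, this functor kills the generating morphisms $[F(f+g)]-[F(f)]-[F(g)]$, hence factors uniquely through $\sE_0$. Uniqueness of the factorization through $G_0$ follows from the fact that $G_0$ is (essentially) surjective on objects together with the uniqueness already built into the universal property of $\sD^\add$. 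This gives initiality of ${\rm Add}(F)=(\sE_0,G_0)$ in the variant of $^{F}\sD$ where $\sE$ is required to be additive.

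The only genuinely delicate point — and I expect it to be the main obstacle, though a mild one — is the bookkeeping around ``the category with the same objects quotiented by a congruence'': one must make sure that the congruence generated by the listed relations is compatible with the additive structure (closed under addition of morphisms and under the biproduct), so that the quotient is again additive rather than merely preadditive. This is routine: the congruence generated inside an additive category by a set of morphisms is the same as the one generated by the two-sided ideal they span, and quotient by an ideal preserves additivity. Alternatively, and perhaps more cleanly, one can simply observe that ${\rm Add}(F)$ as constructed in Proposition \ref{p4.3} is already additive — since $\Z\sD$ can be taken to be the free additive (not merely preadditive) category without changing the argument — so the Corollary is really just the remark that the construction given there lands in additive categories and enjoys the stronger universal property. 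I would state it that way to keep the proof to one line.
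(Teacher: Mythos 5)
Your proposal is correct and is essentially the paper's argument: the paper simply takes the initial object $(\sE_0,G_0)$ of Proposition \ref{p4.3} and replaces $\sE_0$ by its additive hull, whereas you build the hull in at the start by working with the free additive category on $\sD$ instead of the free preadditive one; since forming the additive hull commutes with quotienting by the congruence generated by the relations $[F(f+g)]-[F(f)]-[F(g)]$, the two constructions agree. Your closing remark captures exactly the paper's one-line proof.
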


\begin{proof} In Proposition \ref{p4.3}, replace ${\rm Add}(F)$ by its additive hull.
\end{proof}

Here is a symmetric monoidal variant. Let $F:\sC\to \sD$ belong to $\Cat^\otimes$, with $\sC$ preadditive, and $^{F}\sD^\otimes$ be the category whose objects are  strong  $\otimes$-functors $G:\sD\to \sE$ 
such that $\sE$ is preadditive and $G\circ F$ additive, and morphisms $(\sE,G)\to (\sE',G')$ are additive strong $\otimes$-functors $H:\sE\to \sE'$ such that $G'=H\circ G$.

\begin{prop}\label{p4.3t} The category $^{F}\sD^\otimes$ has an initial object $G_0:\sD\to {\rm Add}^\otimes(F)$ which is the identity on objects. This remains true when replacing preadditive by additive in the condition on $\sE$.
\end{prop}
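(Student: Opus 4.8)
The plan is to mimic the proof of Proposition \ref{p4.3}, but carrying along the symmetric monoidal structure via a ``generators and relations'' construction in $\Cat^\otimes$. First I would form the free preadditive strong $\otimes$-category on $\sD$: concretely, take $\Z\sD$ (the free preadditive category on the underlying category of $\sD$) and endow it with a symmetric monoidal structure by $\Z$-bilinearly extending $\otimes_\sD$ on objects and morphisms — the associativity, unit and symmetry constraints of $\sD$ carry over since they are built from morphisms of $\sD$, and biadditivity holds by construction. Call this $(\Z\sD)^\otimes$; the evident functor $\sD\to (\Z\sD)^\otimes$ is a strong $\otimes$-functor, identity on objects. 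Then I would impose on it the monoidal congruence generated by the relations $[F(f+g)]-[F(f)]-[F(g)]$ for parallel $f,g$ in $\sC$, where now ``monoidal congruence'' means a congruence $\sim$ on morphisms, compatible with composition \emph{and} with $\otimes$ (if $a\sim a'$ and $b\sim b'$ then $a\otimes b\sim a'\otimes b'$); the quotient $\sE_0$ inherits a strong $\otimes$-structure making $G_0:\sD\to\sE_0$ a strong $\otimes$-functor, identity on objects.

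**Verifying the universal property.** Given an object $(\sE,G)$ of $^{F}\sD^\otimes$, that is a strong $\otimes$-functor $G:\sD\to\sE$ with $\sE$ preadditive and $G\circ F$ additive, I must produce a unique additive strong $\otimes$-functor $H:\sE_0\to\sE$ with $G=H\circ G_0$. On objects $H$ is forced to equal $G$ (since $G_0$ is the identity on objects); on morphisms, $\Z$-linearity of $\sE$ forces $H$ on the image of $\Z\sD$, namely $H$ of a $\Z$-combination $\sum n_i[f_i]$ is $\sum n_i G(f_i)$. This passes to the quotient $\sE_0$ precisely because $G\circ F$ is additive, so the defining relations are killed; it is a $\otimes$-functor because the monoidal structure on $(\Z\sD)^\otimes$, hence on $\sE_0$, was defined by $\Z$-bilinear extension of that of $\sD$ and $G$ is a strong $\otimes$-functor; it is strong for the same reason (the structural isomorphisms $\mu,\eta$ of $G_0$ are those of $\sD$ under $G_0$, and $H$ sends them to those of $G$). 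Uniqueness is built into the construction. Finally, for the ``additive'' variant I would replace $\mathrm{Add}^\otimes(F)=(\sE_0,G_0)$ by its additive (pseudo-additive) hull, which inherits a canonical strong $\otimes$-structure as recalled in \S\ref{Term}, and observe that $G_0$ composed with $\sE_0\to\sE_0^{\mathrm{add}}$ is still the identity on objects and still has the required universal property, now among preadditive targets replaced by additive ones.

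**Main obstacle.** The only genuinely delicate point is the bookkeeping around the monoidal congruence: one must check that the congruence generated (for composition) by the additivity relations is automatically compatible with $\otimes$, or else explicitly generate the congruence for both composition and $\otimes$ simultaneously and check it is still ``small enough'' that the quotient functor is the identity on objects and that no unintended identifications of objects occur. Concretely, one uses that in $(\Z\sD)^\otimes$ the tensor product of morphisms is $\Z$-bilinear, so $\otimes$-translates of a relation $[F(f+g)]-[F(f)]-[F(g)]$ are again $\Z$-combinations of such relations up to the composition-congruence; hence generating for composition alone already yields a $\otimes$-congruence. This is routine but must be stated carefully. Everything else — that $\sE_0$ is preadditive, that $G_0$ is strong monoidal, that $H$ is well-defined, additive, strong monoidal and unique — is a direct transcription of the proof of Proposition \ref{p4.3} with the monoidal structure passively carried along, so I would simply indicate it.

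\begin{proof} Argue exactly as in Proposition \ref{p4.3}, but work in $\Cat^\otimes$ throughout. Let $(\Z\sD)^\otimes$ be the free preadditive strong $\otimes$-category on $\sD$, obtained from the free preadditive category $\Z\sD$ by $\Z$-bilinearly extending the tensor product of $\sD$ on objects and morphisms; the constraints of $\sD$ carry over and the canonical functor $\sD\to(\Z\sD)^\otimes$ is a strong $\otimes$-functor, identity on objects. Let $\sE_0$ be the quotient of $(\Z\sD)^\otimes$ by the congruence generated by the relations $[F(f+g)]-[F(f)]-[F(g)]$ for parallel morphisms $f,g$ of $\sC$; since the $\otimes$-translates of these relations are again $\Z$-combinations of relations of the same type modulo the composition-congruence, this congruence is compatible with $\otimes$, so $\sE_0\in\Cat^\otimes$ is preadditive and the induced $G_0:\sD\to\sE_0$ is a strong $\otimes$-functor, identity on objects. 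Given $G:\sD\to\sE$ in ${}^{F}\sD^\otimes$, the functor $H$ with $G=H\circ G_0$ is forced on objects (it equals $G$) and on morphisms (by $\Z$-linearity of $\sE$); it passes to $\sE_0$ because $G\circ F$ is additive, and it is an additive strong $\otimes$-functor because the monoidal structure of $\sE_0$ is the $\Z$-bilinear extension of that of $\sD$ and $G$ is strong monoidal. Uniqueness is clear. For the additive variant, replace $\sE_0$ by its additive hull with the induced strong $\otimes$-structure.
\end{proof}
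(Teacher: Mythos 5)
Your proposal is correct and follows essentially the same route as the paper: form the free preadditive category $\Z\sD$ with its inherited symmetric monoidal structure, quotient by the congruence generated (as a $\otimes$-congruence) by the relations $[F(f+g)]-[F(f)]-[F(g)]$, and check initiality directly; the paper simply takes the $\otimes$-generated congruence rather than worrying about whether the composition-generated one is already $\otimes$-compatible, and your fallback option does exactly that, so the universal property goes through identically.
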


\begin{proof} The free preadditive category $\Z \sD$  on $\sD$ inherits from $\sD$ an additive symmetric monoidal structure. Let $\sE_0^\otimes$ be the category with same objects, quotiented by the congruence $\otimes$-generated by the relations $[F(f+g)]-[F(f)]-[F(g)]$ for $f,g$ parallel morphisms of $\sC$. Let $G_0:\sD \to \sE_0^\otimes$ be the induced functor. The initiality of ${\rm Add}^\otimes(F)=(\sE_0^\otimes,G_0)$ is immediate. The argument for ``additive'' is as before.
\end{proof}

\subsection{Inverting morphisms in additive and $\otimes$-categories} Let $\sC$ be an essentially small additive category, and let $S$ be a set of morphisms of $\sC$. Write $S_\oplus$ for the smallest set of morphisms closed under finite direct sums and containing $S$ and all identities. 

\begin{lemma}\label{l1s2.4}
The Gabriel-Zisman localisation $\sC[S_\oplus^{-1}]$ is additive and the localisation functor $\sC\to \sC[S_\oplus^{-1}]$ is additive; it is universal for additive functors $F:\sC\to \sD$ to other additive categories $\sD$ such that $F(s)$ is invertible for all $s\in S$.
\end{lemma}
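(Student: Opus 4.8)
The plan is to reduce everything to the ordinary Gabriel–Zisman calculus and check that additivity is preserved at each stage. First I would observe that $S_\oplus$ admits a description as the class of all morphisms of the form $s_1\oplus\dots\oplus s_n$ with each $s_i\in S\cup\{\text{identities}\}$; since $\sC$ is additive, every such morphism is a genuine morphism of $\sC$, so $S_\oplus$ is an honest set of morphisms and the Gabriel–Zisman localisation $\sC[S_\oplus^{-1}]$ makes sense as an essentially small category. The localisation functor $q:\sC\to\sC[S_\oplus^{-1}]$ is the identity on objects, so $\sC[S_\oplus^{-1}]$ inherits the zero object and the binary (co)products of $\sC$ at the level of objects; the point is to see that the hom-sets carry abelian group structures making $q$ additive.

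For this, the key step is to run the standard ``additive localisation'' argument: one defines, for $X,Y\in\sC$, a sum on $\Hom_{\sC[S_\oplus^{-1}]}(X,Y)$ by transporting the sum along the canonical isomorphisms $X\cong X\oplus X$ (diagonal) and $Y\oplus Y\cong Y$ (codiagonal) already present in $\sC$, i.e. $f+g := \nabla_Y\circ(f\oplus g)\circ\Delta_X$, where $f\oplus g$ is computed in $\sC[S_\oplus^{-1}]$ using that $q$ preserves products. One checks this is well-defined, associative, commutative, and unital with the zero morphism; the crucial compatibility is bilinearity of composition, which follows because $\Delta$ and $\nabla$ are natural with respect to all morphisms of $\sC[S_\oplus^{-1}]$ once one knows that finite products in $\sC[S_\oplus^{-1}]$ are computed by $q$ — and this last fact is exactly where closure of $S_\oplus$ under finite direct sums is used: a fraction $s^{-1}f$ with $s=s_1\oplus s_2$ inverted lets one invert $s_1\oplus s_2$, so $q(X_1)\times q(X_2)$ is represented by $q(X_1\oplus X_2)$. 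This makes $\sC[S_\oplus^{-1}]$ additive and $q$ an additive functor inverting every $s\in S$.

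For the universal property, let $F:\sC\to\sD$ be additive with $F(s)$ invertible for all $s\in S$. Then $F$ sends each element $s_1\oplus\dots\oplus s_n$ of $S_\oplus$ to $F(s_1)\oplus\dots\oplus F(s_n)$, which is invertible in $\sD$ since $F$ is additive and each $F(s_i)$ is invertible; hence by the ordinary Gabriel–Zisman universal property $F$ factors uniquely as $\bar F\circ q$ for a functor $\bar F:\sC[S_\oplus^{-1}]\to\sD$, and $\bar F$ is automatically additive because $q$ is the identity on objects, $q$ is full, and the additive structure on $\sC[S_\oplus^{-1}]$ was defined by the same $\Delta,\nabla$ formulas that $\bar F$ must respect (being determined by images of morphisms of $\sC$). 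I expect the main obstacle to be the bookkeeping in the previous paragraph: verifying that products in the localisation are computed by $q$ and then that composition is bilinear — this is the one place the hypothesis ``$S_\oplus$ closed under direct sums'' genuinely enters, and it must be invoked carefully rather than taken for granted. Everything else is formal.
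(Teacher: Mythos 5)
Your proposal is correct and follows essentially the same route as the paper: the paper simply cites \cite[Th.\ A.3.4]{birat-pure} for the additivity of $\sC[S_\oplus^{-1}]$ (whose proof is the $\Delta$/$\oplus$/$\nabla$ argument you sketch, with the closure of $S_\oplus$ under direct sums entering exactly where you say it does), and disposes of the universal property by the same observation that an additive $F$ inverting $S$ inverts all of $S_\oplus$. One small inaccuracy: the localisation functor $q$ is \emph{not} full in general (morphisms of $\sC[S_\oplus^{-1}]$ are zigzags), but your argument does not actually need this --- additivity of $\bar F$ follows from the fact that it preserves the biproduct structure maps, which do lie in the image of $q$.
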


\begin{proof} The first claim is \cite[Th. A.3.4]{birat-pure}; the second is obvious, since the hypothesis on $F$ implies that $F(s)$ is invertible also for all $s\in S_\oplus$.
\end{proof}

Replace ``additive category'' by $\otimes$-category in the above, and $S_\oplus$ by $S_\otimes$, in whose definition ``finite direct sums'' is replaced by ``tensor product's'. Similarly, we get

\begin{lemma}\label{l2s2.4}
The Gabriel-Zisman localisation $\sC[S_\otimes^{-1}]$ is symmetric monoidal and the localisation functor $\sC\to \sC[S_\otimes^{-1}]$ is a strong $\otimes$-functor; it is universal for strong $\otimes$-functors $F:\sC\to \sD$ to other $\otimes$-categories $\sD$ such that $F(s)$ is invertible for all $s\in S$.
\end{lemma}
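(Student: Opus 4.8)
The plan is to mimic the proof of Lemma \ref{l1s2.4} (i.e.\ of \cite[Th. A.3.4]{birat-pure}), replacing ``finite direct sum'' by ``tensor product'' everywhere. First I would record the key algebraic fact about $S_\otimes$: by construction it is closed under $\otimes$ and contains all identities, and if $F:\sC\to\sD$ is a strong $\otimes$-functor inverting every $s\in S$, then $F$ inverts every element of $S_\otimes$ (a tensor product of isomorphisms is an isomorphism, using the structural isos $\mu$), exactly as in the proof above. So it suffices to show that $\sC[S_\otimes^{-1}]$ carries a symmetric monoidal structure for which the localisation functor $\gamma:\sC\to\sC[S_\otimes^{-1}]$ is strong $\otimes$ and $2$-universal among strong $\otimes$-functors inverting $S_\otimes$; the stated universal property for $S$ then follows formally.

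The main step is to produce the monoidal structure on $\sC[S_\otimes^{-1}]$. Here I would invoke the standard principle that Gabriel--Zisman localisation is compatible with functors of several variables that preserve the class being inverted: the bifunctor $\otimes:\sC\times\sC\to\sC$ sends $S_\otimes\times S_\otimes$ (more precisely, pairs $(s,\id)$ and $(\id,t)$ with $s,t\in S_\otimes$) into $S_\otimes$, so the composite $\sC\times\sC\by{\otimes}\sC\by{\gamma}\sC[S_\otimes^{-1}]$ inverts these morphisms in each variable separately and therefore factors uniquely through $\gamma\times\gamma$ via a bifunctor $\bar\otimes:\sC[S_\otimes^{-1}]\times\sC[S_\otimes^{-1}]\to\sC[S_\otimes^{-1}]$ with $\gamma(X)\bar\otimes\gamma(Y)=\gamma(X\otimes Y)$ on objects. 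Concretely one uses the calculus-of-fractions-free description of $\sC[S_\otimes^{-1}]$ (same objects as $\sC$, morphisms are zigzags modulo relations) to check that $\bar\otimes$ is well defined on morphisms. The associativity, unit and symmetry constraints of $\sC$ are isomorphisms, hence their images under $\gamma$ are isomorphisms in $\sC[S_\otimes^{-1}]$, and they furnish the corresponding constraints for $\bar\otimes$; the pentagon, triangle and hexagon axioms hold because they hold in $\sC$ and $\gamma$ is (essentially) surjective on objects and full. Thus $(\sC[S_\otimes^{-1}],\bar\otimes)$ is a $\otimes$-category and $\gamma$ is a \emph{strict} (in particular strong) $\otimes$-functor.

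For the universal property: given a strong $\otimes$-functor $F:\sC\to\sD$ inverting all $s\in S$ (hence all of $S_\otimes$), the plain Gabriel--Zisman property gives a unique functor $\bar F:\sC[S_\otimes^{-1}]\to\sD$ with $\bar F\gamma=F$; one checks that the structural data $\mu^F,\eta^F$ of $F$ descend along $\gamma$ (again using fullness and surjectivity on objects of $\gamma$, plus the coherence of $\mu^F$) to make $\bar F$ a strong $\otimes$-functor, uniquely so. I expect the only mildly delicate point to be the well-definedness of $\bar\otimes$ on morphisms of the localisation --- one must verify it respects the equivalence relation defining $\sC[S_\otimes^{-1}]$, but this is routine given that $\otimes$ is a bifunctor on $\sC$ and $S_\otimes$ is $\otimes$-closed; everything else is a formal transport of constraints and axioms through a full, objectwise-surjective functor. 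Accordingly I would keep the written proof as terse as that of Lemma \ref{l1s2.4}, citing \cite[Th. A.3.4]{birat-pure} for the underlying additive/localisation mechanics and indicating only the modification $S_\oplus\rightsquigarrow S_\otimes$.
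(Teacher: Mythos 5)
Your proposal is correct and is essentially the paper's argument: the paper simply replaces the citation of \cite[Th. A.3.4]{birat-pure} by its symmetric monoidal analogue \cite[Prop. A.1.2]{birat-pure}, which packages exactly the descent of $\otimes$, of the constraints, and of the universal property that you spell out. One small inaccuracy: the localisation functor $\gamma$ is bijective on objects but in general \emph{not} full, so your appeals to fullness for the coherence axioms and for descending $\mu^F$ should instead go through the zigzag description of morphisms in $\sC[S_\otimes^{-1}]$ (naturality with respect to formal inverses of elements of $S_\otimes$ follows from naturality with respect to the elements themselves, since these become isomorphisms); with that substitution the argument is complete.
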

 
 The proof is the same, replacing \cite[Th. A.3.4]{birat-pure} by \cite[Prop. A.1.2]{birat-pure}.
\smallskip

Finally, we can mix the two constructions when starting from an additive $\otimes$-category, according with the notation adopted in  \S \ref{Not}: let $\sC\in \Add^\otimes$ and let $S$ be a set of morphisms of $\sC$. Write $S_{\oplus,\otimes}$ for the smallest set of morphisms which is stable under finite direct sums and tensor products, containing all identities. 

\begin{prop}\label{s2.4} 
The Gabriel-Zisman localisation $\sC[S_{\oplus,\otimes}^{-1}]$ is in $\Add^\otimes$, \ie is additive and symmetric monoidal, and the localisation functor $\sC\to \sC[S_{\oplus,\otimes}^{-1}]$ is a strong additive $\otimes$-functor; it is universal for strong additive $\otimes$-functors $F:\sC\to \sD$ to other additive $\otimes$-categories $\sD$ such that $F(s)$ is invertible for all $s\in S$.\qed
\end{prop}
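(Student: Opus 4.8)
The plan is to reduce everything to the two preceding lemmas. First I would observe that, although Lemma~\ref{l1s2.4} is stated for the closure $S_\oplus$, its proof only uses that the set of inverted morphisms is closed under finite direct sums and contains all identities; since $S_{\oplus,\otimes}$ has this property, \cite[Th.~A.3.4]{birat-pure} applies directly and shows that $\sC[S_{\oplus,\otimes}^{-1}]$ is additive, that $L\colon\sC\to\sC[S_{\oplus,\otimes}^{-1}]$ is additive, and that $L$ is $2$-universal for additive functors inverting $S_{\oplus,\otimes}$. Dually, $S_{\oplus,\otimes}$ is closed under tensor products and contains all identities, so the proof of Lemma~\ref{l2s2.4} (i.e.\ \cite[Prop.~A.1.2]{birat-pure}) endows $\sC[S_{\oplus,\otimes}^{-1}]$ with a symmetric monoidal structure for which $L$ is a strong $\otimes$-functor, $2$-universal for strong $\otimes$-functors inverting $S_{\oplus,\otimes}$.

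The next step is to check that the additive and the monoidal structures obtained this way are compatible, i.e.\ that $\otimes$ is biadditive. Here I would use that in the Gabriel--Zisman construction $L$ is the identity on objects and that every morphism of the localisation is a zig-zag of morphisms $L(f)$ and formal inverses $L(s)^{-1}$ with $s\in S_{\oplus,\otimes}$. Since $L$ is additive it preserves biproducts, and since it is strong monoidal the tensor product of two objects downstairs is, up to the structure isomorphism $\mu$, the $L$-image of a tensor product upstairs. From this I would deduce the descent principle that a functor $G$ out of $\sC[S_{\oplus,\otimes}^{-1}]$ preserves biproducts, resp.\ is strong monoidal, as soon as $G\circ L$ does, resp.\ is. Applying it to $G=X\otimes(-)$ and $G=(-)\otimes X$, whose composites with $L$ are naturally isomorphic to $L\circ(X\otimes_\sC-)$ and $L\circ((-)\otimes_\sC X)$ and hence additive by biadditivity of $\otimes_\sC$, gives biadditivity of $\otimes$ on the localisation, so $\sC[S_{\oplus,\otimes}^{-1}]\in\Add^\otimes$ and $L$ is a strong additive $\otimes$-functor.

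For the universal property I would argue as follows. Let $F\colon\sC\to\sD$ be a strong additive $\otimes$-functor to some $\sD\in\Add^\otimes$ with $F(s)$ invertible for all $s\in S$. The class $\Sigma_F$ of morphisms inverted by $F$ contains all identities, is closed under finite direct sums because $F$ is additive, and is closed under tensor products because $F$ is strong monoidal (the structure isomorphism $\mu$ makes $F(f\otimes g)$ isomorphic to $F(f)\otimes F(g)$); hence $S_{\oplus,\otimes}\subseteq\Sigma_F$. Therefore $F$ factors as $\bar F\circ L$ for a unique functor $\bar F$, which by the descent principle of the previous step is additive and strong monoidal; its structure data being forced by those of $F$ together with the additivity and strong monoidality of $L$, it is the unique strong additive $\otimes$-functor with $\bar F\circ L=F$.

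I expect the main obstacle to be the descent principle in the second paragraph --- namely that additivity and strong monoidality of a functor on $\sC[S_{\oplus,\otimes}^{-1}]$ can be tested after restriction along $L$. Everything else is formal bookkeeping with universal properties; the one place where one genuinely has to open up the construction of the Gabriel--Zisman localisation is to check that its biproducts and tensor products are controlled by those of $\sC$ via the functor $L$, which is the identity on objects and generating on morphisms.
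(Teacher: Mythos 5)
Your argument is correct and is exactly the intended one: the paper gives no proof (the statement carries a \qed), treating it as the evident combination of Lemma \ref{l1s2.4} and Lemma \ref{l2s2.4}, i.e.\ of \cite[Th.~A.3.4]{birat-pure} and \cite[Prop.~A.1.2]{birat-pure} applied to the single set $S_{\oplus,\otimes}$. Your filling-in of the two points the paper leaves tacit --- biadditivity of the induced tensor product, checked via the descent principle along the identity-on-objects localisation functor, and the closure of the class of $F$-invertible morphisms under $\oplus$ and $\otimes$ --- is sound.
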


\section{Generalised Weil cohomologies}\label{s4}

\subsection{The set-up} We work over a field $k$. Let $\Sm^\proj(k)$ be the category of smooth projective $k$-varieties and $k$-morphisms.

Note that to give a subclass of objects of a category is equivalent to give a full subcategory. We are going to use this for $\Sm^\proj(k)$  without further mention. The following definition is in the spirit of \cite[2.1]{A}.

\begin{defn}\label{d3.1}
A class $\V\subset \Sm^\proj(k)$  of smooth projective $k$-varieties is \emph{admissible} if $\Spec k\in \V$, $X\coprod Y, X\times Y\in\V$ for $X,Y\in \V$ and $\V$ is stable under taking connected components. It is \emph{strongly admissible} if, moreover
\begin{itemize}
\item $\P^1\in \V$;
\item $X\in \V$ $\Rightarrow$ $\pi_0(X)\in\V$, where $\pi_0(X)$ is the scheme of constants of $X$.
\end{itemize}
Given any class $S\subset \Sm^\proj(k)$ we write $S_\adm$ for the smallest strongly admissible class containing $S$. 
\end{defn}

 The following are useful examples:

\begin{exs}\label{e3.1} a) $\V=\Sm^\proj(k)$.\\
b) $\V=\{X\in \Sm^\proj(k)\mid \dim X=0\}_\adm$.\\
c) $\V=\{\text{coproducts of abelian $k$-varieties}\}$.\\
d) $\V=\{ A_i\}_\adm$ where $A_i$ runs through the class of abelian schemes over \'etale $k$-schemes.\\ 
e) $\V=\{ C_i\}_\adm$ where $C_i$ runs through the class of geometrically connected $k$-curves.\\ 
f) $\V=\{X \}_\adm$, where $X$ is a fixed smooth projective  $k$-variety.
\end{exs}

For an admissible class $\V\subset  \Sm^\proj(k)$, we define motives modelled on $\V$ as in \cite[(4.2)]{A}: let $\Corr(k,\V)$ be the category of Chow correspondences (with $\Q$ coefficients), such that $\Corr(k, \V)(X,Y)=CH^m(X\times Y)_\Q$ for $X,Y\in \V$ if $X$ is of pure dimension $m$, the general case being obtained by direct sums. 

\begin{defn}\label{d3.1.1}
Denote by $\sM_\rat(k,\V)$  the category of \emph{Chow motives over $k$ \cite{scholl} modelled on $\V$}, with $\Q$ coefficients: objects  $M = (X, p, n)\in \sM_\rat(k,\V)$ are given by $X\in\V$, $p^2 =p\in\Corr(k,\V)(X,X)$ an idempotent, and $n$ a continous (= locally constant) function $X\to \Z$.\footnote{\label{fn5} In Jannsen's original description of pure motives, $n$ is an integer; with Andr\'e's trick \cite[(4.2)]{A} $\sM_\rat(k,\V)$ is immediately seen to be additive and rigid.} 
\end{defn}

By sending a morphism to  its graph  we obtain a  functor 
\[h:\V^{op}\to \sM_\rat(k,\V)\]
which on objects is $h(X) = (X, \id, 0)$ where $\id\in \Corr(k,\sV)(X,X)$ is the diagonal.  We write $\sM_\rat^\eff(k,\V)\subset \sM_\rat(k,\V)$ for the strictly full subcategory of effective motives $M = (X, p, 0)$: the functor $h$ takes values in $\sM_\rat^\eff(k,\V)$.   These categories are additive and $\Q$-linear, and the product of varieties gives them a symmetric monoidal structure with unit $\un= (\Spec k, \id, 0)$. We also write $\L =(\Spec k, \id, -1), \T=(\Spec k, \id, 1)\in \sM_\rat(k,\V)$: these are the Lefschetz  and the Tate motive. We have $\L\otimes \T=\un$; the category $\sM_\rat(k,\V)$ is rigid, the dual of $(X, p, n)$ being $(X, p, d-n)$ where $d:X\to \N$ is the dimension function.

If $\V=\Sm^\proj(k)$, we simply write $\Corr(k), \sM_\rat^\eff(k)$ and $\sM_\rat(k)$; in general, $\Corr(k,\V), \sM_\rat^\eff(k,\V)$ and $\sM_\rat(k,\V)$ are full subcategories of those.


\begin{lemma} If $\P^1\in \V$ (\eg if $\V$ is strongly admissible), then $\L\in \sM_\rat^\eff(k,\V)$.
\end{lemma}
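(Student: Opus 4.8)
The claim is that if $\P^1 \in \V$, then the Lefschetz motive $\L = (\Spec k, \id, -1)$ lies in the subcategory $\sM_\rat^\eff(k,\V)$ of effective motives, \ie motives of the form $(X,p,0)$ with $X \in \V$. The plan is the classical computation of the Chow motive of $\P^1$, carried out entirely within $\Corr(k,\V)$ so that all the varieties involved stay in $\V$.

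First I would exhibit the idempotents $p_0, p_2 \in \Corr(k,\V)(\P^1,\P^1) = CH^1(\P^1 \times \P^1)_\Q$ giving the K\"unneth decomposition: $p_0 = [\P^1 \times \{pt\}]$ and $p_2 = [\{pt\} \times \P^1]$. One checks directly from the intersection product on $\P^1 \times \P^1$ that $p_0^2 = p_0$, $p_2^2 = p_2$, $p_0 p_2 = p_2 p_0 = 0$, and $p_0 + p_2 = \Delta_{\P^1} = \id$ (this uses that $CH^1(\P^1\times\P^1)_\Q$ is spanned by these two classes, the diagonal being the sum of the two rulings). Hence in $\sM_\rat^\eff(k,\V)$ we get a decomposition $h(\P^1) = (\P^1, p_0, 0) \oplus (\P^1, p_2, 0)$, and $(\P^1, p_0, 0) \cong \un = (\Spec k, \id, 0)$ via the correspondences induced by the structure morphism $\P^1 \to \Spec k$ and a rational point's inverse (here one uses $\Spec k \in \V$ and that $\V$ is admissible; a rational point exists over $k$ on $\P^1$). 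Therefore $(\P^1, p_2, 0)$ is an \emph{effective} motive.

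Next I would identify $(\P^1, p_2, 0)$ with $\T = (\Spec k, \id, 1)$. This is the standard isomorphism: the correspondences defining the projector $p_2$ factor through $\Spec k$ up to the twist, giving mutually inverse morphisms $(\P^1, p_2, 0) \leftrightarrows (\Spec k, \id, 1)$ in $\sM_\rat(k,\V)$; one verifies the two composites are the respective identities by a Chow-group computation on $\P^1$ and $\P^1 \times \P^1$. Equivalently, one invokes the well-known fact (valid in $\sM_\rat(k)$ and inherited by the full subcategory $\sM_\rat(k,\V)$) that $h(\P^1) \cong \un \oplus \T$. Since $\T \cong (\P^1,p_2,0)$ is a direct summand of $h(\P^1)$ with $\P^1 \in \V$ and the summand is given by an idempotent in $\Corr(k,\V)$, it lies in $\sM_\rat^\eff(k,\V)$. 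Finally, $\L = \T^\vee$; but from $\L \otimes \T = \un$ and $\T \cong (\P^1, p_2, 0)$ one gets $\L \cong (\P^1, {}^t p_2, 0)$ where ${}^t p_2 = [\P^1 \times \{pt\}]\cdot(\text{transpose})$ — more precisely $\L$ is the dual of $(\P^1, p_2, 0)$, and the dual of an effective motive $(X, p, 0)$ with $X$ of pure dimension $d$ is $(X, {}^t p, d - 0)$, which is \emph{not} obviously effective; so instead I would directly observe $\L \cong (\P^1, p_0', 0)$ where $p_0'$ is the transpose of $p_2$, using that transposition of correspondences is an anti-involution preserving $\Corr(k,\V)$ and that under the identification $h(\P^1)^\vee \cong h(\P^1)$ (self-duality of $\P^1$, shifted) the summand $\T^\vee = \L$ corresponds to a summand cut out by an idempotent in $\Corr(k,\V)(\P^1,\P^1)$. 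Hence $\L \in \sM_\rat^\eff(k,\V)$.

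The only genuinely delicate point is the last one: making sure that $\L$, a priori defined with a negative twist, is represented by an \emph{effective} motive in $\V$ rather than merely a direct summand of $h(\P^1)^\vee$. The clean way around this is to note that $\P^1$ is self-dual as a Chow motive up to a Tate twist — precisely, $h(\P^1)^\vee \cong h(\P^1) \otimes \L$, equivalently $h(\P^1) \cong h(\P^1)^\vee \otimes \T$ — so that $\L = \T^\vee$ is a direct summand of $h(\P^1)^\vee$, but also $\L \oplus \un \cong h(\P^1)^\vee \otimes \T \otimes \L \cong h(\P^1)^\vee$... rather than pursue this, the simplest rigorous route is: $\L$ is by definition a direct summand of $\un^{\oplus ?}$ only after twisting, so one instead checks that the transpose idempotent ${}^t p_2 \in \Corr(k,\V)(\P^1,\P^1)$ (which makes sense and is idempotent because transposition is a ring anti-homomorphism on $\Corr(k,\V)(\P^1,\P^1)$, the ring being commutative here as $CH^*(\P^1\times\P^1)$ is) cuts out a summand of $h(\P^1)$ isomorphic to $\L$; this isomorphism is witnessed by explicit correspondences and verified by a routine Chow computation, which I will not grind through here. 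This handles the subtlety and completes the proof.
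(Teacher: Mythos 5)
Your first paragraph is the right, and indeed the standard, argument: decompose $\Delta_{\P^1}$ in $\Corr(k,\V)(\P^1,\P^1)$ into the two orthogonal K\"unneth projectors and identify the resulting summands of $h(\P^1)$; since $\P^1,\Spec k\in\V$, everything takes place in $\sM_\rat(k,\V)$. The error is in the identification of the nontrivial summand. Under the paper's conventions the degree-two piece of $h(\P^1)$ is the \emph{Lefschetz} motive $\L=(\Spec k,\id,-1)$, not the Tate motive $\T=(\Spec k,\id,1)$: the paper records $h(\P^1)=\un\oplus\L$ in the proof of Lemma~\ref{l4.7}, and one checks that $\T$ is not a direct summand of $h(\P^1)$ at all, since $\sM_\rat(k,\V)(\T,h(\P^1))=\sM_\rat(k,\V)(\un,h(\P^1)\otimes\L)=CH^{-1}(\P^1)_\Q=0$ by the formula $\sM_\rat(k)(\L^i,h(X))=CH^i(X)_\Q$ used in the proof of Proposition~\ref{prop:alg}, whereas $\sM_\rat(k,\V)(\L,h(\P^1))=CH^{1}(\P^1)_\Q\neq0$. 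With the correct identification the lemma is finished at the end of your first paragraph: the nontrivial K\"unneth summand $(\P^1,p,0)$ is isomorphic to $\L$, it is effective on the nose, and duality never enters.

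Because the misidentification with $\T$ is what manufactures the ``delicate point'', the entire second half of your proposal is both unnecessary and, as written, incoherent. The transpose of one K\"unneth projector of $\P^1$ is the other one (${}^t[\{pt\}\times\P^1]=[\P^1\times\{pt\}]$), so the idempotent ${}^tp_2$ you invoke at the end is exactly the $p_0$ you identified with $\un$ in the first paragraph; it cannot also cut out $\L$. The ``routine Chow computation'' you defer would therefore not come out as claimed. The repair is purely a matter of conventions: recognise that the effective summand $h^2(\P^1)$ \emph{is} $\L$ (consistently with Proposition~\ref{p1.1}~a)~(1), which says $\uH^*(\L)$ is concentrated in degree $2$), after which your first paragraph alone is a complete proof, in agreement with the reference \cite[4.1.5]{A} that the paper cites.
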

\begin{proof} Indeed, $h(\P^1) = \un\oplus \L$ \cite[1.13]{scholl}.
\end{proof}

Given an admissible class $\V$, we can  define its ``saturation''
\[\V^\sat:=\{X\in \Sm^\proj(k)\mid h(X)\in \sM_\rat(k,\V)\};\]
 these are the \emph{varieties of $\V$ type} (for rational equivalence). We say that $\V$ is \emph{saturated} if $\V=\V^\sat$. Clearly, $\V^\sat$ is saturated.
 
\begin{lemma}\label{l4.7} We have $\sM_\rat(k,\V)=\sM_\rat(k,\V^\sat)$, and $\V^\sat$ is strongly admissible.\end{lemma}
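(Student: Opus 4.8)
The plan is to prove the two assertions of Lemma \ref{l4.7} in turn, the first reducing to a tautology once one unwinds definitions, the second requiring a small but genuine verification of the stability properties of the saturation.

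First I would prove $\sM_\rat(k,\V)=\sM_\rat(k,\V^\sat)$. One inclusion is trivial: $\V\subseteq \V^\sat$, so $\Corr(k,\V)\subseteq \Corr(k,\V^\sat)$ and hence $\sM_\rat(k,\V)\subseteq \sM_\rat(k,\V^\sat)$. For the reverse inclusion, take $M=(X,p,n)\in \sM_\rat(k,\V^\sat)$ with $X\in \V^\sat$. By definition of $\V^\sat$, $h(X)\in \sM_\rat(k,\V)$, say $h(X)\cong (Y,q,0)$ with $Y\in \V$ (and even $q$ an idempotent in $\Corr(k,\V)(Y,Y)$). Then, since the Tate twist $(-,-,n)$ preserves $\sM_\rat(k,\V)$ (the twist only modifies the locally constant function and $\un$-factors lie in $\sM_\rat(k,\V)$), and since $p$ is an idempotent endomorphism of $h(X)$, it corresponds under the isomorphism $h(X)\cong (Y,q,0)$ to an idempotent of $(Y,q,0)$, so that $M$ is a direct summand (with a Tate twist) of an object built from $Y\in \V$; hence $M\in \sM_\rat(k,\V)$. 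In other words, the category $\sM_\rat(k,\V)$ is already pseudo-abelian and closed under Tate twists, so it contains the image of $h$ on $\V^\sat$ and therefore equals $\sM_\rat(k,\V^\sat)$. I would also note $\Corr(k,\V^\sat)=\Corr(k,\V)$ as a byproduct, since $\Corr(k,\V)(X,Y)=\Hom_{\sM_\rat(k,\V)}(h(Y),h(X)\otimes\L^{?})$ type identifications are insensitive to the passage to $\V^\sat$.

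Next I would show $\V^\sat$ is strongly admissible, i.e. verify the conditions of Definition \ref{d3.1}: $\Spec k\in \V^\sat$ (clear, since $h(\Spec k)=\un\in \sM_\rat(k,\V)$ always); closure under $\coprod$ and $\times$; closure under connected components; $\P^1\in \V^\sat$; and $X\in \V^\sat\Rightarrow \pi_0(X)\in \V^\sat$. For $\coprod$ and $\times$, use $h(X\coprod Y)=h(X)\oplus h(Y)$ and $h(X\times Y)=h(X)\otimes h(Y)$, both of which lie in $\sM_\rat(k,\V)$ when $h(X),h(Y)$ do, because that category is additive and monoidal. For connected components: if $X\in \V^\sat$ and $X'$ is a connected component, the projector onto $X'$ is algebraic (it is cut out by the corresponding idempotent in $\Corr$), so $h(X')$ is a direct summand of $h(X)\in \sM_\rat(k,\V)$, hence $h(X')\in \sM_\rat(k,\V)$. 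For $\P^1$: $h(\P^1)=\un\oplus\L$ and $\L\in \sM_\rat(k,\V)$ (this needs $\P^1\in \V$ or strong admissibility of the original $\V$; but $\V$ is assumed admissible only in the statement, so I should double-check — in fact the decomposition $h(\P^1)=\un\oplus\L$ shows $\P^1\in\V^\sat$ as soon as $\L\in\sM_\rat(k,\V)$, and one can alternatively invoke Lemma \ref{l4.7}'s context where $\V$ may be enlarged; cleanest is to observe $\L=(\Spec k,\id,-1)\in\sM_\rat(k,\V)$ by definition of the latter, so $\P^1\in\V^\sat$ unconditionally). For $\pi_0(X)$: the scheme of constants is a zero-dimensional smooth projective variety, and $h(\pi_0(X))$ is the ``weight $0$'' summand of $h(X)$ cut out by an algebraic projector (the fundamental class of the graph of $X\to\pi_0(X)$ composed appropriately), so again it is a direct summand of $h(X)\in\sM_\rat(k,\V)$.

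The main obstacle I anticipate is the $\pi_0(X)$ clause: one must produce an \emph{algebraic} correspondence realising $h(\pi_0(X))$ as a summand of $h(X)$, which amounts to checking that the structure morphism $X\to \pi_0(X)$ and a section-up-to-correspondence thereof give mutually inverse maps on the relevant summand — a statement that is standard but requires knowing that $X$ is smooth projective over the étale $k$-algebra $\pi_0(X)$ with geometrically connected fibres, so that the graph of $X\to\pi_0(X)$ has the expected intersection-theoretic behaviour. Everything else is a formal consequence of $\sM_\rat(k,\V)$ being a pseudo-abelian rigid additive monoidal subcategory of $\sM_\rat(k)$ closed under Tate twists, together with the footnote's remark (Andr\'e's trick) that these categories are additive and rigid by construction.
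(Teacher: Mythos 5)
Your proposal is correct and follows essentially the same route as the paper's (much terser) proof: the first equality is a formal consequence of $\sM_\rat(k,\V)$ being pseudo-abelian and closed under Tate twists, $\P^1\in\V^\sat$ because $h(\P^1)=\un\oplus\L$ with $\L=(\Spec k,\id,-1)$ already in $\sM_\rat(k,\V)$, and $\pi_0(X)$ is handled by noting that $h(\pi_0(X))$ is a direct summand of $h(X)$ (the standard $h^0$-projector, cf.\ \eqref{eq7.1}). Your extra verifications of the remaining admissibility conditions and your caveat about the algebraicity of the $\pi_0$-projector are sound; the paper simply takes these as known.
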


\begin{proof}The first statement is obvious. Then $\P^1\in \V^\sat$ because $h(\P^1)=\un\oplus \L$. Finally,  $h(\pi_0(X))$ is a direct summand of $h(X)$ for any $X \in \V^\sat$ \cite[1.13]{scholl}, which concludes the proof.
\end{proof}

In the sequel, we fix an admissible $\V\subseteq \Sm^\proj(k)$. 
 
\subsection{The axioms} \label{s1.1} Let $\sC$ be an additive $\Q$-linear $\otimes$-category, together with a distinguished invertible object which we denote by $L_\sC$. For any $C\in \sC$ and any $i\in \Z$, we write $C(i):= C\otimes L_\sC^{\otimes -i}$.

\begin{defn}\label{d1.1} A \emph{Weil cohomology on $\V$ with values in $(\sC,L_\sC)$}  is given by
\begin{enumerate}
\item[(a)] a $\Z$-indexed family $H= \{H^i\}_{i\in\Z}:\V^{op}\to \sC$ of functors;
\item[(b)] a Künneth product 
\[\kappa^{i,j}_{X,Y}:H^i(X)\otimes H^j(Y)\to H^{i+j}(X\times Y),\] for every $X, Y\in \V$, and $i, j\in \Z$;
\item[(c)] a trace morphism
\[\Tr_X:H^{2n}(X)(n)\to \un\]
for $X\in \V$ of pure dimension $n$, and 
\item[(d)] a cycle class map given by a $\Q$-linear homomorphism 
\[\cl_X^i:CH^i(X)_\Q\to \sC(\un,H^{2i}(X)(i))\]
for every $X\in \V$ and for every $i\ge 0$.
\end{enumerate}
The data $(H, \kappa, \Tr, \cl)$ are subject to the following axioms:
\begin{thlist}
\item $H^0(\Spec k)\iso\un$ induced by (c) for $X = \Spec k$.
\item If $\dim X=n$, $H^i(X)=0$ for $i\notin [0,2n]$.
\item $H^1(\P^1)=0$ and the trace morphism of (c) for $X = \P^1$  induces an isomorphism $H^2(\P^1)\iso L_\sC$.  
\item \emph{Künneth formula}: $\kappa^{i,j}_{X,Y}$ yields a graded isomorphism 
\[\kappa_{X,Y}: H^*(X)\otimes H^*(Y)\iso H^*(X\times Y)\]
which is natural in $X,Y\in \V$ and verifies the conditions of associativity, unity and graded commutativity. 
\item \emph{Trace and Poincaré duality}: the trace map $\Tr$ is such that $\Tr_{X\times Y}=\Tr_X\otimes \Tr_Y$ modulo the Künneth formula, it is an isomorphism if $X$ is geometrically connected, and the ``Poincaré pairing''
\begin{equation}\label{eq1.1}
H^i(X)\otimes H^{2n-i}(X)\by{\kappa_{X,X}} H^{2n}(X\times X)\by{\Delta_X^*} H^{2n}(X)\by{\Tr_X} \un(-n)
\end{equation}
makes $H^{2n-i}(X)(n)$ the dual of  $H^{i}(X)$.
\item \emph{Cycle classes}: For all $X\in \V$ and all $i\ge 0$, the $\Q$-linear homomorphism $\cl_X^i$ is
contravariant in $X$ and compatible with the Künneth formula and the intersection product. Furthermore, if $\dim X=n$, the diagram
\[\begin{CD}
CH^n(X)_\Q@>\cl_X^n>> \sC(\un,H^{2n}(X)(n))\\
@V\deg VV @V(\Tr_X)_* VV\\
\Q@>>> Z(\sC)
\end{CD}\]
commutes.
\end{thlist}
\end{defn}

\begin{nota}\label{not1} We write $(\sC,H)$ for a Weil cohomology $(H, \kappa, \Tr, \cl)$ with values in $(\sC,L_\sC)$ as in Definition \ref{d1.1}, and simply talk of $H$ as a Weil cohomology with values in $\sC$; if necessary, we shall specify the other implicit data, \eg additionally, $\sC$ will sometimes be assumed to be pseudo-abelian or abelian.
\end{nota}

\begin{rks}\label{r3.1}  a) Let $(\V, \times, 1)$ with $1=\Spec k$ be the natural symmetric mono\-idal structure. Axioms (i), (ii) and (iv) imply that $(H, \kappa)$ satisfies the Künneth formula in the sense of Definition \ref{d3}. Moreover, $H^*:\V^{op}\to \sC^\Z$ takes values in $\sC^{(\Z)}$ and is a strong $\otimes$-functor for $\sC^{(\Z)}$ provided with the $\otimes$-structure given by Remarks \ref{r1} - \ref{r2} (1), as already noted in general.\\
b) The precise meaning of (v) is the following. The morphism
\[H^i(X)\otimes H^{2n-i}(X)(n)\by{\epsilon} \un\]
deduced from \eqref{eq1.1} induces a morphism
\[H^{2n-i}(X)(n)\by{\eta\otimes 1} H^i(X)^\vee\otimes H^i(X)\otimes H^{2n-i}(X)(n)\by{1\otimes \epsilon} H^i(X)^\vee\]
where $\eta$ is the unit map $\un\to H^i(X)^\vee\otimes H^i(X)$; this morphism is an isomorphism. In particular, this requires the existence of $H^i(X)^\vee$ a priori; see however Proposition \ref{p1.4} below.
\end{rks}

\begin{lemma}\label{l3.1}
a) For $X,Y\in \V$, the canonical morphism
\[H^*(X\coprod Y)\to H^*(X)\oplus H^*(Y)\]
is an isomorphism.\\
b) If $X$ is geometrically connected, then the morphism $\un\to H^0(X)$ induced by the projection $X\to \Spec k$ is an isomorphism.
\end{lemma}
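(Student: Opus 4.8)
The plan is to deduce both statements from the Künneth formula (axiom (iv)), the normalisation axioms (i)--(iii), and the cycle class formalism (axiom (vi)), together with the behaviour of $H^*$ as a strong $\otimes$-functor noted in Remark \ref{r3.1} a).

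For part a), first I would observe that $X\coprod Y$ together with the two open-and-closed immersions $X\hookrightarrow X\coprod Y\hookleftarrow Y$ is the coproduct of $X$ and $Y$ in $\Sm^\proj(k)$, hence $h(X\coprod Y)\cong h(X)\oplus h(Y)$ in $\sM_\rat(k,\V)$. The cleanest route is then to reduce to a statement about motives: by Proposition \ref{p1.1} (the reformulation of a Weil cohomology in terms of Chow motives), $H$ factors through an additive $\otimes$-functor $\sM_\rat(k,\V)\to \sC^{(\Z)}$, which being additive sends the direct sum decomposition $h(X\coprod Y)\cong h(X)\oplus h(Y)$ to the asserted isomorphism. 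If one prefers to argue directly without invoking \ref{p1.1} (which is natural, since Lemma \ref{l3.1} is stated just after the axioms and before \ref{p1.1}): the structure maps of the coproduct give a morphism $H^*(X\coprod Y)\to H^*(X)\oplus H^*(Y)$; to see it is an isomorphism, use the Künneth formula applied to $X\coprod Y$ and a third variety $Z$, or more simply use Poincaré duality to reduce to the analogous statement for $H_*$ (Borel--Moore / homology side), where additivity is manifest since $X\coprod Y$ is literally the disjoint union as a scheme and cycles on $(X\coprod Y)\times Z$ split as cycles on $X\times Z$ plus cycles on $Y\times Z$. Either way this part is essentially formal.

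For part b), suppose $X$ is geometrically connected of dimension $n$. The structure morphism $\pi\colon X\to \Spec k$ induces $\pi^*\colon \un\cong H^0(\Spec k)\to H^0(X)$ (axiom (i)), and I want this to be an isomorphism. The natural strategy is to exhibit an inverse via the trace: consider the composite $H^0(X)\to H^0(X)\otimes H^{2n}(X)(n)\to H^{2n}(X)(n)\xrightarrow{\Tr_X}\un$ built from a cycle class of a closed point, or more symmetrically use Poincaré duality (axiom (v)) which makes $H^{2n}(X)(n)$ the dual of $H^0(X)$, so $H^0(X)$ is invertible precisely when $H^{2n}(X)(n)\cong\un$. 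To pin down $H^0(X)\cong\un$: pick a closed point $x\in X$ of some degree $d$ over $k$ (geometric connectedness will be used to ensure the relevant intersection numbers behave correctly, in particular that $\pi_*\pi^*=\deg$ acts as expected after dividing by $d$ — this is where $\Q$-coefficients matter). The cycle class $\cl_X^n([x])\in\sC(\un,H^{2n}(X)(n))$ together with the compatibility diagram of axiom (vi), which says $\Tr_X\circ\cl_X^n([x]) = \deg(x)\cdot 1 = d$ in $Z(\sC)$, shows that $\frac1d\cl_X^n([x])$ is a section of $\Tr_X$; dually one gets that $\pi^*\colon\un\to H^0(X)$ admits a retraction. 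Then one argues that $H^0(X)$ has no room for more: since $H^{2n}(X)(n)$ is dual to $H^0(X)$ and both are "generated" over $\un$ by the above maps (using that $H^*$ is a $\otimes$-functor and $X$ geometrically connected so $H^0(X\times X)\cong H^0(X)$ via Künneth), the retraction is in fact an isomorphism.

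The main obstacle I anticipate is part b): making rigorous the claim that $H^0(X)$ is \emph{exactly} $\un$ rather than merely a retract of it, in the general categorical setting where $\sC$ need not be abelian or even pseudo-abelian. In the classical vector-space case this is the statement $\dim H^0(X)=1$ for connected $X$, proved by a dimension count; here one has no dimensions, so one must instead use the $\otimes$-structure seriously — e.g. apply Künneth to $H^0(X)\otimes H^0(X)\cong H^0(X\times X)$ and use that the diagonal and the two projections $X\times X\rightrightarrows X$ induce, via geometric connectedness of $X\times X$, enough relations to force $H^0(X)$ to be $\otimes$-invertible with trivial class, hence $\cong\un$. I would expect to need Lemma \ref{l2.1} c) (reducedness) or the rigidity of $\sC$ at this point, or simply to invoke that the composite $\un\xrightarrow{\pi^*}H^0(X)\xrightarrow{(1/d)\pi_*}\un$ is the identity and that $H^0(X)\xrightarrow{(1/d)\pi_*}\un\xrightarrow{\pi^*}H^0(X)$ is also the identity, the latter because it is an idempotent endomorphism of an object which, being a retract of $\un$, has $\End=Z(\sC)$ with no nontrivial idempotents when $\sC$ is connected — and in general one falls back on Proposition \ref{p1.1} to import the fact from the known structure of $h(X)$, where $h(X)$ geometrically connected gives $h^0(X)=\un$ canonically by Jannsen/standard Chow-motive theory.
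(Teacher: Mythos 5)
Part a) of your proposal is essentially the paper's argument: the axioms make Chow correspondences act on $H^*$, and the transposes of the graphs of the two inclusions provide retractions exhibiting $H^*(X\coprod Y)$ as a biproduct of $H^*(X)$ and $H^*(Y)$. One caveat: your first route, invoking Proposition \ref{p1.1} wholesale, silently assumes $\sC$ pseudo-abelian (a hypothesis of that proposition but not of the lemma); the paper only borrows the action of correspondences from the proof of Proposition \ref{p1.1} a), which needs no idempotent splitting. Your ``direct'' alternatives (Künneth against a third variety, a Borel--Moore reduction) are not actually developed, but the correspondence/biproduct idea is the right one.

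Part b) has a genuine gap. You overlooked that Axiom (v) of Definition \ref{d1.1} \emph{asserts}, as part of the data, that $\Tr_X\colon H^{2n}(X)(n)\to\un$ is an isomorphism when $X$ is geometrically connected. Granting this, the statement is immediate: Poincaré duality (the rest of Axiom (v)) makes $H^{2n}(X)(n)$ the dual of $H^0(X)$, and pairing against $\pi^*(1)=1\in H^0(X)$ is cup product with the unit, i.e.\ the identity, so the composite $\un\xrightarrow{\pi^*}H^0(X)\xrightarrow{\sim}(H^{2n}(X)(n))^\vee$ is $\Tr_X^\vee$, an isomorphism; hence $\pi^*$ is one. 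Your substitute --- the cycle class of a closed point and the degree compatibility of Axiom (vi) --- only produces a \emph{splitting} of $\pi^*$, and you correctly diagnose that you cannot then show the complementary idempotent vanishes in a general additive $\sC$. Neither of your fallbacks repairs this: the $\otimes$-theoretic argument via $H^0(X\times X)$ is not carried out, and invoking Proposition \ref{p1.1} is circular, since its condition (3) is precisely Lemma \ref{l3.1} b) and is derived from it in the paper. The fix is simply to use the ``$\Tr_X$ is an isomorphism for geometrically connected $X$'' clause of Axiom (v), which is the entire content of the paper's one-line proof of b).
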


\begin{proof} a) The axioms imply that Chow correspondences $\Corr$ act on $H^*$ (see proof of Proposition \ref{p1.1} a) below): in the groups $\Corr(X\coprod Y,X)\simeq \Corr(X,X) \oplus \Corr(Y,X)$ and $\Corr(X\coprod Y,Y)\simeq \Corr(X,Y)\oplus \Corr(Y,Y)$, the graphs of the inclusions $X\inj X\coprod Y$ and $Y\inj X\coprod Y$ have obvious retractions. It is clear that these retractions make $H^*(X\coprod Y)$ a biproduct of $H^*(X)$ and $H^*(Y)$ in the sense of the definition in \cite[VIII.2]{mcl}.

b) follows from Axiom (v). 
\end{proof}

\begin{lemma}\label{l1.1} Let $(\sC, H)$ be a Weil cohomology, and let $X,Y\in \V$, with $X$ of pure dimension $n$. Then we have a canonical isomorphism
\[\sC^{(\Z)}(H^*(X),H^*(Y))\simeq \sC(\un,H^{2n}(X\times Y)(n)).\]
\end{lemma}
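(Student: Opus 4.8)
The plan is to exhibit the isomorphism as a composite of three standard identifications: the description of morphisms in $\sC^{(\Z)}$, the Poincaré duality axiom, and the Künneth axiom. First, unwind the left-hand side. By construction $\sC^{(\Z)}$ is the additive hull of $\sC\times\Z$, in which the group of morphisms between objects sitting in distinct degrees vanishes (Remark \ref{r1}(2)); hence
\[\sC^{(\Z)}(H^*(X),H^*(Y)) = \bigoplus_{i\in\Z}\sC(H^i(X),H^i(Y)),\]
and this is a \emph{finite} direct sum by Axiom (ii).

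Next, invoke Poincaré duality. Axiom (v), in the precise form of Remark \ref{r3.1}(b), says that for $X$ of pure dimension $n$ the object $H^i(X)$ is dualisable in $\sC$ with dual $H^i(X)^\vee = H^{2n-i}(X)(n)$, the evaluation and coevaluation being built from \eqref{eq1.1}. The standard adjunction isomorphism for a dualisable object then gives, canonically for each $i$,
\[\sC(H^i(X),H^i(Y)) \iso \sC\big(\un,\, H^i(X)^\vee\otimes H^i(Y)\big) = \sC\big(\un,\, H^{2n-i}(X)(n)\otimes H^i(Y)\big),\]
sending $f$ to $(1\otimes f)\circ\eta$ with $\eta$ the coevaluation of $H^i(X)$.

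Finally, sum over $i$ and feed in Künneth. Since all sums in sight are finite, $\sC(\un,-)$ commutes with them, so
\[\bigoplus_{i}\sC\big(\un,\, H^{2n-i}(X)(n)\otimes H^i(Y)\big) \iso \sC\Big(\un,\ \bigoplus_{i} H^{2n-i}(X)(n)\otimes H^i(Y)\Big).\]
By Axiom (iv), the isomorphism $\kappa_{X,Y}$ identifies $H^{2n}(X\times Y)$ with $\bigoplus_{p+q=2n}H^p(X)\otimes H^q(Y)=\bigoplus_i H^{2n-i}(X)\otimes H^i(Y)$, and twisting by $(n)$ (which commutes with $\otimes$) identifies $H^{2n}(X\times Y)(n)$ with $\bigoplus_i H^{2n-i}(X)(n)\otimes H^i(Y)$. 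Composing the three displays produces the asserted isomorphism.

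There is no genuine obstacle here; the only points to watch are that every direct sum occurring is finite — so that $\sC$ being merely additive suffices and $\sC(\un,-)$ distributes over the sums — and that one uses the duality and Künneth isomorphisms which are part of the data of $(\sC,H)$, which is what legitimates the word ``canonical''. The Koszul signs of Remark \ref{r2} play no role, since we are comparing Hom groups and not monoidal structures.
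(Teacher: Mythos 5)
Your proof is correct and is precisely the standard argument that the paper's proof invokes by citing \cite[3.45]{zetaL} ``mutatis mutandis, using axioms (iv) and (v)'': degreewise Homs in $\sC^{(\Z)}$ (finite by Axiom (ii)), Poincaré duality to rewrite $\sC(H^i(X),H^i(Y))$ as $\sC(\un,H^{2n-i}(X)(n)\otimes H^i(Y))$, and the Künneth isomorphism to reassemble the sum. Nothing further is needed.
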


\begin{proof} ``As usual'': the proof of \cite[3.45]{zetaL} applies \emph{mutatis mutandis}, using axioms (iv) and (v). 
\end{proof}

\subsection{Some definitions}

\begin{defn}\label{d.ab} A Weil cohomology $(\sC,H)$ is \emph{abelian-valued} if $\sC\in \Ex^\rig$.
\end{defn}

\begin{defn}\label{d3.2} A Weil cohomology $(\sC, H)$ is \emph{traditional} if $\sC=\Vec_K$, the category of finite dimensional vector spaces over a field $K$, with $L_\sC$ a fixed $1$-dimensional $K$-vector space. We say that $H$ is \emph{classical} if it is traditional and belongs to the following list:
\begin{itemize}
\item $\ell$-adic cohomology in any characteristic $\ne \ell$ ($K=\Q_\ell$),
\item Betti or de Rham cohomology in characteristic $0$ ($K=\Q$, \resp $K=k$),
\item crystalline cohomology if $k$ is perfect of characteristic $>0$ ($K=Q(W(k))$, where $W(k)$ is the ring of Witt vectors over $k$).
\end{itemize}
\end{defn}

Obviously, $Z(\sC)=K$ if $H$ is traditional.

\begin{rk}\label{r3.1a} 
When $H$ is traditional, we recover the usual notion of a Weil cohomology as in \cite[3.1.1.1]{andremotifs}.  Condition $H^1(\P^1)=0$ in Definition \ref{d1.1} (iii) is skipped for the latter because it follows from the axioms: by the Lefschetz trace formula, we have
\[\dim H^0(\P^1)-\dim H^1(\P^1)+\dim H^2(\P^1)=\chi(\P^1)=2\]
and $\dim H^0(\P^1)=\dim H^2(\P^1)=1$ (by (iii) without this condition, and (v)), hence $\dim H^1(\P^1)\allowbreak=0$. In general this argument would only give $\chi_\sC(H^1(\P^1))=0$ if $\sC$ is rigid, where $\chi_\sC$ is the Euler characteristic of $\sC$.
\end{rk}

\begin{defn}\label{d3.4}
We say that $H$ is \emph{normalised} if, for any $X\in \sV$ with scheme of constants $\pi_0(X)$, the canonical map $H^0(\pi_0(X))\to H^0(X)$ is an isomorphism.
\end{defn}

(Lemma \ref{l3.1} b) says that this condition is automatic if $X$ is geometrically connected.)\\

Classical Weil cohomologies are normalised.

\begin{defn}[\cf \protect{\cite{os2}}]\label{d6.1} A Weil cohomology $(\sC,H)$ with $\sC\in \Add^\rig$ is \emph{pseudo-tannakian} if there exists a faithful $\otimes$-functor from $\sC$ to a category of $\Z/2$-graded finite-dimensional vector spaces over a field.
\end{defn}

\begin{defn}\label{d3.3} A Weil cohomology $(\sC,H)$ has \emph{weights} if, for any $X,Y\in \sV$, we have
\[\sC(H^i(X),H^j(Y))=0 \text{ for } i\ne j.\]
\end{defn}

(This is the condition of Lemma \ref{l3.4} a).)

\begin{exs} \label{exw}  a) Let $k$ be a subfield of $\C$. Consider the additive $\otimes$-category $\sH$ of pure, polarisable $\Q$-Hodge structures provided with $L_\sH:= \Q(-1)$. The Hodge enrichment of Betti cohomology $(\sH,H)$ is a Weil cohomology and it has weights.\\
b) If $k$ is finitely generated, $\car k =p>0$ and $\ell\neq p$ is a prime number, let $\sR_\ell$ be the category of $\Q_\ell$-adic representations of $Gal(k^s/k)$, where $k^s$ is a separable closure. We get a Weil cohomology $(\sR_\ell, H_\ell)$ given by $\ell$-adic cohomology: this Weil cohomology has weights thanks to Deligne \cite{weilI}.
\end{exs}

\subsection{Weil cohomologies as functors on Chow motives}

\begin{prop} \label{p1.1} Suppose $\sC$ pseudo-abelian.\\
a) Any Weil cohomology $(H, \kappa, \Tr, \cl)$ with values in $(\sC,L_\sC)$ lifts to a strong additive $\otimes$-functor $$\uH^*:\sM_\rat(k,\V)\to \sC^{(\Z)}$$ 
together with a morphism $\Tr: \uH^2(\L)\to L_\sC$  such that 
\begin{enumerate}
\item $\uH^*(\L)$ is concentrated in degree $2$ and $\Tr$ is an isomorphism;
\item $\uH^*(\sM_\rat^\eff(k,\V)) \subset \sC^\N$;
\item if $X$ is geometrically connected, then $\un=\uH^0(h(\Spec k))\allowbreak\to \uH^0(h(X))$ is an isomorphism.
\end{enumerate}
b) Conversely, if $\uH^*:\sM_\rat(k,\V)\to \sC^{(\Z)}$ is a strong additive $\otimes$-functor plus a morphism $\Tr$, which verifiy Conditions (1) -- (3) of a), then $$H^*=\uH^*\circ h:\V^{op}\to \sC^{(\Z)}$$ yields a Weil cohomology.
\end{prop}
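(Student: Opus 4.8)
\textbf{Proof proposal for Proposition \ref{p1.1}.}

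The plan is to treat a) and b) in parallel, the two being inverse constructions, and to organize the argument around the reformulation of a Künneth product as a strong $\otimes$-functor on $\sC^{(\Z)}$ recorded in Remarks \ref{r1}--\ref{r2} and Remark \ref{r3.1} a). For a), the starting point is that $(H^*,\kappa,\upsilon)$ is, by axioms (i), (ii) and (iv), a unital strong $\Z$-graded external product satisfying the Künneth formula; hence, by Remark \ref{r3.1} a), $H^*:\V^{op}\to\sC^{(\Z)}$ is a strong additive $\otimes$-functor when $\sC^{(\Z)}$ is given the Koszul constraint. The first task is then to extend $H^*$ from $\V^{op}$ to $\sM_\rat(k,\V)$. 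For this I would first show that Chow correspondences act on $H^*$: given $\alpha\in\Corr(k,\V)(X,Y)=CH^m(X\times Y)_\Q$ with $X$ of pure dimension $m=\dim X$, Lemma \ref{l1.1} identifies $\sC^{(\Z)}(H^*(X),H^*(Y))$ with $\sC(\un,H^{2m}(X\times Y)(m))$, so the cycle class $\cl^m_{X\times Y}(\alpha)$ (after the appropriate Tate twist coming from the identification, using that $X$ has dimension $m$) produces a morphism $H^*(\alpha):H^*(X)\to H^*(Y)$ in $\sC^{(\Z)}$; more precisely, passing through the dual, $\alpha$ acts as the composite built from $\cl$, $\kappa$, the diagonal pullbacks and $\Tr$ exactly as in \eqref{eq1.1}. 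Compatibility of $\cl$ with intersection product, Künneth and pushforward/pullback (axiom (vi)) gives functoriality $H^*(\beta\circ\alpha)=H^*(\beta)\circ H^*(\alpha)$ and $H^*(\id_X)=\id$; this is the standard ``correspondences act on cohomology'' computation, which I would do but not belabor, citing \cite{zetaL} and \cite{kleiman} for the pattern. Since $h(X)=(X,\id,0)$, this extends $H^*$ to $\sM_\rat^\eff$ on objects of the form $(X,\id,0)$, then to arbitrary effective motives $(X,p,0)$ by sending them to $\IM H^*(p)$ — well-defined because $\sC$, hence $\sC^{(\Z)}$, is pseudo-abelian — and finally to all of $\sM_\rat(k,\V)$ by inverting $\uH^*(\L)$, which is legitimate once we know $\uH^*(\L)$ is invertible.

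The second task in a) is to equip this extension with its monoidal data and verify (1)--(3). That $\uH^*$ is a strong additive $\otimes$-functor follows by extending the $\mu=\kappa$, $\eta=\upsilon$ structure along $h$: the product of varieties induces $\otimes$ on $\sM_\rat(k,\V)$, and the Künneth isomorphisms are natural in correspondences (again axiom (vi): $\cl$ commutes with Künneth), so the structure isomorphisms descend to images of projectors and to the localization inverting $\L$. For (1): axiom (iii) gives $H^1(\P^1)=0$ and $\Tr_{\P^1}:H^2(\P^1)\iso L_\sC$; since $h(\P^1)=\un\oplus\L$ and $\uH^*(\un)=\un$ is in degree $0$ by axiom (i), we get $\uH^*(\L)$ concentrated in degree $2$ with $\uH^2(\L)\iso L_\sC$, and this last isomorphism is the required $\Tr$, which is invertible. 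As a byproduct $\uH^*(\L)$ is invertible in $\sC^{(\Z)}$ (its tensor inverse being $\uH^*(\T)$, using $\L\otimes\T=\un$ and that $\uH^*$ is strong monoidal), which retroactively justifies the localization step above. For (2): $\uH^*$ maps a generator $h(X)$ of $\sM_\rat^\eff$ into $\sC^\N$ by axiom (ii), and $\sC^\N\subset\sC^{(\Z)}$ is stable under direct summands and under $\otimes$, so it contains $\uH^*(\sM_\rat^\eff(k,\V))$. For (3): this is exactly Lemma \ref{l3.1} b).

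For part b), conversely, given a strong additive $\otimes$-functor $\uH^*:\sM_\rat(k,\V)\to\sC^{(\Z)}$ with $\Tr$ satisfying (1)--(3), I set $H^i:=(\uH^*\circ h)_i:\V^{op}\to\sC$ (the $i$-th graded piece), define $L_\sC:=\uH^2(\L)$ with the implicit invertibility from (1), take $\kappa$ to be the monoidal structure isomorphism $\mu$ of $\uH^*$ read off in each graded degree (this is the Künneth product, strong by hypothesis and graded-commutative because $\sC^{(\Z)}$ carries the Koszul constraint and $\uH^*$ is symmetric monoidal — this is precisely the content of Definition \ref{d3} and Remark \ref{r2}), take $\upsilon:\un\iso H^0(\Spec k)$ from $\uH^*(\un)=\un$, define $\Tr_X$ for $X$ of pure dimension $n$ via $\uH^*$ applied to the structural morphism $h(X)\to\L^{\otimes n}$ dual to the diagonal (equivalently, the canonical trace $(X,\id,n)\to\un$ in $\sM_\rat(k,\V)$, whose existence uses rigidity of $\sM_\rat(k,\V)$ recalled after Definition \ref{d3.1.1}) composed with the twisted $\Tr$ of (1), and define $\cl^i_X:CH^i(X)_\Q\to\sC(\un,H^{2i}(X)(i))$ by viewing a codimension-$i$ cycle as a morphism $\un=h(\Spec k)\to (X,\id,i)$ in $\sM_\rat(k,\V)$ and applying $\uH^*$. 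Then I check axioms (i)--(vi) one by one: (i), (ii), (iii) are immediate from (1)--(3) and the computation $h(\P^1)=\un\oplus\L$; (iv) is the hypothesis that $\uH^*$ is strong monoidal together with the Koszul symmetry; (v) is the rigidity of $\sM_\rat(k,\V)$ transported by the strong $\otimes$-functor $\uH^*$ (a strong $\otimes$-functor preserves duals), combined with the description of the Poincaré pairing as the image under $\uH^*$ of the evaluation morphism on $h(X)$; (vi) amounts to the fact that the cycle class map, so defined, is functorial in $X$ and compatible with products and intersection because composition and tensor in $\sM_\rat(k,\V)$ realize pushforward, Künneth and intersection product of cycles, all preserved by the $\otimes$-functor $\uH^*$, the commuting square with $\deg$ being the instance of this for $\un\to(X,\id,n)\to\un$.

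The main obstacle I anticipate is bookkeeping the Tate twists and the precise form of the correspondence action in a): getting the degrees to match in the identification $\sC^{(\Z)}(H^*(X),H^*(Y))\simeq\sC(\un,H^{2\dim X}(X\times Y)(\dim X))$ when $X$ is not equidimensional (handled, as in the definitions, by decomposing along connected components / the locally constant dimension function), and verifying that the resulting action is compatible with composition of correspondences with the correct signs dictated by the Koszul constraint — this is where one must be careful, though it is by now a well-understood computation and the paper's earlier setup (Lemma \ref{l1.1}, Remarks \ref{r1}--\ref{r2}) is designed precisely to make it bookkeeping rather than substance. The equivalence between a) and b) being given by mutually inverse constructions requires one more routine check, namely that starting from $(H,\kappa,\Tr,\cl)$, passing to $\uH^*$ and back returns the same data (and vice versa), which is immediate from the constructions since every piece of data is transported transparently.
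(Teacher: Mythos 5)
Your proposal is correct and follows essentially the same route as the paper: the correspondence action via Lemma \ref{l1.1} and the cycle class, extension to $\sM_\rat^\eff(k,\V)$ by pseudo-abelianness and then to all of $\sM_\rat(k,\V)$ by inverting $\uH^*(\L)$, and for the converse the duality $h(X)^\vee\simeq h(X)\otimes\T^{n}$ transported by the strong $\otimes$-functor. The only wobble is your ``retroactive'' justification that $\uH^*(\L)$ is invertible via $\uH^*(\T)$, which is circular since $\T$ is not effective and $\uH^*(\T)$ only exists after the extension; the paper instead deduces invertibility directly from Axiom (iii), which identifies $H^2(\P^1)$ with the invertible object $L_\sC$.
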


\begin{proof} 
a) Using Lemma \ref{l1.1}, the cycle class $\cl^m_{X\times Y}$ yields a homomorphism
\[CH^m(X\times Y)_\Q= \Corr(k,\V)(X,Y)\to \sC^{(\Z)}(H^*(X),H^*(Y)).\]
 One checks as usual that it respects composition of correspondences. Since $\sC$ is  pseudo-abelian, via $\cl$ the functor $H^*:\V^{op}\to \sC^{(\Z)}$ extends to a $\Q$-linear functor 
\[\sM_\rat^\eff(k,\V)\to \sC^{(\Z)}\]
which is symmetric monoidal by Axiom (iv) of Definition \ref{d1.1}.  Conditions (1), (2) and (3) of Proposition \ref{p1.1}  follow from Axioms  (iii) and  (ii) of Definition \ref{d1.1} plus Lemma \ref{l3.1} b). Since, still by Axiom (iii) of Definition \ref{d1.1}, $H^2(\L)$ is invertible, the above functor extends to $\sM_\rat(k,\V)$ as a $\otimes$-functor $\uH^*$ (on objects $M = (X, p, n)\in \sM_\rat(k,\V)$ we have $\uH^i(M)= p_*H^{i+n}(X)$). Moreover, we get the isomorphism $\Tr$ from (c) and (iii).\\

b)  We need to provide the data and check the axioms of Definition \ref{d1.1}. The K\"unneth structure (b) and Axiom (iv) are obtained from the strong monoidality of $\uH^*$ and the equalities $h(X)\otimes h(Y)= h(X\times Y)$. Axiom (i) follows from the unitality of $\uH^*$. The lower bound in Axiom (ii) follows from (2). 

For the sequel, we recall the isomorphism
\begin{equation}\label{eqdual1}
h(X)^\vee\simeq h(X)\otimes \T^{n}
\end{equation}
for any $X\in \V$ of dimension $n$; the unit and counit of this duality are induced by the morphisms
\begin{equation}\label{eqdual2}
\L^n\to h(X)\otimes h(X), \quad h(X)\otimes h(X)\to \L^n 
\end{equation}
both given by the class of the diagonal in $CH^n(X\times X)$. 

 Since $\uH^*$ is symmetric monoidal, we have an isomorphism
\begin{equation}\label{eqdual}
(\uH^*(h(X))^\vee\simeq \uH^*(h(X)^\vee)\end{equation}
for any $X$ of dimension $n$, where $^\vee$ denotes duals in both categories $\sM_\rat(k,\V)$ and $\sC^{(\Z)}$. This isomorphism is obtained by applying $\uH^*$ to \eqref{eqdual2}. 

For Axiom (v), note that 
\[
\uH^*(h(X)\otimes \T^n)\simeq \uH^*(h(X))\otimes \uH^*(\T)^{\otimes n},
\]
hence
\[
\uH^{2n}(h(X))(n)\simeq \uH^*(h(X)\otimes \T^{n})^0
\]
by (1); $\Tr_X$ is then defined by the morphism $h(X)\otimes \T^{n}\to \un$ dual by \eqref{eqdual1} of the ``structural'' morphism $\un\to h(X)$.  The identity $\Tr_X\otimes \Tr_Y=\Tr_{X\times Y}$ follows. Axiom (iii) also follows from (1). Using (3), we get the isomorphism of (v) when $X$ is geometrically connected.

  More generally, we get from \eqref{eqdual1} and \eqref{eqdual}
\[H^{i}(X)^\vee =\uH^{-i}(h(X)^\vee) \simeq \uH^{-i}(h(X)\otimes \T^{n}) = H^{2n-i}(X)(n)\]
which yields the last part of (v), and in particular the upper bound in (ii) by (2).

In (vi), $\cl^i_X$ is induced by the functoriality of $\uH^*$, which sends $CH^i(X)\allowbreak\otimes \Q=\sM_\rat(k,\V)(\un,h(X)\otimes \T^i)$ to $\sC(\un,H^{2i}(X)(i))$; the commutativity of the square also follows from this functoriality and the definition of $\Tr_X$.
\end{proof}

\subsection{An intermediate version: Chow correspondences}

\begin{prop}\label{p1.3} Let us still assume $\sC$ pseudo-abelian. Then a Weil cohomology $(H, \kappa, \Tr, \cl)$  with values in $(\sC,L_\sC)$ is equivalent to a strong additive $\otimes$-functor $H^*:\Corr(k,\V)\to \sC^\N$
together with a map $\Tr : H^2(\P^1)\to L_\sC$, satisfying the following conditions:
\begin{enumerate}
\item $H^1(\P^1)=0$ and $\Tr$ is an isomorphism;
\item if $X$ is geometrically connected, then $\un\allowbreak\to H^0(X)$ is an isomorphism.
\end{enumerate}
\end{prop}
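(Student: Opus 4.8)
The plan is to deduce this from Proposition \ref{p1.1} together with the observation that $\sM_\rat(k,\V)$ is obtained from $\Corr(k,\V)$ by adjoining a tensor-inverse to the Lefschetz object and passing to the pseudo-abelian completion. First I would recall that sending $X$ to the motive $h(X) = (X,\id,0)$ gives a functor $\Corr(k,\V)^{\op}\to \sM_\rat^\eff(k,\V)$ that is full, that $\sM_\rat^\eff(k,\V)$ is the pseudo-abelian completion of (the additive category spanned by) $\Corr(k,\V)$, and that $\sM_\rat(k,\V) = \sM_\rat^\eff(k,\V)[\L^{-1}]$; since $\sC^{(\Z)}$ is pseudo-abelian (as $\sC$ is, by Remark \ref{r3.3}) and since the image of $\L$ under any candidate functor will be invertible by Condition (1), a strong additive $\otimes$-functor $H^*:\Corr(k,\V)\to\sC^{(\Z)}$ satisfying (1)--(3) extends uniquely to a strong additive $\otimes$-functor $\uH^*:\sM_\rat(k,\V)\to\sC^{(\Z)}$ satisfying Conditions (1)--(3) of Proposition \ref{p1.1}, and vice versa; the map $\Tr$ transports between $H^2(\P^1)$ and $\uH^2(\L)$ using $h(\P^1)=\un\oplus\L$ and $H^1(\P^1)=0$.

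Concretely I would proceed in two directions. \emph{From correspondences to a Weil cohomology:} given $(H^*,\Tr)$ as in the statement, extend $H^*$ to $\sM_\rat(k,\V)$ as just described, check that Conditions (1)--(3) of Proposition \ref{p1.1} hold (Condition (2) is immediate from our (2) since $h$ lands in effective motives; Condition (1) from our (1) and $h(\P^1)=\un\oplus\L$; Condition (3) is literally our (3)), and then invoke Proposition \ref{p1.1} b) to obtain a Weil cohomology $(H,\kappa,\Tr,\cl)$ with values in $(\sC,L_\sC)$ whose underlying graded functor $\V^{\op}\to\sC^{(\Z)}$ is $\uH^*\circ h = H^*\circ(\text{graph})$, i.e.\ agrees with the original data after restriction along $\V^{\op}\to\Corr(k,\V)$. \emph{From a Weil cohomology to correspondences:} given $(H,\kappa,\Tr,\cl)$, Proposition \ref{p1.1} a) produces $\uH^*:\sM_\rat(k,\V)\to\sC^{(\Z)}$; restrict along the (essentially surjective up to direct summands) functor $\Corr(k,\V)\to\sM_\rat^\eff(k,\V)\hookrightarrow\sM_\rat(k,\V)$ to get the desired $H^*$ on $\Corr(k,\V)$, and read off Conditions (1)--(3) from Conditions (1)--(3) of Proposition \ref{p1.1}.

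Finally I would verify that these two constructions are mutually inverse, i.e.\ that the passage really is an equivalence of data and not merely a pair of maps. The key point is that a strong additive $\otimes$-functor out of $\Corr(k,\V)$ determines, and is determined by, its extension to $\sM_\rat(k,\V)$: uniqueness of the extension to the pseudo-abelian completion and to the Lefschetz-inverted category is a formal universal property, so the round trips are identities. One must also check that the cycle class map $\cl$ is recovered — but $\cl^i_X$ is, by construction in Proposition \ref{p1.1}, read off from $\uH^*$ applied to $\sM_\rat(k,\V)(\un,h(X)\otimes\T^i) = CH^i(X)_\Q$, hence already encoded in $H^*$ on $\Corr(k,\V)$.

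\emph{Main obstacle.} The genuinely delicate point is that the intermediate category here is $\Corr(k,\V)$, not $\sM_\rat^\eff(k,\V)$: one is asserting that all the axioms of a Weil cohomology can be recovered from a $\otimes$-functor on \emph{correspondences alone}, with no idempotents and no effective motives mentioned. So the work is to see that the functor $\Corr(k,\V)\to\sM_\rat(k,\V)$ is (a) full — so no information about morphisms is lost — and (b) dense in the sense of Definition \ref{d2.2} (every Chow motive is a summand of some $h(X)\otimes\L^{\otimes n}$) — so that, via pseudo-abelianness of $\sC^{(\Z)}$ and invertibility of $H^2(\P^1)$, the extension exists and is unique. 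Granting fullness and density, everything else is a transcription of Proposition \ref{p1.1}; I expect no further surprises, the bookkeeping of the twist $\T^n$ and of $\Tr$ being the only mildly tedious part.
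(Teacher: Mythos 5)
Your proposal is correct and takes essentially the same route as the paper: one direction is restriction along $\Corr(k,\V)\to\sM_\rat(k,\V)$, and the other extends $H^*$ first to $\sM_\rat^\eff(k,\V)$ using pseudo-abelianness and then to $\sM_\rat(k,\V)$ using the invertibility of $H^*(\L)$ granted by Condition (1), after which Proposition \ref{p1.1} applies. The only point the paper treats separately is that the extended functor still lands in $\sC^{(\Z)}$ (via Lemma \ref{l3.3} a)), which your observation that $\sC^{(\Z)}$ is pseudo-abelian also covers.
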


\begin{proof} Let $\uH^*$ verify the conditions of Proposition \ref{p1.1} a). Then its composition with $\Corr(k,\V)\to \sM_\rat(k,\V)$ obviously verifies the conditions of Proposition \ref{p1.3}.

Conversely, let $H$ be as in Proposition \ref{p1.3}. Since $\sC$ is pseudo-abelian, $H$ extends canonically to $\sM_\rat^\eff(k,\V)$, and then to a strong $\otimes$-functor $\sM_\rat(k,\V)\to \sC^{\lfloor\Z\rfloor}$ by Condition (1), where $\sC^{\lfloor\Z\rfloor}$ denotes the $\otimes$-category of $\Z$-indexed objects of $\sC$ with bounded below support. The resulting functor clearly verifies the conditions of Proposition \ref{p1.1} a), except perhaps for the fact that $\uH^*$ takes values in $\sC^{(\Z)}$. This is granted by Lemma \ref{l3.3} a). 
\end{proof}

In the sequel, we shall use the following categories several times.

\begin{defn}\label{d4.2}
Let $\Corr(k,\V)[\L]$ (\resp $\Corr(k,\V)[\L,\L^{-1}]$) denote the strictly full subcategory of $\sM_\rat(k,\V)$ whose objects are the direct sums of $h(X)\otimes \L^n$ for $X\in \V$ and $n\in\N$ (\resp $n\in\Z$). 
\end{defn}

We observe:

\begin{lemma}\label{l4.8} $\Corr(k,\V)[\L]$ and $\Corr(k,\V)[\L,\L^{-1}]$ are strict $\otimes$-sub\-cat\-eg\-ories of $\sM_\rat(k,\V)$, and $\Corr(k,\V)[\L,\L^{-1}]$ is rigid.\qed
\end{lemma}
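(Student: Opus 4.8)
\textbf{Plan of proof for Lemma \ref{l4.8}.}

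The statement is essentially a bookkeeping verification, so the plan is to check the three assertions in turn using only the structure of $\sM_\rat(k,\V)$ recalled above. First, to see that $\Corr(k,\V)[\L]$ and $\Corr(k,\V)[\L,\L^{-1}]$ are closed under $\otimes$ inside $\sM_\rat(k,\V)$: an object of either category is a finite direct sum $\bigoplus_a h(X_a)\otimes \L^{n_a}$; since the tensor product is biadditive, it suffices to check closure on the generating objects $h(X)\otimes \L^n$, and there the equality $h(X)\otimes h(Y)=h(X\times Y)$ (together with $\L^m\otimes\L^n=\L^{m+n}$ and commutativity of $\otimes$) gives $(h(X)\otimes\L^m)\otimes(h(Y)\otimes\L^n)=h(X\times Y)\otimes\L^{m+n}$, which lies in the subcategory because $X\times Y\in\V$ by admissibility of $\V$ and $m+n\in\N$ (resp. $\in\Z$). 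The unit object $\un=h(\Spec k)=h(\Spec k)\otimes\L^0$ also lies in both, using $\Spec k\in\V$. Since the associativity, commutativity and unit constraints of $\sM_\rat(k,\V)$ restrict to these full subcategories, they are strict $\otimes$-subcategories; the word ``strict'' here just records that the inclusion functor is a strict $\otimes$-functor, which is immediate as the monoidal data are literally inherited.

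Second, for rigidity of $\Corr(k,\V)[\L,\L^{-1}]$: recall from the discussion after Definition \ref{d3.1.1} that $\sM_\rat(k,\V)$ is rigid, with $(X,p,n)^\vee=(X,{}^tp,d-n)$ where $d$ is the dimension function of $X$; in particular, for a generator one has $(h(X)\otimes\L^n)^\vee\simeq h(X)^\vee\otimes\L^{-n}\simeq h(X)\otimes\T^{d_X}\otimes\L^{-n}=h(X)\otimes\L^{d_X-n}$ by \eqref{eqdual1} and $\T=\L^{-1}$, where $d_X$ is the (locally constant) dimension of $X$. Since $d_X-n\in\Z$, this dual again lies in $\Corr(k,\V)[\L,\L^{-1}]$; duals of finite direct sums being finite direct sums of duals, the whole category is closed under $(\ )^\vee$, and the evaluation and coevaluation morphisms — being morphisms in $\sM_\rat(k,\V)$ between objects of the subcategory — exhibit each object as dualisable inside the subcategory. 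Hence $\Corr(k,\V)[\L,\L^{-1}]$ is rigid.

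There is essentially no serious obstacle: the only point requiring a word of care is that $\Corr(k,\V)[\L]$ is \emph{not} claimed to be rigid, and indeed it is not, since $h(X)^\vee\simeq h(X)\otimes\L^{-d_X}$ has a negative Tate twist and need not lie in the effective part unless $d_X=0$ — so one must be careful to assert rigidity only for the $[\L,\L^{-1}]$ version. Beyond that, everything follows from biadditivity of $\otimes$, the multiplicativity $h(X)\otimes h(Y)=h(X\times Y)$, admissibility of $\V$, and the explicit duality on $\sM_\rat(k,\V)$; no nontrivial computation is needed, which is why the statement is recorded with a $\qed$ in place of a proof.
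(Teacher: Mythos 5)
Your proof is correct and follows exactly the routine verification the paper leaves to the reader (the lemma is stated with a \qed and no proof): closure of the generators under $\otimes$ via $h(X)\otimes h(Y)=h(X\times Y)$ and admissibility of $\V$, fullness giving the inherited monoidal constraints and the evaluation/coevaluation maps, and the explicit duality $(h(X)\otimes\L^n)^\vee\simeq h(X)\otimes\L^{d_X-n}$ for rigidity of the $[\L,\L^{-1}]$ version. The only cosmetic point is that when $X$ has components of unequal dimension one should split $h(X)$ along connected components (which stay in $\V$ by admissibility) so that the dual is literally a direct sum of generators $h(X_i)\otimes\L^{m_i}$ with integer exponents, but your parenthetical on the locally constant dimension already signals this.
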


In contrast to the  previous conditions, those of Proposition \ref{p1.3} continue to make sense when $\sC$ is not  pseudo-abelian (this hypothesis was inserted to connect with Proposition \ref{p1.1}). Moreover this case reduces to the rigid one as follows: 

\begin{prop}\label{p1.4} Proposition \ref{p1.3} remains valid for any additive $\otimes$-category  $\sC$. Moreover,\\
a) For any $X\in \V$ and any $i\in \N$, $H^i(X)$ is dualisable in $\sC$.\\
b) The smallest strictly full additive $\otimes$-subcategory  $\sD$ of $\sC$ containing the $H^i(X)$ is rigid and contains $L_\sC$.
\end{prop}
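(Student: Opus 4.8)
The statement claims that Proposition \ref{p1.3} remains valid when $\sC$ is merely an additive $\otimes$-category (not pseudo-abelian), and that the $H^i(X)$ for $X\in\V$, $i\in\N$ are dualisable, with the $\otimes$-subcategory they generate being rigid and containing $L_\sC$. The natural strategy is to reduce to the pseudo-abelian case already treated, by passing to the pseudo-abelian completion $\sC^\natural$, which is again an additive $\otimes$-category (the tensor structure carries over canonically, as noted in \S\ref{Term}). First I would observe that a strong additive $\otimes$-functor $H^*:\Corr(k,\V)\to\sC^{(\Z)}$ satisfying conditions (1)--(3) composes with the inclusion $\sC^{(\Z)}\hookrightarrow(\sC^\natural)^{(\Z)}=(\sC^{(\Z)})^\natural$ to give such a functor into $\sC^\natural$; since $\sC^\natural$ is pseudo-abelian, Proposition \ref{p1.3} applies there and produces a Weil cohomology with values in $\sC^\natural$. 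The point is that all the data and axioms of Definition \ref{d1.1} only involve the objects $H^i(X)$ and morphisms among tensor products of them, together with $L_\sC=H^2(\P^1)$ and $\un$, all of which already live in $\sC$ (not just $\sC^\natural$); so the Weil cohomology structure descends to $\sC$. Conversely, a Weil cohomology with values in $\sC$ in the sense of Definition \ref{d1.1} gives one with values in $\sC^\natural$, whose associated functor (by the pseudo-abelian case) restricts back to $\Corr(k,\V)\to\sC^{(\Z)}$ because the functor $h:\V^{op}\to\sM_\rat$ lands in objects of the form $(X,\id,0)$ and $H^*\circ h$ already has image in $\sC^{(\Z)}\subset(\sC^\natural)^{(\Z)}$, while a correspondence acts through morphisms of $\sC$ by the construction in Proposition \ref{p1.1} a).

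For part a), I would argue that in $\sC^\natural$ the object $\uH^*(h(X))$ is dualisable for every $X\in\V$: indeed $h(X)$ is a dualisable (rigid) object of $\sM_\rat(k,\V)$ with dual $h(X)\otimes\T^n$ (formula \eqref{eqdual1}), and $\uH^*$, being a strong $\otimes$-functor, preserves dualisability. Hence each graded piece $H^i(X)$ — which is $\uH^*(h(X))^i$, a direct summand of $\uH^*(h(X))$ — is dualisable in $(\sC^\natural)^{(\Z)}$, so by Lemma \ref{l3.3} b) $H^i(X)$ is dualisable in $\sC^\natural$. The key extra point is that $H^i(X)$ already lies in $\sC$ and its dual, being constructed from the unit/counit maps \eqref{eqdual2} which are images under $\uH^*$ of morphisms coming from the diagonal class — morphisms that are defined in $\sC$ (via $\cl$) — is likewise an object of $\sC$ together with evaluation and coevaluation in $\sC$. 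Thus $H^i(X)$ is dualisable \emph{in $\sC$ itself}.

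For part b), let $\sD\subseteq\sC$ be the smallest strictly full additive $\otimes$-subcategory containing the $H^i(X)$ for $X\in\V$, $i\in\N$. By part a) each generator is dualisable and its dual lies in $\sD$ (being a summand of some $H^j(Y)(m)$, and $L_\sC=H^2(\P^1)$ is among the generators so twists stay in $\sD$); since the class of dualisable objects with duals in a $\otimes$-subcategory is closed under $\otimes$, direct sums and direct summands taken within that subcategory, $\sD$ is rigid. That $L_\sC\in\sD$ is immediate from Axiom (iii): $L_\sC\cong H^2(\P^1)$.

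\textbf{Main obstacle.} The delicate point is part a) — more precisely, verifying that the duality data for $H^i(X)$ genuinely live in $\sC$ and not only in $\sC^\natural$. Dualisability is a property stable under strong $\otimes$-functors, but a priori one only gets it for $\uH^*(h(X))$ in $\sC^\natural$; extracting dualisability of the individual summands $H^i(X)$ requires Lemma \ref{l3.3} b), and then one must check that the projectors cutting out $H^i(X)$, and the resulting evaluation/coevaluation morphisms, are already morphisms of $\sC$. This works because those projectors are the images under $H^*$ (via the cycle class map, which by Definition \ref{d1.1}(d) takes values in morphism groups of $\sC$) of the Künneth projectors, which are algebraic on $\P^1\times\P^1$ and hence on any $X$ after the standard reduction; so everything is visible in $\sC$. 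Once this bookkeeping is done, the rest is formal.
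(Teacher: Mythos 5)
Your proof is correct and follows essentially the same route as the paper: reduce the first claim to the pseudo-abelian case via $\sC^\natural$, get dualisability of $H^*(X)$ from the rigidity of the motivic source category together with the fact that strong $\otimes$-functors preserve dualisable objects, pass to graded components via Lemma \ref{l3.3} b), and deduce b) from a).

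Two remarks. First, your detour through $\sC^\natural$ in part a) is avoidable: the paper instead invokes Lemma \ref{l4.8} ($\Corr(k,\V)[\L,\L^{-1}]$ is rigid) and observes that the extension of $H^*$ to this category still takes values in $\sC^{(\Z)}$ (the only idempotent one needs to split is the one giving $H^*(\P^1)=\un\oplus L_\sC[2]$, which splits by axiom (iii)); then $H^*(X)$ is dualisable in $\sC^{(\Z)}$ itself and Lemma \ref{l3.3} b) applies directly, with no descent step. Second, the closing claim in your ``main obstacle'' paragraph — that the relevant projectors are images of K\"unneth projectors, ``which are algebraic \dots on any $X$'' — is false in general (algebraicity of the K\"unneth projectors is Condition C, Definition \ref{d4.1}) and, fortunately, not needed: each $H^i(X)$ is an object of $\sC$ by definition of the data $(a)$ in Definition \ref{d1.1}, and morphisms in $\sC^{(\Z)}$ are by construction tuples of morphisms of $\sC$ in each degree, so the evaluation and coevaluation for $H^i(X)$ (the degree-$(i,2n-i)$ components of the maps induced by the diagonal class) are automatically morphisms of $\sC$. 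Your earlier justification in the body of part a) — identifying the dual as $H^{2n-i}(X)(n)$ with unit/counit coming from \eqref{eqdual2} via $\cl$ — is the correct one; the projector discussion should simply be deleted.
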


\begin{proof} For the first claim, start from $(\sC,L_\sC)$ and $H$ verifying the hypotheses of Proposition \ref{p1.3}. They remain true when replacing $\sC$ by its pseudo-abelian hull $\sC^\natural$. Applying Proposition \ref{p1.3}, this yields a Weil cohomology as in Definition \ref{d1.1} with values in $\sC^\natural$, which in fact takes values in $\sC$. Same process in the other direction.
In a), $\uH^*(X)$ is dualisable by Lemma \ref{l4.8}, hence the claim follows from Lemma \ref{l3.3} b). Finally b) follows from a).
\end{proof}

\subsection{Adequate equivalences}

\begin{prop} \label{prop:alg} Let $(\sC, H)$ be a normalised Weil cohomology (Definition \ref{d3.4}) and assume that $\V$ contains all curves. 
 Then $H^*: \Corr\to \sC^{(\N)}$ factors through algebraic equivalence. Hence so does the induced functor $\uH^*:\sM_\rat\to (\sC^\natural)^{(\Z)}$. 
\end{prop}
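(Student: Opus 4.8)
The plan is to reduce the statement to the classical fact that algebraic equivalence is generated by specialisations over smooth connected curves, and then to show that such specialisations act as the identity on $H^*$. Concretely, recall that if $\alpha, \beta \in CH^i(X)_\Q$ are algebraically equivalent, there is a smooth projective connected curve $C$, two points $t_0, t_1 \in C(k)$, and a cycle $Z \in CH^i(C\times X)_\Q$ such that $Z_{t_0} = \alpha$ and $Z_{t_1} = \beta$; here $Z_{t}$ denotes the refined Gysin pull-back along $\{t\}\hookrightarrow C$, i.e.\ the composition with the correspondence $[\Gamma_t]\in\Corr(k,\V)(\Spec k, C)$. Since $\V$ is assumed to contain all curves, $C \in \V$, and all these correspondences live in $\Corr(k,\V)$. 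Hence it suffices to prove: for any smooth projective connected curve $C$ and any two $k$-rational points $t_0, t_1$, the two correspondences $[\Gamma_{t_0}], [\Gamma_{t_1}] \in \Corr(k,\V)(\Spec k, C)$ (equivalently the morphisms $\un \to h(C)$, equivalently their transposes $h(C)\to\un$) induce the same map after applying the $\otimes$-functor $H^*$.

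First I would record that the point correspondences factor through $H^0$: by Axiom (ii), $H^*(\Spec k)=\un$ is concentrated in degree $0$, so the map $H^*([\Gamma_{t}])\colon \un \to H^*(C)$ lands in $H^0(C)$, and dually $H^*(C)\to\un$ factors through $H^0(C)$. So the whole question takes place inside the degree-$0$ part, i.e.\ I only need that the two sections $\un\to H^0(C)$ (or the two retractions $H^0(C)\to\un$) induced by $t_0,t_1$ coincide. Now use the normalisation hypothesis (Definition \ref{d3.4}): the structural map $H^0(\pi_0(C))\to H^0(C)$ is an isomorphism. For a geometrically connected curve $\pi_0(C)=\Spec k$ and Lemma \ref{l3.1} b) already gives $\un\iso H^0(C)$ via the structure map $C\to\Spec k$; a rational point $t$ is a section of $C\to\Spec k$, so $H^0(t)$ is the inverse of this isomorphism, independent of $t$. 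For a general curve $C$ one decomposes $C = \coprod C_j$ into connected components (admissibility of $\V$ plus Lemma \ref{l3.1} a) gives $H^*(C)=\bigoplus H^*(C_j)$), replacing $k$ by each scheme of constants; the normalisation axiom is exactly what is needed to say that $H^0(t)$ depends only on the connected component containing $t$, not on $t$ itself. Since $t_0$ and $t_1$ lie on the same connected component of $C$ (the curve supporting the algebraic equivalence may be taken connected), the two induced maps agree.

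Putting these together: the correspondences $[\Gamma_{t_0}]$ and $[\Gamma_{t_1}]$ act identically on $H^*$, hence so does $Z\circ[\Gamma_{t_0}]$ versus $Z\circ[\Gamma_{t_1}]$ for any $Z\in\Corr(k,\V)(C,X)$; since $H^*$ is a $\otimes$-functor and in particular a functor, $H^*(Z\circ[\Gamma_{t_0}]) = H^*(Z)\circ H^*([\Gamma_{t_0}]) = H^*(Z)\circ H^*([\Gamma_{t_1}]) = H^*(Z\circ[\Gamma_{t_1}])$. As algebraic equivalence on $\Corr$ is the finest adequate equivalence for which all such differences $Z_{t_0}-Z_{t_1}$ vanish, $H^*$ factors through $\Corr_{\alg}$. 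The statement for $\uH^*\colon\sM_\rat\to(\sC^\natural)^{(\Z)}$ follows formally: $\sM_{\alg}$ is obtained from $\Corr_{\alg}$ by the same idempotent-completion and Tate-twist procedure used to build $\sM_\rat$ from $\Corr$ (Propositions \ref{p1.1}, \ref{p1.3}), so a $\otimes$-functor on $\Corr_{\alg}$ into the pseudo-abelian rigid category $(\sC^\natural)^{(\Z)}$ extends uniquely, and this extension is $\uH^*$ by uniqueness.

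I expect the only real subtlety to be the bookkeeping with non-geometrically-connected curves and the precise role of the normalisation axiom: one must be careful that ``$t_0$ and $t_1$ on the same component'' is legitimate (it is — algebraic equivalence is by definition witnessed by connected parameter varieties, or one reduces to that case) and that the normalisation isomorphism $H^0(\pi_0(C))\iso H^0(C)$ is compatible with pull-back along a rational point. Everything else — that point correspondences factor through $H^0$, that $H^*$ respects composition, that the extension to motives is automatic — is formal given the axioms and the earlier propositions. So the main obstacle is essentially verifying that the classical ``algebraic equivalence is generated by connected-curve specialisations'' input interacts correctly with the normalisation hypothesis; there is no deep new idea required.
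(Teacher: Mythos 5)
Your overall strategy (reduce to curves, then show that specialisation at two points of a connected curve induces the same map on $H$) is in the right spirit, but as written it has a genuine gap, and it is located exactly where you dismiss the ``only real subtlety''. The paper's proof uses the Weil--Bloch trick: $\alpha\sim_\alg 0$ means $\alpha=\gamma_*(\beta)$ for some $\beta\in\Pic^0(C)_\Q$ and some correspondence $\gamma$ from a curve $C$; it then kills all of $\Pic^0(C)_\Q$ by pushing forward along a finite map $g:C\to \P^1\times\pi_0(C)$ and observing that $g_*$ on $\Hom(\un,H^2(-)(1))$ is an isomorphism \emph{precisely because $H$ is normalised}, while $\Pic^0(\P^1\times\pi_0(C))=0$. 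Your reduction to differences $[t_0]-[t_1]$ with $t_0,t_1\in C(k)$ on a connected curve is not valid over an arbitrary field: a connected curve need not have any $k$-rational point, and $\Pic^0(C)_\Q$ is generated by degree-zero combinations of closed points of varying residue degrees, not by differences of rational points. Tellingly, in your argument the normalisation hypothesis is never actually load-bearing: any connected component containing a $k$-rational point is automatically geometrically connected, so Lemma \ref{l3.1} b) already gives $\un\iso H^0$ there without normalisation. Since the paper includes normalisation as a hypothesis (and states the unconditional version, Proposition \ref{P1} a), only over a separably closed field, where every connected smooth projective variety is geometrically connected), the case your argument silently omits --- non-geometrically-connected components and closed points of degree $>1$ --- is exactly the case the hypothesis is there to handle.

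A secondary issue: the specialisation $Z_t$ is composition with the \emph{covariant} point class $[t]:\L\to h(C)$, so $H$ of it is the cycle class $\un\to H^2(C)(1)$, not a map landing in $H^0(C)$ as you assert ``by Axiom (ii)''. Your reduction to the retraction $H^0(t^*):H^0(C)\to\un$ is salvageable, but it requires identifying $\Hom(\un,H^2(C)(1))$ with $\Hom(H^0(C),\un)$ via Poincar\'e duality and the projection formula (Axiom (v) and Lemma \ref{l5.1}), which you do not invoke. Once you make that identification and work with $\Pic^0(C)_\Q$ rather than pairs of rational points, for geometrically connected $C$ the degree axiom (vi) plus $\Tr_C$ being an isomorphism gives $\cl([t])=\deg(t)\cdot\Tr_C^{-1}(1)$ for every closed point $t$, hence $\cl(\Pic^0(C)_\Q)=0$; for general $C$ you then need normalisation to reduce to $\pi_0(C)$ --- at which point you have essentially reconstructed the paper's argument.
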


\begin{proof} Let $X\in \V$ and $\alpha \in CH^i(X)_\Q=\sM_\rat(k)(\L^i,h(X))$ be algebraically equivalent to $0$: we must show that $H^*(\alpha)=0$. By the Weil-Bloch trick (\cf \cite[proof of Lemma 7.10]{bo}), $\alpha$ is the image of some $\beta\in \Pic^0(C)_\Q$ for some curve $C$ under an algebraic correspondence from $C$ to $X$: this reduces us to $i=1$, $X=C$. Choose a  non constant morphism $f:C\to \P^1$, whence a morphism $g:C\to \P^1\times \pi_0(C)$; we have a commutative  diagram:
\[\begin{CD}
\Pic(C)_\Q@>g_*>> \Pic(\P^1\times \pi_0(C))_\Q\\
@V \cl_W^1 VV @V \cl_W^1 VV \\
\Hom(\un,H^2(C)(1))@>g_*>> \Hom(\un,H^2(\P^1\times \pi_0(C))(1))
\end{CD}\]
where $g_*$ is the action of the transpose of the graph of $g$ viewed as a correspondence as in Propositions \ref{p1.1} and \ref{p1.3}  --- the commutation of the diagram follows from this construction, and the bottom horizontal arrow is an isomorphism by Axioms (v) and (vi) of a Weil cohomology plus the property of being normalised. The result thus follows from the vanishing of $\Pic^0(\P^1\times \pi_0(C))$.
\end{proof}

\begin{rk} \label{rk:tweil}
More is true if $H$ is abelian-valued (Definition \ref{d.ab}): by Lemma \ref{l2.1} c), $\uH^*$ even factors through Voevodsky's smash-nil\-pot\-en\-ce equivalence (which is coarser than algebraic equivalence by \cite{voe}). Here we don't need $\sV$ to contain all curves. \end{rk}

\section{The main theorem}

\subsection{The 2-functor of Weil cohomologies} We still fix an admissible category $\V$ of smooth projective varieties, as in Definition \ref{d3.1}.

\begin{defn}\label{d5.1} Let $\Add^\otimes_*$ be the $2$-category whose 
\begin{itemize}
\item objects are pairs $(\sC,L_\sC)$ where $\sC\in \Add^\otimes$ and $L_\sC$ is an invertible object of $\sC$;
\item $1$-morphisms $(\sC,L_\sC)\to (\sD,L_\sD)$ are pairs $(F,u)$ where $F\in \Add^\otimes(\sC,\sD)$ and $u$ is an isomorphism $F(L_\sC)\iso L_\sD$ (composition: $(G,v)\circ (F,u)=(G\circ F,v\circ G(u))$).
\item $2$-morphisms $\theta:(F,u)\Rightarrow (F',u')$ are $2$-morphisms $\theta:F\Rightarrow F'$ in $\Add^\otimes$ such that $u=u'\circ \theta_{L_\sC}$.
\end{itemize}
We define $\Add^\rig_*$ and $\Ex^\rig_*$ similarly, replacing $\Add^\otimes$ by $\Add^\rig$ or $\Ex^\rig$.
\end{defn}

This is the same as the Tate $\Q$-pretensor categories of \cite[p. 4]{os}.

\begin{defn}\label{d5.2} Let $(\sC,L_\sC)\in \Add^\otimes_*$.  We denote by  $\Weil(k,\V; \sC,L_\sC)$ the category whose objects $(H^*,  \Tr)$  are as in Proposition \ref{p1.3} (see also Proposition \ref{p1.4}), \ie $H^*: \Corr (k, \V)\to \sC^{(\N)}$. 
A morphism $\phi:(H^*,\Tr)\to ({H'}^*,\Tr')$ in
 $\Weil(k,\V; \sC,L_\sC)$ is a graded natural transformation $\phi:H^*\Rightarrow {H'}^*$ such that $\Tr=\Tr'\circ \phi_{\P^1}$. 
\end{defn} 

\begin{lemma}\label{l4.2} The category $\Weil(k,\V; \sC,L_\sC)$ is a groupoid.
\end{lemma}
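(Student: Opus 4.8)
The claim is that every morphism $\phi\colon (H^*,\Tr)\to ({H'}^*,\Tr')$ in $\Weil(k,\V;\sC,L_\sC)$ is an isomorphism, i.e.\ the componentwise natural transformation $\phi_X\colon H^*(X)\to {H'}^*(X)$ (for $X\in\V$, equivalently for objects of $\Corr(k,\V)$) is invertible for each $X$. First I would reduce to checking this on the generators $h(X)$ with $X\in\V$ geometrically connected, using Lemma \ref{l3.1} a): since $H^*$ and ${H'}^*$ send coproducts to direct sums, $\phi$ is an isomorphism on $h(X\coprod Y)$ iff it is on $h(X)$ and $h(Y)$, and $\phi$ is automatically natural in the correspondences, so it suffices to invert $\phi_X$ as a morphism in $\sC^{(\N)}$ for $X$ geometrically connected of pure dimension $n$.

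**Key steps.** Fix such an $X$. The data $H^*(X)$ and ${H'}^*(X)$ are dualisable objects of $\sC^{(\N)}$ (Proposition \ref{p1.4} a)), and the Poincaré pairing of Axiom (v) exhibits, in each degree $i$, a perfect duality between $H^i(X)$ and $H^{2n-i}(X)(n)$ (and likewise for $H'$). The strategy is: (1) On $\L=(\Spec k,\id,-1)$, hence on $H^2(\P^1)$, the morphism $\phi$ is forced to be the isomorphism $\Tr'^{-1}\circ\Tr\colon H^2(\P^1)\iso L_\sC\iso {H'}^2(\P^1)$ by the compatibility $\Tr=\Tr'\circ\phi_{\P^1}$ together with Axiom (iii); so $\phi$ is invertible on $\un$ (Axiom (i)) and on all Tate twists. (2) Naturality of $\phi$ with respect to the structural morphism $\un\to h(X)$ and its dual $h(X)\otimes\T^n\to\un$ (the morphisms \eqref{eqdual2}), together with the compatibility of $\phi$ with the Künneth isomorphisms $\kappa$ of Axiom (iv), shows that $\phi_X$ is compatible with the counit of the duality $H^{2n-i}(X)(n)\simeq H^i(X)^\vee$. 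Concretely: $\phi$ being a $\otimes$-natural-type transformation between strong monoidal functors, it is automatically compatible with passing to duals, so $\phi_{h(X)^\vee}=(\phi_{h(X)}^\vee)^{-1}$ — and since $h(X)^\vee\simeq h(X)\otimes\T^n$, invertibility of $\phi$ on $h(X)\otimes\T^n$ is equivalent to invertibility on $h(X)$; this is circular unless one pins down one more piece. (3) The honest input is the cycle class map: for $X$ geometrically connected, $\phi_X^0\colon H^0(X)\to {H'}^0(X)$ is the identity of $\un$ under the isomorphisms of condition (3) in Proposition \ref{p1.3} (the structural morphism is natural), hence invertible in degree $0$; dually, by Poincaré duality and compatibility of $\phi$ with duals and traces (condition $\Tr=\Tr'\circ\phi$), $\phi_X^{2n}$ is invertible. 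For intermediate degrees one pushes through a resolution of the diagonal: $h(X)$ is a summand, in $\sM_\rat$, of motives of products of curves and points in enough cases — but in general $\V$ need not contain curves, so instead I would argue that the pairing \eqref{eq1.1} identifies $\phi_X^i$ with the transpose-inverse of $\phi_X^{2n-i}$, and then invoke Lemma \ref{l1.1}: $\phi$ acts on $\sC^{(\Z)}(H^*(X),H^*(Y))\simeq \sC(\un,H^{2n}(X\times Y)(n))$ compatibly with composition, and since $\phi$ is invertible on $\un$ and (by the Künneth formula) respects the monoidal structure, the induced map on these Hom-groups is a bijection; applying this with $Y=X$ and tracking the identity correspondence forces $\phi_X$ to be invertible.

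**Cleanest route / main obstacle.** Honestly the slick proof is: $\phi$ is a morphism of \emph{strong} additive $\otimes$-functors $\Corr(k,\V)\to\sC^{(\N)}$ (Definition \ref{d5.2}), and a $\otimes$-natural transformation between strong monoidal functors with values in (the dualisable part of) a rigid — or at least locally-rigid — target is automatically invertible: on a dualisable object $M$, $\phi_M$ has inverse $(\phi_{M^\vee})^\vee$ up to the canonical identifications, precisely because $\phi$ commutes with unit/counit of duality. By Proposition \ref{p1.4} b), every $H^i(X)$ lies in a rigid $\otimes$-subcategory $\sD\subseteq\sC$, so $H^*(M)$ is dualisable in $\sC^{(\Z)}$ for every $M\in\Corr(k,\V)$; hence each $\phi_M$ is invertible, and $\phi$ is an isomorphism. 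Thus $\Weil(k,\V;\sC,L_\sC)$ is a groupoid. The one point needing care — and the main obstacle — is checking that $\phi$ genuinely is a $\otimes$-natural transformation (compatible with the Künneth structure maps $\kappa$ and with $\upsilon$), not merely a graded natural transformation: this has to be extracted from the definition of a morphism in $\Weil$ together with the functoriality of the Künneth isomorphisms, and from the compatibility $\Tr=\Tr'\circ\phi_{\P^1}$ which rigidifies the unit/Tate-object normalisation. Once that is in hand, rigidity of $\sD$ does the rest.
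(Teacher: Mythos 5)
Your final paragraph is exactly the paper's proof: by Propositions \ref{p1.3}--\ref{p1.4}, $H^*$ and ${H'}^*$ correspond to strong additive $\otimes$-functors on the rigid category $\Corr(k,\V)[\L,\L^{-1}]$, and a $\otimes$-natural transformation between such functors is automatically invertible on dualisable objects (the paper cites Saavedra, Prop.\ I.5.2.3, with precisely your remark that rigidity of the target is not needed because images of dualisable objects are dualisable). The degree-by-degree attempts in your first two paragraphs are unnecessary and, as you note yourself, partly circular, but the ``cleanest route'' you settle on is the correct and intended argument.
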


\begin{proof} This amounts to saying that any morphism $\phi:(H^*,\Tr)\to ({H'}^*,\Tr')$ as above is invertible. By Propositions \ref{p1.3} and \ref{p1.4}, $H^*$ and ${H'}^*$ correspond to strong additive $\otimes$-functors $\Corr(k,\V)[\L,\L^{-1}]\to \sC^{(\Z)}$, where $\Corr(k,\V)[\L,\L^{-1}]$ is as in Definition \ref{d4.2}. Since it is rigid (Lemma \ref{l4.8}), the statement follows from \cite[Prop. I.5.2.3]{saa}.\footnote{Since the image of a dualisable object by a $\otimes$-functor is dualisable, the hypothesis in \loccit that the target category be rigid is not needed.}
\end{proof}

\begin{cons}\label{cons1}  Definition \ref{d5.2} provides a strict $2$-functor
\begin{equation}\label{eq5.10}
\Weil(k,\V; -): \Add^\otimes_*\to \Cat.
\end{equation}
\end{cons}

\begin{proof}[Detailed definition] 1) The $2$-functor is given on objects by Definition \ref{d5.2}.

2) Let $(F,u):(\sC,L_\sC)\to (\sD,L_\sD)\in \Add^\otimes_*$ be a $1$-morphism. We define  a ``push-forward''  functor 
\begin{equation}\label{eq.fu}
(F,u)_*:\Weil(k,\V; \sC,L_\sC)\allowbreak\to \Weil(k,\V; \sD,L_\sD)
\end{equation}
as follows:
\begin{itemize}
\item\emph{On objects}: $(F,u)_*(H^*,\Tr) =(F^{(\N)}\circ H^*,  u\circ F(\Tr))$.
\item\emph{On morphisms}: for $\phi:(H^*,\Tr)\to ({H'}^*,\Tr')$, $(F,u)_* \phi=F*\phi$.
\end{itemize}

By definition, we have $(G,v)_*\circ (F,u)_* = ((G,v)\circ (F,u))_*$. (This is the meaning of ``strict'').

In the sequel, we shall use this construction repeatedly; we abbreviate it to $H'=F_*H$ and simply say that $H'$ is the push-forward of $H$ by $F$.

3) Let $\theta:(F,u)\Rightarrow (F',u')$ be a $2$-morphism. For any $(H^*,\Tr)\in\Weil(k,\V; \sC,L_\sC)$, the natural transformation $\theta*H^*$ defines a morphism $\theta_*:(F,u)_*(H^*,\Tr)\to (F',u')_*(H^*,\Tr)$ (immediate verification).

Checking the axioms of a $2$-functor is trivial.\end{proof}

\begin{exs}\label{ex5.1}
a) \emph{Extension of scalars}. Let $(\sC,L_\sC)\in \Add^\otimes_*$, $R=Z(\sC)$ and $f:R\to S$ be a homomorphism to a commutative ring $S$. Then $(S\otimes_R \sC,L_\sC)$ is an object of $\Add^\otimes_*$ and extension of scalars $E$ defines a $1$-morphism $(E,1_{L_\sC}):(\sC,L_\sC)\to (S\otimes_R \sC,L_\sC)$.\\
b) \emph{Comparison isomorphisms}.
Let $k$ be a subfield of $\C$;  here, $\V=\Sm^\proj(k)$. Consider the category $\sH$ of pure, polarisable $\Q$-Hodge structures provided with $L_\sH:= \Q(-1)$. Then $(\sH,L_\sH)\in \Add^\otimes_*$ and we have a Weil cohomology $(H_B^*,\Tr_B)\in \Weil(k;\sH,L_\sH)$ given by Betti cohomology. We also have a forgetful functor $\iota_\sH:\sH\to \Vec_\Q$ forgetting the Hodge structure. On the other hand, let $\ell$ be a prime number, and let $\sR_\ell$ be the category of $\Q_\ell$-adic representations of $Gal(\bar k/k)$, where $\bar k$ is the algebraic closure of $k$ into $\C$: if $L_{\sR_\ell}:=\Q_\ell(-1)$, $\ell$-adic cohomology gives a Weil cohomology $(H_\ell^*,\Tr_\ell)\in \Weil(k;\sR_\ell,L_{\sR_\ell})$. We also have a forgetful functor $\iota_\ell:\sR_\ell\to \Vec_{\Q_\ell}$. Then Artin's comparison theorem gives an isomorphism
\[(\iota_\ell)_*(H_\ell^*,\Tr_\ell)\simeq \Q_\ell\otimes_\Q (\iota_\sH)_*(H_B^*,\Tr_B) \]
 in $\Weil(k;\Vec_{\Q_\ell},\iota_\ell(L_{\sR_\ell}))$, thanks to the isomorphism $\iota_\ell(\Q_\ell(-1))\allowbreak\simeq \Q_\ell\otimes_\Q \iota_\sH(\Q(-1))$ given by the exponential.

We leave it to the reader to refomulate the de Rham-Betti isomorphism in the same fashion, using the compatibility of $d$ and $d\log$ via the exponential.
\end{exs}

\begin{rk}\label{r4.1} We may reformulate Construction \ref{cons1} by defining a $2$-category $\Weil(k,\V)$ whose objects are those of $\Weil(k,\V; \sC,L_\sC)$ for varying $(\sC,L_\sC)$, provided with a $2$-functor $\Weil(k,\V)\to \Add^\otimes_*$ which is ``$2$-cofibred'' in a sense generalising \cite[\S 6]{SGA1}; the push-forward functors of Construction \ref{cons1} play the rôle of cocartesian morphisms. Details are left to the reader.
\end{rk}
\bigskip

\subsection{The representability theorem}
\begin{thm}\label{thm2}
The $2$-functor  $\Weil(k,\V, -)$ is strictly $2$-represent\-able.  A representing object is called a \emph{universal Weil cohomology} (relative to $\V$) and is denoted by 
\[W_\V^*:\Corr(k,\V)\to \weil(k,\V)^{(\N)},\quad \Tr_W:W_\V^2(\P^1)\iso \L_W\] 
(where $(\weil(k,\V),\L_W)\in \Add^\otimes_*$).
\end{thm}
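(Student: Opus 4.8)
The plan is to prove the $2$-representability of $\Weil(k,\V;-)$ by an explicit construction by generators and relations, using Theorem \ref{thm1m} as the main engine. The key observation is that, by Proposition \ref{p1.3} together with Remark \ref{r2}, an object of $\Weil(k,\V;\sC,L_\sC)$ amounts to a strong additive $\otimes$-functor $\Corr(k,\V)^{(\N)}\to \sC$ (with the Koszul constraint, where $L_\sC$ is the image of $h(\P^1)$ in degree $2$) subject to the three conditions (1)--(3) of Proposition \ref{p1.3}: vanishing of $H^1(\P^1)$, invertibility of $\Tr$, and the unit condition for geometrically connected $X$; plus the target-grading condition $\IM H^*\subset \sC^{\N}$. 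So I would first produce a universal strong additive $\otimes$-functor out of $\Corr(k,\V)$ with \emph{no} conditions, then impose the conditions as successive localisations/quotients within $\Add^\otimes_*$, each of which is elementary to check as a universal construction because the conditions are of the form ``such-and-such morphism becomes invertible'' or ``such-and-such morphism becomes zero''.

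Concretely, first I would apply Theorem \ref{thm1m} to the $\otimes$-category $\Corr(k,\V)\times \N$ (this is ``the key new idea'' flagged in the introduction): this yields a lax $\otimes$-functor $i:\Corr(k,\V)\times\N\to (\Corr(k,\V)\times\N)^\kappa$ which is universal among lax $\otimes$-functors, hence — after passing to the relative additive completion of Proposition \ref{p4.3t} and Proposition \ref{p4.3}, i.e.\ ${\rm Add}^\otimes$ applied to it so that it becomes additive on $\Corr(k,\V)$ — universal among additive lax $\otimes$-functors out of $\Corr(k,\V)\times\N$ to additive $\otimes$-categories. By Remark \ref{r1} such a functor is the same datum as an $\N$-graded external product $H=\{H^i\}$ on $\Corr(k,\V)$, and by Remark \ref{r2} adding the Koszul sign constraint on the $\N$-factor makes it a Künneth product in the sense of Definition \ref{d3}. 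This produces an object $\sW_0$ of $\Add^\otimes_*$ (with $L_{\sW_0}$ the image of $h(\P^1)$ placed in degree $2$) and a universal ``pre-Weil cohomology'' $W_0^*$, representing the $2$-functor of additive Künneth products with no extra axioms.

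Next I would impose the remaining axioms one at a time by passing to quotients and localisations, each step being justified by Proposition \ref{p4.3t}, Lemma \ref{l1s2.4}, Lemma \ref{l2s2.4}, or Proposition \ref{s2.4} (universal inversion of a set of morphisms in $\Add^\otimes$): (a) kill the object $W_0^1(\P^1)$, equivalently kill $\id_{W_0^1(\P^1)}$; (b) invert the trace morphism $W_0^2(\P^1)\to L_{\sW_0}$ — but since $L_{\sW_0}$ is already invertible this amounts to requiring $W_0^2(\P^1)$ invertible, so really I only need step (a) together with declaring $\Tr_W$ itself to be the structural iso; (c) for each geometrically connected $X\in\V$, invert the canonical morphism $\un\to W_0^0(X)$; (d) ensure the grading has support in $\N$, which is automatic from the construction over $\Corr(k,\V)\times\N$. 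At each stage one checks that the resulting pair still lies in $\Add^\otimes_*$ (the distinguished invertible object survives) and that the universal property composes correctly: a $1$-morphism $(F,u)$ out of the quotient is exactly a $1$-morphism out of the previous stage annihilating/inverting the prescribed morphisms, which by Proposition \ref{p1.3} is exactly an object of $\Weil(k,\V;\sC,L_\sC)$. Since $\Weil(k,\V;\sC,L_\sC)$ is a groupoid (Lemma \ref{l4.2}) and the constructions of Proposition \ref{p4.3t} etc.\ are strictly functorial, one gets a \emph{strict} $2$-representation; call the final object $(\weil(k,\V),\L_W)$ with universal cohomology $W_\V^*$.

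The main obstacle, I expect, is bookkeeping rather than a conceptual difficulty: one must check that the various universal constructions can be performed \emph{in the $2$-category $\Add^\otimes_*$ of pairs} (keeping track of the distinguished invertible object and its prescribed image), that they compose to give a single strictly $2$-representable functor, and — more delicately — that the target-grading condition $\IM H^*\subset \sC^{(\N)}$ (as opposed to $\sC^{\Z}$ or all coproduct-completions) is correctly built in; this is where working over $\Corr(k,\V)\times\N$ rather than $\Corr(k,\V)\times\Z$ matters, and where Lemma \ref{l3.3}(a) and Remark \ref{r1}'s discussion of $\sC^{(M)}$ versus $\sC^{M}$ must be invoked to see that one does not accidentally force infinite coproducts to exist in $\sC$. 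A secondary subtlety is the interaction of the Koszul constraint with the additive completion and with killing $W_0^1(\P^1)$: one must make sure the sign conventions are preserved throughout, but this is routine given Remarks \ref{r2}--\ref{r3.3}.
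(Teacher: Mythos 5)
Your overall strategy is the same as the paper's: apply Theorem \ref{thm1m} to $\Corr(k,\V)\times\N$, pass to the relative additive completion of Proposition \ref{p4.3t}, and then impose the axioms of Proposition \ref{p1.3} by universally inverting or killing a set of morphisms via Proposition \ref{s2.4}. But there is a genuine gap in how you produce the distinguished invertible object. You declare $L_{\sW_0}$ to be ``the image of $h(\P^1)$ placed in degree $2$'' and then, in your step (b), argue that since ``$L_{\sW_0}$ is already invertible'' it suffices to take $\Tr_W$ to be the structural isomorphism. This is circular: in the freely generated category nothing makes $W_0^2(\P^1)$ $\otimes$-invertible, so the pair you exhibit is not an object of $\Add^\otimes_*$, and the ``universal'' object does not satisfy condition (1) of Proposition \ref{p1.3} (which requires $\Tr$ to be an isomorphism onto an \emph{invertible} object). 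The paper's proof has an extra, essential final step: one formally $\otimes$-inverts $W^{\eff}(\P^1,2)$, and — this is a real mathematical point, not bookkeeping — one must check that the localised category remains symmetric monoidal. This is done via Voevodsky's criterion (the symmetry $c_{\L,\L}$ is the identity), using the factorisation of $W^{\eff}$ through $\Corr(k,\V)[\L]\times\N$. Only after this inversion can one set $\L_W:=W(\P^1,2)$ and take $\Tr_W$ tautological.

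A secondary omission: your explicit list (a)--(d) of morphisms to invert or kill does not include the Künneth morphisms $\delta=\sum\boxtimes^{i,j}_{X,Y}$ and the unit morphism $\upsilon$, which must be inverted to pass from the lax (external-product) structure given by Theorem \ref{thm1m} to the strong one required by Axiom (iv), nor the maps $W^{\add}(\un,i)\to 0$ for $i>0$ and $W^{\add}(\P^1,i)\to 0$ for $i\ne 0,2$, all of which the paper puts into the localising set $S$. These are easy to add, but as written your intermediate object $\sW_0$ does not yet represent Künneth products in the strong sense, and the subsequent steps do not repair this.
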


Let us specify the meaning of strictly $2$-representable here: $W_\sV^*$ induces an isomorphism of categories
\begin{equation} \label{eq5.9}
\Add_*^\otimes((\weil (k,\V),\L_W),
(\sC,L_\sC) )\iso\Weil(k,\V; \sC,L_\sC).
\end{equation}

Explicitly, for $(\sC,L_\sC)\in \Add_*^\otimes$ and  $(H^*, \Tr)\in \Weil(k,\sV; \sC,L_\sC)$, there exists a unique 1-morphism $(F_H, u_H) :
(\weil(k,\V), \L_W)\to (\sC,L_\sC)$  of $\Add_*^\otimes$ such that the induced diagram
\begin{equation}\label{eq4.2}
\begin{gathered}
\xymatrix{\Corr(k,\V)\ar[r]^{W_\V^*} \ar[d]_{H^*}& \weil(k,\V)^{(\N)}
\ar[dl]^{F_H^{(\N)}} \\ \sC^{(\N)}}
\end{gathered}
\end{equation}
strictly commutes. Moreover, if $F,F'\in \Add_*^\otimes((\weil (k,\V),\L_W)$ and $H=F_*W_\sV$, $H'=F'_*W_\sV$ as in \eqref{eq.fu}, 
any morphism $\phi:H\Rightarrow H'$ as in Definition \ref{d5.2} extends uniquely to a morphism $F\Rightarrow F'$.

\begin{cor} \label{cor:thm2}
Theorem \ref{thm2} remains true if we compose $\Weil(k,-)$ with the inclusions $\Add^\rig_*\subset \Add^\otimes_*$ and $\Ex^\rig_*\subset \Add^\rig_*$.  In particular, $(\weil(k,\V),\L_W)\in \Add^\rig_*$. A representing object for the second universal problem is called a \emph{universal abelian Weil cohomology} (relative to $\V$) and is denoted by 
\[W_{\V,\ab}^*:\Corr(k,\V)\to \weil_\ab(k,\V)^{(\N)},\quad \Tr_{W,\ab}:W_{\V,\ab}^2(\P^1)\iso \L_{W,\ab}\] 
(where $(\weil_\ab(k,\V),\L_{W,\ab})\in \Ex^\rig_*$). We have $\weil_\ab(k,\V)=T(\weil(k,\V))$.
\end{cor}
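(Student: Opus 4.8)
The plan is to deduce Theorem \ref{thm2} and Corollary \ref{cor:thm2} from the universal construction of Theorem \ref{thm1m} applied to a cleverly chosen $\otimes$-category, combined with a sequence of relative completions and localisations from \S\ref{s4.5} and \S\ref{Not}, then refine to the rigid and abelian cases. Concretely, recall from Proposition \ref{p1.3} and Remark \ref{r2}(2) that a Weil cohomology $(H^*,\Tr)$ with values in $(\sC,L_\sC)$ is the same datum as a strong additive $\otimes$-functor $\fake{\Corr(k,\V)^{(\Z)}}\to \sC$ (with the Koszul constraint, via Notation \ref{n3.1}) together with an isomorphism of its restriction along a chosen object with $L_\sC$, subject to the closed conditions (1)--(3) of Proposition \ref{p1.3}. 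The key new idea, as announced in \S\ref{s1.1a}, is to present this using Theorem \ref{thm1m} on the $\otimes$-category $\Corr(k,\V)\times\N$: applying $(-)^\kappa$ turns a lax $\otimes$-functor into a strict one, so lax $\otimes$-functors out of $\Corr(k,\V)\times\N$ — equivalently $\N$-graded external products on $\Corr(k,\V)$ in the sense of Definition \ref{d1} — correspond to strict $\otimes$-functors out of $(\Corr(k,\V)\times\N)^\kappa$.

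The steps, in order, would be: \textbf{(i)} form $(\Corr(k,\V)\times\N)^\kappa$ from Theorem \ref{thm1m}; \textbf{(ii)} apply the relative additive completion $\mathrm{Add}^\otimes(-)$ of Proposition \ref{p4.3t} along $\Corr(k,\V)\to(\Corr(k,\V)\times\N)^\kappa$ so that the $\Q$-linear structure on $\Corr(k,\V)$ is respected — this produces an additive $\otimes$-category receiving a strong additive $\otimes$-functor from $\Corr(k,\V)^{(\N)}$ whose composite with $\Corr(k,\V)$ is the canonical (additive, $\Q$-linear) functor; \textbf{(iii)} force the Künneth formula, i.e. strongness of the external product, by inverting (using Proposition \ref{s2.4}) the canonical comparison morphisms $\coprod_{i+j=k}H^i(X)\otimes H^j(Y)\to H^k(X\times Y)$ — note these maps make sense here because everything is additive; \textbf{(iv)} impose the remaining conditions (1)--(3) of Proposition \ref{p1.3} as further localisations or quotients: adjoin an inverse to $\Tr$, kill $H^1(\P^1)$ and the morphism $\un\to H^0(X)^{\mathrm{coker}}$ for $X$ geometrically connected (each is the vanishing of a specified morphism, handled by passing to the quotient by the $\otimes$-ideal it generates, as in \S\ref{s4.5} style arguments), and equip the result with the distinguished invertible object $\L_W:=W^2(\P^1)$; \textbf{(v)} apply the Koszul sign change of Notation \ref{n3.1} where needed. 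At each stage the universal property of the construction used gives exactly the bijection \eqref{eq5.9}, and composing all these universal properties yields that $(\weil(k,\V),\L_W)$ together with $W_\V^*$ represents $\Weil(k,\V;-)$ strictly; the $2$-categorical part (uniqueness of the extension of a morphism $\phi$) follows because, after inverting $\Tr$, the functors factor through the rigid category $\Corr(k,\V)[\L,\L^{-1}]$, so $\phi$ is forced by Lemma \ref{l4.2} / \cite[Prop. I.5.2.3]{saa} and by initiality at each step.

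For Corollary \ref{cor:thm2}: rigidity of $\weil(k,\V)$ follows from Proposition \ref{p1.4}(b) — the subcategory generated by the $W^i(X)$ is rigid and, by construction, it is all of $\weil(k,\V)$ — or directly from the fact that $\Corr(k,\V)[\L,\L^{-1}]$ is rigid (Lemma \ref{l4.8}) and rigidity is inherited through the additive completion and localisations; hence $\Weil(k,\V;-)$ restricted to $\Add^\rig_*$ is represented by the same object. For the abelian case, apply the $2$-functor $T$ of Lemma \ref{bvk}: since $T(\sC)$ $2$-represents $\sA\mapsto\Add^\rig(\sC,\sA)$ on $\Ex^\rig$, precomposing \eqref{eq5.9} with $T$ gives that $T(\weil(k,\V))$ represents $\Ex^\rig_*\ni(\sA,L_\sA)\mapsto\Weil(k,\V;\sA,L_\sA)$, so one sets $\weil_\ab(k,\V):=T(\weil(k,\V))$, $W^*_{\V,\ab}:=\lambda_{\weil(k,\V)}^{(\N)}\circ W^*_\V$, and $\L_{W,\ab}:=\lambda_{\weil(k,\V)}(\L_W)$ (which is still invertible since $\lambda$ is a $\otimes$-functor); the isomorphism $\Tr_{W,\ab}$ is the image of $\Tr_W$.

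The main obstacle I expect is step \textbf{(iv)} combined with tracking strictness: one must verify that imposing conditions (1)--(3) by quotients/localisations is compatible with the already-built $\otimes$ and additive structure (Proposition \ref{s2.4} handles the monoidal localisation, but the relations in conditions (1)--(3) involve objects like $\pi_0(X)$ and geometric connectedness, so one should check the generated $\otimes$-ideals behave well), and that the composite of all these universal properties really yields a \emph{strict} $2$-representation giving an \emph{isomorphism} of categories in \eqref{eq5.9} rather than merely an equivalence — this rests on each intermediate construction (Theorems \ref{thm1m}, Propositions \ref{p4.3t}, \ref{s2.4}) being stated with strict universal properties and being composable on the nose, plus the groupoid statement Lemma \ref{l4.2} to pin down the $2$-cells.
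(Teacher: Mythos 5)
Your treatment of the corollary itself coincides with the paper's: rigidity of $\weil(k,\V)$ is obtained from Proposition \ref{p1.4} b) applied to the universal object (the paper makes your ``by construction it is all of $\weil(k,\V)$'' precise by using the $2$-universal property to produce a retraction onto the rigid subcategory $\sD$ and then invoking strict fullness), and the abelian case is handled by pushing $W_\sV$ forward along $\lambda_{\weil(k,\V)}$ and composing the universal properties of $\weil(k,\V)$ and of $T$ from Lemma \ref{bvk}. The long preliminary portion of your argument simply retraces the paper's proof of Theorem \ref{thm2}, which the corollary takes as given.
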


\begin{proof} Let $\sD\subseteq \weil(k,\V)$ be as in Proposition \ref{p1.4} b) (where we take $\sC=\weil(k,\V)$): note that $L_\sD:=\L_W\in \sD$. Since $W_\sV^*$ takes its values in $\sD^\N$, by the $2$-universal property of $\weil(k,\V)$ there exists a functor $(F,u):(\weil(k,\V),\L_W) \to (\sD,L_\sD)$ in $\Add^\otimes_*$ such that $F^{(\N)}\circ W_\V^*\simeq H^*$, whose composition with the inclusion  $(\sD,L_\sD)\subseteq (\weil(k,\V),\L_W)$ is isomorphic to $\Id_{(\weil(k,\V),\L_W)}$. Since $\sD\subseteq \weil(k,\V)$ is strictly full, this implies that $\sD=\weil(k,\V)$. So $\weil(k,\V)\in \Add^\rig$. 

Let $W_{\sV,\ab}$ be the push-forward of $W_\sV$, in the sense of \eqref{eq.fu}, by the functor $\lambda_{\weil(k,\V)}$ of Lemma \ref{bvk}:  it is clear that $(T(\weil(k,\V)),W_{\sV,\ab})$ has the desired universal property.
\end{proof}

\begin{proof}[Proof of Theorem \ref{thm2}]  To construct a universal Weil cohomology,  we use version (2) in Remarks \ref{r1} and \ref{r2}: namely, we start from the $\otimes$-category $\Corr(k,\V)\times \N$ that we modify step by step until we obtain a $\otimes$-category $\weil(k,\V)$ and a $\otimes$-functor $\Corr(k,\V)\times \N\to \weil(k,\V)$ with the correct properties.

For simplicity, we abbreviate $\Corr(k,\V)$ to $\Corr$ and $\weil(k,\V)$ to $\weil$ in this proof. By Theorem \ref{thm1m} applied to $\Corr\times \N$, we first obtain  
$$ (W^\flat,\boxtimes, \upsilon) : \Corr\times \N\to (\Corr\times \N)^\kappa$$ where $(\Corr\times \N)^\kappa$ is the universal $\otimes$-category together with 
\[\boxtimes_{X,Y}^{i,j}: W^\flat(X,i)\otimes W^\flat(Y,j)\to W^\flat(X\times Y,i+j)\]
the induced $\N$-graded unital external product for $X,Y\in \Corr$, $i,j\in \N$, 
and 
\[\upsilon: \omega\to W^\flat(\un,0)\]
 where $\omega$ is the unit of $(\Corr\times \N)^\kappa$. Let $(\Corr\times \N)^{\kappa,\add}\allowbreak:={\rm Add}^\otimes(W^\flat)$, see Proposition \ref{p4.3t}; thus the functor $(\Corr\times \N)^\kappa\to (\Corr\times \N)^{\kappa,\add}$ is  a strong $\otimes$-functor and its composition $W^\add$ with $W^\flat$ is additive.  Whence morphisms
\[\delta= \sum\boxtimes_{X,Y}^{i,j}: \bigoplus_{i+j=k} W^\add(X,i)\otimes W^\add(Y,j)\to W^\add(X\times Y,k)\]
 in $(\Corr\times \N)^{\kappa,\add}$, for $i,j,k\in \N$.

Now we need to impose the Künneth formula (Definition \ref{d3}) and the conditions of Proposition \ref{p1.3}. Consider the set $S$ of morphisms of $(\Corr\times \N)^{\kappa,\add}$ given by $\delta$, $\upsilon$ and $W^\add(\un,i)\to 0$ for $i> 0$, $W^\add(\P^1,i)\allowbreak\to 0$ for $i\neq 0, 2$ and $W^\add(\un ,0)\allowbreak\to W^\add(h(X),0)$ for $X$  geometrically connected. 
Enlarging $S$ to $S_{\oplus\otimes}$ as in Proposition \ref{s2.4}, we get an additive $\otimes$-category 
$$\weil^\eff:= (\Corr\times \N)^{\kappa,\add}[S^{-1}_{\oplus\otimes}]$$ and a functor $W^\eff:\Corr\times\N\to \weil^\eff$, given by the composition of $W^\add$ with the localisation functor.
We finally $\otimes$-invert $W^\eff(\P^1,2)$, hence a category $\weil$. We want to show that it is still a $\otimes$-category. For this, let $\Corr[\L]$ be as in Definition \ref{d4.2}. Then $W^\eff$ canonically factors through $\Corr[\L]\times\N$, sending $(\L,2)$ to $W^\eff(\P^1,2)$.  Since the permutation involution $c_{\L,\L}$ on $\L\otimes\L$ is the identity in $\Corr[\L]$ and $W^{\eff}(\L\otimes\L, 4)=W^\eff(\P^1,2)\otimes W^\eff(\P^1,2)$  hence we have that $c_{W^\eff(\P^1,2),W^\eff(\P^1,2)}$ is the identity in $\weil^\eff$.
Therefore Voevodsky's condition \cite[Th. 4.3]{voeicm} is verified and the claim holds. 

Write $W$ for the composition of $W^\eff$ with the functor $\weil^\eff\to \weil$. Define $\L_W$ as $W(\P^1,2)$.  We thus have finally constructed a strong additive $\otimes$-functor  
$$W^*: \Corr\to \weil^\N$$
with a tautological isomorphism $\Tr:W(\P^1,2)\iso \L_W$. We then apply Propositions \ref{p1.3} and \ref{p1.4}.

For $(\sC,L_\sC)\in \Add_*^\otimes$ and  $(H^*, \Tr)\in \Weil(k; \sC,L_\sC)$, consider  the induced unital $\N$-graded external product  
\[(H, \kappa, \eta): \Corr\times \N\to \sC\] 
given by Remark \ref{r1}. It extends successively to $(\Corr\times \N)^*$ by Theorem \ref{thm1m}, $(\Corr\times \N)^{\kappa,\add}$ by Proposition \ref{p4.3t}, $\weil^\eff$ by Proposition \ref{s2.4} and $\weil$ since $H^2(\P^1)\cong L_{\sC}$ is invertible in $\sC$. We then obtain a 1-morphism $(F_H, u_H) :
(\weil, \L_W)\to (\sC,L_\sC)$  of $\Add_*^\otimes$ where
$F_H:\weil\to  \sC$ is the induced additive strong
$\otimes$-functor such that $F_H(W^i(X)) = H^i (X)$, $u_H=\Tr :
F_H(\L_W)= H^2(\P^1)\iso L_\sC$ and the push-forward $(F_H,
u_H)_*(W, \Tr)= (H,\Tr)$.
Therefore Diagram \eqref{eq4.2} strictly commutes; moreover, $F_H$ is obviously unique. 

The full faithfulness of \eqref{eq5.9} is now proven step by step: indeed, each step of the above construction is the solution of a $2$-universal problem.
\end{proof}

\begin{rk} Of course, the category $\weil(k,\V)$ depends on the choice of $\V$. Given $\V\subseteq \V'$, restricting $W^*_{\V'}$ to $\V$ yields by the universal property a canonical $\otimes$-functor $\weil(k,\V)\to \weil(k,\V')$. Here is a computation in the minimal case $\V=\{\Spec k\}_\adm $: the category $\sC=\sM_\rat(k,\V)$ consists of (pure) Tate motives; since $\sC(\L^m,\L^n)=0$ for $m\ne n$, the functor $h:\V\to \sC$ induces a canonical Weil cohomology $h^*:\Corr(k,\V)\to \sC^{(\N)}$ which is obviously initial. Therefore, $(\weil(k,\V),\L_W)=(\sM_\rat(k,\V),\L)$.
\end{rk}

\begin{rk}\label{r5.1} A similar technique would prove the existence of a universal normalised Weil cohomology (see Definition \ref{d3.4}). However, there are further natural properties enjoyed by usual Weil cohomologies, that we shall study in Section \ref{s8}; this is where we shall prove refined representability theorems.
\end{rk}

\subsection{Extension to other adequate equivalences}\label{s5.3a}

Let $\sim$ be an adequate equivalence relation on algebraic cycles on $\sV$: it corresponds to a $\otimes$-ideal of $\sM(k,\sV)$ by \cite[Lemma 4.4.1.1]{andremotifs}. We write $\Corr_\sim(k,\sV)$ and $\sM_\sim(k,\sV)$ for the corresponding categories of correspondences and motives. 

\begin{defn}\label{d5.3} A Weil cohomology $(H^*,\Tr)$ is \emph{compatible with $\sim$} if $H^*$ factors through $\sim$ in Proposition \ref{p1.3}, \ie induces a functor $\Corr_\sim(k,\sV)\to \sC^{{(\Z)}}$.
\end{defn}

For $(\sC,L_\sC)\in \Add_*^\otimes$, let $\Weil_\sim(k,\sV;\sC,L_\sC)$ be the full subcategory of $\Weil(k,\sV;\sC,L_\sC)$ consisting of the  $(H^*,\Tr)$ compatible with $\sim$. This defines a strict $2$-functor $\Weil_\sim(k,\sV;-)$ as in Construction \ref{cons1}.

\begin{thm}\label{t5.3} The analogues of Theorem \ref{thm2} and Corollary \ref{cor:thm2} hold for $\Weil_\sim(k,\sV;-)$, yielding universal Weil cohomologies $W_{\sV,\sim}$ and $W_{\sV,\sim}^\ab$ with values in $\weil_\sim(k,\sV)$ and $\weil_{\sim,\ab}(k,\sV)=T(\weil_\sim(k,\sV))$.
\end{thm}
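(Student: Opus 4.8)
The plan is to mimic the proof of Theorem \ref{thm2} and Corollary \ref{cor:thm2}, simply inserting one more localisation step corresponding to the adequate equivalence $\sim$. Concretely, recall that the construction of $\weil(k,\sV)$ proceeded through $\Corr(k,\sV)\times\N \to (\Corr\times\N)^\kappa \to (\Corr\times\N)^{\kappa,\add}$, then localisation at $S_{\oplus\otimes}$ to get $\weil^\eff$, and finally $\otimes$-inversion of $W^\eff(\P^1,2)$. To build $W_{\sV,\sim}$, I would first observe that the $\otimes$-ideal $\mathbb{I}_\sim\subseteq \sM_\rat(k,\sV)$ corresponding to $\sim$ (via \cite[Lemma 4.4.1.1]{andremotifs}) restricts to an ideal of $\Corr(k,\sV)$, hence an ideal of $\Corr(k,\sV)\times\N$ in the obvious way (zero on the ``off-diagonal'' pieces). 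Its image $W^{\add}(\mathbb{I}_\sim)$ generates a $\otimes$-ideal in $(\Corr\times\N)^{\kappa,\add}$; enlarge the set $S$ from the proof of Theorem \ref{thm2} by adjoining all morphisms $W^{\add}(\gamma)$ for $\gamma\in\mathbb{I}_\sim$ (equivalently, localise further so that these become zero — since they are already additive maps one inverts the inclusions of their images, or more simply quotients by the $\otimes$-ideal they generate). Call the resulting category $\weil_\sim^\eff$ and then $\otimes$-invert $W^\eff(\P^1,2)$ exactly as before (Voevodsky's condition \cite[Th. 4.3]{voeicm} still holds since $c_{\L,\L}=\id$), obtaining $(\weil_\sim(k,\sV),\L_{W,\sim})\in\Add^\otimes_*$ together with $W_{\sV,\sim}^*:\Corr_\sim(k,\sV)\to\weil_\sim(k,\sV)^{(\N)}$.

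For the universal property, the argument is formally identical to that of Theorem \ref{thm2}: given $(\sC,L_\sC)\in\Add^\otimes_*$ and $(H^*,\Tr)\in\Weil_\sim(k,\sV;\sC,L_\sC)$, by Definition \ref{d5.3} the functor $H^*$ kills $\mathbb{I}_\sim$, so the induced $\N$-graded external product $\Corr\times\N\to\sC$ extends through $(\Corr\times\N)^\kappa$ (Theorem \ref{thm1m}), through $(\Corr\times\N)^{\kappa,\add}$ (Proposition \ref{p4.3t}), then through the localisation defining $\weil_\sim^\eff$ (Proposition \ref{s2.4}, using that $H^*$ inverts/kills exactly the enlarged set $S$, the new generators being killed precisely because $H^*$ is compatible with $\sim$), and finally through $\weil_\sim$ since $H^2(\P^1)\cong L_\sC$ is invertible. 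Uniqueness of the resulting $(F_H,u_H)$ and full faithfulness of the comparison isomorphism \eqref{eq5.9} follow step by step because each stage solves a $2$-universal problem, exactly as in the original proof. The rigidity of $\weil_\sim(k,\sV)$ (hence that $(\weil_\sim(k,\sV),\L_{W,\sim})\in\Add^\rig_*$) is obtained by repeating the argument of Corollary \ref{cor:thm2}: the subcategory $\sD\subseteq\weil_\sim(k,\sV)$ generated by the $H^i(X)$ is rigid (Proposition \ref{p1.4} b)), and by the universal property the inclusion $\sD\hookrightarrow\weil_\sim(k,\sV)$ admits a retraction, forcing $\sD=\weil_\sim(k,\sV)$.

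For the abelian-valued variant, push forward $W_{\sV,\sim}$ along $\lambda_{\weil_\sim(k,\sV)}:\weil_\sim(k,\sV)\to T(\weil_\sim(k,\sV))$ from Lemma \ref{bvk}; by the $2$-representability of $\Add^\rig(\,\cdot\,,-)$ stated there, $(T(\weil_\sim(k,\sV)),W_{\sV,\sim}^\ab)$ represents the composite $2$-functor $\Weil_\sim(k,\sV;-)$ restricted to $\Ex^\rig_*$, i.e. $\weil_{\sim,\ab}(k,\sV)=T(\weil_\sim(k,\sV))$. This is verbatim the proof of Corollary \ref{cor:thm2}.

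I expect the only genuine point requiring care — the ``main obstacle'', though it is minor — to be the bookkeeping that the $\otimes$-ideal of $\sM_\rat(k,\sV)$ attached to $\sim$ pulls back correctly to a set of morphisms of $(\Corr\times\N)^{\kappa,\add}$ whose inversion (or quotient) is compatible with the later $\otimes$-inversion of $W^\eff(\P^1,2)$, and that ``$H^*$ compatible with $\sim$'' in the sense of Definition \ref{d5.3} is exactly equivalent to ``$H^*$ annihilates the enlarged generating set $S$''. Once this dictionary is set up, nothing new happens: the proof is a routine repetition of Theorem \ref{thm2} and Corollary \ref{cor:thm2} with one extra localisation inserted, and I would present it as such rather than reproducing all the steps.
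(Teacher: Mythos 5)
Your proposal is correct and is essentially the paper's argument: the paper's proof simply observes that the constructions of Theorem \ref{thm2} and Corollary \ref{cor:thm2} go through unchanged, starting from $\Corr_\sim(k,\sV)\times\N$ in place of $\Corr(k,\sV)\times\N$. Your variant of inserting the quotient by the $\otimes$-ideal $\mathbb{I}_\sim$ at the stage of $(\Corr\times\N)^{\kappa,\add}$ rather than at the very beginning is equivalent by the universal properties involved (a lax $\otimes$-functor from $\Corr\times\N$ killing $\mathbb{I}_\sim$ is the same as one from $\Corr_\sim\times\N$), so nothing further is needed.
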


\begin{proof} One checks that the proofs of Theorem \ref{thm2} and Corollary \ref{cor:thm2} go through without change.
\end{proof}

Let $\sim\ge \sim'$ be two comparable adequate equivalence relations. The universal properties of $W_\sim$ and $W_{\sim,\ab}$ yield canonical strong $\otimes$-functors
\begin{equation}\label{eq5.8}
\weil_\sim(k,\sV)\to \weil_{\sim'}(k,\sV), \quad \weil_{\sim,\ab}(k,\sV)\to \weil_{\sim',\ab}(k,\sV)
\end{equation}
the second being exact. For simplicity, let us drop $(k,\sV)$ from the notation in the following proposition.

\begin{prop}\label{p5.7} In \eqref{eq5.8}, the first functor is full and essentially surjective, and the second is a localisation. The kernel (\resp Serre kernel) of the first (\resp second) one is generated by $W_\sim^*(\Ker(\Corr_{\sim}\to \Corr_{\sim'})$ (\resp by the images of the morphisms in $W_{\sim,\ab}^*(\Ker(\Corr_{\sim}\to \Corr_{\sim'})$).
\end{prop}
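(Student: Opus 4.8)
The plan is to reduce both assertions to a single observation: $\weil_{\sim'}$ is the quotient of $\weil_\sim$ by the $\otimes$-ideal generated by $W_\sim^*(\Ker(\Corr_\sim\to\Corr_{\sim'}))$. To make this precise, set $I=\Ker(\Corr_\sim\to\Corr_{\sim'})$ and let $J\subset\weil_\sim$ be the $\otimes$-ideal generated by the components $W_\sim^i(\gamma)$ ($\gamma\in I$, $i\in\N$) of the morphisms $W_\sim^*(\gamma)$; write $Q:\weil_\sim\to\weil_\sim/J$ for the quotient $\otimes$-functor. First I would check that $(\weil_\sim/J,Q)$ represents the $2$-functor $(\sC,L_\sC)\mapsto\Weil_{\sim'}(k,\sV;\sC,L_\sC)$: for $(\sC,L_\sC)\in\Add^\otimes_*$, a strong additive $\otimes$-functor $F:\weil_\sim\to\sC$ kills $J$ if and only if it kills every $W_\sim^i(\gamma)$, if and only if the functor $F^{(\N)}\circ W_\sim^*$ underlying the Weil cohomology $F_*W_\sim$ kills $I$, i.e.\ factors through $\Corr_{\sim'}$, i.e.\ $F_*W_\sim$ is compatible with $\sim'$; together with the full faithfulness of \eqref{eq5.9} (in its $\sim$-variant, Theorem \ref{t5.3}) this yields the representability. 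It follows that $\weil_\sim/J$ is canonically $\otimes$-equivalent to $\weil_{\sim'}$ over $\weil_\sim$, and, by the uniqueness in that universal property, the first functor of \eqref{eq5.8} corresponds under this equivalence to $Q$.

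Granting this, the first assertion is immediate: a quotient functor by a $\otimes$-ideal is the identity on objects and surjective on Hom-groups, hence full and essentially surjective, with kernel $J$ — and the same therefore holds for the first functor of \eqref{eq5.8}.

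For the second assertion I would invoke Lemma \ref{t6.1} with $\sC=\weil_\sim$ (which is in $\Add^\rig$ by Corollary \ref{cor:thm2}) and $\mathbb{I}=J$: this gives a $\otimes$-equivalence $T(\weil_\sim)/\sI\iso T(\weil_\sim/J)$ under which $T(Q)$ becomes the Serre localisation at $\sI$, the Serre $\otimes$-ideal of $T(\weil_\sim)$ generated by the $\IM\lambda_{\weil_\sim}(f)$ for $f\in J$. Since $T(\weil_\sim)=\weil_{\sim,\ab}$ and $T(\weil_\sim/J)\simeq T(\weil_{\sim'})=\weil_{\sim',\ab}$ (Corollary \ref{cor:thm2}), and since a short computation using the naturality of $\lambda_{(-)}$ and of the push-forward \eqref{eq.fu} identifies $T$ of the first functor of \eqref{eq5.8} with the second functor of \eqref{eq5.8}, the latter is, up to an equivalence, the Serre localisation at $\sI$; in particular it is a localisation with Serre kernel $\sI$. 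Finally I would rewrite $\sI$: because $W_{\sim,\ab}^*=\lambda_{\weil_\sim}^{(\N)}\circ W_\sim^*$, the components of $W_{\sim,\ab}^*(\gamma)$, $\gamma\in I$, are exactly the $\lambda_{\weil_\sim}(W_\sim^i(\gamma))$, which lie among the $\lambda_{\weil_\sim}(f)$, $f\in J$; conversely, as $J$ is generated as a $\otimes$-ideal by the $W_\sim^i(\gamma)$, as $\lambda_{\weil_\sim}$ is a strong $\otimes$-functor, and as $\otimes$ is exact in $\weil_{\sim,\ab}\in\Ex^\rig$, every $\IM\lambda_{\weil_\sim}(f)$ with $f\in J$ is a subquotient of a finite direct sum of objects $\IM\lambda_{\weil_\sim}(W_\sim^i(\gamma))\otimes Z$, hence belongs to the Serre $\otimes$-ideal generated by the $\IM\lambda_{\weil_\sim}(W_\sim^i(\gamma))$. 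So $\sI$ equals the Serre $\otimes$-ideal generated by the images of the morphisms in $W_{\sim,\ab}^*(I)$, as asserted.

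The main obstacle is not any single computation but the two compatibility checks: that the first functor of \eqref{eq5.8} really is the quotient $Q$, and that $T$ of it really is the second functor of \eqref{eq5.8}. Both come down to carefully unwinding the $2$-universal properties of $\weil_\sim$ and $\weil_{\sim,\ab}$ (Theorem \ref{thm2}, Corollary \ref{cor:thm2} and their $\sim$-variants from Theorem \ref{t5.3}) and the $2$-functoriality of $T$, together with the naturality of $\lambda_{(-)}$ and of \eqref{eq.fu}; once these are in hand, everything else is formal.
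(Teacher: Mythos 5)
Your proof is correct and follows essentially the same route as the paper: identify $\weil_{\sim'}$ with the quotient of $\weil_\sim$ by the $\otimes$-ideal generated by $W_\sim^*(\Ker(\Corr_\sim\to\Corr_{\sim'}))$ via the universal property, and read off fullness, essential surjectivity and the kernel. The only (harmless) variation is that for the abelian case the paper simply reruns the same argument \emph{mutatis mutandis} in $\Ex^\rig_*$, whereas you deduce it from the additive case by applying $T$ and Lemma \ref{t6.1}; both work.
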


\begin{proof} Let $\weil_{\sim'}(k,\sV)'$ be the quotient of $\weil_\sim(k,\sV)$ by the said $\otimes$-ideal. The functor \eqref{eq5.8} clearly factors through $\weil_{\sim'}(k,\sV)'$. But pushing forward $W_\sim$ to $\weil_{\sim'}(k,\sV)'$ by the projection functor in the style of \eqref{eq.fu} provides the latter category with a Weil cohomology which factors through $\Corr_{\sim'}$; hence $\weil_{\sim'}(k,\sV)'\to \weil_{\sim'}(k,\sV)$ is an equivalence of categories by universality. Same proof for $W_{\sim',\ab}$, \emph{mutatis mutandis}.
\end{proof}

\begin{rk} As Proposition \ref{prop:alg} and Remark \ref{rk:tweil} show, it may well happen that \eqref{eq5.8} is an equivalence even if $\sim \ne \sim'$, either in the abelian case or in both cases. Note also that $\weil_\num\ne 0$ by \cite{AK} if $k$ is a finite field.
\end{rk}

\section{Initiality}

In the sequel, most $\otimes$-functors we shall encounter will be strong; therefore we drop the adjective `strong' to lighten the exposition, and add `lax' if our $\otimes$-functor turns out not to be strong. 

We drop the mention of $(k,\sV)$ for lightness of notation, except when it may create an ambiguity, e.g. when a statement depends on this pair.

\subsection{Initial and final Weil cohomologies}
Any Weil cohomology $H^*:\Corr\allowbreak\to \sC^{(\N)}$ defines an adequate equivalence relation on $\V$. Specifically, extend $H^*$ to $\uH^*:\sM_\rat\to (\sC^\natural)^{(\Z)}$ by Proposition \ref{p1.3}. Then the kernel of $\uH^*$ is a $\otimes$-ideal. We set (\cf \cite[3.3.4 \& 4.4.1]{A}): 

\begin{defn} \label{defhom}
Denote $\sim_H$ the adequate equivalence relation corresponding to $\Ker \uH^*$ and let 
$$\sM_H:=\sM_{\sim_H}(=(\sM_\rat/\Ker \uH^*)^\natural)$$ 
be the category of \emph{$H$-homological motives}. 
  Composing the induced functor  $\uH^*: \sM_H\to(\sC^\natural)^{(\Z)}$  with the direct sum functor, we get a faithful ``realisation functor''
\[\uH:\sM_H\to \sC^\natural \ \ \ M \leadsto \bigoplus_i \uH^i(M)\]
which is monoidal but not symmetric (see Remark \ref{r2}). (Note that $\uH^i(M)=H^i(X)$ if $M=h(X)$ for $X\in \sV$.) In the universal case, we abbreviate
\[\sM_\hun:= \sM_{W_\ab}\]
and get a faithful functor
\begin{equation}\label{eq6.3}
 w :\sM_\hun\to \weil_\ab= T(\weil).
 \end{equation}
\end{defn}

\begin{lemma}\label{l4.3} 
Let $(F,u):(\sC,L_\sC)\to (\sC',L_\sD')\in \Add^\otimes_*$ be a $1$-morphism. Let 
$H'= F_*H$ be the push-forward of $H$ by $F$ as \eqref{eq.fu} in Construction \ref{cons1}. We have
an induced $\otimes$-functor  $$\sM_H\to \sM_{H'}$$ which is the identity if $F$ is faithful.
\end{lemma}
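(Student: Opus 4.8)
The plan is to unwind the definitions. Recall that $\sim_H$ is the adequate equivalence relation attached to $\Ker(\uH^*\colon \sM_\rat\to (\sC^\natural)^{(\Z)})$, and similarly $\sim_{H'}$ corresponds to $\Ker(\uH'^*)$. Since $H'=F_*H$, by the construction in \eqref{eq.fu} we have $\uH'^* = \overline{F}^{(\Z)}\circ \uH^*$, where $\overline{F}\colon \sC^\natural\to (\sC')^\natural$ is the pseudo-abelian extension of $F$ (which is again an additive $\otimes$-functor, as recalled in \S\ref{Term}). Hence $\Ker\uH^*\subseteq \Ker\uH'^*$, so $\sim_H$ refines $\sim_{H'}$, i.e.\ $\sim_H\ge \sim_{H'}$ in the notation of \S\ref{s5.3a}. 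By the functoriality of $\sM_\sim$ with respect to comparable adequate equivalences (the quotient by a larger $\otimes$-ideal, followed by pseudo-abelian completion), this produces a $\otimes$-functor $\sM_H=\sM_{\sim_H}\to \sM_{\sim_{H'}}=\sM_{H'}$, namely the composite of the projection $\sM_\rat/\Ker\uH^*\to \sM_\rat/\Ker\uH'^*$ with the pseudo-abelian completion.

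For the last assertion, suppose $F$ is faithful. Then so is $\overline{F}$: a morphism $f$ in $\sC^\natural$ between summands of objects $A,B$ can be viewed, after adding the complementary summands, as a morphism $A\to B$ in $\sC$ composed with the relevant idempotents, and $\overline{F}(f)=0$ forces $F$ of that morphism of $\sC$ to vanish, hence the morphism itself, hence $f$. Consequently $\overline{F}^{(\Z)}$ is faithful on $(\sC^\natural)^{(\Z)}$ (faithfulness is checked componentwise), so $\Ker\uH'^*=\Ker(\overline{F}^{(\Z)}\circ\uH^*)=\Ker\uH^*$. Therefore $\sim_H=\sim_{H'}$, the projection $\sM_\rat/\Ker\uH^*\to \sM_\rat/\Ker\uH'^*$ is the identity, and the induced $\otimes$-functor $\sM_H\to \sM_{H'}$ is the identity functor.

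I do not expect any genuine obstacle here: the statement is purely formal, and the only points requiring a line of care are (a) that the pseudo-abelian extension $\overline{F}$ of an additive $\otimes$-functor is again an additive $\otimes$-functor — which is recalled in \S\ref{Term} — and (b) that faithfulness is preserved under pseudo-abelian completion and under passing to the $\Z$-graded category $(-)^{(\Z)}$, both of which are immediate. If one prefers to avoid pseudo-abelian completions altogether, an alternative is to invoke directly the functoriality of the assignment $\sim\mapsto \sM_\sim$ for comparable adequate equivalences, together with the observation that $\sim_H\ge\sim_{H'}$, with equality when $F$ is faithful; this is the approach I would actually write up, as it is the shortest.
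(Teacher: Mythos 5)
Your proof is correct and is essentially the paper's own argument: the paper's one-line proof notes that ${H'}^*=F^{(\N)}H^*$ implies $\Ker\uH^*\subseteq\Ker\uH'^*$, with equality when $F$ is faithful. Your write-up merely spells out the (routine) verifications that faithfulness passes to the pseudo-abelian completion and to the graded category.
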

\begin{proof}
In fact, ${H'}^*= F^{(\N)} H^*$, thus $\Ker \underline{H}^*\subseteq \Ker {\underline{H}'}^*$ and the equality holds if $F$ is faithful.
\end{proof}

\begin{defn} \label{relh} In the situation of Lemma \ref{l4.3}, we say that 
\begin{itemize} \item $H$ is an \emph{enrichment} of ${H'}$  if $F$ is faithful. We say that ${H'}$ is \emph{initial} if any such $F$ is an equivalence of categories. 
\item  ${H'}$ is a \emph{specialisation} of $H$ if $F$ is full. We say that $H$ is \emph{final} if any specialisation of $H$ is an isomorphism of categories.
\end{itemize}
\end{defn}

We have an analogous definition for the abelian case:

\begin{defn} \label{relhab} In the situation of Lemma \ref{l4.3}. Suppose that $(F,u):(\sC,L_\sC)\to (\sC',L_\sD')\in \Ex^\rig_*$. We say that 
\begin{itemize} \item $H$ is an \emph{abelian enrichment} of ${H'}$  if $F$ is faithful. We say that ${H'}$ is \emph{ab-initial} if any such $F$ is an equivalence of categories. 
\item  ${H'}$ is a \emph{ab-specialisation} of $H$ if $F$ is localisation. We say that $H$ is \emph{ab-final} if any ab-specialisation of $H$ is an isomorphism of categories.
\end{itemize}
\end{defn}

The following is obvious:

\begin{lemma} \label{l6.1}
 If $H$ has weights (see Definition \ref{d3.3}), any enrichment and any special\-is\-ation of $H$ has weights.
 \qed
\end{lemma}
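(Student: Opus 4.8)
\emph{Proof proposal.} The plan is to unwind Definition~\ref{relh} and invoke two elementary facts: a faithful functor is injective on Hom-groups, while a full functor is surjective on Hom-groups. In either case the weight hypothesis on $H$ (Definition~\ref{d3.3}) then forces the relevant Hom-group to vanish. Recall that for a $1$-morphism $(F,u)$, the push-forward $F_*H$ of Construction~\ref{cons1} has $i$-th component $(F_*H)^i(X)=F(H^i(X))$.

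First I would treat the enrichment case. Let $\tilde H$, with values in $\tilde\sC$, be an enrichment of $H$, i.e.\ $H=F_*\tilde H$ for a faithful $1$-morphism $(F,u)\colon(\tilde\sC,L_{\tilde\sC})\to(\sC,L_\sC)$ of $\Add^\otimes_*$, so that $H^i(X)=F(\tilde H^i(X))$ for all $X\in\sV$ and all $i$. Fixing $X,Y\in\sV$ and $i\ne j$, faithfulness of $F$ gives an injection
\[\tilde\sC(\tilde H^i(X),\tilde H^j(Y))\hookrightarrow \sC(F\tilde H^i(X),F\tilde H^j(Y))=\sC(H^i(X),H^j(Y)),\]
whose target vanishes since $H$ has weights; hence the source vanishes, that is, $\tilde H$ has weights.

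Next I would treat the specialisation case. Let ${H'}=F_*H$ with $F\colon\sC\to\sC'$ full, so that ${H'}^i(X)=F(H^i(X))$. For $X,Y\in\sV$ and $i\ne j$, fullness of $F$ gives a surjection
\[\sC(H^i(X),H^j(Y))\twoheadrightarrow \sC'(FH^i(X),FH^j(Y))=\sC'({H'}^i(X),{H'}^j(Y)),\]
whose source vanishes by hypothesis, so its target vanishes as well, that is, ${H'}$ has weights.

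I do not expect a genuine obstacle here: this is precisely why the Lemma is flagged as obvious. The only point that warrants a moment's care is keeping track of the direction of $F$ --- for an enrichment $F$ maps \emph{into} the category of $H$, so one uses injectivity on Homs, whereas for a specialisation $F$ maps \emph{out of} it, so one uses surjectivity on Homs.
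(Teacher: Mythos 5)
Your proof is correct and is exactly the intended argument: the paper states the lemma as obvious with no written proof, and your unwinding of Definition \ref{relh} (faithfulness injects Hom-groups for an enrichment, fullness surjects them for a specialisation, and in either direction the vanishing required by Definition \ref{d3.3} transfers) is the only argument there is. You have also tracked the direction of $F$ correctly in both cases.
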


\begin{rks} a)  Lemma \ref{l6.1} is also true for ab-specialisations, but nontrivially: any $\otimes$-localisation of a rigid abelian $\otimes$-category is full \cite[Th. 4.21]{standard-schur}.\\
b) Note that any traditional Weil cohomology is final (and even ab-final). In Proposition \ref{p6.1}, we shall characterise those Weil cohomologies which are initial and final.
\end{rks}

Note that $\sim_H = \sim_{H'}$ and $\sM_H= \sM_{H'}$ for all enrichments  $H^*$ of ${H'}^*$. Also, if $(\sC,L_\sC)\in \Add^\otimes_*$ happens to be in $\Ex^{\rm rig}_*$, we have two different universal problems: to distinguish them we shall refer to Definitions \ref{relh}-\ref{relhab}.

\begin{thm}\label{thm2bis}
a) Any Weil cohomology $(\sC,H)$ has an initial enrichment $(\weil_H,W_H)$ and an ab-initial enrichment $(\weil_H^\ab,W_H^\ab)$ if $\sC\in \Ex^\rig$. In this case, there is a canonical faithful $\otimes$-functor 
$\iota_H:\weil_H\to \weil_H^\ab$ such that $W_H^\ab=(\iota_H)_*W_H$.\\
b) There is a $1-1$ correspondence between initial Weil cohomologies and $\otimes$-ideals of $\weil$ (resp. Serre $\otimes$-ideals of $\weil_\ab$). 

Moreover, the target of any initial or ab-initial Weil cohomology is rigid. \\
c) The category $\weil_H$ is a $\otimes$-quotient of $\weil$, the category $\weil_H^\ab$ is a $\otimes$-Serre localisation of $\weil_\ab=T(\weil)$, and $\iota_H$ induces a $\otimes$-localisation $T(\weil_H)\surj \weil_H^\ab$. 
\end{thm}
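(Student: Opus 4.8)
The plan is to construct $\weil_H$ and $\weil_H^\ab$ as canonical quotients of $\weil$ and $\weil_\ab$ respectively, using the universal property of Theorem \ref{thm2} and Corollary \ref{cor:thm2} together with the structure results on rigid $\otimes$-categories recalled in \S\ref{Term}. First, the given Weil cohomology $(\sC,H)$ corresponds by \eqref{eq5.9} to a unique $1$-morphism $(F_H,u_H):(\weil,\L_W)\to (\sC,L_\sC)$ in $\Add_*^\otimes$, and by Proposition \ref{p1.4} we may replace $\sC$ by the rigid subcategory $\sD$ generated by the $H^i(X)$, so without loss of generality $\sC\in\Add^\rig$. Factor $F_H$ as $\weil \surj \weil/\Ker(F_H) =: \weil_H \hookrightarrow \sC$; here $\Ker(F_H)$ is the $\otimes$-ideal of morphisms killed by $F_H$ and the induced functor $\bar F_H: \weil_H\to \sC$ is faithful by construction. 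Pushing forward $W_\sV$ along $\weil\surj\weil_H$ produces a Weil cohomology $W_H$ with values in $\weil_H$, of which $H$ is an enrichment via $\bar F_H$. To see that $W_H$ is \emph{initial}: if $(G,v):(\weil_H,\cdot)\to(\sC'',\cdot)$ is faithful with $G_*W_H = $ some Weil cohomology, then $G$ factors the corresponding $1$-morphism out of $\weil$; since $G$ is faithful it does not enlarge the kernel, so $G$ is an equivalence onto its image, which must be all of $\weil_H$ by the same strictly-full argument as in the proof of Corollary \ref{cor:thm2}. This gives part a) in the additive case and part b) for $\otimes$-ideals of $\weil$ (the correspondence sends an initial $H$ to $\Ker(F_H)$, and an ideal $\mathbb{I}$ to the push-forward of $W_\sV$ along $\weil\to\weil/\mathbb{I}$). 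Rigidity of the target is automatic since a $\otimes$-quotient of a rigid $\otimes$-category is rigid.

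For the abelian statements, when $\sC\in\Ex^\rig$ the same $1$-morphism $(F_H,u_H)$, now viewed in $\Ex^\rig_*$ via Corollary \ref{cor:thm2}, factors through $\weil_\ab = T(\weil)$; write $\weil_H^\ab$ for the Serre $\otimes$-localisation of $\weil_\ab$ by the Serre $\otimes$-kernel of this exact functor to $\sC$, and $W_H^\ab$ for the corresponding push-forward. By Lemma \ref{l2.1} a) applied componentwise (or rather: an exact $\otimes$-functor out of a rigid abelian $\otimes$-category whose Serre kernel is trivial is faithful) the induced functor $\weil_H^\ab\to\sC$ is faithful, so $H$ is an abelian enrichment of $W_H^\ab$, and ab-initiality follows because any localisation that does not collapse the kernel is an equivalence — more precisely one uses Lemma \ref{l2.3}, applied to the chain of Serre subcategories, to see that a further ab-specialisation is forced to be an isomorphism when we have already localised by the full Serre kernel. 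The $1-1$ correspondence with Serre $\otimes$-ideals of $\weil_\ab$ is then formal. The canonical faithful $\otimes$-functor $\iota_H:\weil_H\to\weil_H^\ab$ is obtained by functoriality of $T$ and of localisation: $\lambda_{\weil}:\weil\to\weil_\ab$ carries the $\otimes$-ideal $\Ker(F_H)$ into the Serre $\otimes$-kernel defining $\weil_H^\ab$, so it descends to $\weil_H=\weil/\Ker(F_H)\to\weil_H^\ab$; faithfulness of $\iota_H$ follows from faithfulness of $\lambda_\weil$ (Lemma \ref{bvk}, since $\weil$ need only be rigid there) together with the fact that $\bar F_H$ and its abelian counterpart both factor $\iota_H$ compatibly. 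The identity $W_H^\ab = (\iota_H)_*W_H$ is then a direct comparison of push-forwards.

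For part c): $\weil_H$ is a $\otimes$-quotient of $\weil$ and $\weil_H^\ab$ a Serre $\otimes$-localisation of $\weil_\ab$ by the very construction above. The statement that $\iota_H$ induces a $\otimes$-localisation $T(\weil_H)\surj\weil_H^\ab$ is where the main work lies, and it is exactly the content of Lemma \ref{t6.1}: applying $T$ to the quotient $\weil\surj\weil_H=\weil/\Ker(F_H)$ gives a Serre localisation $T(\weil)=\weil_\ab\surj T(\weil_H)$ with kernel the Serre $\otimes$-ideal $\sI$ generated by the images $\IM\lambda_\weil(f)$ for $f\in\Ker(F_H)$; comparing $\sI$ with the Serre $\otimes$-kernel of $\weil_\ab\to\weil_H^\ab$ (and using Lemma \ref{l2.1} d) to identify Serre $\otimes$-ideals with ideals of the relevant center, hence to see that the two are comparable and in fact the second is generated by the first together with whatever further relations make the quotient faithful over $\sC$) shows that $\weil_H^\ab$ is a further Serre localisation of $T(\weil_H)$, i.e. $T(\weil_H)\surj\weil_H^\ab$ is a $\otimes$-localisation, invoking Lemma \ref{l2.3} for the ``localisation of a localisation'' bookkeeping.

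The main obstacle I expect is the last paragraph: precisely matching the Serre $\otimes$-kernel of $\weil_\ab\to\weil_H^\ab$ with the Serre $\otimes$-ideal produced by Lemma \ref{t6.1} from $\Ker(F_H)$, so as to conclude that $T(\weil_H)\to\weil_H^\ab$ is a localisation rather than merely an exact full functor. This requires care about the difference between the $\otimes$-ideal generated in the additive category and the Serre $\otimes$-ideal it generates in the abelian one, and one genuinely needs the dictionary of Lemma \ref{l2.1} d) (Serre $\otimes$-ideals $\leftrightarrow$ ideals of the absolutely flat center) to control it; the faithfulness assertions, by contrast, are routine once the right ambient rigidity is in place.
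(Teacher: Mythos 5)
Your proposal is correct and follows essentially the same route as the paper: $\weil_H=\weil/\Ker F_H$ with $W_H$ the push-forward of $W$ along the projection, $\weil_H^\ab$ the Serre $\otimes$-localisation of $T(\weil)$ by the Serre kernel of the exact classifying functor, part b) as the formal dictionary with ($\otimes$- or Serre $\otimes$-) ideals, and the last claim of c) via Lemmas \ref{t6.1} and \ref{l2.3}. The only wobbles are presentational: in the initiality argument the faithful functor should run \emph{into} the target of the candidate initial object (the point, as in the paper, is that every enrichment $H'$ of $H$ has $\Ker F_{H'}=\Ker F_H$ because faithful functors do not enlarge kernels, so $F_{H'}$ factors through $\weil_H$), and faithfulness of $\iota_H$ follows simply from the factorisation of the faithful $\bar F_H$ through it rather than from Lemma \ref{bvk}, which would require an abelian source.
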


\begin{proof}
a) Let $(\sC,H)$ be a Weil cohomology, and let 
\[(F_H ,u_H) : (\weil, \L)\allowbreak\to (\sC, L_\sC)\] 
be the classifying additive $\otimes$-functor. Set $$\weil_H:= \weil/\Ker F_H$$ and $\bar F_H  : \weil_H\to \sC$ for the induced faithful additive $\otimes$-functor. The push-forward of the universal Weil cohomology $W$ along the projection $\weil\to\weil_H$ yields $W_H$ such that 
$(\bar F_H)_*W_H = ( F_H)_*W  \allowbreak= H$. If $F_*H'= H$ then   
$F_*(F_{H'})_*W \allowbreak= H$ and $F\circ F_{H'} = F_H$ by unicity in the universal property. Since $F$ is faithful we have that $\Ker F_H = \Ker F_{H'}$ and $F_{H'}$ factors through $\weil_H$ providing $\bar F_{H'}:\weil_H \to \sC'$ such that $(\bar F_{H'})_*W_H= H'$ as claimed. Moreover, $\weil_H$ is rigid as a $\otimes$-quotient of $\weil$.

Similarly, for $(\sC, L_\sC)\in \Ex^{\rm rig}_*$ the $\otimes$-functor 
\[(F_H ,u_H) : (T(\weil), \L)\to (\sC, L_\sC)\] 
is exact and factors through the Serre localisation $\weil_H^\ab$ of \allowbreak $T(\weil)$ by the (Serre) kernel of $F_H$, and $\weil_H^\ab\in \Ex^\rig$ by \cite[Prop. 4.5]{BVK}, yielding a Weil cohomology with values in $\weil_H^\ab$. Moreover the induced exact functor $\weil_H^\ab\to \sC$ is injective on objects by construction, hence faithful.

b) Any (initial) Weil cohomology defines a $\otimes$-ideal of $\weil$. Conversely, any such $\otimes$-ideal $\sI$ defines an initial Weil cohomology by push-forward of $W$ along $\weil\to \weil/\sI$. Same reasoning in the abelian case, \emph{mutatis mutandis}. 

c) The first two statements follow from the proof of a), the third then follows from Lemmas \ref{t6.1} and \ref{l2.3}.
\end{proof}

\begin{ex} \label{exunivin} For $H=W$ we get  $\weil_W = \weil$ (as follows from the proof of Theorem \ref{thm2bis} a)). For $H=W_\ab$, we get $\weil_{W_\ab}^\ab = T(\weil)$ and $\weil_{W_\ab} = \weil/\ker\lambda_\sW$, where $\lambda_\weil:\weil \to T(\weil)$ is the canonical functor of Lemma \ref{d3.4};
the faithful functor $\iota_{W_\ab}$ of Theorem \ref{thm2bis} a) is identified with the induced functor. 
\end{ex}

\begin{prop}\label{p6.1} Let $F,H,{H'}$ be as in Lemma \ref{l4.3}. Then $F$ induces a full functor $\weil_H\to\weil_{H'}$. If $\sC,\sC'\in \Ex^\rig$ and $F$ is exact, then it induces a localisation $\weil_H^\ab\to\weil_{H'}^\ab$. These functors are the identity if $F$ is faithful.\\
There is a $1-1$ correspondence between maximal $\otimes$-ideals of $\weil$ and Weil cohomologies which are both  initial and final. Similarly, there is a $1-1$ correspondence between maximal Serre $\otimes$-ideals of $\weil^\ab$ and Weil cohomologies with target in $\Ex^\rig_*$ which are both  initial and ab-final.
\end{prop}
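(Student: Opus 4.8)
The plan is to prove Proposition \ref{p6.1} in two parts: first the statements about the induced functors on the initial enrichments, then the bijection with maximal ideals.

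\textbf{The induced functors.} First I would use Theorem \ref{thm2bis} a): given $F:\sC\to\sC'$ with $H'=F_*H$, we have classifying functors $F_H:\weil\to\sC$ and $F_{H'}:\weil\to\sC'$ with $F\circ F_H=F_{H'}$ (up to the canonical isomorphism), hence $\Ker F_H\subseteq\Ker F_{H'}$. By the construction $\weil_H=\weil/\Ker F_H$ and $\weil_{H'}=\weil/\Ker F_{H'}$, so the identity of $\weil$ induces a $\otimes$-functor $\weil_H\to\weil_{H'}$ which is a $\otimes$-quotient, in particular full. When $F$ is faithful, $\Ker F_H=\Ker F_{H'}$ by the argument in the proof of Theorem \ref{thm2bis} a) (one has $F_{H'}=F\circ F_H$ and $F$ faithful forces the kernels to agree), so the functor is the identity. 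In the abelian case, with $\sC,\sC'\in\Ex^\rig$ and $F$ exact, the same reasoning applied to $T(\weil)$ gives $\weil_H^\ab=T(\weil)/(\text{Serre kernel of }F_H)$ and likewise for $H'$, with the Serre kernel of $F_H$ contained in that of $F_{H'}$; by Lemma \ref{l2.3} the induced exact functor $\weil_H^\ab\to\weil_{H'}^\ab$ is a Serre localisation. Faithfulness of $F$ again collapses the two Serre kernels.

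\textbf{The bijection with maximal ideals.} By Theorem \ref{thm2bis} b), initial Weil cohomologies correspond bijectively to $\otimes$-ideals $\sI\subseteq\weil$, the correspondence sending $\sI$ to the push-forward of $W$ along $\weil\to\weil/\sI$, with target $\weil/\sI$. I claim the initial Weil cohomology attached to $\sI$ is also final if and only if $\sI$ is maximal among $\otimes$-ideals. Indeed, by Definition \ref{relh}, finality of an initial $H$ means every specialisation of $H$ — i.e. every full $\otimes$-functor $G$ out of $\weil_H=\weil/\sI$ producing a push-forward Weil cohomology — is an isomorphism of categories on the nose; but push-forward along the quotient $\weil/\sI\to(\weil/\sI)/\sJ$ by a $\otimes$-ideal $\sJ$ is full and realises all specialisations up to equivalence (any full $\otimes$-functor factors as a quotient by its kernel followed by a faithful one, and faithful enrichments don't change $\sM_H$ and, for \emph{initial} $H$, don't change $\weil_H$). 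So $H$ is final exactly when $\weil/\sI$ has no nonzero proper $\otimes$-ideal, i.e. when $\sI$ is a maximal $\otimes$-ideal of $\weil$. This gives the first bijection. For the abelian variant one argues identically with Serre $\otimes$-ideals of $\weil^\ab=T(\weil)$: ab-final means every ab-specialisation (push-forward along a $\otimes$-localisation) is an isomorphism, which by Lemma \ref{l2.3} happens exactly when the corresponding Serre $\otimes$-ideal is maximal.

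\textbf{Main obstacle.} The delicate point is the precise bookkeeping in the finality half: one must check that ``specialisation'' (full push-forward functor) of an \emph{initial} Weil cohomology is the same datum, up to isomorphism of categories, as ``quotient by a $\otimes$-ideal of $\weil_H$'', and in particular that passing to a faithful enrichment afterwards cannot create new specialisations — this uses that for initial $H$, the enrichment is already maximal, so $\weil_{F_*H}=\weil_H$ whenever $F$ is faithful, which was established in part one. A secondary subtlety is that the isomorphisms of categories demanded by Definition \ref{relh} (not merely equivalences) match the literal equality $\weil/\sI=\weil/\sI$ in the quotient description; this is why the bijection is with $\otimes$-ideals rather than with equivalence classes of quotients, and it is already built into the statement of Theorem \ref{thm2bis} b), so there is nothing extra to prove there. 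I would also remark that maximal $\otimes$-ideals exist (Zorn), so the correspondence is non-vacuous, though strictly speaking that is not part of the assertion.
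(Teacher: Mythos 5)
Your proposal is correct and follows essentially the same route as the paper: the first half is exactly the paper's argument (the inclusion $\Ker F_H\subseteq\Ker F_{H'}$ plus Theorem \ref{thm2bis} c) for the additive case and Lemma \ref{l2.3} for the Serre localisation), and the second half is the intended unwinding of Theorem \ref{thm2bis} b) together with the fact that full functors factor as a quotient by the kernel followed by a faithful functor. The paper's own proof is in fact terser than yours — it treats the maximal-ideal bijection as immediate — so your extra bookkeeping on finality is a faithful expansion rather than a deviation.
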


\begin{proof} The additive case is obvious from Theorem \ref{thm2bis} c). For the abelian case, by Lemmas \ref{l2.3} and \ref{l2.1} d) we have to show that the localisation functor $T(\weil)\to \weil_{H'}^\ab$ factors through a $\otimes$-localisation  $\weil_{H}^\ab\to \weil_{H'}^\ab$. This follows from the inclusion $\Ker F_H\subseteq \Ker F_{H'}$.
\end{proof}

\begin{rk} By \cite[Th. 4.18]{standard-schur}, maximal Serre $\otimes$-ideals of \break $T(\weil)$ are in $1-1$-correspondence with the maximal ideals of its centre (which is an absolutely flat ring by Lemma \ref{l2.1} d)). Similarly, maximal $\otimes$-ideals of $\weil$ contain the ideal $\sN$ of negligible morphisms (ibid., Cor. 5.14). 
\end{rk}
Moreover:
\begin{lemma}\label{p6.2} If $\sW_H^\natural$ is abelian, and
\begin{thlist}
\item either $\sW_H^\natural$ and $\sW_H^\ab$ are both connected (Definition \ref{d2.1})
\item  or $\sW_H^\natural$ is semisimple
\end{thlist}
then $\iota_H^\natural:\sW_H^\natural\iso\sW_H^\ab$ is an equivalence.
\end{lemma}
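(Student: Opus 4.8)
The plan is to exploit Lemma \ref{bvk}, which tells us that for $\sC\in \Add^\rig$ the canonical functor $\lambda_\sC:\sC\to T(\sC)$ is faithful when $\sC$ is abelian and an equivalence when $\sC$ is moreover semisimple (or merely split). First I would recall from Theorem \ref{thm2bis} c) that $\iota_H$ induces a $\otimes$-localisation $T(\weil_H)\surj \weil_H^\ab$, and that (after passing to pseudo-abelian completions, which is harmless since $T(-)$ is insensitive to them up to canonical equivalence) $\iota_H^\natural:\sW_H^\natural\to \sW_H^\ab$ is the composite $\sW_H^\natural\by{\lambda} T(\sW_H^\natural)\surj \sW_H^\ab$ where the first arrow is the canonical functor of Lemma \ref{bvk} and the second is the Serre localisation. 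Since $\sW_H^\natural$ is abelian by hypothesis, $\lambda$ is faithful, and $\iota_H^\natural$ is faithful and exact; it is automatically essentially surjective because $\weil_H^\ab$ is a localisation of $T(\weil_H)$ with $\weil_H$ dense in $T(\weil_H)$ (Lemma \ref{l2.2} applies, using that any object of $T(\weil_H)$ is a direct summand of one coming from $\weil_H$). So in both cases it remains to prove fullness, equivalently that the Serre kernel of $T(\sW_H^\natural)\surj \sW_H^\ab$ is trivial.

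In case (ii), this is immediate: if $\sW_H^\natural$ is semisimple, then by Lemma \ref{bvk} the functor $\lambda:\sW_H^\natural\to T(\sW_H^\natural)$ is already an equivalence, so $\iota_H^\natural$ is a Serre localisation of the abelian semisimple category $\sW_H^\natural$, hence an equivalence (a Serre localisation of a semisimple abelian category is semisimple with the same simple objects, and here it is essentially surjective, so the kernel is $0$).

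In case (i), I would use Lemma \ref{l2.1} a): since $\sW_H^\natural$ is abelian and connected, and $\sW_H^\ab$ is abelian, rigid and connected, the exact $\otimes$-functor $\iota_H^\natural:\sW_H^\natural\to\sW_H^\ab$ is faithful by \emph{loc.\ cit.} — but we need more, namely that a Serre localisation between connected rigid abelian $\otimes$-categories which is essentially surjective is an equivalence. This follows from Lemma \ref{l2.1} d): the Serre $\otimes$-ideals of $\sW_H^\ab$ correspond to ideals of $Z(\sW_H^\ab)$, which is a field by connectedness, so $\sW_H^\ab$ has no nontrivial Serre $\otimes$-ideal; applying the same to $T(\sW_H^\natural)$ (which is connected, its centre being $Z(\sW_H^\natural)$, a field by connectedness of $\sW_H^\natural$ and \cite[I.1.3.3.1]{saa}), the localisation $T(\sW_H^\natural)\surj\sW_H^\ab$ has trivial kernel, hence is an equivalence; composing with the equivalence $\lambda$ — which is an equivalence here because $\sW_H^\natural$ is abelian and connected, hence split — gives the claim. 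Alternatively and more directly, since $\sW_H^\natural$ is connected it is split, so $\lambda$ is an equivalence by the Remark following Lemma \ref{bvk}, and then $\iota_H^\natural$ is an essentially surjective Serre localisation of the connected rigid category $T(\sW_H^\natural)\simeq\sW_H^\natural$, whose only Serre $\otimes$-ideals are $0$ and everything (Lemma \ref{l2.1} d)), forcing it to be an equivalence.

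The main obstacle is case (i): one must be careful that ``connected'' propagates correctly along $\lambda$ and along the localisation, and that the absence of nontrivial Serre $\otimes$-ideals (rather than merely Serre ideals) is what one needs — this is exactly what Lemma \ref{l2.1} d) provides, so the argument is clean once the identification $\iota_H^\natural=(\text{localisation})\circ\lambda$ is in place. The only mild subtlety is checking that $Z(T(\sW_H^\natural))=Z(\sW_H^\natural)$, which follows from the $2$-representability of $T$ (Lemma \ref{bvk}) since $\un$ and its endomorphisms are preserved by $\lambda_\sC$ and detected by the universal property.
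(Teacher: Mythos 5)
Your reduction of $\iota_H^\natural$ to the composite $\sW_H^\natural\by{\lambda} T(\sW_H^\natural)\to \sW_H^\ab$ (Theorem \ref{thm2bis} c)) is legitimate, and case (ii) is essentially salvageable: there $\lambda$ is an equivalence by Lemma \ref{bvk}, so $\iota_H^\natural$ is a Serre localisation which is faithful (Theorem \ref{thm2bis} a)), hence has zero kernel and is an equivalence. But your stated justification for the vanishing of the kernel --- ``it is essentially surjective, so the kernel is $0$'' --- is a non-sequitur: every Serre localisation is essentially surjective, and a localisation of a semisimple category can certainly kill simple objects (e.g.\ a projection $\Vec\times\Vec\to\Vec$). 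The correct reason is faithfulness, which you do have in hand.

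Case (i) contains two genuine gaps. First, you need $\lambda:\sW_H^\natural\to T(\sW_H^\natural)$ to be an equivalence, and you justify this by ``connected, hence split''. Connected only means $Z(\sW_H^\natural)$ is a field (Definition \ref{d2.1}); it does not imply splitness or semisimplicity (any non-semisimple Tannakian category over a field is connected), and Lemma \ref{bvk} gives only faithfulness of $\lambda$ for an abelian source. Without this step your entire strategy of analysing the Serre kernel of $T(\sW_H^\natural)\to\sW_H^\ab$ cannot conclude anything about $\iota_H^\natural$ itself. Second, the claim $Z(T(\sW_H^\natural))=Z(\sW_H^\natural)$ is asserted, not proved: $\lambda$ only gives an injection of centres, and the paper explicitly treats the centres as possibly growing along such functors (see \S\ref{s6.2a}, ``the centres may a priori differ''); Lemma \ref{l2.1} d) only tells you $Z(T(\sW_H^\natural))$ is absolutely flat. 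The paper's argument for (i) is different and avoids both problems: since $\iota_H^\natural$ is faithful and both $\sW_H^\natural$ and $\sW_H^\ab$ are connected objects of $\Ex^\rig$, Lemma \ref{l2.1} b) (the Coulembier--Etingof--Ostrik--Pauwels converse) shows $\iota_H^\natural$ is \emph{exact}; a faithful exact $\otimes$-functor realising $W_H^\ab$ as a push-forward exhibits an abelian enrichment of the ab-initial cohomology $W_H^\ab$, so $\iota_H^\natural$ is an equivalence by Theorem \ref{thm2bis} a). You invoke neither Lemma \ref{l2.1} b) nor the ab-initial universal property, and these are exactly the missing ingredients.
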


\begin{proof} Assuming (i), we get from Lemma \ref{l2.1} b) that the faithful functor $\iota_H^\natural$ is exact.  Assuming (ii), $\iota_H^\natural$ factors as a composition
\[\sW_H^\natural\iso T(\sW_H^\natural)\to \sW_H^\ab \]
in which the first functor is an equivalence by Lemma \ref{bvk} and the second is exact. The conclusion follows from Theorem \ref{thm2bis} a) in both cases. \end{proof}

\subsection{When two worlds meet} \label{s6.3}

\begin{prop}\label{p6.3} Let $\sim$ be an adequate equivalence. Then, with the notation of Theorem \ref{t5.3},  $(\weil_\sim,W_\sim)$ is initial and $(\weil_{\sim,\ab},W_{\sim,\ab})$ is ab-initial.
\end{prop}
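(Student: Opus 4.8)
The plan is to show that $(\weil_\sim, W_\sim)$ has no proper faithful enrichment (and dually for the abelian case), using the same device that proved Proposition \ref{p5.7}: a faithful enrichment is classified by a $\otimes$-functor, and we push $W_\sim$ forward along it to land in the quotient, where it still factors through $\sim$, hence by universality the classifying functor must be an equivalence. Concretely, suppose $(\sC, H)$ is a Weil cohomology compatible with $\sim$ and $(F,u)\colon (\sC', H') \to (\sC, H)$ is a faithful enrichment, i.e.\ $H = F_* H'$ with $F$ faithful. First I would observe that $H'$ is itself compatible with $\sim$: by Lemma \ref{l4.3}, the faithfulness of $F$ gives $\sim_{H'} = \sim_H$, and since $H = F_*H'$ is compatible with $\sim$ we have $\sim \le \sim_H = \sim_{H'}$, so $H'$ factors through $\Corr_\sim$. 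Thus $H' \in \Weil_\sim(k,\sV; \sC', L_{\sC'})$, and by the universal property of $(\weil_\sim, W_\sim)$ from Theorem \ref{t5.3} there is a unique classifying $1$-morphism $(F_{H'}, u_{H'})\colon (\weil_\sim, \L_{W,\sim}) \to (\sC', L_{\sC'})$ with $(F_{H'})_* W_\sim = H'$.

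Next I would run the initiality argument of Theorem \ref{thm2bis} in this relative setting. Form $\weil_{H'} := \weil_\sim / \Ker F_{H'}$ and note that $W_\sim$ pushes forward along $\weil_\sim \surj \weil_{H'}$ to a Weil cohomology $W_{H'}$ which is still compatible with $\sim$ (the quotient functor is additive and $W_\sim$ already factors through $\Corr_\sim$). But $\weil_{H'}$ with $W_{H'}$ then has the universal property defining $\weil_\sim$ among Weil cohomologies compatible with $\sim$: given any $(\sC'', H'')$ compatible with $\sim$, the classifying functor $F_{H''}\colon \weil_\sim \to \sC''$ satisfies $\Ker F_{H'} \subseteq \Ker F_{H''}$ whenever $H''$ is an enrichment factoring through $H'$, exactly as in the proof of Theorem \ref{thm2bis} a). Hence the projection $\weil_\sim \to \weil_{H'}$ is an equivalence, so $\Ker F_{H'} = 0$ and $F_{H'}$ is faithful; composing, the original $F$ (which, together with $F_{H'}$, classifies $H = F_* H'$) must also be faithful and, being a faithful $\otimes$-functor out of $\weil_\sim$ whose composite classifies the initial object, is an equivalence. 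This is precisely the statement that $W_\sim$ is initial.

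For the abelian half, I would argue the same way with $\weil_{\sim,\ab} = T(\weil_\sim)$ in place of $\weil_\sim$: an abelian enrichment $(F,u)$ with $F$ faithful and exact between objects of $\Ex^\rig_*$, classified by an exact $\otimes$-functor out of $T(\weil_\sim)$, factors through the Serre localisation by its kernel; the push-forward of $W_{\sim,\ab}$ to that localisation is again compatible with $\sim$, so by the universal property of $\weil_{\sim,\ab}$ the localisation functor is an equivalence, its Serre kernel vanishes, $F$ is faithful, and — invoking Lemma \ref{l2.1} a) if one wants the cleanest formulation, or simply the injectivity-on-objects bookkeeping of Theorem \ref{thm2bis} a) — $F$ is an equivalence. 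So $W_{\sim,\ab}$ is ab-initial.

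The main obstacle I anticipate is purely bookkeeping rather than conceptual: one must check carefully that "push-forward along a quotient or Serre localisation preserves compatibility with $\sim$", i.e.\ that factoring $H^*$ through $\Corr_\sim$ is a condition stable under the operations $F^{(\N)} \circ (-)$ for $F$ a localisation. This is immediate since compatibility with $\sim$ is a condition on the \emph{source} ($H^*$ kills the $\otimes$-ideal of $\sim$-trivial correspondences, and post-composition with any additive functor cannot undo that), so the argument of Theorem \ref{thm2} and Corollary \ref{cor:thm2} "goes through without change" exactly as asserted in the proof of Theorem \ref{t5.3}. No genuinely new idea beyond the proofs of Theorem \ref{thm2bis} and Proposition \ref{p5.7} is needed.
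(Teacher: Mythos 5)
Your proposal is correct, but it takes a different (and more self-contained) route than the paper. The paper's proof is two lines: by Proposition \ref{p5.7}, $\weil_\sim$ is a $\otimes$-quotient of $\weil$ and $\weil_{\sim,\ab}$ is a Serre localisation of $\weil_\ab$, so the claim is immediate from the correspondence of Theorem \ref{thm2bis} b), c) between $\otimes$-ideals (resp.\ Serre $\otimes$-ideals) and initial (resp.\ ab-initial) Weil cohomologies. You instead verify initiality directly inside the $\sim$-compatible world: your key observation --- that an enrichment of a $\sim$-compatible Weil cohomology is again $\sim$-compatible because a faithful $F$ forces $\Ker \uH'^*=\Ker \uH^*$ (Lemma \ref{l4.3}) --- lets you classify the enrichment by a functor $F_{H'}$ out of $\weil_\sim$ via Theorem \ref{t5.3}, and uniqueness of classifying functors then gives $F\circ F_{H'}=\Id$. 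This buys independence from Proposition \ref{p5.7}, at the cost of redoing the mechanism of Theorem \ref{thm2bis}. Two small points: the detour through $\weil_{H'}=\weil_\sim/\Ker F_{H'}$ is unnecessary, since $F\circ F_{H'}=\Id$ already forces $\Ker F_{H'}=0$; and $F$ is a functor \emph{into} $\weil_\sim$, not ``out of'' it. The final passage from ``$F$ faithful and $F\circ F_{H'}=\Id$'' to ``$F$ is an equivalence'' is exactly the step the paper itself takes implicitly in Theorem \ref{thm2bis} b), so you introduce no gap beyond what the paper already assumes.
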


\begin{proof} By Proposition \ref{p5.7}, $\weil_\sim$ is a quotient of $\weil$ and $\weil_{\sim,\ab}$ is a localisation of $\weil_\ab$. Therefore the claim follows from Theorem \ref{thm2bis} b) and c).
\end{proof}

Recall from Definition \ref{defhom} that any Weil cohomology $H$ defines an adequate equivalence $\sim_H$ (homological equivalence with respect to $H$). Let $\Ad$ be the poset of adequate equivalences, $\Wl$ the poset of (isomorphism classes of) initial Weil cohomologies and $\Wl^\ab$ the poset of (isomorphism classes of) ab-initial Weil cohomologies: by Proposition \ref{p6.3}, we  get nondecreasing maps
\begin{equation}\label{eq6.4}
W:\Ad\leftrightarrows\Wl:\sim, \quad W^\ab:\Ad\leftrightarrows\Wl^\ab:\sim
\end{equation}

\begin{thm}\label{t6.2} Consider the objects in \eqref{eq6.4} as categories and functors. Then $(W,\sim)$ and $(W^\ab,\sim)$ are two pairs of adjoint functors where $\sim$ is the right adjoint. In particular, $\sim W \sim = \sim$, $\sim W^\ab \sim = \sim$, $W\sim W = W$ and $W^\ab\sim W^\ab = W^\ab$ (``Galois correspondence'').
\end{thm}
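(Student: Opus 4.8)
The plan is to unwind the definitions so that the adjunction becomes a single elementary statement about the posets $\Ad$, $\Wl$, $\Wl^\ab$ together with the maps in \eqref{eq6.4}. Since all four categories are posets (viewed as categories in the usual way, with a unique morphism $x\to y$ whenever $x\le y$), an adjunction $(W\dashv\ \sim)$ amounts to the bi-implication
\[
W(\sim_0)\le H \iff \sim_0\ \le\ \sim_H
\]
for an adequate equivalence $\sim_0\in\Ad$ and an initial Weil cohomology $H\in\Wl$, and similarly for the abelian pair. So the first step is to record that $W$ and $\sim$ are nondecreasing (this is exactly Proposition \ref{p6.3} plus the construction of \eqref{eq6.4}), hence genuine functors of posets, and then to prove the displayed Galois-type equivalence.

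For the direction $\sim_0\le\sim_H\Rightarrow W(\sim_0)\le H$: by Theorem \ref{t5.3} the Weil cohomology $W_{\sim_0}$ is the universal one compatible with $\sim_0$, i.e. it factors through $\Corr_{\sim_0}$; if $\sim_0\le\sim_H$ then $H^*$ also factors through $\Corr_{\sim_0}$ (Definition \ref{d5.3}), so by the universal property of Theorem \ref{t5.3} there is a classifying $\otimes$-functor $\weil_{\sim_0}\to\sC$ pushing $W_{\sim_0}$ to $H$. Since $H$ is initial, the initial enrichment $\weil_H$ of Theorem \ref{thm2bis} receives a faithful functor from… — more precisely, by Theorem \ref{thm2bis}\,b) the initial Weil cohomology $W(\sim_0)=W_{\sim_0}$ corresponds to a $\otimes$-ideal of $\weil$, and the classifying functor above shows $\Ker F_{W_{\sim_0}}\supseteq$ the ideal corresponding to $H$ is \emph{not} quite what we want; rather, the existence of $\weil_{\sim_0}\to\sC$ realising $H$ means $H$ is an enrichment of (a quotient of) $W_{\sim_0}$, so in the poset $\Wl$ we get $W_{\sim_0}\le H$ via the induced full surjection $\weil_{W_{\sim_0}}\surj\weil_H$ of Proposition \ref{p6.1}. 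For the reverse direction $W(\sim_0)\le H\Rightarrow \sim_0\le\sim_H$: apply $\sim$ (which is nondecreasing) to get $\sim W(\sim_0)\le\sim_H$, and then use $\sim W(\sim_0)=\sim_0$, which holds because $W_{\sim_0}$ is by construction compatible with $\sim_0$ and no coarser: its homological equivalence is exactly $\sim_0$ (the kernel of the realisation of $W_{\sim_0}$ is precisely the $\otimes$-ideal of $\sim_0$, since $\weil_{\sim_0}\to\sC^\natural$ is faithful by Proposition \ref{p6.3} and Theorem \ref{thm2bis}). The abelian case is identical, replacing ``quotient/full'' by ``Serre localisation'' and invoking the abelian halves of Propositions \ref{p5.7}, \ref{p6.1} and Theorem \ref{thm2bis}.

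Once the adjunction is established, the ``Galois correspondence'' identities are formal consequences of the triangular identities for an adjunction between posets: $\sim W\sim=\sim$ and $W\sim W=W$ are exactly the statements that the unit and counit are identities after one composition, which for poset adjunctions reduces to the standard fact that $\mathrm{RL}$ and $\mathrm{LR}$ are idempotent and that $\mathrm{RLR}=\mathrm R$, $\mathrm{LRL}=\mathrm L$. I would simply cite this (e.g. the closure-operator formalism) rather than rederive it.

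The main obstacle is the identity $\sim W_{\sim_0}=\sim_0$, i.e. that the universal Weil cohomology compatible with $\sim_0$ has homological equivalence \emph{equal to} $\sim_0$ and not strictly coarser. One inclusion ($\sim_0\le\sim W_{\sim_0}$) is trivial since $W_{\sim_0}$ factors through $\Corr_{\sim_0}$; the content is the other inclusion, which says the realisation functor $\weil_{\sim_0}\to\sC^\natural$ (for $\sC=\weil_{\sim_0}$ itself, or rather the identity realisation) does not kill any correspondence that survives $\sim_0$. This should follow from the construction in the proof of Theorem \ref{thm2} (the universal category is built by generators and relations imposing \emph{only} the Weil-cohomology axioms and compatibility with $\sim_0$, never extra cycle relations), combined with the existence of at least one Weil cohomology enriching the situation — but one must be slightly careful that ``no extra relations are imposed'' is genuinely true in $\Corr_{\sim_0}\times\N$, which is where I would spend the most care.
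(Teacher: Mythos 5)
Your reduction of the adjunction to a poset bi-implication, and your proof of the direction ``$\sim_0$ finer than $\sim_H$ $\Rightarrow$ $W(\sim_0)\le H$'' via the universal property of Theorem \ref{t5.3}, agree with the paper's argument. The converse direction, however, is broken: you deduce it by applying $\sim$ and invoking the identity $\sim W(\sim_0)=\sim_0$, i.e.\ that the homological equivalence of $W_{\sim_0}$ is exactly $\sim_0$. That identity is false in general, and it cannot be rescued by arguing that the generators-and-relations construction ``imposes no extra cycle relations'': by Proposition \ref{prop:alg} (resp.\ Remark \ref{rk:tweil}), for $\sim_0=\rat$ the universal Weil cohomology already factors through algebraic (resp.\ smash-nilpotence) equivalence under mild hypotheses, so its homological equivalence is strictly coarser than rational equivalence as soon as, say, some curve in $\sV$ has nontrivial $\Pic^0$. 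This is precisely why the theorem asserts only the triangle identity $\sim W\sim=\sim$ and not that the unit $\sim_0\to\sim_{W_{\sim_0}}$ of the adjunction is invertible; in a Galois connection it need not be. Your stated justification (faithfulness of $\weil_{\sim_0}\to\sC^\natural$, via Proposition \ref{p6.3} and Theorem \ref{thm2bis}) controls the wrong functor: what determines $\sim_{W_{\sim_0}}$ is the kernel of $\Corr_{\sim_0}\to\weil_{\sim_0}^{(\N)}$, about which faithfulness of the outgoing classifying functors says nothing.

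The correct argument for the converse is shorter and does not pass through the unit at all: if $W(\sim_0)\le H$ in $\Wl$, then $H$ is a push-forward of $W_{\sim_0}$ (Theorem \ref{thm2bis} b): inclusion of the corresponding $\otimes$-ideals of $\weil$), hence $H^*$ factors through $\Corr_{\sim_0}$ because $W_{\sim_0}^*$ does; by Definition \ref{d5.3} and the definition of $\sim_H$ as the kernel of $\uH^*$, this says exactly that $\sim_0$ is finer than $\sim_H$. This is the paper's proof: both implications are read off from the chain ``$H$ is compatible with $\sim$ $\iff$ $\sim\ge\sim_H$ $\iff$ (for $H$ initial) $W_\sim\ge H$''. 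Your derivation of the Galois identities from the adjunction, once established, is fine.
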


\begin{proof} Given $H$ and $\sim$, $H$ is compatible with $\sim$ in the sense of Definition \ref{d5.3} if and only if  $\sim\ge \sim_H$. If $H$ is initial (\resp ab-initial), this is also equivalent to $W_\sim\ge H$ (\resp $W_{\sim,\ab}\ge H$), hence the adjunction claims. The Galois correspondences then follow from the adjunction identities.
\end{proof}

To be more explicit, for any Weil cohomology $H$ one has a commutative diagram of $\otimes$-categories:
\begin{equation}\label{eq6.5}
\begin{CD}
\sM_{\sim_H}@>>> \sW_{\sim_H} @>>> T(\sW_{\sim_H})@>\sim>> \sW_{\sim_H}^\ab\\
@V||VV@Va VV @Vb VV @Vc VV\\
\sM_H@>>> \sW_H@>>> T(\sW_H)@>d >> \sW_H^\ab
\end{CD}
\end{equation}
where $a$ is full surjective and $b,c,d$ are Serre localisations. It is unclear when $a$ and $d$ are equivalences in practice.

\subsection{The case of traditional Weil cohomologies}\label{s6.2a} Let $(\sC,H)$ be a Weil cohomology, with $(\sC,L_\sC)\in \Ex^\rig_*$ and $\sC$ connected (Definition \ref{d2.1}). 
Then $Z(\weil_H)\subseteq Z(\weil_H^\ab)\subseteq Z(\sC)$ are domains; but $Z(\weil_H^\ab)$ is absolutely flat (Lemma \ref{l2.1} d)), so $\weil_H^\ab$ is also connected. Let $\weil_H^Q$ be the extension of scalars of $\weil_H$ to the field of fractions $Q$ of its centre; then the functor $\iota_H:\weil_H \to \weil_H^\ab$ extends to a faithful $\otimes$-functor
$\weil_H^Q \to \weil_H^\ab$ (note that the centres may a priori differ).

If Hom groups in $\sC$ are finite dimensional over its centre, we can apply Lemma \ref{lak}. We thus get a commutative diagram of rigid $\otimes$-categories
\begin{equation}\label{eq6.1}
\begin{gathered}
\xymatrix{
\sM_H\ar[r]^{\underline{W_H}}\ar@/^2.8pc/[rrrr]^{\uH}\ar[d]&(\weil_H)^\natural\ar[r] &(\weil_H^Q)^\natural\ar[r]\ar[d] & \weil_H^\ab \ar[r]& \sC\\
\sM_{H,\sN}\ar[rr]\ar[d] &&(\weil_H^Q/\sN)^\natural\\
\sM_\num
}
\end{gathered}
\end{equation}
in which the horizontal functors are faithful, the vertical functors are full, and $\sM_\num,(\weil_H^Q/\sN)^\natural$ are abelian semi-simple. The (pseudo-abelian) category $\sM_{H,\sN}$ is defined by the diagram. All functors are symmetric monoidal, except the horizontal ones starting from $\sM_H$ and $\sM_{H,\sN}$ which are only monoidal (see Definifion \ref{defhom}).

This applies in particular to any traditional Weil cohomology. 

\begin{rk} One could further decorate Diagram \eqref{eq6.1} by adding Diagram \eqref{eq6.5}. We shall refrain from this and leave it to the pleasure of enthusiastic readers.
\end{rk}

\subsection{The case of classical Weil cohomologies}\label{s6.2b} 

As an instance of Examples \ref{exw} - \ref{ex5.1}, let $k$ be a subfield of $\C$ and $\sV=\Sm^\proj(k)$.  Hodge cohomology in $\Weil(k, \V; \sH, L_\sH)$, for $\sH$ the category of pure polarisable Hodge structures and $L_\sH:= \Q(-1)$, is an enrichment of the classical Betti cohomology ${H}=H_B$, with $F=\iota_\sH: \sH\to \Vec_\Q$ the forgetful functor. 
Other examples are the de Rham-Betti enrichement of Betti cohomology, and the Ogus enrichment of de Rham cohomology as in \cite[Ch. 7]{andremotifs}. These examples are all superseded by Deligne's (abelian semi-simple) category $\sM^D$ of motives for absolute cycles \cite[\S 6]{delmil}. Here, contrary to \cite{delmil}, we don't modify the commutativity constraint, in order that the natural functor $\sM_H\to \sM^D$ be symmetric monoidal.

In any characteristic, we also have the Galois enrichment of $\ell$-adic cohomology.

\subsection{Andr\'e's category}\label{s6.2}
The previous examples are further enriched by André's category of motivated motives $\sM^A_H(k,\sV)=\sM^A_H$ (\cite[4.2]{A} and \cite[\S 2]{A2}) associated to a classical Weil cohomology $H$ (same comment for the commutativity constraint).  Theorem \ref{thm2bis} gives a universal enrichment of all these cases:
\begin{equation}\label{eq6.2}
\sM_H\by{w} (\weil_H)^\natural \to \sM^A_H
\end{equation}
where all functors are faithful. We distinguish two cases:

\subsubsection{Characteristic $0$, $k$ embeddable in $\C$} Then none of the categories in \eqref{eq6.2} depends on the choice of $H$: we drop it from the notation $\sM^A_H$ and replace it by $\hom$ for the others this is the usual homological equivalence). Moreover, $\sM^A$ is abelian semi-simple by \cite[Th. 0.4]{A}. We thus get the following refinement of \eqref{eq6.2} still into faithful functors, with the last two exact:
\begin{equation}\label{eq6.2a}
\sM_\hom\by{w} (\weil_\hom)^\natural\by{\iota^\natural_\hom} \weil_\hom^\ab \by{\theta} \sM^A\to \sM^D\to \sH.
\end{equation}

Here all centres are equal to $\Q$ (hence passing from $\weil_\hom$ to $\weil_\hom^Q$ is not necessary). By Lemma \ref{l6.1}, all Weil cohomologies appearing in \eqref{eq6.2a} have weights since $\sH$ has.

\begin{thm}\label{t6.4} In \eqref{eq6.2a}, $\theta$ is an equivalence of categories; hence $\weil_\hom^\ab$ is semi-simple.
\end{thm}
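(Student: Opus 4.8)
`\textbf{Proof proposal.}`

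\textbf{Proof proposal.} The plan is to show that the faithful exact $\otimes$-functor $\theta$ of \eqref{eq6.2a} is moreover full and essentially surjective, hence an equivalence. I will use the following properties of $\theta$, clear from its construction: it sends $W^i_\hom(X)$ to the $i$-th Künneth component $h^A_i(X)$ of the motivated motive of $X\in\sV$, it sends $\L_{W}$ to the Lefschetz object of $\sM^A$, and the composite $\weil_\hom^\ab\by{\theta}\sM^A\to\sM^D\to\sH\to\Vec_\Q$ is faithful and exact.

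First I would prove that $\theta$ is full, which is the crux. Fix $X\in\sV$ of pure dimension $n$. For $0\le i\le n$ the $i$-th power of the Lefschetz operator, i.e. cup-product with the $i$-th power of a hyperplane class, is an algebraic correspondence, hence defines a morphism $L^i\colon W^{n-i}_\hom(X)\to W^{n+i}_\hom(X)(i)$ in $\weil_\hom^\ab$. Since $H$ is classical it satisfies the hard Lefschetz theorem, so $L^i$ becomes an isomorphism after the faithful exact realization $\weil_\hom^\ab\to\Vec_\Q$ (equivalently, already in $\sM^A$, where $\Lambda$ is motivated by André); as a morphism of an abelian category that becomes invertible under a faithful exact functor is itself invertible, $L^i$ is an isomorphism in $\weil_\hom^\ab$. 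Therefore the Lefschetz involutions $*_L$, which are universal polynomial expressions in the operators $L^i$, their inverses and the Künneth projectors (Kleiman \cite{kdix}, \cite{kst}), are morphisms of $\weil_\hom^\ab$. Now a morphism of $\sM^A$ is, by definition of $\sM^A_H$, a $\Q$-linear combination of pushforwards, along projections, of cup-products $\alpha\cup\,{*_L}\,\beta$ of algebraic cycles; by the above each such expression is realized by a morphism of $\weil_\hom^\ab$, so every morphism of $\sM^A$ lies in the image of $\theta$. Hence $\theta$ is full.

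Next I would deduce essential surjectivity. Since $\weil_\hom^\ab$ is abelian, hence pseudo-abelian, and $\theta$ is fully faithful and exact, the essential image of $\theta$ is a full subcategory of $\sM^A$ stable under finite direct sums and under direct summands. It contains every twisted Künneth component $h^A_i(X)(m)=\theta\bigl(W^i_\hom(X)(m)\bigr)$, and every object $(X,p,m)$ of $\sM^A$ is a direct summand of $\bigoplus_i h^A_i(X)(m)$; hence $\theta$ is essentially surjective, and therefore an equivalence. Finally $\sM^A$ is abelian and semi-simple by \cite[Th. 0.4]{A}, so $\weil_\hom^\ab\cong\sM^A$ is semi-simple.

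The main obstacle is the second paragraph, i.e. the fullness of $\theta$: a priori $\theta$ is only faithful, and the universal category $\weil_\hom$ does not ``see'' the hard Lefschetz theorem — this is precisely why the tight, `$+$'-decorated categories are introduced elsewhere in the paper. What rescues the argument is the combination of a \emph{classical} $H$ (so that hard Lefschetz holds for its realization), a faithful exact realization of $\weil_\hom^\ab$ to $\Vec_\Q$, and characteristic $0$ (so that $\sM^A$ is abelian and André's semi-simplicity applies). One should also double-check, in the setup, that $\theta$ genuinely has the three properties listed in the first paragraph, which reduces to the compatibility of the various realizations with $H$.
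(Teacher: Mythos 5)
Your proposal is correct and follows essentially the same route as the paper: the key step is that the Lefschetz powers $L^{n-i}$ are algebraic, hence in the image of $\theta$, and become invertible under a conservative (faithful exact) functor, so their inverses $\Lambda^{n-i}$ (equivalently the involutions $\star_L$ generating André's motivated correspondences) already live in $\weil_\hom^\ab$, giving fullness; essential surjectivity then follows from full faithfulness, pseudo-abelianness of the source and density of $\sM_\hom\to\sM^A$ (the paper's Lemma \ref{l2.2}). The only cosmetic difference is that the paper applies conservativity to $\theta$ itself rather than to the composite realisation in $\Vec_\Q$.
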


\begin{proof} Recall that $\sM^A$ is constructed by adjoining to algebraic cycles the inverses $\Lambda^{n-i}$ of the Lefschetz isomorphisms $L^{n-i}:H^i(X)\iso H^{2n-i}(X)(n-i)$ associated to a polarised $X\in \sV$ of dimension $n$ (see \S \ref{s8.5} below for details on these operators). Being exact and faithful, $\theta$ is conservative; therefore $\Lambda^{n-i}\in \IM\theta$ and $\theta$ is full. Its essential surjectivity now follows from Lemma \ref{l2.2} since the functor $\sM_\hom\to \sM^A$ is dense  (see Definition \ref{d2.2} for ``dense'').
\end{proof}

\subsubsection{$k$ is finitely generated of characteristic $>0$} Taking for $H$ in \eqref{eq6.1} $\ell$-adic cohomology $H_\ell$ where $\ell$ is a prime number different from $\car k$ and for $\sC=\sR_\ell$ $\ell$-adic representations, except that we now have inclusions
\[\Q\subseteq Z(\weil_{H_\ell})\subseteq Z(\weil_{H_\ell}^Q)\subseteq Z(\weil_{H_\ell}^\ab)\subseteq Z(\sR_\ell)= \Q_\ell\]

Here again all Weil cohomologies have weights since $\sR_\ell$ has \cite{weilII}. Since $\sM^A_{H_\ell}$ is not known to be abelian, we have another set of inclusions
\[Z(\weil_{H_\ell})\subseteq Z(\sM^A_{H_\ell})\subseteq  \Q_\ell.\]

Finally, it is not known whether $\ell$-adic homological equivalence is independent of $\ell$, so this picture a priori varies with $\ell$. We shall refine it in \S \ref{S:A}.

\subsection{Comparison of Weil cohomologies}
We may introduce the following relations.

\begin{defn}\label{dcomp.1} 
a) Two Weil cohomologies $(\sC, H)$ and $(\sC',H')$ are \emph{equivalent} if $\weil_H = \weil_{H'}$. For $\sC, \sC'\in \Ex^\rig$ we say that are \emph{ab-equivalent} if  $\weil_H^\ab = \weil_{H'}^\ab$. Clearly, these are equivalence relations.

b) Two Weil cohomologies $(\sC, H)$ and $(\sC',H')$ are \emph{comparable} if there exists a third pointed category $(\sC'',L_{\sC''})\in \Add^\otimes_*$, two faithful $1$-morphisms $(F,u):(\sC,L_\sC)\to (\sC'',L_{\sC''})$  and $(F',u'):(\sC',L_{\sC'})\to (\sC'',L_{\sC''})$ and a comparison isomorphism of Weil cohomologies $ F_*H \iso F'_*H'$ in $\Weil(k,\V; \sC'',L_{\sC''})$. If $\sC, \sC'\in \Ex^\rig$ we additionally require that $\sC''\in \Ex^\rig$, and  $F, F'$ should be exact. 
\end{defn}

\begin{lemma}\label{dcomp.l1} 
If $(\sC, H)$ and $(\sC',H')$ are comparable, they are equivalent. 
\end{lemma}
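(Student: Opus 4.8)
The plan is to show that if $(\sC,H)$ and $(\sC',H')$ are comparable via a third category $(\sC'',L_{\sC''})$ with faithful (exact, in the abelian case) $1$-morphisms $(F,u)\colon(\sC,L_\sC)\to(\sC'',L_{\sC''})$ and $(F',u')\colon(\sC',L_{\sC'})\to(\sC'',L_{\sC''})$ and a comparison isomorphism $F_*H\iso F'_*H'$ in $\Weil(k,\V;\sC'',L_{\sC''})$, then $\weil_H=\weil_{H'}$ (resp.\ $\weil_H^\ab=\weil_{H'}^\ab$). The key tool is Theorem \ref{thm2bis}, which produces for any Weil cohomology $K$ its initial enrichment $(\weil_K,W_K)$ as a $\otimes$-quotient of $\weil$ (resp.\ a Serre $\otimes$-localisation of $\weil_\ab$), together with the fact --- already noted after Definition \ref{relh} and implicit in Lemma \ref{l4.3} --- that $\sim_K$ and hence the $\otimes$-ideal $\Ker F_K\subseteq\weil$ (resp.\ the Serre $\otimes$-ideal $\Ker F_K\subseteq\weil_\ab$) are unchanged when $K$ is replaced by an enrichment of it, i.e.\ by $G_*K$ for $G$ faithful. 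Equivalently, $\weil_K$ is determined by $\sim_K$.

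First I would observe that, since $F$ is faithful, Lemma \ref{l4.3} gives $\sim_H=\sim_{F_*H}$, so by Theorem \ref{thm2bis}~a) (the construction of the initial enrichment) one has $\weil_H=\weil_{F_*H}$; symmetrically $\weil_{H'}=\weil_{F'_*H'}$. It therefore suffices to prove that $F_*H$ and $F'_*H'$ have the same initial enrichment inside $\weil$. But this is immediate from the comparison isomorphism: an isomorphism $F_*H\iso F'_*H'$ in $\Weil(k,\V;\sC'',L_{\sC''})$ identifies the two classifying $1$-morphisms $(\weil,\L)\to(\sC'',L_{\sC''})$ up to $\otimes$-natural isomorphism (by the full faithfulness of \eqref{eq5.9}, i.e.\ the $2$-representability statement of Theorem \ref{thm2}), hence the two kernels $\Ker F_{F_*H}$ and $\Ker F_{F'_*H'}$ coincide as $\otimes$-ideals of $\weil$. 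Since $\weil_{F_*H}=\weil/\Ker F_{F_*H}$ and likewise for $F'_*H'$ by the proof of Theorem \ref{thm2bis}~a), we get $\weil_{F_*H}=\weil_{F'_*H'}$, and chaining the equalities yields $\weil_H=\weil_{H'}$.

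The abelian case runs in exactly the same way, working in $\weil_\ab=T(\weil)$ instead of $\weil$: here $F,F'$ are required exact, so Lemma \ref{l4.3} again gives $\sim_H=\sim_{F_*H}$ and $\sim_{H'}=\sim_{F'_*H'}$, the classifying functors $(T(\weil),\L)\to(\sC'',L_{\sC''})$ are exact $\otimes$-functors classified by Serre $\otimes$-ideals, the comparison isomorphism identifies these two Serre $\otimes$-ideals by Corollary \ref{cor:thm2}, and $\weil_H^\ab=T(\weil)/\Ker F_H=T(\weil)/\Ker F_{H'}=\weil_{H'}^\ab$ by the construction in Theorem \ref{thm2bis}~a). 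I do not expect a genuine obstacle here; the only point requiring a little care is the bookkeeping that a comparison isomorphism of Weil cohomologies, which is by definition a graded natural isomorphism commuting with traces, does indeed induce a $\otimes$-natural isomorphism of the associated classifying $1$-morphisms --- this is precisely the content of the equivalence of categories \eqref{eq5.9} (resp.\ its abelian analogue), applied to the target $(\sC'',L_{\sC''})$, so it is a formal consequence of the $2$-representability already established.
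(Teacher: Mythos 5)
Your proof is correct and is essentially the paper's argument: the paper also observes that $H$ and $H'$ are both enrichments of $H'':=F_*H\iso F'_*H'$ and concludes $\weil_H=\weil_{H''}=\weil_{H'}$, citing Proposition \ref{p6.1} (whose content you unroll directly via the kernels of the classifying functors from Theorem \ref{thm2bis}). The abelian case is handled the same way in both.
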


\begin{proof}
If $(\sC, H)$ and $(\sC',H')$ are comparable, then $(\sC, H)$ and $(\sC',H')$ are both enrichments of $H'':=  F_*H \iso F'_*H'$, and  then $\weil_H = \weil_{H''}= \weil_{H'}$ by Proposition \ref{p6.1}. Same argument in the abelian case. 
\end{proof}

Let  $\sS =\{(\sC, H)\mid\sC\in\Ex^\rig\}$ be a class of Weil cohomologies; write $\sI_{\sS} =\bigcap_{H\in \sS} \Ker F_H$ and let
$$\sA_\sS := T(\weil)/\sI_{\sS}$$ 
be the abelian rigid $\otimes$-category given by the Serre quotient. Let $r_H : \sA_\sS\to \sC$ be the induced exact $\otimes$-functor for each $(\sC, H)\in\sS$.

\begin{thm}\label{thm:AS}
For $\sA_{\sS}$ as above, consider the conditions
\begin{thlist}
\item $\sA_\sS$ is connected
\item  every $r_H$ is faithful
\item  all $H\in\sS$ are equivalent
\item $\sA_\sS$ is Tannakian.
\end{thlist}
Then (i) $\Rightarrow$ (ii) $\Rightarrow$ (iii) and (iv) $\Rightarrow$ (i); if $\sS$ contains a traditional Weil cohomology, then (iii) $\Rightarrow$ (iv) and all conditions are equivalent. Moreover, $\sA_S$ is semi-simple if $\car k=0$ and $\sS$ contains a classical Weil cohomology. 
\end{thm}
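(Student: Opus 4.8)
\textbf{Proof plan for Theorem \ref{thm:AS}.}

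The plan is to establish the implications one at a time, drawing on the lemmas of \S\ref{Term}. For (iv) $\Rightarrow$ (i): a Tannakian category $\sA_\sS$ has $Z(\sA_\sS)=\End(\un)$ a field (indeed, over an algebraically closed field of definition it is the base field; in general it is a finite field extension), so $\sA_\sS$ is connected by Definition \ref{d2.1}. For (i) $\Rightarrow$ (ii): if $\sA_\sS$ is connected, then each $r_H:\sA_\sS\to\sC$ is an exact $\otimes$-functor out of a connected rigid abelian $\otimes$-category, hence faithful by Lemma \ref{l2.1} a). For (ii) $\Rightarrow$ (iii): if every $r_H$ is faithful, then by construction $\sI_\sS=\bigcap_H\Ker F_H=\Ker F_H$ for each $H$ (the quotient map $T(\weil)\to\sA_\sS$ composed with the faithful $r_H$ has the same kernel as $F_H$), so $\Ker F_H$ is independent of $H\in\sS$; since $\weil_H^\ab$ is the Serre localisation of $T(\weil)$ by that kernel (Theorem \ref{thm2bis} a) and c)), all the $\weil_H^\ab$ coincide, \ie all $H\in\sS$ are ab-equivalent, hence equivalent by restricting along $\iota_H$ (or directly since $\weil_H=\weil/\Ker F_H$ with the same kernel).

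For the converse chain when $\sS$ contains a traditional Weil cohomology $H_0$ with values in $\Vec_K$: assuming (iii), all $H\in\sS$ are equivalent, so in particular $\weil_H^\ab=\weil_{H_0}^\ab$ for all $H$, whence $\Ker F_H=\Ker F_{H_0}$ for all $H$ and therefore $\sI_\sS=\Ker F_{H_0}$, giving $\sA_\sS=\weil_{H_0}^\ab$. Now $H_0$ is traditional, so $\sC=\Vec_K$ has $Z(\sC)=K$ a field and Homs are finite-dimensional over $K$; applying Lemma \ref{l2.1} (the centre $Z(\weil_{H_0}^\ab)$ is absolutely flat and sits inside the domain $K$, hence is a field) shows $\sA_\sS=\weil_{H_0}^\ab$ is connected, and moreover the faithful exact $\otimes$-functor $r_{H_0}:\sA_\sS\to\Vec_K$ with finite-dimensional Homs exhibits $\sA_\sS$ as a Tannakian category — this is exactly the recognition of a neutral (or, after base change, neutralisable) Tannakian category from a faithful exact $\otimes$-functor to $\Vec_K$, e.g. via \cite[Prop. 1.19]{delmil} combined with \cite[VI]{saa} or \cite[Th. 2.11]{delmil}. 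This proves (iii) $\Rightarrow$ (iv), and combined with (iv) $\Rightarrow$ (i) $\Rightarrow$ (ii) $\Rightarrow$ (iii) all four conditions are equivalent in this case.

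For the semi-simplicity statement: suppose $\car k=0$ and $\sS$ contains a classical Weil cohomology $H_1$. By Theorem \ref{t6.4} (applicable since in characteristic $0$ homological equivalence is independent of the classical $H$ and $\weil_\hom^\ab$ is semi-simple), $\weil_{H_1}^\ab$ is abelian semi-simple. If (iii) holds then $\sA_\sS=\weil_{H_1}^\ab$ is semi-simple; if (iii) does not hold, we argue directly that $\sA_\sS$ is a Serre quotient of the semi-simple category $T(\weil)/\Ker F_{H_1}=\weil_{H_1}^\ab$ — indeed $\sI_\sS\subseteq\Ker F_{H_1}$ forces $\sA_\sS=T(\weil)/\sI_\sS$ to surject onto $\weil_{H_1}^\ab$, but we want it the other way. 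The correct argument: since $\car k = 0$, one may use that $T(\weil)/\sN$ is already semi-simple (Lemma \ref{lak}, applied through the classical $H_1$ which factors through a category with finite-dimensional Homs over a finite extension of $\Q$), that $\sN$ is the unique $\otimes$-ideal with semi-simple quotient, and that $\sI_\sS\supseteq\sN$ by the remark following Proposition \ref{p6.1} (every $\Ker F_H$, being a maximal-quotient-giving ideal for the rigid setup, contains $\sN$); hence $\sA_\sS=T(\weil)/\sI_\sS$ is a Serre $\otimes$-quotient of the semi-simple $T(\weil)/\sN$, and a Serre quotient of a semi-simple abelian category is semi-simple.

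\textbf{Main obstacle.} The delicate point is the step (iii) $\Rightarrow$ (iv): one must verify that the faithful exact $K$-linear $\otimes$-functor $r_{H_0}:\sA_\sS\to\Vec_K$ genuinely promotes $\sA_\sS$ to a Tannakian category in the technical sense (rigid abelian $\otimes$-category with $\End(\un)=K$ admitting a fibre functor), which requires knowing $\End_{\sA_\sS}(\un)=K$ rather than merely a subfield — this is where connectedness plus the absolute flatness of the centre (Lemma \ref{l2.1} d)) does the work, but the bookkeeping of centres along $\weil\to\weil_{H_0}\to\weil_{H_0}^Q\to\weil_{H_0}^\ab\to\Vec_K$ (cf. \S\ref{s6.2a}) needs care. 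The semi-simplicity assertion in the non-(iii) case is the second subtlety, handled by the uniqueness of $\sN$ (Lemma \ref{lak}) rather than by any direct identification of $\sA_\sS$.
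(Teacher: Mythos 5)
Your handling of the chain (iv) $\Rightarrow$ (i) $\Rightarrow$ (ii) $\Rightarrow$ (iii), and of (iii) $\Rightarrow$ (iv) when $\sS$ contains a traditional $H_0$, is correct and follows the same route as the paper: connectedness gives faithfulness of each $r_H$ by Lemma \ref{l2.1} a); faithfulness of $r_H$ forces $\Ker F_H=\sI_\sS$ for every $H$, hence all kernels coincide and the $\weil_H^{\ab}$ (and $\weil_H$) agree; and under (iii) one gets $\sA_\sS=\weil_{H_0}^{\ab}$ with $r_{H_0}$ a fibre functor. Your extra bookkeeping on $\End(\un)$ being a field (via absolute flatness of the centre, Lemma \ref{l2.1} d)) is a legitimate expansion of the paper's terser ``$\sA_\sS$ is then Tannakian''.

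The genuine gap is in your attempt to prove the semi-simplicity statement \emph{without} assuming (iii). The step ``$\sI_\sS\supseteq\sN$ by the remark following Proposition \ref{p6.1}'' does not hold up: that remark asserts that \emph{maximal} $\otimes$-ideals of $\weil$ contain $\sN$, whereas $\Ker F_H$ is maximal only when $H$ is both initial and (ab-)final (Proposition \ref{p6.1}), which is not part of your hypotheses. In fact the inclusion is false in general: taking $H=W_\ab$ itself gives $F_{W_\ab}=\Id_{T(\weil)}$ and $\Ker F_{W_\ab}=0$, so if $W_\ab\in\sS$ then $\sI_\sS=0$ and $\sA_\sS=T(\weil)$; your argument would then prove that $T(\weil)$ is semi-simple in characteristic $0$, a statement the paper never makes and which would trivialise the discussion in \S\ref{s9.1} (and, via Remark \ref{rkloc}, would force $\weil_\ab^+$ to be semi-simple unconditionally). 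More generally, a negligible morphism of $T(\weil)$ has no reason to die under an exact $\otimes$-functor that is not full, so there is no a priori comparison between $\sN$ and the $\Ker F_H$.

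The resolution is that the paper's ``Moreover'' is to be read in the presence of the equivalent conditions (i)--(iv): that is the only reading under which ``follows from Theorem \ref{t6.4}'' makes sense, since one then has $\sI_\sS=\Ker F_{H_1}$ and $\sA_\sS=\weil_{H_1}^{\ab}=\weil_{\hom}^{\ab}$, which Theorem \ref{t6.4} identifies with the semi-simple category $\sM^A$. Your conditional half of the argument (``if (iii) holds then $\sA_\sS=\weil_{H_1}^{\ab}$ is semi-simple'') is exactly the paper's proof; the unconditional half should be deleted, as its key inclusion $\sN\subseteq\sI_\sS$ is unprovable and generally false.
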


\begin{proof} If $\sA_\sS$ is connected then all $r_H$ are faithful by Lemma \ref{l2.1} a), which in turn implies $\weil_H^\ab = \weil_{H'}^\ab$ since $\Ker F_H=\Ker F_{H'}$ for $H, H'\in \sS$. This shows (i) $\Rightarrow$ (ii) $\Rightarrow$ (iii), and (iv) $\Rightarrow$ (i) is obvious. If $H\in\sS$ is traditional, $r_H$ is a fiber functor and $\sA_\sS$ is then Tannakian under (iii) and all conditions are equivalent.  Finally, the last statement follows from Theorem \ref{t6.4}.
 \end{proof}

\section{K\"unneth decompositions and numerical equivalence}

\subsection{Condition C}\label{s5.2}
Let  $(\sC,H)$ be a Weil cohomology with $\sC$ pseudo-abelian, and consider the corresponding category $\sM_H(k,\V)=\sM_H$ of $H$-homological motives of Definition \ref{defhom}.

\begin{defn}\label{d4.1} We say that $X\in \V$ \emph{verifies Condition C relatively to $H$} if, for  any $i\ge 0$, the K\"unneth projector $\End_{\sC^\natural}(\uH(X))\ni \pi^i_X:\uH(X)\surj H^i(X)\inj \uH(X)$ is in the image of $\uH$. We say that $H$ \emph{verifies Condition C} if all $X\in \V$ do.
\end{defn}

(If $H$ is a traditional Weil cohomology and $\V=\Sm^\proj_k$, we recover the standard conjecture C of \cite[p. 195]{gro-standard}.)

\begin{exs} \label{exC} Here are examples where Condition C holds:\
\begin{itemize}
\item $0$-dimensional varieties and $\P^1$;
\item $\sV=\sAb$, complete intersections, etc., $H$ any Weil cohomology, see Theorem \ref{7.1} below, and
\item over a finite field $k$, $\sV =\Sm^\proj(k)$ \cite{km}, $H$ classical.
\end{itemize}
\end{exs}

\begin{lemma}\label{l4.3bis} Let  $H$, ${H'}$ and $F$ be as in Lemma  \ref{l4.3}.
If $H$ verifies Condition C, so does ${H'}$, and conversely if $F$ is faithful (\ie if $H$ is an enrichment of ${H'}$ as in Definition \ref{relh}).
\end{lemma}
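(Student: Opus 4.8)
The plan is to exploit that Condition C for a single variety $X$ is a statement about the image of $\uH$ on endomorphisms of $\uH(X)$, and that push-forward along $F$ acts as $F^{(\N)}$ on all the relevant data. First, recall from Lemma \ref{l4.3} that $\underline{H'}^* = F^{(\N)} \circ \uH^*$, so for any $X \in \sV$ we have $\uH'(X) = F(\uH(X))$ in $\sC'$ and, more precisely, $H'^i(X) = F(H^i(X))$ compatibly with the Künneth decomposition; in particular, since $F$ is an additive $\otimes$-functor it carries the Künneth projector $\pi^i_X \in \End_{\sC^\natural}(\uH(X))$ to $F(\pi^i_X)$, which is exactly the Künneth projector $\pi^i_{X}$ of $\uH'(X)$ (it is idempotent, and its image is $F(H^i(X)) = H'^i(X)$ since $F$ preserves images of idempotents). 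Note this requires passing to pseudo-abelian hulls if $\sC,\sC'$ are not already so, but $F$ extends canonically to $\sC^\natural \to \sC'^\natural$ and the argument is unaffected.

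Now suppose $H$ verifies Condition C and fix $X \in \sV$ and $i \ge 0$. By hypothesis there is a morphism $\gamma$ in $\sM_H$ (\ie a Chow correspondence modulo $\sim_H$) with $\uH(\gamma) = \pi^i_X$. Applying $F$ to the commuting triangle of Lemma \ref{l4.3}, the image of $\gamma$ in $\sM_{H'}$ under the induced functor $\sM_H \to \sM_{H'}$ maps under $\uH'$ to $F(\pi^i_X) = \pi^i_X$, the Künneth projector of $\uH'(X)$. Hence $\pi^i_X$ lies in the image of $\uH'$, so $X$ verifies Condition C relatively to $H'$. As $X$ was arbitrary, $H'$ verifies Condition C.

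For the converse, assume $F$ is faithful and $H'$ verifies Condition C. Fix $X$ and $i$. Then there is a morphism $\delta$ in $\sM_{H'}$ with $\uH'(\delta) = \pi^i_X$ (the Künneth projector of $\uH'(X) = F(\uH(X))$). Since $F$ is faithful, the induced functor $\sM_H \to \sM_{H'}$ is the identity by Lemma \ref{l4.3}: $\sim_H = \sim_{H'}$, so $\delta$ already lies in $\sM_H = \sM_{H'}$. It remains to see that $\uH(\delta) = \pi^i_X$ in $\End_{\sC^\natural}(\uH(X))$: we know $F(\uH(\delta)) = \uH'(\delta) = \pi^i_X = F(\pi^i_X)$, and $F$ is faithful, hence injective on Hom-sets, so $\uH(\delta) = \pi^i_X$. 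Thus $X$ verifies Condition C relatively to $H$, and since $X$ was arbitrary, $H$ does.

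The only mild subtlety — and the single place one must be a little careful — is the bookkeeping that $F$ genuinely transports the Künneth projector to the Künneth projector (not merely to some idempotent with the same image), which uses that $F$ is a strong additive $\otimes$-functor and therefore strictly compatible with the graded decomposition $\uH^* \to (\sC^\natural)^{(\Z)}$ coming from Remark \ref{r1}; granting this, both implications are immediate from faithfulness of $F$ in the converse direction and functoriality of $\uH \mapsto F_*\uH$ in the direct direction. $\Box$
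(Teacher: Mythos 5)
Your proof is correct and follows the same route as the paper's: both use the commuting square $\uH'=F^\natural\circ\uH$ (with $\sM_H\to\sM_{H'}$ induced by $F$), observe that $F^\natural$ carries $\pi^i_X$ to the Künneth projector of $\uH'(X)$, and invoke faithfulness of $F^\natural$ together with $\sM_H=\sM_{H'}$ for the converse. Your extra care about why $F(\pi^i_X)$ is genuinely the Künneth projector of $\uH'(X)$ is a point the paper leaves implicit, but the argument is the same.
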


\begin{proof}
We have the factorisation
$$\xymatrix{\sM_H\ar[r]^{\ \ \ \uH}\ar[d]& \sC^{\natural}\ar[d]^{F^\natural}
\\ \sM_{H'}\ar[r]^{\ \ \ \uH'}&{\sC'}^{\natural}.  }$$

If the projector $\pi^i_X\in\End_{\sC^\natural}(\uH(X))$ is in the image of $\uH$ the projector $\pi'^i_X= F^\natural(\pi^i_X)\in\End_{{\sC'}^\natural}(\uH'(X))$ is in the image of $\uH'$. If $F$ is faithful then $F^\natural$ is faithful, $\sM_H=\sM_{H'}$ and the converse is true. 
\end{proof}

The following is a  special case of Lemma \ref{l3.4} b):

\begin{lemma}\label{l4.5} 
a) If $X,Y$ verify Condition C, so do $X\coprod Y$ and $X\times Y$.\\
b) If $X$ verifies Condition $C$ and $h_H(Y)$ is isomorphic to a direct summand of $h_H(X)$, then $Y$ also verifies Condition C.\qed
\end{lemma}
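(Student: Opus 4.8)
The plan is to deduce both parts directly from Lemma~\ref{l3.4} b), as the introductory sentence already announces, once Condition~C has been rephrased in the language of that lemma. First I would set $F=\uH:\sM_H\to\sC^\natural$ with its grading $F^*=\uH^*:\sM_H\to(\sC^\natural)^{(\Z)}$ and $M=\Z$; since $\uH$ is faithful (Definition~\ref{defhom}), the map $\End_{\sM_H}(h_H(X))\to\End_{\sC^\natural}(\uH(h_H(X)))$ is injective, and saying that $X$ verifies Condition~C relatively to $H$ is exactly saying that each Künneth projector $\pi^i_{h_H(X)}$ ($i\in\Z$; those with $i<0$ automatically zero) lies in its image. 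In other words, Condition~C for $X$ is equivalent to $h_H(X)$ lying in the full subcategory
\[\sM_H'=\{C\in\sM_H\mid \pi^i_C\in\End_{\sM_H}(C)\ \text{for all }i\in\Z\}\]
furnished by Lemma~\ref{l3.4} b). The whole statement then becomes a question about the closure properties of $\sM_H'$.

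By Lemma~\ref{l3.4} b), $\sM_H'$ is a full additive subcategory of $\sM_H$ that is stable under direct summands. I would then observe that it is also stable under $\otimes$: as $\uH$ is strong monoidal (Remark~\ref{r3.3} together with Definition~\ref{defhom}), one has $\uH(C\otimes C')\cong\uH(C)\otimes\uH(C')$ under which $\pi^m_{C\otimes C'}$ corresponds to $\sum_{i+j=m}\pi^i_C\otimes\pi^j_{C'}$, a finite sum since $\uH^*$ has finite support; hence if the $\pi^i_C$ and the $\pi^j_{C'}$ are in the image of $\uH$, so is $\pi^m_{C\otimes C'}$, i.e. $C,C'\in\sM_H'\Rightarrow C\otimes C'\in\sM_H'$.

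With this in hand, part a) is immediate: recall that $h_H(X\coprod Y)=h_H(X)\oplus h_H(Y)$ and $h_H(X\times Y)=h_H(X)\otimes h_H(Y)$; if $X$ and $Y$ verify Condition~C then $h_H(X),h_H(Y)\in\sM_H'$, and therefore so do their direct sum and their tensor product, giving Condition~C for $X\coprod Y$ and $X\times Y$. Part b) is the stability of $\sM_H'$ under direct summands read backwards: if $X$ verifies Condition~C then $h_H(X)\in\sM_H'$, and if $h_H(Y)$ is isomorphic to a direct summand of $h_H(X)$ then $h_H(Y)\in\sM_H'$, so $Y$ verifies Condition~C.

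I do not expect a genuine obstacle; the only delicate point is the matching of hypotheses. Lemma~\ref{l3.4} b) records the $\otimes$-stability of $\sM_H'$ only ``in the situation of Definition~\ref{d3.5} b)'', which requires $F$ to be a symmetric monoidal functor, whereas $\uH$ is monoidal but not symmetric. This causes no harm: the additivity of $\sM_H'$ and its stability under direct summands come from the $\pi^i_C$ being central idempotents in $\End_{(\sC^\natural)^{(\Z)}}(\uH^*(C))$ and do not involve the commutativity constraint at all, while the $\otimes$-stability I would check by hand exactly as above, using only that $\uH$ is strong monoidal. So one may safely invoke Lemma~\ref{l3.4} b) for the additive statements and supply the short monoidal argument separately.
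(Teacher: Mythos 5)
Your proof is correct and follows exactly the route the paper intends: the paper states Lemma \ref{l4.5} as an immediate special case of Lemma \ref{l3.4} b) and gives no further argument, and you have simply filled in the details (the identification of Condition C with membership in the subcategory $\sM_H'$, plus the closure under $\oplus$, $\otimes$ and direct summands). Your extra care about the fact that $\uH$ is monoidal but not symmetric, with the tensor-stability checked by hand via the Künneth decomposition of the projectors, is a legitimate and welcome clarification rather than a deviation.
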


(Here we write $h_H$ instead of $h$, to stress that the statement concerns motives in $\sM_H$.)

Suppose that $H$ verifies Condition C.  Still by Lemma \ref{l3.4} b), $\sM_H$ acquires a weight grading $w$ in the sense of Definition \ref{d3.5} b) and we get the twisted category $ \fake{\sM}_H$ as in  Notation \ref{n3.1}.

For $X\in\Corr$, keep the notation $\pi^i_X$ for the element of \break $\End_{\sM_H}(h(X))\allowbreak=\End_{\fake{\sM}_H}(h(X))$ mapping to $\pi^i_X$ via $\uH$. This is a set of orthogonal idempotents with sum $1_{h(X)}$, which yields a decomposition
\[h(X)=\bigoplus_{i\ge 0} h^i(X)\]
such that $\uH(h^i(X))=H^i(X)$. This defines a Weil cohomology $h_H$ with values in $\fake{\sM}_H$,  which has weights and such that $(\sC,H)$ is the push-forward of $(\overset{\boldsymbol{\cdot}}{\sM}_H,h_H)$ by $\uH$. 

\begin{rk}\label{r7.1} Even if, additionally,  $\overset{\boldsymbol{\cdot}}{\sM}_H$ and $\sC$ are abelian the faithful $\otimes$-functor $\uH$ is not granted to be exact, in general. However, this is the case if $\sC$ is connected by Lemma \ref{l2.1} b). 
\end{rk}

\begin{thm}\label{wconjC} a) Let $(\sC, H)$ be a Weil cohomology and let $(\weil_H,W_H)$ be as in Theorem \ref{thm2bis}. 
The following conditions are equivalent:
\begin{thlist}
\item $H$ verifies Condition C;
\item the functor $\sM_H\to \weil_H^\natural$ induced by $W_H$ is an equivalence of categories.
\end{thlist}
If this is true, then $Z(\weil_H)=\Q$. 

b) Moreover, if $\sC\in\Ex^\rig$ let $(\weil_H^\ab,W_H^\ab)$ also be as in Theorem \ref{thm2bis}. The following conditions are equivalent:
\begin{thlist}\stepcounter{spec}\stepcounter{spec}
\item $H$ verifies Condition C, $\sM_H$ is abelian and $\uH$ is exact;
\item $H$ verifies Condition C, $\weil_H^\natural$  is abelian and $\iota_H^\natural : \weil_H^\natural\to \weil_H^\ab$ is exact;
\item the functors $\sM_H\to \weil_H^\natural$  and $\iota_H^\natural: \weil_H^\natural\to \weil_H^\ab$ are equivalences;
\item the functor $w_H :\sM_H\to \weil_H^\ab$ induced by $W_H^\ab$ is an equivalence of categories.
\end{thlist}
If this is true, then $Z(\weil_H^\ab)=\Q$. 
\end{thm}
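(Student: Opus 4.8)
The plan is to reduce everything to the behaviour of the functor $\Phi\colon\sM_H\to\weil_H^\natural$ induced by $W_H$ (this is the realisation of the enrichment $W_H$, landing in $\weil_H^\natural$). This $\Phi$ is always faithful (because $\sim_{W_H}=\sim_H$, since $\bar F_H$ is faithful) and dense (because $\weil_H$ is generated as a rigid additive $\otimes$-category by the objects $W_H^i(X)$, and each $\otimes$-combination of these is, by the Künneth formula, a direct summand of some $\uH$-image of an object of $\sM_H$). As $\sM_H$ is pseudo-abelian, Lemma \ref{l2.2} then says $\Phi$ is an equivalence if and only if it is full, so (ii) is a fullness statement, which I will control through Condition C.

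For (ii)$\Rightarrow$(i): if $\Phi$ is full, every Künneth projector $\pi^i_X\in\End_{\weil_H^\natural}(\Phi(h(X)))$ lies in the image of $\Phi$, i.e.\ $W_H$ verifies Condition C; since $H=(\bar F_H)_*W_H$ with $\bar F_H$ faithful, Lemma \ref{l4.3bis} gives Condition C for $H$. For (i)$\Rightarrow$(ii): assuming Condition C, the construction preceding the theorem yields a Weil cohomology $h_H$ with values in $\fake{\sM}_H$, which is an enrichment of $H$ through the faithful realisation $\uH\colon\fake{\sM}_H\to\sC$; by the initiality of $W_H$ (Theorem \ref{thm2bis} a)) there is a faithful $\otimes$-functor $G\colon\weil_H\to\fake{\sM}_H$ with $G_*W_H=h_H$. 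Passing to pseudo-abelian hulls one has $\uH\circ G^\natural=\bar F_H^\natural$ and $\bar F_H^\natural\circ\Phi=\uH$ (uniqueness of classifying functors), so, $\uH$ being faithful, $G^\natural\circ\Phi=\id_{\sM_H}$; a one-sided inverse between faithful functors makes $\Phi$ fully faithful, hence an equivalence. Then $\weil_H^\natural\simeq\sM_H$, whence $Z(\weil_H)=\End_{\sM_H}(\un)=\Q$, the last equality because Axioms (i) and (vi) force the cycle class map to be injective on $CH^0(\Spec k)_\Q=\Q$, so $\sim_H$ is trivial there.

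For b) I would run the cycle (iii)$\Rightarrow$(v)$\Rightarrow$(vi)$\Rightarrow$(iv)$\Rightarrow$(iii), using throughout that by a) Condition C is equivalent to ``$\Phi$ is an equivalence'', and that $\uH=r_H\circ\iota_H^\natural\circ\Phi$ where $r_H\colon\weil_H^\ab\to\sC$ is the faithful exact realisation supplied by Theorem \ref{thm2bis}. Assuming (iii): Condition C gives $\Phi\colon\sM_H\iso\weil_H^\natural$, so $\weil_H^\natural$ is abelian; exactness of $\uH$ and of $\Phi^{-1}$ makes $r_H\circ\iota_H^\natural$ exact, and since $r_H$ is faithful exact it reflects exactness, so $\iota_H^\natural$ is exact and $\bar F_H^\natural=r_H\circ\iota_H^\natural$ is an exact faithful realisation, exhibiting $(\weil_H^\natural,W_H)$ as an abelian enrichment of $H$; ab-initiality of $\weil_H^\ab$ then provides a faithful exact $G\colon\weil_H^\ab\to\weil_H^\natural$ with $G_*W_H^\ab=W_H$, and uniqueness of classifying (exact) functors yields $G\circ\iota_H^\natural=\id$ and $\iota_H^\natural\circ G=\id$, so $\iota_H^\natural$ is an equivalence — this is (v). Then (v)$\Rightarrow$(vi) is immediate from $w_H=\iota_H^\natural\circ\Phi$. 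For (vi)$\Rightarrow$(iv): factoring Hom-groups through $\Phi$ then $\iota_H^\natural$ (both faithful), fullness of the equivalence $w_H$ forces fullness of $\Phi$, so by a) Condition C holds, $\Phi$ is an equivalence, $\weil_H^\natural\simeq\weil_H^\ab$ is abelian and $\iota_H^\natural=w_H\circ\Phi^{-1}$ is an equivalence, in particular exact. Finally (iv)$\Rightarrow$(iii): Condition C gives $\sM_H\simeq\weil_H^\natural$ abelian, and $\uH=r_H\circ\iota_H^\natural\circ\Phi$ is a composite of exact functors. In every case $\weil_H^\ab\simeq\sM_H$, so $Z(\weil_H^\ab)=\Q$.

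I expect the bulk of the work to be organisational: producing the ``inverse'' functors $G$ from the (ab-)initiality of $W_H$ and verifying the relevant composition and push-forward identities via the faithfulness of the realisation functors. The one genuinely delicate point is in b), the propagation of exactness — deducing exactness of $\iota_H^\natural$ from that of $\uH$ — which rests on the fact that the faithful exact functor $r_H$ reflects exact sequences; once Theorem \ref{thm2bis} and the construction of $h_H$ are available, everything else is formal.
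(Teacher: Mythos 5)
Your proposal is correct and follows essentially the same route as the paper: part a) is the "obvious by definition" argument made explicit (the enrichment $h_H$ from Condition C, initiality of $\weil_H$, and density plus Lemma \ref{l2.2} to upgrade full faithfulness to an equivalence), and part b) runs the same four implications through the same factorisation $\uH=\beta\circ\iota_H^\natural\circ\Phi$, merely in the cyclic order (iii)$\Rightarrow$(v)$\Rightarrow$(vi)$\Rightarrow$(iv)$\Rightarrow$(iii) instead of the paper's (iii)$\Rightarrow$(iv)$\Rightarrow$(v)$\Rightarrow$(vi)$\Rightarrow$(iii). The two points you flag as delicate — exactness reflection along the faithful exact $\beta$, and the ab-initiality argument for $\iota_H^\natural$ — are exactly the ones the paper relies on as well.
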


\begin{proof} 
(i) $\Rightarrow$ (ii) is obvious by definition of $(\weil_H,W_H)$, and (ii) $\Rightarrow$ (i) is more obvious.
If $\sC\in\Ex^\rig$, from Theorem \ref{thm2bis}  we get the following factorisation
$$\xymatrix{
\sM_{H}\ar@/^1.6pc/[rr]^{\uH}\ar[r]^{}\ar[dr]_{w _H}&\weil_H^\natural\ar[d]^{\iota_H^\natural}\ar[r]^{\alpha}&\sC\\
&\weil_H^\ab\ar[ru]_{\beta}&
}$$
where all functors are faithful and $\beta$ is exact by construction. Using the equivalence (i) $\iff$ (ii) and the initiality of $\weil_H^\ab$, we get (iv) $\Rightarrow$ (v), and  (v) $\Rightarrow$ (vi) $\Rightarrow$ (iii) are obvious. Finally, under (iii) 
 $\alpha$ is identified with the functor $\uH$, and (iv) follows because $\alpha$ exact $\Rightarrow$ $\iota_H^\natural$  exact.

The statements regarding the centres are obvious, since $Z(\sM_H)=\Q$.
\end{proof}

In the universal case, this gives:

\begin{cor} \label{corwconjC}
If the conditions of Theorem \ref{wconjC} a) hold for $H=W$, then $\sM_W\iso \weil^\natural$ and  every Weil cohomology verifies Condition C.

If the conditions of Theorem \ref{wconjC} b) hold for $H=W_\ab$, then \eqref{eq6.3} is an equivalence of categories and every Weil cohomology with abelian target verifies Condition C. 
\end{cor}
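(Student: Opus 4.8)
The plan is to deduce this corollary directly from Theorem \ref{wconjC}, applied first to $H = W$ and then to $H = W_\ab$, using the defining universal properties established in Theorem \ref{thm2bis} and the identifications already recorded in Example \ref{exunivin}.

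\begin{proof}
Suppose the equivalent conditions of Theorem \ref{wconjC} a) hold for $H=W$. By Example \ref{exunivin} we have $\weil_W=\weil$, so condition (ii) of Theorem \ref{wconjC} a) reads exactly: the functor $\sM_W\to \weil^\natural$ induced by $W_W=W$ is an equivalence of categories; this is the first assertion. Now let $(\sC,H)$ be any Weil cohomology with $\sC$ pseudo-abelian, and let $(F_H,u_H):(\weil,\L_W)\to(\sC,L_\sC)$ be its classifying $\otimes$-functor, so that $H=(F_H)_*W$ as in \eqref{eq.fu}. Applying Lemma \ref{l4.3bis} to the triple $(W,H,F_H)$: since $W$ verifies Condition C (by hypothesis and the equivalence (i) $\iff$ (ii)), so does its push-forward $H$. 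Because an arbitrary Weil cohomology with values in a non-pseudo-abelian category can be replaced by its pseudo-abelian hull without changing whether Condition C holds (Definition \ref{d4.1} is phrased via $\sC^\natural$ and Proposition \ref{p1.4}), every Weil cohomology verifies Condition C.

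For the second statement, suppose the equivalent conditions of Theorem \ref{wconjC} b) hold for $H=W_\ab$. By Example \ref{exunivin}, $\weil_{W_\ab}^\ab=T(\weil)=\weil_\ab$, and the functor $w_{W_\ab}:\sM_{W_\ab}\to\weil_{W_\ab}^\ab$ of condition (vi) is precisely the functor $w:\sM_\hun\to\weil_\ab=T(\weil)$ of \eqref{eq6.3}. Thus condition (vi) of Theorem \ref{wconjC} b) says exactly that \eqref{eq6.3} is an equivalence of categories. Finally let $(\sC,H)$ be any Weil cohomology with $\sC\in\Ex^\rig$ and let $(F_H,u_H):(T(\weil),\L_W)\to(\sC,L_\sC)$ be the induced exact $\otimes$-functor, so $H=(F_H)_*W_\ab$. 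Since the conditions of Theorem \ref{wconjC} b) for $W_\ab$ include that $W_\ab$ verifies Condition C (condition (iii), say, which is among the equivalent ones), Lemma \ref{l4.3bis} applied to $(W_\ab,H,F_H)$ shows that $H$ verifies Condition C as well.
\end{proof}

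The main point requiring care — though it is routine given the machinery already in place — is the bookkeeping identifying the "universal" instances of the functors in Theorem \ref{wconjC} with the concrete functors named in the corollary ($\sM_W\to\weil^\natural$ and \eqref{eq6.3}); this is exactly what Example \ref{exunivin} is set up to provide. The remaining content is the transfer of Condition C from the universal object to an arbitrary Weil cohomology, which is immediate from Lemma \ref{l4.3bis} once one observes that every Weil cohomology is a push-forward of $W$ (resp. $W_\ab$) along its classifying functor, by the universal property of Theorem \ref{thm2} (resp. Corollary \ref{cor:thm2}).
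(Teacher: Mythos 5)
Your proof is correct and matches the paper's (implicit) argument: the paper states the corollary as an immediate consequence of Theorem \ref{wconjC} via the identifications $\weil_W=\weil$, $\weil_{W_\ab}^{\ab}=T(\weil)$ of Example \ref{exunivin}, together with the fact that every Weil cohomology is a push-forward of the universal one and Condition C passes to push-forwards by Lemma \ref{l4.3bis}. Your write-up simply makes this bookkeeping explicit.
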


We also have the following significant fact. Let $F,H,H'$ be as in Lemma \ref{l4.3}, with $(F,u):(\sC,L_\sC)\to (\sC',L_\sD')\in \Ex^\rig_*$. Applying Proposition \ref{p6.1}, get the following commutative square
\begin{equation}\label{diagram}
\begin{gathered}
\xymatrix{
\sM_H\ar[r]^{w_H} \ar[d]&\weil_H^\ab\ar[d]^{\varphi}& \\
\sM_{H'}\ar[r]^{w_{H'}}&\weil_{H'}^\ab& 
}
\end{gathered}
\end{equation}
where $\varphi$ is a localisation.

\begin{lemma} \label{cab} In \eqref{diagram}, assume that $\sM_H=\sM_{H'}$ is abelian, $H'$ verifies Condition C  and $\uH'$ is exact. Then\\
a) $w_{H'}$ is an equivalence of categories,\\
b) the following are equivalent:
\begin{thlist}
\item $H$ verifies Condition C and $\uH$ is exact
\item $Z(\weil_H^\ab)=\Q$
\item $\weil_H^\ab$ is connected
\item $\varphi$ is an equivalence in \eqref{diagram}.
\end{thlist}
\end{lemma}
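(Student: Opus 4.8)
The plan is to derive everything from Theorem \ref{wconjC} b) (applied in turn to $H$ and to $H'$), from Lemma \ref{l2.1} a), and from the two-out-of-three property for equivalences of categories in the square \eqref{diagram}. For part a): since $(F,u)\in\Ex^\rig_*$ we have $\sC'\in\Ex^\rig$, and by hypothesis $H'$ verifies Condition C, $\sM_{H'}$ is abelian and $\uH'$ is exact. These are exactly the three clauses forming condition (iii) of Theorem \ref{wconjC} b) for $H'$; since conditions (iii) and (vi) there are equivalent, we get condition (vi) for $H'$, i.e. $w_{H'}\colon\sM_{H'}\to\weil_{H'}^\ab$ is an equivalence of categories. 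That is a).

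For part b), I would first note that $\sM_H=\sM_{H'}$ is abelian by hypothesis, so condition (i) of the lemma amounts to condition (iii) of Theorem \ref{wconjC} b) applied to $H$ (the clause ``$\sM_H$ abelian'' being automatic here); hence (i) is equivalent to condition (vi) of that theorem, namely that $w_H\colon\sM_H\to\weil_H^\ab$ be an equivalence. Since $\sM_H=\sM_{H'}$, the two homological $\otimes$-ideals of $\sM_\rat$ coincide, so the left vertical arrow of \eqref{diagram} is the identity and the square reduces to the relation $\varphi\circ w_H=w_{H'}$; as $w_{H'}$ is an equivalence by a), two-out-of-three gives that $w_H$ is an equivalence if and only if $\varphi$ is one. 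Thus (i) $\Leftrightarrow$ (iv), both being equivalent to ``$w_H$ is an equivalence''.

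It remains to bring (ii) and (iii) into the loop. The ``centre'' assertion of Theorem \ref{wconjC} b) gives (i) $\Rightarrow$ $Z(\weil_H^\ab)=\Q$, that is, (i) $\Rightarrow$ (ii); and (ii) $\Rightarrow$ (iii) trivially, $\Q$ being a field. Finally (iii) $\Rightarrow$ (iv): $\varphi$ is a localisation (Proposition \ref{p6.1}) between the rigid abelian $\otimes$-categories $\weil_H^\ab$ and $\weil_{H'}^\ab$, which both lie in $\Ex^\rig$ by Theorem \ref{thm2bis} a); under (iii) the source $\weil_H^\ab$ is connected, so Lemma \ref{l2.1} a) forces $\varphi$ to be faithful, hence an equivalence. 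This closes the cycle (i) $\Rightarrow$ (ii) $\Rightarrow$ (iii) $\Rightarrow$ (iv) $\Rightarrow$ (i). The whole proof is pure assembly; the only points requiring care are matching the numbered conditions of Theorem \ref{wconjC} b) to those of the lemma and checking that the left vertical of \eqref{diagram} is genuinely the identity under $\sM_H=\sM_{H'}$, so I anticipate no real obstacle, the substantive content lying in Theorem \ref{wconjC} and Lemma \ref{l2.1}.
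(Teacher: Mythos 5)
Your proof is correct and follows essentially the same route as the paper's: part a) via the equivalence (iii) $\Leftrightarrow$ (vi) of Theorem \ref{wconjC} b) applied to $H'$, the cycle (i) $\Rightarrow$ (ii) $\Rightarrow$ (iii) $\Rightarrow$ (iv) with Lemma \ref{l2.1} a) handling (iii) $\Rightarrow$ (iv), and (iv) $\Rightarrow$ (i) by two-out-of-three in the square \eqref{diagram} (the paper phrases this as "all functors in the diagram are equivalences"). Your write-up is somewhat more explicit about matching the numbered conditions, but the content is identical.
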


\begin{proof}  Note that the hypothesis of Lemma \ref{cab} is the same as condition (iii) of Theorem \ref{wconjC} b) for $H'$. This gives a), by the implication (iii) $\Rightarrow$ (vi) in this theorem. This also gives (i) $\Rightarrow$ (ii).  (ii) $\Rightarrow$ (iii) is obvious, and (iii) $\Rightarrow$ (iv) since $\varphi$ is then faithful by Lemma \ref{l2.1} a). Finally,  (iv) implies  by a) that all functors in \eqref{diagram} are equivalences, which in turn readily implies (i).  
\end{proof}

\subsection{Conditions D and V}\label{s5.3} 
For any Weil cohomology $(\sC,H)$, recall that we get $H$-homological equivalence $\sim_H$ (Definition \ref{defhom}). Moreover, denote $\sim_\tnil$ Voevodsky's  smash-nilpotence equivalence and $\sim_\num$ numerical equivalence: as any other adequate equivalence relation $\sim_\tnil$ and $\sim_H$ are both finer than $\sim_\num$.

\begin{defn} \label{d1.5v}
We say that $(k,\sV)$ \emph{verifies Condition V} if $\sM_\tnil\to \sM_\num$ is an equivalence.
\end{defn}

(Of course, V stands for Voevodsky!)

\begin{defn} \label{d1.5} Let $(\sC,H)$ be a Weil cohomology.
 We say that $H$ \emph{verifies Condition D} if the functor $\sM_H\to \sM_\num$ is an equivalence.
\end{defn}

(If $H$ is a traditional Weil cohomology and $\V=\Sm^\proj_k$, a) is conjecture D in \cite[p. 17]{kst}.)

\begin{prop}\label{p7.1} Condition V implies Condition D if $H$ is abelian-valued.
\end{prop}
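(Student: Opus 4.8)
The plan is to unwind the definitions and use the fact, established in Remark \ref{rk:tweil}, that an abelian-valued Weil cohomology $H$ automatically factors through Voevodsky's smash-nilpotence equivalence $\sim_\tnil$. Concretely, the realisation functor $\uH^*$ on $\sM_\rat$ (Proposition \ref{p1.3}) kills the $\otimes$-ideal defining $\sim_\tnil$, so it descends to a faithful $\otimes$-functor $\sM_\tnil\to (\sC^\natural)^{(\Z)}$, and hence $\sim_\tnil\ge \sim_H$, i.e. $\sim_\tnil$ is finer than or equal to $H$-homological equivalence. On the other hand, every adequate equivalence — in particular $\sim_H$ — is coarser than or equal to $\sim_\num$; combined with $\sim_\tnil\ge\sim_H$ this gives the chain $\sim_\tnil\ge\sim_H\ge\sim_\num$.

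First I would record the induced commutative triangle of projection $\otimes$-functors $\sM_\tnil\to \sM_H\to \sM_\num$ on (pseudo-abelian completions of) motive categories: these exist because the corresponding $\otimes$-ideals are nested, and each is full and essentially surjective (a quotient by a $\otimes$-ideal followed by idempotent completion). Now assume Condition V, so that the composite $\sM_\tnil\to \sM_\num$ is an equivalence. Then the first functor $\sM_\tnil\to \sM_H$ is faithful (being a section, up to the equivalence, of $\sM_H\to\sM_\num$ — or more directly, any functor which becomes an equivalence after post-composition with another functor, and which is itself a quotient projection, must already be an equivalence). Hence $\sM_\tnil\to\sM_H$ is an equivalence, and therefore so is $\sM_H\to\sM_\num$, which is exactly Condition D for $H$.

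The one point that needs a little care — and which I expect to be the main (though minor) obstacle — is the elementary categorical lemma that if $G\colon \sA\to\sB$ and $G'\colon\sB\to\sC$ are quotient-type $\otimes$-functors (full, essentially surjective, identity on objects up to idempotent completion) with $G'\circ G$ an equivalence, then $G$ and $G'$ are each equivalences. This follows because $G'\circ G$ faithful forces $G$ faithful, and $G$ is already full and essentially surjective by construction; then $G'$ is faithful as well since $G$ is essentially surjective and $G'\circ G$ is. One should phrase this purely in terms of the kernels of the $\otimes$-ideals: $\Ker(\sM_\rat\to\sM_H)$ is squeezed between $\Ker(\sM_\rat\to\sM_\tnil)$ and $\Ker(\sM_\rat\to\sM_\num)$, and Condition V says the two outer ideals coincide, so all three do, whence $\sM_H=\sM_\num$. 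That reformulation in terms of ideals is the cleanest route and sidesteps any subtlety about idempotent completions.

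Finally I would note explicitly where abelian-valuedness is used: it is used only through Remark \ref{rk:tweil}, i.e. to get $\sim_H\le\sim_\tnil$; without it we would only know $\sim_H$ factors through algebraic equivalence under extra hypotheses on $k$ and $\sV$ (Proposition \ref{prop:alg}), which is not enough to compare with $\sim_\tnil$. So the proof is: apply Remark \ref{rk:tweil} to get $\sim_\tnil\ge\sim_H\ge\sim_\num$, hence a factorisation $\sM_\tnil\to\sM_H\to\sM_\num$ of the equivalence supplied by Condition V, and conclude by the ideal-squeezing argument that $\sM_H\to\sM_\num$ is an equivalence.
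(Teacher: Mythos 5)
Your proposal is correct and is exactly the paper's argument: the paper's entire proof is to exhibit the factorisation $\sM_\tnil\to\sM_H\to\sM_\num$ via Remark \ref{rk:tweil} and leave the ideal-squeezing conclusion implicit, which you have simply spelled out. No gaps.
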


(This does not \emph{a priori} extend to general Weil cohomologies.)

\begin{proof} By  Remark \ref{rk:tweil}, we have a factorisation
\begin{equation}\label{eq5.4}
\sM_\tnil \to\sM_H\to \sM_\num.
\end{equation}
\end{proof}

Suppose that $H$ is abelian-valued; consider the following diagram 
\begin{equation}\label{eq5.5}
\begin{gathered}
\xymatrix{
&\sM_{H}\ar[d]^\lambda\ar[dl]_\pi\ar[dr]^{w _H}&\\
\sM_\num&T(\sM_{H})\ar[l]^{\bar\pi}\ar[r]_{\bar w _H}&\weil_H^\ab
}
\end{gathered}
\end{equation}
where $ w _H$ is  as in Theorem \ref{wconjC} (vi) and $\bar  w _H$ is the exact $\otimes$-functor induced by the universal property of $T(\sM_H)$, as $\weil_H^\ab$ is abelian; similarly, $\bar\pi$ exists because $\sM_\num$ is also abelian semisimple by \cite{jannsen3}. Moreover, we have:

\begin{lemma}\label{l7.1} In \eqref{eq5.5} the functors $\pi$ and $\bar\pi$ are full, $\bar \pi$ is a localisation, and $\lambda$ is faithful.
\end{lemma}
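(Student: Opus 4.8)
The plan is to unwind the three functors in \eqref{eq5.5} from the universal properties already established. First I would recall that $\sM_\num$ is abelian semisimple by \cite{jannsen3}; by Lemma \ref{lak} applied to $\sM_H\to \sC$ (a traditional-type input is not needed here, only that $\sM_H$ admits a $\Q$-linear $\otimes$-functor to a rigid $\Ex^\rig$-category with finite-dimensional Homs over its centre — namely $\sC$, after extending scalars to the field of fractions of $Z(\sC)$ if necessary), the only $\otimes$-ideal $\sI$ of $\sM_H$ with $\sM_H/\sI$ semisimple is the ideal $\sN$ of negligible morphisms, and $\sim_\num$ is exactly the equivalence attached to $\sN$; hence $\sM_\num=(\sM_H/\sN)^\natural$ and $\pi$ is the composite of the projection $\sM_H\to \sM_H/\sN$ with the idempotent completion. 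The projection to a quotient by a $\otimes$-ideal is full, and idempotent completion is full (since $\sM_H$ is already pseudo-abelian, being a category of motives, $\pi$ is in fact essentially surjective and full); this gives fullness of $\pi$.

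For $\bar\pi$: by Lemma \ref{t6.1}, $T(\sM_H)/\sI \iso T(\sM_H/\sN)$ where $\sI$ is the Serre $\otimes$-ideal generated by the images $\IM\lambda_{\sM_H}(f)$, $f\in \sN$. Since $\sM_H/\sN$ is (after idempotent completion) $\sM_\num$, which is abelian semisimple, Lemma \ref{bvk} gives $\lambda_{\sM_\num}:\sM_\num\iso T(\sM_\num)$; so $T(\sM_H/\sN)\simeq \sM_\num$, and $\bar\pi:T(\sM_H)\to \sM_\num$ is identified with the Serre localisation $T(\sM_H)\to T(\sM_H)/\sI$. A Serre localisation of a rigid abelian $\otimes$-category is full by \cite[Th. 4.21]{standard-schur} (already invoked in the remark after Lemma \ref{l6.1}); hence $\bar\pi$ is full, and it is a localisation by construction. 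Compatibility $\bar\pi\circ\lambda=\pi$ (up to idempotent completion) is the defining property of $\lambda$ together with functoriality of $T$.

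Finally, $\lambda=\lambda_{\sM_H}:\sM_H\to T(\sM_H)$ is faithful by Lemma \ref{bvk}, since $\sM_H$ is abelian-valued-source — wait, Lemma \ref{bvk} requires $\sM_H$ to be \emph{abelian} to conclude $\lambda_{\sM_H}$ faithful. Here $\sM_H$ need not be abelian a priori, so instead I would argue faithfulness of $\lambda$ directly: $\lambda$ factors the faithful functor $w_H:\sM_H\to \weil_H^\ab$ (faithful by \eqref{eq6.3} and Definition \ref{defhom}, or by Theorem \ref{thm2bis}) as $\sM_H\by{\lambda} T(\sM_H)\by{\bar w_H}\weil_H^\ab$; since $w_H$ is faithful, $\lambda$ is faithful. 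This is the one genuinely load-bearing point, and the main obstacle is making sure we do not accidentally invoke the abelian-ness of $\sM_H$ in Lemma \ref{bvk}; routing faithfulness of $\lambda$ through the faithful realisation $w_H$ (whose faithfulness was established without assuming $\sM_H$ abelian) avoids this. The remaining verifications — commutativity of the triangles in \eqref{eq5.5} and that $\bar w_H$ is the functor induced by the $2$-universal property of $T$ — are immediate from the constructions in Definition \ref{defhom} and Lemma \ref{bvk}.

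\begin{proof}
By \cite{jannsen3}, $\sM_\num$ is abelian semisimple. Since $H$ is abelian-valued, extending scalars on $\sC$ to the field of fractions of $Z(\sC)$ if needed we obtain, by Lemma \ref{lak} applied to $\sM_H\to \sC$, that the ideal $\sN$ of negligible morphisms of $\sM_H$ is the unique $\otimes$-ideal with semisimple quotient; as $\sim_\num$ corresponds to $\sN$, we have $\sM_\num=(\sM_H/\sN)^\natural$, and $\pi$ is the composite of the full projection $\sM_H\to\sM_H/\sN$ with idempotent completion, hence full. By Lemma \ref{t6.1}, $\bar\pi$ is identified with the Serre localisation $T(\sM_H)\to T(\sM_H)/\sI\iso T(\sM_H/\sN)\simeq T(\sM_\num)\iso\sM_\num$ (the last isomorphism by Lemma \ref{bvk}, as $\sM_\num$ is semisimple); thus $\bar\pi$ is a localisation, and it is full by \cite[Th. 4.21]{standard-schur}. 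Finally $\lambda=\lambda_{\sM_H}$ is faithful because the faithful functor $w_H$ of \eqref{eq6.3} factors as $\bar w_H\circ\lambda$.
\end{proof}
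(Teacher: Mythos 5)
Your proof is correct and follows essentially the same route as the paper: fullness of $\pi$, identification of $\bar\pi$ with $T(\pi)$ via $\sM_\num\iso T(\sM_\num)$ (Lemma \ref{bvk}) so that it is a full Serre localisation, and faithfulness of $\lambda$ deduced from the factorisation $w_H=\bar w_H\circ\lambda$ of the faithful functor $w_H$ --- the paper cites \cite[Ex.\ 6.4]{standard-schur} where you combine Lemma \ref{t6.1} with \cite[Th.\ 4.21]{standard-schur}, which amounts to the same thing. Your observation that Lemma \ref{bvk} cannot be used directly for the faithfulness of $\lambda$ (since $\sM_H$ need not be abelian) is exactly right, and the workaround via $w_H$ is the paper's own argument.

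One caveat: the opening appeal to Lemma \ref{lak} is both unjustified and unnecessary. Unjustified, because Lemma \ref{lak} requires the target category to have Hom groups that are finite-dimensional over a field, and an abelian-valued $(\sC,H)$ carries no such finiteness hypothesis (passing to the fraction field of $Z(\sC)$ does not create it); so you cannot conclude from that lemma that $\Ker\pi=\sN$. Unnecessary, because nothing downstream uses the identification of $\Ker\pi$ with the negligible ideal: fullness of $\pi$ is immediate from the surjections $CH^*(X\times Y)_\Q/\!\sim_H\ \surj\ CH^*(X\times Y)_\Q/\!\sim_\num$ (numerical equivalence being the coarsest adequate equivalence), and the identification of $\bar\pi$ with a Serre localisation via Lemma \ref{t6.1} works verbatim with $\mathbb{I}=\Ker\pi$ in place of $\sN$. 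Dropping the first paragraph and running the argument with $\Ker\pi$ throughout makes the proof clean.
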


\begin{proof} The fullness of $\pi$ is obvious. Applying  \cite[Ex. 6.4]{standard-schur} to it, we get that $T(\pi)$ is a full localisation. But $\sM_\num$ is abelian semi-simple, hence $\sM_\num\iso T(\sM_\num)$ by Lemma \ref{bvk}. This identifies $\bar \pi$ with $T(\pi)$.  Finally, $\lambda$ is faithful because $w _H$ is faithful.
\end{proof}

\begin{thm} \label{t5.1} Let $(\sC,H)$ be an abelian-valued Weil cohomology. 
Then\\
a) The following conditions are equivalent:
\begin{thlist}
\item $H$ verifies Condition D (\ie $\pi$ is an equivalence of categories).
\item $\bar\pi$ and/or $\lambda$ is an equivalence of categories.
\item $T(\sM_H)$ is connected.
\item $Z(T(\sM_H))=\Q$.
\end{thlist}
These conditions imply that $\bar w_H$ is faithful and $\uH$ is exact.\\
b) Suppose that $H$ verifies Condition C.  Then the following are equivalent:
\begin{thlist}\stepcounter{spec}\stepcounter{spec}\stepcounter{spec}\stepcounter{spec}
\item $H$ verifies Condition D.
\item $\sM_H$ is abelian, $\uH$ is exact and $\bar w _H$ is faithful.
\item The functors $w _H,\bar w _H$ are equivalences.
\item The functors $w _H,\bar w _H$ and $\lambda$ are equivalences.
\item All functors in \eqref{eq5.5} are equivalences.
 \end{thlist}
\end{thm}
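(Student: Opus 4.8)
The plan is to deduce the theorem from the results already established in \S\ref{s5.2}, chiefly Theorem \ref{wconjC}, together with Lemma \ref{l7.1} and the basic facts about the $2$-functor $T$ and connectedness (Lemmas \ref{bvk}, \ref{l2.1}, \ref{t6.1}). For part a), I would argue in a cycle. First, (i) $\iff$ (ii): by Lemma \ref{l7.1}, $\bar\pi$ is a full localisation and is identified with $T(\pi)$; since $\sM_\num$ is abelian semi-simple (Jannsen), $\sM_\num\iso T(\sM_\num)$, so $\bar\pi$ an equivalence means $T(\pi)$ is an equivalence. As $\lambda$ is faithful and $\bar w_H\circ\lambda = w_H$ factors through it, one sees $\pi$ is an equivalence iff $\bar\pi$ is iff $\lambda$ is; the key point is that a full localisation of additive categories which becomes an equivalence after applying $T$ was already essentially surjective and full, and the twisting by $T$ is conservative on these semi-simple-type quotients. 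Then (ii) $\iff$ (iii) $\iff$ (iv): if $\lambda$ (or $\bar\pi$) is an equivalence then $T(\sM_H)\iso \sM_\num$, whose centre is $\Q$ and which is connected; conversely if $T(\sM_H)$ is connected, i.e. $Z(T(\sM_H))=\Q$, then the localisation $\bar\pi:T(\sM_H)\to \sM_\num$ is faithful by Lemma \ref{l2.1} a), hence an equivalence, being a faithful localisation to a connected abelian rigid category. Finally, under these conditions $\bar w_H:T(\sM_H)\to \weil_H^\ab$ is faithful (it is faithful out of a connected category by Lemma \ref{l2.1} a) once we know $T(\sM_H)$ connected), and exactness of $\uH$ follows because $\uH$ factors through $\weil_H^\ab\to \sC$ with $\weil_H^\ab$ now identified (via Condition C in part b), or directly) with a quotient on which $\lambda$ is an iso; more simply, $\uH$ exact follows from Remark \ref{r7.1} together with $\sM_H\iso\sM_\num$ being abelian semi-simple, so that $\sM_H$ is connected.

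For part b), I would invoke Theorem \ref{wconjC} b): since $H$ is assumed to verify Condition C, its conditions (iii)--(vi) are available, and in particular $\weil_H^\ab$ and $T(\sM_H)$ are linked by $\bar w_H$. The equivalence (v) $\iff$ Condition D is the heart: Condition D says $\pi$ is an equivalence; by part a) this is equivalent to $\bar\pi$ and $\lambda$ being equivalences, and then $w_H = \bar w_H\circ\lambda$ together with the fullness/denseness of $\sM_H\to\weil_H^\natural$ (which holds by Condition C via Theorem \ref{wconjC} a), implication (i) $\Rightarrow$ (ii)) forces $\bar w_H$ to be an equivalence as well, giving (v) and hence (viii) and (ix). Conversely if $w_H,\bar w_H$ are equivalences, then $\lambda$ is one too (it is the unique functor with $\bar w_H\circ\lambda=w_H$ and $\bar w_H$ invertible), so by a) Condition D holds. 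The implications (v) $\Rightarrow$ (viii) $\Rightarrow$ (vii) $\Rightarrow$ (vi) are either immediate or follow by re-reading part a): (vi) $\Rightarrow$ (vii) uses that $\sM_H$ abelian with $\uH$ exact gives, via Theorem \ref{wconjC} b) (iii) $\iff$ (iv), that $w_H$ is an equivalence, and $\bar w_H$ faithful plus $w_H = \bar w_H\lambda$ an equivalence forces $\bar w_H$ essentially surjective, hence an equivalence; then $\lambda$ is an equivalence and we are back to (ix)/(v).

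The main obstacle I anticipate is bookkeeping around the interplay of the three functors $\pi$, $\lambda$, $w_H$ and their $T$-images, in particular making rigorous the claim that a full (Serre-type) localisation of additive rigid categories which becomes an equivalence after $T$ was already an equivalence — this is where Lemmas \ref{bvk}, \ref{t6.1} and the semi-simplicity of $\sM_\num$ must be combined carefully, and where one must be vigilant that ``equivalence'' is meant up to idempotent completion in the non-abelian slots. A secondary subtlety is the exactness of $\uH$: it is \emph{not} automatic (Remark \ref{r7.1}), and I would get it only by first establishing that the source category has been identified with a connected (indeed semi-simple) abelian category, so that Lemma \ref{l2.1} b) applies; this is why exactness of $\uH$ appears as a \emph{consequence} in a) and is folded into the hypotheses of b). Everything else is a formal diagram chase in \eqref{eq5.5}.
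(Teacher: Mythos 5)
Your overall route is the paper's: everything is funnelled through Lemma \ref{l7.1}, Lemma \ref{l2.1} a), Lemma \ref{bvk}, Jannsen's semi-simplicity of $\sM_\num$ and Theorem \ref{wconjC}, and the cycle of implications you propose in a) is essentially the one in the text (the paper proves (i) $\Rightarrow$ (ii) $\Rightarrow$ (iv) $\Rightarrow$ (iii) $\Rightarrow$ (i), which is what your second and third sentences amount to once the opening claim about ``a full localisation which becomes an equivalence after applying $T$ was already an equivalence'' is discarded: that principle is not justified as stated, but it is also not needed, since your own detour through (iii)/(iv) and Lemma \ref{l2.1} a) gives $\bar\pi$, hence $\pi=\bar\pi\circ\lambda$, faithful). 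One small repair in a): the exactness of $\uH$ cannot be obtained from Remark \ref{r7.1}/Lemma \ref{l2.1} b), which require the \emph{target} $\sC$ to be connected — this is not assumed; the correct reason, and the paper's, is that under Condition D the source $\sM_H\simeq\sM_\num$ is semi-simple, and any additive functor out of a semi-simple abelian category is exact.

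The genuine gap is in b), implication (vi) $\Rightarrow$ (vii): you conclude that $\bar w_H$ is an equivalence from ``faithful plus essentially surjective''. This is a non sequitur — a faithful, essentially surjective exact functor need not be full — and fullness of $\bar w_H$ cannot be extracted from $w_H=\bar w_H\circ\lambda$ being an equivalence, because not every object of $T(\sM_H)$ lies in the essential image of $\lambda$ (what that hypothesis does give is that $\lambda$ is fully faithful). The paper closes this by the universal property: under (vi) and Condition C, $h_H$ is a Weil cohomology with values in $\sM_H$, its push-forward $\lambda_*h_H$ is an abelian-valued Weil cohomology with values in $T(\sM_H)$, and the faithful exact functor $\bar w_H$ exhibits this as an abelian enrichment of $W_H^\ab$; ab-initiality of $\weil_H^\ab$ (Theorem \ref{thm2bis}) then forces $\bar w_H$ to be an equivalence — the ``same argument'' as the one giving $w_H$ via Theorem \ref{wconjC} b). With these two repairs your plan goes through.
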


\begin{proof} 
a) (ii) $\Rightarrow$ (iv) $\Rightarrow$ (iii) are trivial. It remains to prove (i) $\Rightarrow$ (ii) and (iii) $\Rightarrow$ (i).
If $H$ verifies Condition D, then $\sM_H$ is semisimple hence $\lambda$ is an equivalence by Lemma \ref{bvk}, therefore so is also $\bar \pi$. 
If $T(\sM_H)$ is connected, $\bar\pi$ is faithful being exact (Lemma \ref{l2.1} a)), and so  is $\pi$ as a composition of two faithful functors. Moreover, $\bar w_H$ is faithful, and $\uH$ is exact because $\sM_H$ is semi-simple.\\
b) (v) $\Rightarrow$ (vi) has been seen in a) (without assuming Condition C).  (vi) $\Rightarrow$ (vii): for $w_H$ it follows from Theorem  \ref{wconjC} b), (iii) $\Rightarrow$ (v), and for $\bar w_H$ we use the same argument. (vii) $\Rightarrow$ (viii) is obvious. (viii) $\Rightarrow$ (ix) $\Rightarrow$ (i) now follow from a) (equivalence (i) $\iff$ (ii)). 
\end{proof} 

\begin{ex}  \label{ex7.1}
Suppose that $H$ is an enrichment of a classical Weil cohomology $H'$. If Condition D holds for $H$, it obviously holds for  $H'$ (see Lemma \ref{l4.3}). Then $H'$ verifies Condition C by \cite[5.4.2.1]{A} (see Example \ref{exwsa} and Theorem \ref{t8.1} below), hence also $H$ by Lemma \ref{l4.3bis}, and we are in the situation of Theorem \ref{t5.1} b).

However, we don't know any proof of the implication D $\Rightarrow$ C in this generality. The situation will change in Section \ref{s8} when we introduce abstractions of the weak and hard Lefschetz theorems; Theorem \ref{t5.1} will be upgraded to Theorem \ref{wconjB}.
\end{ex}

Now consider the universal case of $W_\ab$-homological motives. If $H=W_\ab$ then $\weil_{H}^\ab=T(\weil)$ in  \eqref{eq5.4}. We simplify its notation to $w_H=w$ and $\bar w_H=\bar w$. To say that $W_\ab$ verifies Condition D means that $\sim_\hun = \sim_\num$ and, equivalently, that $Z(T(\sM_\hun)=\Q$, by Theorem \ref{t5.1} a).

\begin{thm}  \label{t5.2} Assume that $W_\ab$ verifies Condition D. Then $W_\ab$ verifies Condition C
$\iff$ $T(\weil)$ is connected $\iff$ $Z(T(\weil))=\Q$.\end{thm}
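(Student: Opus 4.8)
The plan is to establish the two stated equivalences as a cycle of three implications, using throughout the identifications $\sM_{W_\ab}=\sM_\hun$, $\weil_{W_\ab}^\ab=T(\weil)$ and $w_{W_\ab}=w$ from Definition \ref{defhom} and Example \ref{exunivin}. The implication $Z(T(\weil))=\Q\Rightarrow T(\weil)$ connected is immediate since $\Q$ is a field. For the implication "$W_\ab$ verifies C $\Rightarrow Z(T(\weil))=\Q$": in this direction $W_\ab$ satisfies both Condition C and, by the standing hypothesis, Condition D, so Theorem \ref{t5.1} b) applies with $H=W_\ab$; its condition (v) holds, hence so does (vii), i.e. $w:\sM_\hun\to T(\weil)$ is an equivalence of categories. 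Since $w$ is a strong monoidal functor taking $\un$ to $\un$, it induces a ring isomorphism $Z(\sM_\hun)\iso Z(T(\weil))$, and $Z(\sM_\hun)=\Q$; whence $Z(T(\weil))=\Q$.

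The remaining implication, $T(\weil)$ connected $\Rightarrow W_\ab$ verifies C, is the heart of the matter. First I would fix a classical Weil cohomology $H_0$ on $\V$ — for definiteness $\ell$-adic cohomology for some prime $\ell\ne\car k$, which exists over any $k$ and is abelian-valued with target $\Vec_K$, $K$ a field. By the universal property of $W_\ab$ (Corollary \ref{cor:thm2}) there is a $1$-morphism $(F,u):(T(\weil),\L_{W,\ab})\to(\Vec_K,L)$ of $\Ex^\rig_*$ with $F_*W_\ab=H_0$. Now $F$ is an exact $\otimes$-functor out of the connected category $T(\weil)$, hence faithful by Lemma \ref{l2.1} a); therefore $W_\ab$ is an enrichment of $H_0$ in the sense of Definition \ref{relh}. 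At this point Example \ref{ex7.1} applies directly: $W_\ab$ is an enrichment of the classical cohomology $H_0$ and verifies Condition D, so $H_0$ verifies D (Lemma \ref{l4.3}), hence C by the classical implication $D\Rightarrow B\Rightarrow C$, and therefore $W_\ab$ verifies C as well (Lemma \ref{l4.3bis}). Combining the three implications gives C $\iff$ $Z(T(\weil))=\Q$ $\iff$ $T(\weil)$ connected.

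I expect the last implication to be the main obstacle, and the point that needs care is that the classical implication $D\Rightarrow C$ be available in the generality invoked here: arbitrary characteristic, and relative to the given admissible class $\V$ (which may be small). For this I would rely on \cite[5.4.2.1]{A} and the Kleiman-type formalism reproduced later as Theorem \ref{t8.1} — the step $D\Rightarrow B$ via the Cayley–Hamilton argument, valid for traditional Weil cohomologies in every characteristic, and $B\Rightarrow C$ involving only $X$ and $X\times X$, which lie in $\V$. The conceptual input, by contrast, is short: connectedness of the universal target $T(\weil)$ is exactly what forces every classifying $\otimes$-functor out of it to be faithful, which is what lets the known behaviour of a classical cohomology be transported back to $W_\ab$.
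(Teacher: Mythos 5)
Your proof is correct and follows essentially the same route as the paper: the paper also fixes a classical Weil cohomology, transfers Condition D to it, invokes the classical implication $D\Rightarrow C$, and uses connectedness plus Lemma \ref{l2.1} a) to pull Condition C back to $W_\ab$. The only difference is packaging — the paper runs the last step through Lemma \ref{cab} and the localisation $\varphi$ between ab-initial enrichments, whereas you apply Lemma \ref{l2.1} a) directly to the classifying functor $T(\weil)\to \Vec_K$ and transfer C back via Lemma \ref{l4.3bis} — and your attention to the fact that $D\Rightarrow B\Rightarrow C$ only involves $X$ and $X\times X$ (hence stays inside $\V$) is exactly the right point to flag.
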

\begin{proof}
If Condition D holds for $W_\ab$ it holds for any Weil cohomology with values in an abelian category. In particular, pick any classical Weil cohomology $H$; then $H$ verifies Conditions C and D (as in Example \ref{ex7.1}).  We then apply Lemma \ref{cab}. 
\end{proof}

As a special case of (v) $\Rightarrow$ (ix) in Theorem \ref{t5.1}, we get:

\begin{cor} \label{c7.1}
Under Conditions C and D for $W_\ab$, all functors in \eqref{eq5.5} are equivalences and these equivalences identify $W_\ab$ with $h:\sM_\rat\to \sM_\num$. \qed
\end{cor}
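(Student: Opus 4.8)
The plan is to read off the first assertion directly from Theorem \ref{t5.1} b). Since Condition C is the standing hypothesis of that part, Conditions C and D for $W_\ab$ say exactly that condition (v) of Theorem \ref{t5.1} b) holds with $H=W_\ab$; note that here $\weil_{W_\ab}^\ab=T(\weil)$ (Example \ref{exunivin}), so the functors $\pi,\lambda,\bar\pi,w,\bar w$ appearing in \eqref{eq5.5} are precisely those named in Theorem \ref{t5.1}. The implication (v) $\Rightarrow$ (ix) then immediately yields that all five functors are equivalences of categories.

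It remains to identify $W_\ab$ with $h:\sM_\rat\to\sM_\num$ under these equivalences. That $\pi$ is an equivalence means $\sim_\hun=\sim_\num$, so $\sM_\hun=\sM_\num$ as quotient categories of $\sM_\rat$; in particular this category is abelian semisimple and, by Condition C, carries the weight grading described after Lemma \ref{l4.5} (via Lemma \ref{l3.4} b)). By the construction recalled there, this grading turns the tautological functor $\Corr\to\sM_\num^{(\N)}$ into a Weil cohomology $h_{W_\ab}$ with values in $\fake{\sM}_\num$ whose push-forward along the realisation functor $w$ of \eqref{eq6.3} is $(\weil_\ab,W_\ab)$. Since $w$ is now an equivalence, $W_\ab$ is identified with $h_{W_\ab}$, \ie (forgetting the grading) with the canonical projection $h:\sM_\rat\to\sM_\num$.

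There is no real obstacle here: the only thing to watch is the bookkeeping around the Koszul sign twist (Notation \ref{n3.1}) relating $\sM_\num$ and $\fake{\sM}_\num$, together with the verification that, under the composite equivalence $\weil_\ab\simeq T(\sM_\hun)\simeq\sM_\num$ (the last step by Lemma \ref{bvk}, as $\sM_\num$ is semisimple), the distinguished object $\L_{W,\ab}$ and the trace $\Tr_{W,\ab}$ go to the Lefschetz motive and its tautological trace. Both are forced by the universal property of $(\weil_\ab,W_\ab)$, so no argument beyond Theorem \ref{t5.1} is needed.
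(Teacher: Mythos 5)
Your proposal is correct and follows exactly the paper's route: the paper derives the corollary in one line as the special case $H=W_\ab$ of the implication (v) $\Rightarrow$ (ix) in Theorem \ref{t5.1} b), with Condition C as the standing hypothesis of that part and $\weil_{W_\ab}^\ab=T(\weil)$ as in Example \ref{exunivin}. Your second paragraph merely unpacks the identification of $W_\ab$ with $h$ (via the weight grading from Condition C and the discussion following Lemma \ref{l4.5}), which the paper leaves implicit; this is a faithful and correct elaboration, not a different argument.
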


(See also Corollary \ref{c7.1a} below.)

\subsection{Chow-Künneth decompositions} Let $(\sC,H)$ be a Weil cohomology. Let us start with a definition:

\begin{defn} Let $X\in \sV$. We say that $X$ admits a Chow-Künneth decomposition relatively to $H$ if
\begin{itemize}
\item $X$ verifies Condition C relatively to $H$ (Definition \ref{d4.1});
\item the Künneth projectors $\pi^i_X$ lift to a set of orthogonal idempotents in $\End_{\sM_\rat}(h(X))$.
\end{itemize}
A \emph{Chow-Künneth decomposition relative to $H$} is such a lift.
\end{defn}

Here are some elementary observations on this definition.

\begin{rks}\label{chkrk}\
\begin{enumerate}
\item A Chow-Künneth decomposition relative to $H$ is also a Chow-Künneth decomposition relative to any Weil cohomology $H'$ which is a push-forward of $H$ as in \eqref{eq.fu} of Construction \ref{cons1}, \cf Lemmas \ref{l4.3} - \ref{l4.3bis}.
\item A Chow-Künneth decomposition, if it exists, is not unique, because one can conjugate it by any invertible self-correspondence of $X$ which is $\equiv 1\pmod{\sim_H}$. Moreover, it has \emph{a priori} no reason to be unique up to such conjugation. This will hold, however, if $\Ker(\End_{\sM_\rat}(h(X))\to \End_{\sM_\num}(h(X)))$ is nilpotent, for example if $X$ is an abelian variety \cite{kimura}.
\item Given $H$, if $X$ and $Y$ admit a Chow-Künneth decomposition, so do $X\coprod Y$ and $X\times Y$ as in Lemma \ref{l4.5} a). However, the analogue of Lemma \ref{l4.5} b) (stability under direct summands) is not clear \emph{a priori}; see nevertheless Proposition \ref{p5.6}.
\item There may be partial Chow-Künneth decompositions, as we shall see now. (The same is true for Künneth decompositions.)
\end{enumerate}
\end{rks}

In \cite{murre}, Murre constructed a partial Chow-Künneth decomposition of the form
\begin{equation}\label{eq5.1}
h(X)=h^0(X)\oplus h^1(X)\oplus h^{[2,2n-2]}(X)\oplus h^{2n-1}(X)\oplus h^{2n}(X)
\end{equation}
and isomorphisms
\begin{equation}\label{eq7.1}
h(\pi_0(X))\iso h^0(X), h^0(X)\otimes \L^n\iso h^{2n}(X) , h^1(X)\otimes \L^{n-1} \iso h^{2n-1}(X)
\end{equation}
for any $X\in \Sm^\proj(k)$,  where $\pi_0(X)$ is the scheme of constants of $X$. This was refined by Scholl in \cite[\S 1 and \S 4]{scholl} to a self-dual decomposition:
 \begin{equation}\label{eq7.2}
h^j(X)^\vee\simeq h^{2n-j}(X)(n).
\end{equation}

By \cite[3.7]{murre}, we have
\begin{equation}\label{eq5.7}
H^i(h^j(X))=
\begin{cases}
0 &\text{ if } i\ne j\\
H^i(X) &\text{ if } i= j
\end{cases}
\end{equation}
for $j\in \{0,1,2n-1,2n\}$ if $H$ is $\ell$-adic cohomology, hence for $H$ classical in the sense of Definition \ref{d3.2} if we are in characteristic $0$. The basic reason (see Example \ref{p5.3} below) is that \eqref{eq5.1} and  \eqref{eq7.1} lift the partial Künneth decomposition and isomorphisms established by Kleiman in \cite{kdix} to rational equivalence. We now examine what happens for a general Weil cohomology $H$: obviously, if \eqref{eq5.7} holds then $X$ verifies Condition C relatively to $H$ in the sense of Definition \ref{d4.1}.

\begin{prop}\label{p5.1} a) The first isomorphism of \eqref{eq7.1} yields a decomposition
\[H^0(X)\simeq H^0(\pi_0(X))\oplus S
\]
where $S=0$ (for all $X$) $\iff$ \eqref{eq5.7} holds for $i=0$ (for all $X$) $\iff$ $H$ is normalised  in the sense of Definition \ref{d3.4}.\\
b) Suppose that $H$ is normalised.  If the saturation of $\V$ contains all curves, we have $H^i(h^j(X))=0$ for $i\ne j$ for any $X\in \sV$ and $j\in \{0,1,2n-1,2n\}$.
\end{prop}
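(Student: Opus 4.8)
The plan is to read Murre's partial Chow--Künneth decomposition \eqref{eq5.1} and the isomorphisms \eqref{eq7.1}, together with Scholl's self-dual refinement \eqref{eq7.2}, through the realisation functor $\uH^*\colon\sM_\rat(k,\V)\to(\sC^\natural)^{(\Z)}$ of Propositions \ref{p1.1} and \ref{p1.4}, and to extract the vanishings from the Weil axioms. Since $\V^{\sat}$ is strongly admissible (Lemma \ref{l4.7}) we have $\pi_0(X)\in\V^{\sat}$, and the hypothesis that $\V^{\sat}$ contains all curves makes Murre's and Scholl's constructions of the outer pieces $h^0,h^1,h^{2n-1},h^{2n}$ of $h(X)$ --- which proceed via a curve --- take place inside $\sM_\rat(k,\V)=\sM_\rat(k,\V^{\sat})$; thus \eqref{eq5.1}, \eqref{eq7.1} and \eqref{eq7.2} are available there for every $X\in\V$. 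By Lemma \ref{l3.1} a) and stability of $\V$ under connected components we may assume $X$ connected; the case $n:=\dim X=0$ is trivial, so assume $n\ge 1$.

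For part a), I would apply $\uH^*$ to the splitting $h(X)=h^0(X)\oplus h'(X)$ with $h'(X):=\IM(\id-\pi^0_X)=h^1(X)\oplus h^{[2,2n-2]}(X)\oplus h^{2n-1}(X)\oplus h^{2n}(X)$. The first isomorphism of \eqref{eq7.1} gives $\uH^*(h^0(X))\cong\uH^*(h(\pi_0(X)))=H^*(\pi_0(X))$, which is concentrated in degree $0$ since $\pi_0(X)$ is $0$-dimensional (Axiom (ii)); hence in degree $0$ we get $H^0(X)=\uH^0(h^0(X))\oplus\uH^0(h'(X))\cong H^0(\pi_0(X))\oplus S$ with $S:=\uH^0(h'(X))$. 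Murre builds the first isomorphism of \eqref{eq7.1} compatibly with the structure morphism $\pi\colon X\to\pi_0(X)$, so that $h(\pi_0(X))\iso h^0(X)\hookrightarrow h(X)$ is $h(\pi)$; thus the inclusion $H^0(\pi_0(X))\hookrightarrow H^0(X)$ is the canonical map of Definition \ref{d3.4}. Being a split monomorphism, it is an isomorphism if and only if $S=0$, which shows $S=0$ for all $X$ precisely when $H$ is normalised. Finally $S=\bigoplus_{j\neq 0}\uH^0(h^j(X))$ by additivity of $\uH^*$, so $S=0$ if and only if $\uH^0(h^j(X))=0$ for all $j\neq 0$; together with $\uH^0(h^0(X))=H^0(\pi_0(X))$ this is exactly \eqref{eq5.7} for $i=0$, the converse implication being trivial.

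For part b), assume $H$ normalised. The case $j=0$ is immediate: $\uH^*(h^0(X))=H^*(\pi_0(X))$ lives in degree $0$. For $j=2n$, the second isomorphism of \eqref{eq7.1} gives $\uH^*(h^{2n}(X))\cong\uH^*(h^0(X))\otimes\uH^*(\L)^{\otimes n}$, concentrated in degree $0+2n=2n$ because $\uH^*(\L)$ lives in degree $2$ (Proposition \ref{p1.1}(1)). For $j=1$: $h^1(X)$ is effective, so $\uH^i(h^1(X))=0$ for $i<0$ (Proposition \ref{p1.1}(2)), and by part a) (applicable as $H$ is normalised) $S=0$, hence its direct summand $\uH^0(h^1(X))$ vanishes. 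Combining \eqref{eq7.2} for $j=1$ with the third isomorphism of \eqref{eq7.1} gives $h^1(X)^\vee\cong h^1(X)(1)$ in $\sM_\rat(k,\V)$; applying the strong $\otimes$-functor $\uH^*$ and comparing graded components (recall the distinguished invertible object $\uH^*(\L)$ of the target sits in degree $2$) yields $\uH^m(h^1(X))\cong\uH^{2-m}(h^1(X))^\vee\otimes L_\sC$ for all $m\in\Z$. The right-hand side is $0$ for $m>2$ by effectivity and for $m=2$ since $\uH^0(h^1(X))=0$; hence $\uH^m(h^1(X))=0$ for all $m\neq 1$. The case $j=2n-1$ then follows from the third isomorphism of \eqref{eq7.1}: $\uH^*(h^{2n-1}(X))\cong\uH^*(h^1(X))\otimes\uH^*(\L)^{\otimes(n-1)}$ is concentrated in degree $1+2(n-1)=2n-1$.

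The step I expect to need the most care is the first paragraph: making precise which parts of the Murre--Scholl package are being imported and checking they genuinely live in $\sM_\rat(k,\V)$ --- this is where the hypothesis on $\V^{\sat}$ enters --- and, inside part a), verifying that Murre's isomorphism $h(\pi_0(X))\cong h^0(X)$ is compatible with the structure morphism, so that the complement $S$ really measures the failure of normalisation rather than some a priori unrelated splitting of $H^0(X)$. Everything else is bookkeeping with the graded $\otimes$-structure and the Weil axioms.
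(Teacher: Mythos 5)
Your proof is correct, and for the decisive case $j=1$ of part b) it takes a genuinely different route from the paper's. The paper first settles curves ($n=1$) directly --- using the three-term decomposition for $i\notin[0,2]$, normalisation for $i=0$, and Poincar\'e duality together with \eqref{eq7.2} for $i=2$ --- and then reduces a general $X$ to this case via the fact that $h^1(X)$ is a direct summand of $h^1(C)$ for an ample curve $C$ (Scholl, 4.3); that reduction is the only place where the hypothesis that $\V^{\sat}$ contains all curves is actually invoked. You instead run the duality argument on $X$ itself: combining \eqref{eq7.2} with the third isomorphism of \eqref{eq7.1} gives $h^1(X)^\vee\simeq h^1(X)(1)$, and then $\uH^*(h^1(X))$ is squeezed into degree $1$ between effectivity ($i<0$), normalisation ($i=0$, via part a)) and the reflected vanishings ($i\ge 2$). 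This is essentially the paper's $n=1$ argument promoted to arbitrary $n$, and it buys something: since the Murre--Scholl projectors and the isomorphisms \eqref{eq7.1}--\eqref{eq7.2} are cycles on $X\times X$ and $\pi_0(X)\times X$, they already live in $\sM_\rat(k,\V)$ for any admissible $\V$ (with $\pi_0(X)\in\V^{\sat}$ by Lemma \ref{l4.7}), so your version of b) does not really need the curve hypothesis --- your first paragraph over-attributes its role, but that is harmless. Parts a) (which the paper dismisses as tautological) and the cases $j=0,2n,2n-1$ (handled in the paper by Poincar\'e duality, by you directly through \eqref{eq7.1}) agree in substance.
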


\begin{proof} a) is tautologically true. b) is obvious for $j=0$ since $h^0(X)$ is an Artin motive \cite[4.1.6.1]{andremotifs}. For $j=1$, suppose first $n=1$. For any $i$, we have
\[H^i(X)=H^i(h^0(X))\oplus H^i(h^1(X))\oplus H^i(h^2(X))\]
so the statement is true for $i\notin [0,2]$; it also holds for $i=0$ by the normalised hypothesis, and finally for $i=2$ by Axiom (v) of Definition \ref{d1.1}, and \eqref{eq7.2}.

If $n>1$, we use the fact that $h^1(X)$ is a direct summand of $h^1(C)$ for an ample curve $C$ (which follows from \cite[4.3]{scholl}). The cases $j=2n-1$ and $j=2n$ then follow again by Poincar\'e duality.
\end{proof}

\begin{prop}\label{p5.6}  Suppose that $H$ is normalised.\\
a) \eqref{eq5.7} holds for $i=1$ if and only if one has $H^1(h^{[2,2n-2]}(X))=0$; in particular it holds for curves.\\
b) The condition of a) is stable under coproducts, products and direct summands as in Lemma \ref{l4.5}.
\end{prop}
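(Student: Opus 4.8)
The plan is to feed Murre's partial decomposition \eqref{eq5.1} into the realization functor $\uH^*:\sM_\rat\to(\sC^\natural)^{(\Z)}$, read off what happens in cohomological degree $1$, and then, for b), rephrase the resulting condition as a statement about the realization of Murre's idempotent $\pi^1_X$.

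For a), I would apply $\uH^*$ to \eqref{eq5.1} and take degree-$1$ components, getting $H^1(X)=\uH^1(h^0(X))\oplus\uH^1(h^1(X))\oplus\uH^1(h^{[2,2n-2]}(X))\oplus\uH^1(h^{2n-1}(X))\oplus\uH^1(h^{2n}(X))$. Three of these summands vanish unconditionally: since $h^0(X)=h(\pi_0(X))$ by \eqref{eq7.1} and $\pi_0(X)$ is $0$-dimensional, $\uH^1(h^0(X))=0$; since $h^{2n}(X)\simeq h^0(X)\otimes\L^n$ and $\uH^*(\L)$ sits in degree $2$, $\uH^1(h^{2n}(X))=\uH^{1-2n}(h^0(X))(-n)=0$; and since $h^{2n-1}(X)\simeq h^1(X)\otimes\L^{n-1}$ with $h^1(X)$ effective (hence $\uH^{<0}(h^1(X))=0$ by Proposition \ref{p1.1}(2)), $\uH^1(h^{2n-1}(X))=\uH^{3-2n}(h^1(X))(1-n)=0$ whenever $n\ge2$. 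If $n\le1$ the interval $[2,2n-2]$ is empty, so $h^{[2,2n-2]}(X)=0$ and the two vanishings plus the tautology $H^1(X)=\uH^1(h^1(X))$ already give \eqref{eq5.7} for $i=1$; this settles curves. If $n\ge2$, what remains is $H^1(X)=\uH^1(h^1(X))\oplus\uH^1(h^{[2,2n-2]}(X))$, and \eqref{eq5.7} for $i=1$ — which, given the three automatic vanishings, amounts to $H^1(h^1(X))=H^1(X)$ — is precisely $H^1(h^{[2,2n-2]}(X))=0$.

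For b), I would first record two consequences of $H$ being normalised: $\uH(h^0(X))$ is concentrated in degree $0$ and equals $H^0(X)$ (Proposition \ref{p5.1} a) together with $h^0(X)=h(\pi_0(X))$), and the self-duality \eqref{eq7.2} combined with \eqref{eq7.1} gives $\uH^i(h^1(X))^\vee\simeq\uH^{2-i}(h^1(X))(1)$, which with $\uH^0(h^1(X))=0$ (i.e.\ \eqref{eq5.7} for $i=0$) and effectivity shows $\uH(h^1(X))$ is always concentrated in degree $1$. Hence, by a), the condition on $X$ is equivalent to $\uH^1(\pi^1_X)=\id_{H^1(X)}$, i.e.\ Murre's $\pi^1_X$ realizes to the Künneth projector onto $H^1(X)$. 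Then: for coproducts, Murre's decomposition is additive, $\pi^1_{X\coprod Y}=\pi^1_X\oplus\pi^1_Y$, and one concludes with the identification $H^1(X\coprod Y)=H^1(X)\oplus H^1(Y)$ of Lemma \ref{l3.1} a). For products, Murre's low-degree projectors are multiplicative, $\pi^0_{X\times Y}=\pi^0_X\otimes\pi^0_Y$ and $\pi^1_{X\times Y}=\pi^0_X\otimes\pi^1_Y+\pi^1_X\otimes\pi^0_Y$; applying the $\otimes$-functor $\uH$, taking degree-$1$ parts, and using that $\uH(\pi^0_X)$ restricts to the identity on $H^0(X)$ and vanishes in all other degrees, one obtains $\uH^1(\pi^1_{X\times Y})=\id_{H^0(X)\otimes H^1(Y)}\oplus\id_{H^1(X)\otimes H^0(Y)}$, which by the Künneth formula (Axiom (iv)) and $H^{<0}=0$ equals $\id_{H^1(X\times Y)}$. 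For direct summands I would invoke Lemma \ref{l3.4} b) applied to the faithful realization functor $\uH:\sM_H\to\sC^\natural$ with its $\Z$-grading: the class of motives $M$ for which the degree-$1$ Künneth projector lifts to $\End_{\sM_H}(M)$ is stable under direct summands, and for $M=h_H(X)$ this lift is, under the present hypotheses, realized by $\pi^1_X$.

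The step I expect to be the main obstacle is the behaviour of Murre's degree-$1$ piece under products and, especially, under direct summands — exactly the point flagged as ``not clear \emph{a priori}'' in Remark \ref{chkrk}(3). One has to be sure that \eqref{eq5.1} is genuinely multiplicative in degrees $0$ and $1$ and that $h^1$ of a summand of $h_H(X)$ is the corresponding summand of $h^1(X)$; this is where I would lean on the Picard/Albanese description of $h^1$ from \cite{murre,scholl} rather than on formal manipulations with Chow--Künneth projectors, which are not summand-stable in general.
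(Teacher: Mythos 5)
Your proof is correct and follows essentially the same route as the paper's (much terser) one: realize Murre's partial decomposition \eqref{eq5.1} and kill the $j=0,2n-1,2n$ pieces in degree $1$ for a), then use multiplicativity of the low-degree Chow--Künneth projectors together with the Künneth axiom for products; your direct vanishing arguments (effectivity, the self-duality $h^1(X)^\vee\simeq h^1(X)(1)$, normalisation) replace the paper's appeal to Proposition \ref{p5.1} b) and have the merit of avoiding its extra hypothesis that the saturation of $\sV$ contain all curves. The direct-summand subtlety you flag at the end is genuine --- the paper disposes of it with ``immediate from a)'', which is no more detailed than your Lemma \ref{l3.4} b) argument --- and the repair you propose, namely the Picard/Albanese description $h^1(X)\simeq h^1(T_X)$ making the condition intrinsic rather than dependent on the chosen projector, is exactly what the paper formalises right afterwards in Proposition \ref{c5.1} and Definition \ref{defalbinv}.
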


\begin{proof} The first statement of a) follows from Proposition \ref{p5.1} b), and the case of curves then follows from Proposition \ref{p5.1}. The first statement of b) follows from the Künneth formula for $H^*$ and for the partial Chow-Künneth decompositions, while the second one is immediate from a).
\end{proof}

 Let $T_X$ be the canonical torsor under the Albanese variety of $X$, and let $a_X:X\to T_X$ be the canonical Albanese map. 

\begin{prop}\label{c5.1} The map $a_X^*:H^1(T_X)\to H^1(X)$ is split injective. Moreover, the condition of Proposition \ref{p5.6} a) holds if and only if $a_X^*$ is an isomorphism.
 \end{prop}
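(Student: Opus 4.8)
The plan is to exploit the partial Chow--Künneth decomposition \eqref{eq5.1}--\eqref{eq7.2} together with the known geometry of the Albanese map, and then feed the result into the criterion of Proposition \ref{p5.6} a).

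First I would recall that, after \cite[\S 1]{scholl}, Murre's motive $h^1(X)$ together with its dual $h^{2n-1}(X)(n)$ only depend on the Albanese torsor: more precisely, the Albanese map $a_X:X\to T_X$ induces $h^1(a_X):h^1(T_X)\iso h^1(X)$, an isomorphism in $\sM_\rat$, where $h^1(T_X)$ is the usual degree-$1$ piece of the Chow motive of the abelian variety $A_X=\Alb(X)$ (after fixing a rational point, or working with the torsor directly as in \loccit). Applying the realisation functor $\uH$ and using \eqref{eq5.7} for $j=1$ in the curve/abelian-variety case — which holds by Proposition \ref{p5.1} b) combined with the fact that $h^1$ of an abelian variety is cut out inside $h^1$ of a curve — we get that $\uH(h^1(X))=H^1(h^1(X))$ is concentrated in degree $1$, and the map $a_X^*=H^1(h^1(a_X))^{-1}$-type morphism on the $H^1$-graded pieces fits into a commutative diagram. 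The key point for split injectivity is that $h^1(a_X)$ is a split monomorphism of Chow motives: indeed $h^1(T_X)\to h(X)\to h^1(X)$ exhibits $h^1(X)$ as a direct summand isomorphic to $h^1(T_X)$, so applying the additive functor $H^1(-)$ preserves this splitting, giving that $a_X^*:H^1(T_X)\to H^1(X)$ is split injective with a canonical retraction.

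Next, for the "if and only if" I would identify the obstruction. By \eqref{eq5.1}, $H^1(X)=H^1(h^0(X))\oplus H^1(h^1(X))\oplus H^1(h^{[2,2n-2]}(X))\oplus H^1(h^{2n-1}(X))\oplus H^1(h^{2n}(X))$; since $H$ is normalised, Proposition \ref{p5.1} b) kills the $j=0$ term, and Poincaré duality \eqref{eq7.2} together with $H$ normalised kills the $j=2n-1,2n$ terms (they compute $H^1$ of motives of weight $\ge 2n-1>1$, using Axiom (ii) applied after the duality as in the proof of Proposition \ref{p5.1} b)). Hence $H^1(X)=H^1(h^1(X))\oplus H^1(h^{[2,2n-2]}(X))$, and under the identification $H^1(h^1(X))\cong H^1(T_X)$ this decomposition is exactly the one realising $a_X^*$ as the inclusion of the first summand. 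Therefore $a_X^*$ is an isomorphism if and only if $H^1(h^{[2,2n-2]}(X))=0$, which is precisely the condition of Proposition \ref{p5.6} a) (recall \eqref{eq5.7} for $i=1$ is equivalent to $H^1(h^{[2,2n-2]}(X))=0$ by that proposition).

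The main obstacle I anticipate is making the identification $H^1(h^1(X))\cong H^1(T_X)$ clean, and in particular checking that under this identification the summand projection of $H^1(X)$ genuinely coincides with the realisation of $a_X^*$ rather than merely being abstractly isomorphic to it — this requires tracking Scholl's construction of the projector $\pi^1_X$ and its factorisation through $h(T_X)$ (see \cite[\S 4]{scholl}), and verifying that $h^1(a_X)$ is the morphism inducing it. Once that bookkeeping is in place, split injectivity is formal (additive functors preserve split monos) and the equivalence drops out of the weight count above; no genuinely new input beyond Murre--Scholl's results and the normalisation hypothesis is needed.
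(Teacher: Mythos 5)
Your proof is correct and follows essentially the same route as the paper's: the Murre--Scholl isomorphism $h^1(T_X)\iso h^1(X)$, realised by $\uH$, gives the splitting, and the vanishing of $H^1(h^j(X))$ for $j\in\{0,2n-1,2n\}$ from Proposition \ref{p5.1} reduces the equivalence to $H^1(h^{[2,2n-2]}(X))=0$, i.e.\ to the condition of Proposition \ref{p5.6} a). The only point where you (like the paper) are terse is the identification $H^1(T_X)=H^1(h^1(T_X))$, which requires $H^1(h^j(T_X))=0$ for all $j\ne 1$ --- for the middle range $j\in[2,2\dim T_X-2]$ this rests on the symmetric-power structure $h^j\simeq S^j h^1$ of the Deninger--Murre decomposition (cf.\ Proposition \ref{p5.2}), not merely on \eqref{eq5.7} for $j=1$ as you cite.
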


\begin{proof}
 The map $a_X$ induces an isomorphism 
\begin{equation}\label{eq5.2}
h^1(T_X)\iso h^1(X)
\end{equation}
\cf \cite[Ex. 6.38]{zetaL}. Using this and Proposition \ref{p5.1}, we get a commutative diagram
\[\begin{CD}
H^1(T_X)@>>> H^1(X)\\
@A\wr AA @AAA\\
H^1(h^1(T_X))@>\sim>> H^1(h^1(X))
\end{CD}\]
in which the left vertical map and the bottom horizontal map are isomorphisms. The claim follows.
\end{proof}

\section{Lefschetz operators} \label{s8}

In this section, we generalise to arbitrary Weil cohomologies important properties satisfied by the classical ones: Normalised (Definition \ref{d3.4}), Albanese-invariance (Definition \ref{defalbinv}), Weak and Strong Lefschetz (Definition \ref{d5.2a}).  
By the same technique as before, we then get a universal Weil cohomology enjoying these extra properties. Moreover, as in the classical case, Lefschetz theory provides a natural context for the yoga of the standard conjectures, as in Definition  \ref{d1.8b}. 

\subsection{More formalism of Weil cohomologies} Let $H$ be a Weil cohomology. 
 If $X\in \V$, we write $\cup_X$ for the ``cup-product''
\[H^*(X)\otimes H^*(X)\by{\kappa_{X,X}^{*,*}} H^*(X\times X)\by{\Delta_X^*} H^*(X)\]
where the first map is given by the K\"unneth product and $\Delta_X$ is the diagonal of $X$.

If $f:Y\to X$ is a morphism in $\V$, we write $f^*$ for its (contravariant) action on $H^*$, and $f_*$ for the action of the transpose of the graph of $f$, viewed as a correspondence (see proof of Proposition \ref{p1.1} a)). 

\begin{lemma}[Projection formula]\label{l5.1} The diagram
\[\xymatrix{
H^*(X)\otimes H^*(Y) \ar[r]^{1\otimes f_*}\ar[d]^{f^*\otimes 1}&H^*(X)\otimes H^*(X)\ar[r]^-{\cup_X}&H^*(X)\\
H^*(Y)\otimes H^*(Y)\ar[r]^-{\cup_Y} &H^*(Y)\ar[ur]_{f^*}
}\]
commutes.
\end{lemma}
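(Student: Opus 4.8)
The plan is to recognise all four arrows of the square as images, under the $\otimes$-functor attached to $H$, of explicit algebraic correspondences, so that commutativity reduces to the classical projection formula of intersection theory.

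First I would replace $H^*$ by the strong additive $\otimes$-functor $\uH^*\colon\sM_\rat(k,\sV)\to\sC^{(\Z)}$ of Proposition \ref{p1.1}, passing to the pseudo-abelian hull of $\sC$ if necessary (by Proposition \ref{p1.4} this is harmless, the square in question being a diagram in the full subcategory $\sC$), or equivalently working inside the rigid category $\Corr(k,\sV)[\L,\L^{-1}]$ of Lemma \ref{l4.8}. Under $\uH^*$ one has $f^*=\uH^*(h(f))$, while $f_*$ is $\uH^*$ of the correspondence given by the transpose of the graph of $f$ (as in the proof of Proposition \ref{p1.1}); and, after identifying $\uH^*(h(X))\otimes\uH^*(h(X))\cong\uH^*(h(X)\otimes h(X))=\uH^*(h(X\times X))$ by the structure isomorphism of $\uH^*$ — which is exactly $\kappa_{X,X}$, see Remarks \ref{r3.1} a) — the cup product $\cup_X=\Delta_X^*\circ\kappa_{X,X}$ becomes $\uH^*(h(\Delta_X))$, and likewise $\cup_Y=\uH^*(h(\Delta_Y))$. (The bottom arrow $H^*(Y)\to H^*(X)$ of the displayed diagram, printed as $f^*$, denotes $f_*$.) Since $\uH^*$ preserves composition, identities and the monoidal structure, commutativity of the square in $\sC$ follows from commutativity of the corresponding square of morphisms $h(X\times Y)\to h(X)$ in $\sM_\rat(k,\sV)$.

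Thus it remains to prove, in $\sM_\rat(k,\sV)$, the identity
\[
h(\Delta_X)\circ(\id_{h(X)}\otimes f_*)=f_*\circ h(\Delta_Y)\circ(h(f)\otimes\id_{h(Y)})
\]
of morphisms $h(X\times Y)\to h(X)$, where $f_*$ now also denotes the transpose-graph correspondence in $\sM_\rat$. Let $g\colon Y\to X\times Y$ be the morphism $y\mapsto(f(y),y)$, so that $g=(f\times\id_Y)\circ\Delta_Y$. Since $h$ is a contravariant $\otimes$-functor, $h(f)\otimes\id_{h(Y)}=h(f\times\id_Y)$, hence $h(\Delta_Y)\circ(h(f)\otimes\id_{h(Y)})=h\bigl((f\times\id_Y)\circ\Delta_Y\bigr)=h(g)$; likewise, since the pushforward along a product map is the tensor of the pushforwards, $\id_{h(X)}\otimes f_*=(\id_X\times f)_*$ under $h(X)\otimes h(Y)=h(X\times Y)$. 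So the identity to prove reduces to $h(\Delta_X)\circ(\id_X\times f)_*=f_*\circ h(g)$. This is the base-change formula for the Cartesian square
\[
\begin{CD}
Y @>g>> X\times Y\\
@VfVV @VV\id_X\times fV\\
X @>\Delta_X>> X\times X
\end{CD}
\]
in which $\Delta_X$ and $g$ are regular closed immersions of codimension $\dim X$ with no excess intersection — a standard compatibility of composition of correspondences with Tor-independent base change for smooth projective varieties. (On cycle classes it is the usual $f_*(f^*a\cdot b)=a\cdot f_*b$ in $CH^*(X)_\Q$; the categorical diagram forces us to argue at the level of correspondences.)

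I expect the only real obstacle to be bookkeeping: matching each arrow of the square with the correct correspondence (and Tate twist), and checking that the Cartesian square above is precisely the one governing the composite $\Delta_X^*\circ(\id_X\times f)_*$. Once this is in place, the proof is simply the classical projection formula combined with the formal behaviour of $h$ and $\uH^*$ as $\otimes$-functors.
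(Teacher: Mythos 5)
Your proof is correct and takes essentially the same route as the paper: both reduce, via $\kappa$ and the action of Chow correspondences, to the identity $f_*\circ\Delta_Y^*\circ(f\times 1)^*=\Delta_X^*\circ(1\times f)_*$ in $\sM_\rat(k,\sV)$ (and you rightly read the arrow printed $f^*\colon H^*(Y)\to H^*(X)$ as $f_*$). The only difference is in the final verification of that correspondence identity — the paper invokes Manin's identity principle together with Fulton's projection formula for Chow groups, whereas you invoke base change along the Tor-independent Cartesian square through $g\colon y\mapsto(f(y),y)$ — two standard renderings of the same intersection-theoretic fact.
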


\begin{proof} Passing through $\kappa$, we are reduced to the following identity:
\[f_* \circ \Delta_Y^*\circ (1\times f)^* = \Delta_X^* \circ (1\times f)_* \]
which follows from the same identity in the category of graded correspondences, which in turn follows by Manin's identity principle from the projection formula for Chow groups \cite[Prop. 8.3 (c)]{fulton}.
\end{proof}

Let $i:Y\subset X$ be a closed immersion of codimension $1$ in $\V$. 

\begin{defn}\label{lefdef}
The class $\cl^1_X(Y):\un\to H^2(X)(1)$ induces 
\[L_Y:H^k(X)\by{1\otimes \cl^1_X(Y)} H^k(X)\otimes H^2(X)(1)\by{\cup_X} H^{k+2}(X)(1)\]
which we call a \emph{Lefschetz operator}.
\end{defn}

\begin{lemma}\label{l5.2}  The operator $L_Y$ may be factored as
\[H^k(X)\by{i^*} H^k(Y)\by{i_*} H^{k+2}(X)(1).\]
\end{lemma}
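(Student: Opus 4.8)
The plan is to reduce everything to the projection formula (Lemma~\ref{l5.1}), once the divisor class $\cl^1_X(Y)$ has been rewritten as a Gysin pushforward of the unit.

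First I would unwind Definition~\ref{lefdef}: since $\cup_X=\Delta_X^*\circ\kappa_{X,X}$, the operator $L_Y$ is, by construction, the map $x\mapsto x\cup_X \cl^1_X(Y)$ on $H^k(X)$. Next I would identify $\cl^1_X(Y)=i_*(1_Y)$, where $1_Y\in\sC(\un,H^0(Y))$ is the unit of the ring $H^*(Y)$. Indeed, as a codimension-one cycle on $X$ we have $[Y]=i_*[Y]$ with $[Y]\in CH^0(Y)_\Q$ the fundamental class; the cycle class map is compatible with the action of correspondences (see the proof of Proposition~\ref{p1.1}~a)) and $i_*$ is by definition the action of the transpose graph of $i$, so $\cl^1_X(Y)=i_*(\cl^0_Y[Y])$. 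Finally $\cl^0_Y[Y]=1_Y$: writing $\pi:Y\to\Spec k$ for the structure map one has $[Y]=\pi^*[\Spec k]$, so by contravariance of $\cl$ we get $\cl^0_Y[Y]=\pi^*\bigl(\cl^0_{\Spec k}[\Spec k]\bigr)$, and axioms (i) and (vi) identify $\cl^0_{\Spec k}[\Spec k]$ with $\id_\un$ under $H^0(\Spec k)\iso\un$; hence $\cl^0_Y[Y]=\pi^*(1_{\Spec k})$ is the unit of $H^*(Y)$.

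Then I would apply Lemma~\ref{l5.1} to $f=i$, with the two arguments $x\in H^k(X)$ and $1_Y\in H^0(Y)$, reading off the identity $x\cup_X i_*(1_Y)=i_*\bigl(i^*x\cup_Y 1_Y\bigr)$ (the lower-right arrow in the diagram of Lemma~\ref{l5.1} being the Gysin map $i_*$). Since $1_Y$ is the unit for $\cup_Y$, the right-hand side equals $i_*(i^*x)$. Combining the three steps gives $L_Y(x)=x\cup_X\cl^1_X(Y)=x\cup_X i_*(1_Y)=i_*(i^*x)$, i.e.\ the desired factorisation $L_Y=i_*\circ i^*$, the Tate twist matching because a codimension-one Gysin map raises degree by $2$ and twists by $1$.

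The only delicate point is the identification $\cl^1_X(Y)=i_*(1_Y)$ together with the bookkeeping of the unit $1_Y$ when $Y$ is not geometrically connected: there $1_Y$ is not a canonical isomorphism but merely the image of $\id_\un$ under $\pi^*$, and one must check it is genuinely a two-sided unit for $\cup_Y$. Everything else is formal given Lemma~\ref{l5.1}.
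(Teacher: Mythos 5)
Your proof is correct. The paper takes a formally different route: it verifies the commutativity of the square with top arrow $1\otimes [Y]:h(X)\to h(X)\otimes h(X)(1)$, right arrow $\Delta_X^*$, left arrow $i^*$ and bottom arrow $i_*:h(Y)\to h(X)(1)$ directly in the category of Chow motives $\sM(k,\sV)$, using the projection formula for Chow groups together with Manin's identity principle, and then applies the $\otimes$-functor $H^*$; it does not pass through Lemma \ref{l5.1}. You instead stay entirely in cohomology, invoking the already-established cohomological projection formula (Lemma \ref{l5.1}) and supplementing it with the identification $\cl^1_X(Y)=i_*(1_Y)$. The two arguments rest on the same underlying computation --- Lemma \ref{l5.1} is itself proved by the same Manin-identity reduction --- but yours isolates as explicit inputs (a) that the cycle class map intertwines pushforward of cycles with the Gysin map on cohomology, which is legitimate since $\cl$ is the map induced by $\uH^*$ on Hom groups out of $\un$ (cf.\ the proof of Proposition \ref{p1.1}), and (b) that $\cl^0_Y$ of the fundamental class is the two-sided unit for $\cup_Y$. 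The paper's motive-level formulation sidesteps any discussion of the unit $1_Y$; your caveat about non-geometrically-connected $Y$ is well taken but harmless, since unitality of $\pi^*(1_{\Spec k})$ for $\cup_Y$ follows from the unitality axiom of the K\"unneth structure irrespective of connectedness. Each approach buys something: the paper's is one clean motivic identity, while yours reuses Lemma \ref{l5.1} and makes the familiar formula $L_Y(x)=x\cup i_*(1)=i_*i^*x$ transparent.
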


\begin{proof} Indeed, the projection formula for Chow groups plus Manin's identity principle \cite[\S 2]{scholl} show that the following diagram commutes in $\sM(k,\sV)$:
\[\begin{CD}
h(X)@>1\otimes [Y]>> h(X)\otimes h(X)(1)\\
@V i^* VV @V\Delta_X^* VV\\
h(Y)@> i_* >> h(X)(1)
\end{CD}
\]
where $\Delta_X$ is the diagonal and $[Y]\in CH^1(X)$ is viewed as a morphism $\un\to h(X)(1)$, and we apply the $\otimes$-functor $H^*$.
\end{proof}

\begin{rk}\label{r8.1} Let  $\varpi:X\inj \P^N$ be a polarisation and $i=Y\inj X$ be a corresponding smooth hyperplane section. Then $\cl^1_X(Y)$ only depends on  $\varpi$ in Definition \ref{lefdef}: indeed, the class of $Y$ in $CH^1(X)$ is $\varpi^* \sO(1)$ where $\sO(1)$ is the canonical generator of $CH^1(\P^N)\simeq \Z$.
\end{rk}

\subsection{Albanese-invariant cohomologies}

\begin{defn}\label{defalbinv}
We say that $H$ is \emph{Albanese-invariant} if \eqref{eq5.7} holds for $i=1$ for any $X\in \V$  or, equivalently, if the condition of Proposition \ref{c5.1} holds for any $X\in \V$, \ie $a^*_X$ is an isomomorphism. 
\end{defn}

\begin{ex}\label{p5.3} 
Any  classical Weil cohomology is Albanese-invariant. To see this, we can reduce to the condition in Proposition \ref{c5.1}. For $\ell$-adic cohomology, see \cite[Th. 2A9 6.]{kdix}. In characteristic $0$, we get the other classical Weil cohomologies by the comparison theorems. Over a finite field, we get crystalline cohomology by applying \cite[Th. 1]{km}. Over a general field $k$ of characteristic $>0$, we then reduce to a finite field by reducing first to $k$ finitely generated and then using smooth and proper base change.
\end{ex}

\subsection{Weak and Strong Lefschetz}
Assume that $\sV$ is closed under taking smooth hyperplane sections.

\begin{defn}\label{d5.2a} Let $(\sC,H)$ be a Weil cohomology with $(\sC, L_\sC)\in \Add^\otimes_*$. We say that
\begin{itemize}
\item $H$ \emph{verifies Weak Lefschetz}  if, for any connected $X\in \V$ of dimension $n$ and any smooth hyperplane section $i:Y\inj X$, connected with the same field of constants as $X$, the map $i^*:H^l(X)\to H^l(Y)$ is an isomorphism for $l\le n-2$.
\item $H$ \emph{verifies Strong Lefschetz} if, for $(X,Y)$ as above and $j\le n$, the morphism
\[L^{j}: H^{n-j}(X)\by{}H^{n+j}(X)(j),\]
induced by the Lefschetz operator $L:= L_Y$ (Definition \ref{lefdef}), is an isomorphism.
\end{itemize}
\end{defn}

\begin{rk} This induces isomorphisms
$$L^{n-2i}: \sC (\un, H^{2i}(X)(i))\iso \sC(\un, H^{2(n-i)}(X)(n-i)).$$
\end{rk}

\begin{rk} \label{rk:split}
The Weak Lefschetz property is usually stated with the additional condition: $i^*$  is injective for $l= n-1$. This is automatic in the presence of Strong Lefschetz, by Lemma \ref{l5.2}. This argument even gives split injectivity for all $l= n-j$.
\end{rk}

\begin{defn}\label{d5.2b}
A Weil cohomology $(\sC,H)$ is \emph{tight} if it is normalised (Definition \ref{d3.4}),  Albanese-invariant (Definition \ref{defalbinv}), and verifies 
Weak and Strong Lefschetz (Definition \ref{d5.2a}). 
\end{defn}

(For $\V =\Sm^\proj_k$ and $(\Vec_K,H)$ traditional we recover Kleiman's axiomatisation of a Weil cohomology in \cite{kst} plus Albanese-invariance.) 

\begin{ex}\label{exwsa} (see also Example \ref{p5.3}.) All classical Weil cohomologies  (and the abelian enrichments in Example \ref{exw}) in the sense of Definition \ref{d3.2} are tight. \end{ex}

\begin{lemma}\label{l8.1}
If $(\sC,H)$ is tight and $(F,u):(\sC,L_\sC)\to (\sC',L_\sD')\in \Add^\otimes_*$, the push-forward $H'= F_*H$ is also tight.  Additionally, for $(F,u)\in \Ex^\rig_*$ with $F$ faithful we have that $H$ is tight if and only if $H'$ is tight. 
\end{lemma}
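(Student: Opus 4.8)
The plan is to verify each of the four defining properties of tightness for $H'=F_*H$ from the corresponding property of $H$, exploiting that each property is phrased in terms of a morphism of $\sC$ being an isomorphism (or a specified direct sum decomposition), and that the functor $F^{(\N)}\colon\sC^{(\N)}\to\sC'^{(\N)}$ carries isomorphisms to isomorphisms and direct sums to direct sums. First I would recall that by Construction \ref{cons1}, $H'^*=F^{(\N)}\circ H^*$ as functors $\Corr(k,\sV)\to\sC'^{(\N)}$, and that under the reformulation of Proposition \ref{p1.1}, the Künneth projectors, trace maps, cycle classes, Lefschetz operators $L_Y$ (Definition \ref{lefdef}), pullbacks $i^*$ and Gysin maps $i_*$ for $H'$ are simply the images under $F$ of the corresponding data for $H$; this is because all of them are built functorially out of $\uH^*$ and its $\otimes$-structure, and $F$ is a $\otimes$-functor.

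Granting that dictionary, the four verifications are routine. \emph{Normalised}: the map $H'^0(\pi_0(X))\to H'^0(X)$ is $F$ applied to $H^0(\pi_0(X))\to H^0(X)$, which is an isomorphism by hypothesis, hence so is its image. \emph{Albanese-invariant}: by Proposition \ref{c5.1}, $H$ is Albanese-invariant iff $a_X^*\colon H^1(T_X)\to H^1(X)$ is an isomorphism for all $X$; since $a_{X}^{\prime *}=F(a_X^*)$, the same holds for $H'$. \emph{Weak Lefschetz}: $i^*\colon H'^l(X)\to H'^l(Y)$ is $F(i^*)$, an isomorphism for $l\le n-2$ because $F$ preserves isomorphisms. \emph{Strong Lefschetz}: the operator $L_Y$ for $H'$ is $F(L_Y)$ by Definition \ref{lefdef} (both $\cl^1_X(Y)$ and $\cup_X$ are transported by $F$), so $L^j\colon H'^{n-j}(X)\to H'^{n+j}(X)(j)$ is $F$ applied to the corresponding isomorphism for $H$, hence an isomorphism. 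This proves the first assertion.

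For the second assertion, suppose $(F,u)\in\Ex^\rig_*$ with $F$ faithful; we must show $H$ is tight if $H'$ is. The ``only if'' direction is the first part. For the ``if'' direction, note that since $F$ is faithful, exact, and between rigid abelian $\otimes$-categories, it \emph{reflects isomorphisms}: a morphism $f$ in $\sC$ with $F(f)$ an isomorphism has $\Ker f$ and $\Coker f$ killed by $F$, hence zero by faithfulness (alternatively invoke that a faithful exact functor reflects exactness of short sequences, or Lemma \ref{l2.1} c) together with rigidity). Applying this to each of the four morphisms above — $H^0(\pi_0(X))\to H^0(X)$, $a_X^*$, $i^*$ for $l\le n-2$, and $L^j$ for $j\le n$ — whose images under $F$ are isomorphisms by the tightness of $H'$, we conclude each is an isomorphism, so $H$ is normalised, Albanese-invariant, and verifies Weak and Strong Lefschetz, i.e. $H$ is tight.

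The only mildly delicate point — and the one I would write out with care — is the claim that $F$ reflects isomorphisms under the stated hypotheses; everything else is bookkeeping with the $\otimes$-functoriality established in Proposition \ref{p1.1} and the definitions in \S\ref{s8}. I expect no real obstacle: the lemma is essentially an immediate consequence of functoriality plus the elementary fact that (faithful, exact) functors reflect isomorphisms, so the ``proof'' is a short paragraph citing Construction \ref{cons1}, Definition \ref{lefdef}, Proposition \ref{c5.1}, and this standard categorical fact.
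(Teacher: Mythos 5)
Your proposal is correct and follows exactly the paper's argument: the paper's proof likewise observes that all four tightness properties are isomorphism conditions transported by $F$, and that a faithful exact functor between abelian categories is conservative. Your write-up is simply a more detailed expansion of the same two-sentence argument.
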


\begin{proof} The first fact is true because the tightness properties are given by isomorphisms, which are preserved by $F$.  In the second case, if $F$ faithful and exact then it is conservative. 
\end{proof}

\subsection{Another representability theorem}

\begin{thm} \label{thm2m} Assume that $\sV$ is closed under taking smooth hyperplane sections. 
For $(\sC,L_\sC)\in \Add^\otimes_*$, let $\Weil^+(k,\V;\sC,L_\sC)$ be the $1$-full and $2$-full sub-$2$-category of $\Weil(k,\V;\sC,L_\sC)$ consisting of those $H$ which are tight. Then the $2$-functor $\Weil^+(k,\V;-)$ is strongly $2$-representable, as well as its restriction to $\Ex^\rig_*$.
\end{thm}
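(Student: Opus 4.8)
The plan is to mimic the proof of Theorem \ref{thm2} and Corollary \ref{cor:thm2}, just adding the extra relations that encode tightness. Concretely, one starts from the universal Weil cohomology $W^*:\Corr(k,\sV)\to \weil(k,\sV)^{(\N)}$ of Theorem \ref{thm2} and forms a further quotient. Since each of the four tightness conditions (normalised, Albanese-invariant, Weak Lefschetz, Strong Lefschetz) is expressed in Definitions \ref{d3.4}, \ref{defalbinv}, \ref{d5.2a} by asking that a specified list of morphisms in $\weil(k,\sV)$ become invertible, one collects all these morphisms into a set $S^+$ and localises: by Proposition \ref{s2.4}, $\weil^+(k,\sV):=\weil(k,\sV)[(S^+)^{-1}_{\oplus,\otimes}]$ is again an additive $\otimes$-category, with $L_{W^+}$ the image of $\L_W$, and the composite $W^{+,*}:\Corr(k,\sV)\to \weil^+(k,\sV)^{(\N)}$ is still a strong additive $\otimes$-functor. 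One must first check that $W^{+,*}$ is itself tight: this is where one has to be slightly careful, because inverting the Weak Lefschetz maps $i^*$ and the Strong Lefschetz maps $L^j$ for \emph{all} connected $X$ and hyperplane sections forces, via Lemma \ref{l5.2} and Remark \ref{rk:split}, the split injectivity statements as well, and one has to see that no new obstruction to the remaining axioms (i)--(vi) of Definition \ref{d1.1} is introduced — but those axioms are given by isomorphisms, hence survive localisation.

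Next I would verify the universal property, i.e. that $W^{+,*}$ induces an isomorphism of categories
\[\Add^\otimes_*\big((\weil^+(k,\sV),L_{W^+}),(\sC,L_\sC)\big)\iso \Weil^+(k,\sV;\sC,L_\sC).\]
Given a tight $(H^*,\Tr)$ with values in $(\sC,L_\sC)$, the universal property of $\weil(k,\sV)$ produces a unique $F_H:\weil(k,\sV)\to\sC$ with $(F_H)_*W=H$; since $H$ is tight, $F_H$ sends every morphism in $S^+$ to an isomorphism, so by the universal property of the localisation (Proposition \ref{s2.4}) it factors uniquely through $\weil^+(k,\sV)$. Conversely any $\otimes$-functor out of $\weil^+(k,\sV)$ composes with $W^{+,*}$ to give a Weil cohomology which is tight by Lemma \ref{l8.1} (push-forward of a tight cohomology is tight). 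Full faithfulness on $2$-morphisms is inherited from Theorem \ref{thm2} together with the fact that localisation functors are epimorphisms of categories. This gives the statement for $\Add^\otimes_*$, hence also for $\Add^\rig_*$ since a localisation of a rigid category is rigid (as in the proof of Corollary \ref{cor:thm2}).

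For the restriction to $\Ex^\rig_*$, I would apply the $2$-functor $T$ of Lemma \ref{bvk} exactly as in Corollary \ref{cor:thm2}: set $\weil^+_\ab(k,\sV):=T(\weil^+(k,\sV))$ and push $W^+$ forward along $\lambda_{\weil^+(k,\sV)}$. The $2$-representability of $\Add^\rig(\weil^+(k,\sV),-)$ on $\Ex^\rig$ then transports the universal property; one only needs that a tight cohomology with abelian rigid target, precomposed with $\lambda$, recovers the original — which follows from the defining adjunction of $T$ — and that the resulting object lies in $\Ex^\rig_*$, for which one invokes \cite[Prop. 4.5]{BVK} as in Theorem \ref{thm2bis}.

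The main obstacle I anticipate is not the formal localisation bookkeeping but checking that $W^{+,*}$ remains a genuine Weil cohomology after imposing the Lefschetz relations — in particular that Poincaré duality (Axiom (v)) and the compatibility of the trace with products are not destroyed, and that the Lefschetz operators in $\weil^+(k,\sV)$ really are the images of those in $\weil(k,\sV)$ so that "tight" is meaningful there. Once one observes that all of (i)--(vi) are statements about certain canonical maps being isomorphisms and that these maps are the images under the localisation functor of the corresponding maps in $\weil(k,\sV)$, their invertibility is automatic; so the real content is simply to list $S^+$ correctly and to note that the tightness conditions are themselves exactly "$s$ invertible for $s\in S^+$", which makes the universal property a formal consequence of Proposition \ref{s2.4}.
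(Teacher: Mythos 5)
Your proposal is correct and follows essentially the same route as the paper: the authors also obtain $\weil^+(k,\sV)$ by localising $\weil(k,\sV)$ at the morphisms $i^*$, $a^*_X$ and $L^j$ encoding the tightness conditions, in the style of Proposition \ref{s2.4}, and then compose with the $2$-functor $T$ for the restriction to $\Ex^\rig_*$ exactly as in Corollary \ref{cor:thm2}. Your additional checks (that the localised functor is still a tight Weil cohomology, and the verification of the universal property) are the details the paper leaves implicit, and they are carried out correctly.
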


\begin{proof} We obtain a representing object by localising $\weil(k,\V)$ with respect to the morphisms $i^*$, $a^*_X$ and $L^j$ given by any $(X,Y)$ as in Definitions \ref{d5.2a} - \ref{defalbinv}, in the style of Proposition \ref{s2.4} (\cf the proof of Theorem \ref{thm2}).  For its restriction to $\Ex^\rig_*$, we compose with $T$ as in the proof of Corollary \ref{cor:thm2}.
\end{proof}

\begin{nota} \label{n:wsa}
We denote by $(\weil^+, W^+)$ and $(\weil_\ab^+, W_\ab^+)$ the representing objects in Theorem \ref{thm2m}; by its proof, we have $\weil_\ab^+=T(\weil^+)$. 
\end{nota}

\begin{rk}\label{rkloc}
The abelian $\otimes$-category $\weil_\ab^+$ is a $\otimes$-Serre localisation of $\weil_\ab$. In fact, let $\sI^+$ be the $\otimes$-Serre ideal of $\weil_\ab$ generated by kernels and cokernels of the morphisms in the proof of Theorem \ref{thm2m} under $\lambda_\weil$. Then $\weil_\ab^+\simeq \weil_\ab/\sI^+$. Thus, in Theorem \ref{thm:AS}, for $\sS$ the class of tight Weil cohomologies we get that $\sI_\sS=\sI^+$ and $\sA_S =\weil_\ab^+$. 
\end{rk}

\begin{warning}\label{warn} By Lemma \ref{l4.7} and Proposition \ref{p1.1}, we have $\weil(k,\sV)^\natural\allowbreak=\weil(k,\sV^\sat)^\natural$ where $\sV^\sat$ is the saturation of $\sV$. The same is not true (or at least not clear) for $\weil^+$, because it is not clear whether a Weil cohomology on $\sV^\sat$ whose restriction to $\sV$ is tight, is also tight on $\sV^\sat$. Note also that $\weil^+(k,\sV)$ is not defined if $\sV$ is not closed under hyperplane sections, because one cannot formulate weak Lefschetz.
\end{warning}

For tight Weil cohomologies we set the same framework of Definitions \ref{relh} and \ref{relhab}. The analogue of Theorem \ref{thm2bis} is the following:

\begin{thm}\label{thm2tris}
a) Any tight Weil cohomology $(\sC,H)$ has an initial tight enrichment $(\weil_H^+,W_H^+)$ such that $W_H^+= (\epsilon_H)_*W_H$ is the push-forward along a faithful $\otimes$-functor $\epsilon_H:\weil_H\to \weil_H^+$. If $H$ is abelian-valued, then the ab-initial enrichment $(\weil_H^\ab,W_H^\ab)$ is tight; in this case, there is a canonical faithful $\otimes$-functor $\iota_H^+$ fitting in the following commutative diagram
\begin{equation}\label{eq+ab}
\begin{gathered}
\xymatrix{
\weil_H\ar[r]^{\iota_H} \ar[d]_{\epsilon_H}&\weil_H^\ab\\
\weil_H^+\ar[ru]_{\iota_H^+}&
}
\end{gathered}
\end{equation}
and $W_H^\ab=(\iota_H^+)_*W_H^+$.\\
\hspace{0.7cm}
b) There is a $1-1$ correspondence between initial tight Weil cohomologies and $\otimes$-ideals of $\weil^+$ (resp. Serre $\otimes$-ideals of $T(\weil^+)$). 

Moreover, the target of any initial or ab-initial tight Weil cohomology is rigid.\\
c) The category $\weil_H^+$ is a $\otimes$-quotient of $\weil^+$, and the category $\weil_H^\ab$ is a $\otimes$-Serre localisation of $\weil_\ab^+$.
\end{thm}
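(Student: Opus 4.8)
The plan is to mirror, \emph{mutatis mutandis}, the proof of Theorem~\ref{thm2bis}, replacing $\weil$ by $\weil^+$ and $\weil_\ab$ by $\weil_\ab^+=T(\weil^+)$ throughout, and to check at each step that tightness survives the various operations. The key point that makes this work is Lemma~\ref{l8.1}: tightness is preserved under push-forward along any $1$-morphism in $\Add^\otimes_*$, and is detected by faithful exact $1$-morphisms in $\Ex^\rig_*$.

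First, for part a), start from a tight $(\sC,H)$ and let $(F_H,u_H):(\weil^+,\L_{W^+})\to(\sC,L_\sC)$ be the classifying $\otimes$-functor provided by the $2$-universal property of Theorem~\ref{thm2m} (which applies because $H$ is tight). Set $\weil_H^+:=\weil^+/\Ker F_H$, and let $W_H^+$ be the push-forward of $W^+$ along the projection $\weil^+\to\weil_H^+$; by Lemma~\ref{l8.1} it is tight, and the induced functor $\bar F_H:\weil_H^+\to\sC$ is faithful. The initiality argument is verbatim that of Theorem~\ref{thm2bis}~a): if $H=F_*H'$ with $F$ faithful and $H'$ tight, then by uniqueness in the universal property $F\circ F_{H'}=F_H$, so $\Ker F_{H'}=\Ker F_H$ (faithfulness of $F$), whence $F_{H'}$ factors through $\weil_H^+$ and we get the desired $\bar F_{H'}$. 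Since $\epsilon_H$ is a push-forward of $W$ (namely $W^+=(\epsilon_H)_*W_H$ comes from the universal map $\weil_H\to\weil_H^+$ obtained by the tightness of the localisation morphisms), it is a localisation, hence faithful on the relevant part; its faithfulness follows as in the proof of Theorem~\ref{thm2bis} together with Proposition~\ref{s2.4}. For the abelian statement: if $\sC\in\Ex^\rig$, the functor $(T(\weil^+),\L)\to(\sC,L_\sC)$ is exact, factors through the Serre localisation $\weil_H^\ab$ by its (Serre) kernel, and $\weil_H^\ab\in\Ex^\rig$ by \cite[Prop.~4.5]{BVK}; by Lemma~\ref{l8.1} the resulting $W_H^\ab$ is tight. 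The diagram \eqref{eq+ab} is then obtained by composing the universal maps, and $\iota_H^+$ is faithful because the exact functor $\weil_H^\ab\to\sC$ is injective on objects by construction, exactly as in Theorem~\ref{thm2bis}~a).

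For part b), one runs the same bijection as in Theorem~\ref{thm2bis}~b): every initial tight Weil cohomology defines a $\otimes$-ideal $\Ker F_H$ of $\weil^+$, and conversely any $\otimes$-ideal $\sI\subseteq\weil^+$ yields an initial tight Weil cohomology by push-forward of $W^+$ along $\weil^+\to\weil^+/\sI$ (tight by Lemma~\ref{l8.1}); same reasoning with Serre $\otimes$-ideals of $T(\weil^+)$ in the abelian case. Rigidity of $\weil_H^+$ (resp.\ $\weil_H^\ab$) follows because it is a $\otimes$-quotient (resp.\ $\otimes$-Serre localisation) of the rigid category $\weil^+$ (resp.\ $T(\weil^+)$), rigidity of $\weil^+$ itself being inherited from $\weil$, which is rigid by Corollary~\ref{cor:thm2}, through the localisation of Theorem~\ref{thm2m}. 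Part c) is then immediate from the construction in a): $\weil_H^+=\weil^+/\Ker F_H$ is a $\otimes$-quotient, and $\weil_H^\ab$ is the Serre localisation of $\weil_\ab^+=T(\weil^+)$ by the Serre kernel of $F_H$; the compatibility $T(\weil_H^+)\surj\weil_H^\ab$ (implicit via $\iota_H^+$) follows from Lemmas~\ref{t6.1} and \ref{l2.3} exactly as in Theorem~\ref{thm2bis}~c).

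The main obstacle, and the only place where genuine care beyond transcribing Theorem~\ref{thm2bis} is needed, is verifying that the \emph{initial tight} enrichment is computed inside the tight world rather than the ambient one --- i.e.\ that an arbitrary faithful enrichment $F_*H'=H$ with $H'$ merely a Weil cohomology, not \emph{a priori} tight, does not intervene. This is handled by the second assertion of Lemma~\ref{l8.1}: if $F$ is faithful and exact (the abelian case) then it is conservative, so $H$ tight $\Leftrightarrow$ $H'$ tight, and in the additive case one only ever pushes forward, which preserves tightness by the first assertion; hence the poset of tight enrichments of $H$ is cofinal in the poset of all enrichments and the initial object is the same. Once this is in place, every remaining step is a routine repetition of the arguments already carried out for Theorem~\ref{thm2bis}, now over the base category $\weil^+$ supplied by Theorem~\ref{thm2m}.
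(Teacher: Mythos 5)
Your proposal is correct and takes essentially the same route as the paper, whose proof consists precisely of the remark that the argument for Theorem \ref{thm2bis} applies verbatim to $\weil^+$ and $\weil_\ab^+=T(\weil^+)$, using Theorem \ref{thm2m} for the universal property and Lemma \ref{l8.1} to see that tightness is preserved. Only your closing aside is off: the initial enrichment $\weil_H$ and the initial tight enrichment $\weil_H^+$ are \emph{not} the same in general (they differ by the localisation $\epsilon_H$); it is only in the abelian case that the ab-initial enrichment $\weil_H^\ab$ is automatically tight, which is exactly what Lemma \ref{l8.1} supplies.
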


\begin{proof} The proof of Theorem \ref{thm2bis} applies \emph{verbatim} to $\weil^+$ and $\weil_\ab^+$ appealing to Theorem \ref{thm2m} and observing that $(\weil_H^\ab,W_H^\ab)$ is tight by Lemma \ref{l8.1}. 
\end{proof}

\begin{ex} By Example \ref{exwsa}, Theorem \ref{thm2tris} applies to classical Weil cohomologies.  
\end{ex}

\begin{rk} Let $(\sC^\natural, H)$ be  tight and assume Condition C. Let $\sM_H^\Theta:= (\sM_H[\Theta_{\oplus,\otimes}^{-1}])^\natural$ be the pseudo-abelian completion of the localisation of $\sM_H$ at the set $\Theta:=\{L^{n-i}: h^{i}_H(X) \to h^{2n-i}_H(X)(n-i)\ \text{for all}\ X\in\V\ \text{and}\ i\leq n =\dim(X)\}$ (as in Proposition \ref{s2.4}). We have that $\sM_H^\Theta\iso (\weil_H^+)^\natural$. Actually, the push-forward of $h_H$ to $\sM_H^\Theta$ defines a tight enrichment of $H$ which is universal by construction.  
\end{rk}

\subsection{Some Lefschetz algebra}\label{s8.5} Here we verify that the operators and identities of \cite{kdix} and \cite{kst} continue to make sense and hold for any Weil cohomology verifying Strong Lefschetz.

Let $(\sC,H)$ be  tight. 
For $X\in\V$ and $\dim (X) =n$, provided with a polarisation as in Remark \ref{r8.1}, define the operator $\Lambda$ as usual \cite[1.4.2.1]{kdix}, \cite[\S 4]{kst} on $H^i(X)$ with $i\leq n$ by the following commutative diagram 
\begin{equation}\label{eq8.1}
\begin{gathered}
\xymatrix{
H^i(X)\ar[r]^{L^{n-i}\hspace{0.5cm}}_{\sim\hspace{0.5cm}} \ar@{.>}[d]_{\Lambda}&H^{2n-i}(X)(n-i)\ar[d]^{L}\\
H^{i-2}(X)(-1)\ar[r]^{L^{n-i+2}\hspace{0.7cm}}_{\sim\hspace{0.6cm}}&H^{2n-i+2}(X)(n-i +1)}
\end{gathered}
\end{equation}
and on $H^{2n-i+2}(X)$ by the following one
\begin{equation}\label{eq8.2}
\begin{gathered}
\xymatrix{
H^i(X)(i-n-1)\ar[r]^{L^{n-i}}_{\sim\hspace{0.1cm}} &H^{2n-i}(X)(-1)\\
H^{i-2}(X)(i-n-2)\ar[u]^{L}\ar[r]^{\hspace{0.5cm}L^{n-i+2}}_{\hspace{0.3cm}\sim}&H^{2n-i+2}(X)\ar@{.>}[u]_{\Lambda}
}
\end{gathered}
\end{equation}
so that $\Lambda L=1$ in \eqref{eq8.1} and $L\Lambda =1$ in \eqref{eq8.2}.

The primitive decomposition of $H(X)$ then carries over in $\sC^\natural$. Namely, for $i\le n$ we define $P^i(X,L)$ as the image of $1-L\Lambda$ in 
$H^i(X)$ (primitive classes) and $p^i$ as the projector of $H(X)$ with image $P^i(X,L)$; the decomposition $H^i(X)\simeq  P^i(X,L)\oplus H^{i-2}(X)(-1)$ yields inductively a decomposition
\begin{equation}\label{eq8.3}
H^i(X)\simeq \bigoplus_{j=0}^{[i/2]} P^{i-2j}(X,L)(-j).
\end{equation}

For $i\ge n$, the isomorphism $L^{i-n}$ of \eqref{eq8.2} yields a similar decomposition
\begin{equation}\label{eq8.4}
H^i(X)\simeq \bigoplus_{j=i-n}^{[(i-n)/2]} P^{i-2j}(X,L)(-j)
\end{equation}
and we define $p^i$ as the projector of $H(X)$ onto $P^{2n-i}(X,L)(n-i)$. For \eqref{eq8.3} (\resp \eqref{eq8.4}) and modulo twists, $L$ acts like inclusion (\resp projection) and $\Lambda$ acts like projection (\resp inclusion).

We then get the additional operators $h$ (of degree $0$) and  ${ }^c\Lambda$, where $h =\sum_{i=0}^{2n}(i-n)\pi^i$ and, according to \eqref{eq8.3} and \eqref{eq8.4}, ${ }^c\Lambda$ is multiplication by $j(n-i+j+1)$ on $P^{i-2j}(X,L)(-j)$. We also get the Lefeschetz and Hodge involutions $\star_L$ and $\star_H$ as in \cite[1.1]{A}, exchanging the decompositions \eqref{eq8.3} for $H^i(X)$ and $\eqref{eq8.4}$ for $H^{2n-i}(X)$ with signs and multiplicities; then  \cite[1.2]{A} holds verbatim, \ie one has the $\mathfrak{sl}_2$-triple identities 
\begin{equation}\label{eq8.5}
[h,{ }^c\Lambda]=2\ { }^c\Lambda, \quad [h,L]=- 2L,\quad [L, { }^c\Lambda] = h
 \end{equation}
  which define a representation of $\mathfrak{sl}_2$ on $H(X)$ sending $\left(\begin{smallmatrix} 0&-1\\1&0\end{smallmatrix}\right)$ to $\star_H$, up to signs. Also, $\Lambda = \star_L L\star_L=\star_H L \star_H$ is easy to check (see  \cite[Prop. 1.4.3 and Lemma 1.4.6]{kdix} or \cite[pp. 13-14]{kst}).

In order to avoid the ``heresy'' of neglecting the Tate twists, we propose the following formalism (see also \cite[Rem. 1.10]{criteres}): in the ind-category $\ind\ \sC =\ind\ \sC^\natural$, consider within $\End(\bigoplus_{i,j} H^i(X)(j))$ the sub-graded algebra $R$ generated by the homogeneous operators of bidegrees $(2i,i)$. Then $R$ is finite-dimensional if $\sC=\Vec_K$ for some field $K$, and all above operators belong to $R$ in general.

\begin{prop}\label{p8.1} Let $S$ be the (graded) subalgebra of $R$ generated by $L$ and $\Lambda$.  Then $S$ contains the operators ${}^c\Lambda$, $\star_L$, $\star_H$, $\pi^i$, and is also generated by $L$ and ${}^c\Lambda$. \\
Moreover,  ${ }^c\Lambda$ is the only operator of degree $(-2,-1)$ verifying the third identity of \eqref{eq8.5}.
\end{prop}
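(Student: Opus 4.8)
The plan is to establish the proposition in two parts: first the identification of $S$ and the containment of the listed operators, then the uniqueness of ${}^c\Lambda$. For the first part, the key input is the $\mathfrak{sl}_2$-representation theory that is already available to us via the triple identities \eqref{eq8.5} and the primitive decompositions \eqref{eq8.3}--\eqref{eq8.4}. Concretely, I would argue as follows. Since $H(X)$ decomposes as a finite direct sum of primitive pieces $P^{i-2j}(X,L)(-j)$, and $L$ acts as inclusion/projection with respect to these decompositions (with explicit scalars), one computes that on the primitive summand $P^{m}(X,L)(-j)$ sitting in degree $i=m+2j$ the composite $\Lambda L - L\Lambda$ is multiplication by the scalar $m-n$ appearing as the weight; summing over the decomposition this is exactly $h=\sum_i (i-n)\pi^i$. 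Hence $h=[\Lambda,L]\in S$. From $h\in S$ one recovers each Künneth projector $\pi^i$ by polynomial interpolation in $h$ (its eigenvalues $i-n$, $0\le i\le 2n$, are distinct integers), so $\pi^i\in S$. Next, ${}^c\Lambda$: by definition it is multiplication by $j(n-i+j+1)$ on $P^{i-2j}(X,L)(-j)$; I would exhibit it as a polynomial expression in $L$ and $\Lambda$ — the standard trick is that $\Lambda$ restricted to the copies $L^kP^m$ acts with scalars that differ from the ${}^c\Lambda$-scalars only by factors depending on the weight $m-n$, which we have just shown lies in $S$ via $h$; so ${}^c\Lambda = \Lambda\cdot f(h,L,\Lambda)$ for a suitable polynomial correction factor. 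This simultaneously shows ${}^c\Lambda\in S$ and, since by \eqref{eq8.5} $h=[L,{}^c\Lambda]$ and ${}^c\Lambda$ together with $L$ generate an $\mathfrak{sl}_2$, that $L$ and ${}^c\Lambda$ generate $S$ (as $\Lambda$ is recovered as $\star_L L\star_L$, or more elementarily from ${}^c\Lambda$ and the projectors by the same scalar bookkeeping). Finally $\star_L$ and $\star_H$: these are defined in \cite[1.1]{A} by their action on the primitive decomposition with explicit signs and multiplicities, hence are again polynomials in $\pi^i$ and in ${}^c\Lambda$, $L$, $\Lambda$ — so they lie in $S$. All of this is the verbatim transcription of \cite[Lemma 1.4.6, Prop. 1.4.3]{kdix} and \cite[pp. 13--14]{kst}, valid here because Strong Lefschetz supplies the isomorphisms $L^{n-i}$ and the scalars involved are rational (hence invertible in the $\Q$-linear $\sC$).

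For the uniqueness statement, suppose $\Theta$ is a homogeneous operator of bidegree $(-2,-1)$ in $R$ with $[L,\Theta]=h$. Set $D:={}^c\Lambda-\Theta$; then $D$ has bidegree $(-2,-1)$ and $[L,D]=0$. I must show $D=0$. The point is to use the $\mathfrak{sl}_2$-structure: $[L,D]=0$ means $D$ maps the kernel of $L^{?}$ appropriately, but more usefully, decompose $D$ with respect to the weight decomposition given by $h$. Since $D$ has degree $-2$ it lowers the $\mathfrak{sl}_2$-weight by $2$, and $[L,D]=0$ says $D$ commutes with the raising operator $L$. In a finite-dimensional $\mathfrak{sl}_2$-module the only weight-$(-2)$ endomorphisms commuting with $L$ are the zero map — because such an endomorphism restricted to an irreducible summand $V_d$ (highest weight $d$) is an $\mathfrak{sl}_2$-module map $V_d\to V_d$ twisted by the degree shift, and commuting with $L$ forces it to vanish on the highest weight line (the target weight $d-2$ line is hit by $L$, not by the highest vector), hence to vanish by irreducibility; between non-isomorphic summands it is zero by Schur. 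Thus $D=0$ on $H(X)$, i.e. ${}^c\Lambda=\Theta$. One must phrase this so that it applies in the general additive rigid $\Q$-linear setting rather than for vector spaces: since $H(X)$ is a finite direct sum of explicit primitive summands and $L,\Lambda,h$ act by explicit rational scalars, the representation of $\mathfrak{sl}_2$ is defined over $\Q$ and "splits" into standard irreducibles internally, so the Schur-type vanishing is a statement about matrices of morphisms and goes through unchanged.

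The main obstacle I anticipate is the bookkeeping of Tate twists in the uniqueness argument: "$\mathfrak{sl}_2$-representation theory" is clean over a field but here we work inside the graded algebra $R\subset \End(\bigoplus_{i,j}H^i(X)(j))$ introduced just before the proposition precisely to keep the twists honest, so I must make sure the weight decomposition, the notion of "highest weight vector," and the Schur argument are all carried out in terms of the explicit primitive summands $P^m(X,L)(-j)$ and the rational scalars, never invoking a genuine vector-space structure. Concretely the safe route is: (1) reduce, using the primitive decomposition, to checking $D=0$ on each $L^k P^m(X,L)(-j)$; (2) observe $D$, having bidegree $(-2,-1)$, maps this into $L^{k-1}P^m(X,L)(-j-1)$ (or a sum of such) and that $[L,D]=0$ determines $D$ on $L^kP^m$ in terms of $D$ on $P^m$ together with the known action of $L$, by downward induction on $k$; (3) the base case $k=0$: $D$ sends $P^m$ into pieces of one lower degree, but those pieces are $L\cdot(\text{primitives of degree }m-2\text{ or lower})$ whereas $[L,D]=0$ applied to a primitive class forces the $P^m$-to-lower transition to be compatible with $L$ in a way that has no nonzero solution — this is exactly where the "degree $(-2,-1)$" constraint bites. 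I would write this out carefully since it is the only non-formal step; everything else is citation of \cite{kdix,kst,A} with "vector space" replaced by "object of $\sC$".
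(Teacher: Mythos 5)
Your uniqueness argument is essentially the paper's (it defers to Bourbaki's $\mathfrak{sl}_2$-lemma, \cite[\S 11, lemme 1]{bbki}): a bidegree-$(-2,-1)$ operator $D$ with $[L,D]=0$ is determined by its restriction to primitives, and for $p\in P^m$ one kills $D(p)$ by noting $L^{n-m+1}D(p)=D(L^{n-m+1}p)=0$ while $L^{n-m+1}$ is (split) injective on $H^{m-2}$ by Strong Lefschetz; you gesture at this but should state it in exactly this form rather than via "compatibility with $L$ has no nonzero solution".

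The first part, however, rests on a false identity. You claim that $[\Lambda,L]=h$, computing the commutator on primitive summands as "multiplication by the weight". With the paper's normalisation ($\Lambda L=1$ in \eqref{eq8.1}, $L\Lambda=1$ in \eqref{eq8.2}, so that $\Lambda$ acts as the unscaled projection/inclusion on the decompositions \eqref{eq8.3}--\eqref{eq8.4}), one finds on $L^jP^m\subseteq H^i$ (in the interior range) that $\Lambda L(L^jp)=L^jp$ and $L\Lambda(L^jp)=L^jp$ for $j\ge 1$ but $=0$ for $j=0$: thus $[\Lambda,L]$ is the idempotent $1-L\Lambda$ projecting onto the primitive part, not the scalar $i-n$. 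The operator satisfying $[L,\cdot]=h$ is ${}^c\Lambda$ (which rescales $\Lambda$ by $j(n-i+j+1)$ on each summand), precisely the operator whose membership in $S$ you are trying to prove. Since your derivations of $\pi^i\in S$ (Vandermonde in $h$) and of ${}^c\Lambda\in S$ (scalar bookkeeping using $h$) both presuppose $h\in S$, the error propagates through the whole first half. The repair is the one the paper takes from Kleiman \cite[Prop.~1.4.3--1.4.4]{kdix}: prove $\pi^i\in S$ directly by the identity $\Lambda^{n-i}(1-\sum_{j>2n-i}\pi^j)L^{n-i}(1-\sum_{j<i}\pi^j)=\pi^i$ (checked summand by summand on \eqref{eq8.3}, by induction on $i$), then deduce $h=\sum_i(i-n)\pi^i\in S$, and only then run your argument for ${}^c\Lambda$, $\star_L$, $\star_H$ and the generation statement.
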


\begin{proof} If $H$ is traditional, this follows from \cite[Prop. 1.4.3 and 1.4.4]{kdix}; but the proofs of \loccit work in general. Indeed, Kleiman works with elements but, for example, the identities of his lemma 1.4.5 are readily checked in our case by restricting them to the direct summands of \eqref{eq8.3}. The last claim  is shown as in the proof of \cite[Prop. 1.4.6 (i)]{kdix} (see also \cite[\S 11, lemme 1]{bbki}).
\end{proof}

\begin{rk} For the sake of exposition, let us give two explicit proofs that $S$ contains the $\pi^i$'s. The first is by the following identity of \cite[Lemma 2.4]{kdix}:
\[\Lambda^{n-i}(1-\sum_{j>2n-i}\pi^j)L^{n-i}(1-\sum_{j<i}\pi^j) = \pi^i.\]

To check this formula in $\sC$, it suffices to show that the two sides agree after composing with $\pi^l$ on the right for every $l$. For $l\le i-1$ this is clear because $(1-\pi^0+… + \pi^{i-1})\pi^l= 0$.  For $l\ge i$ we have $(1-\pi^0+… + \pi^{i-1})\pi^l = \pi^l$ and $L^{n-i}$ carries $H^l$ into $H^{2n-2i+l}$, so the left hand side becomes $0$ if $l\ne i$ since $2n-2i+l> 2n-i$, while for $l=i$ this boils down to the identity $\Lambda^{n-i} L^{n-i}=1$ on $H^i$.

For the second argument,  formula \eqref{eq8.5} implies that $h\in S$. Since
\[h^l = \sum_{i=0}^{2n}(i-n)^l\pi^i\]
for all $l>0$, the Vandermonde theorem then implies that all $\pi^i$'s belong to $S$ for $i\ne n$; finally, $\pi^n=1-\sum_{i\ne n} \pi^i$. 
\end{rk}

The uniqueness of ${}^c\Lambda$ in Proposition \ref{p8.1} implies the identity
\begin{equation}\label{eq8.6}
{ }^c\Lambda_{X\times Z} = { }^c\Lambda_X\otimes 1+1\otimes { }^c\Lambda_z
\end{equation}
as in the proof of \cite[Prop. 1.4.6 (ii)]{kdix}, where the Lefechetz operator of the product $X\times Z$ comes from the Segre embedding associated to the respective polarisations of $X$ and $Z$. We also note the identity
\begin{equation}\label{eq8.7}
\Lambda_Y =i^*\Lambda_X^2i_*
\end{equation}
of \cite[proof of Prop. 2.12]{kdix} for a smooth hyperplane section $i:Y\inj X$, where $\Lambda_Y$ is relative to the same polarisation as for $X$; it can be checked using \eqref{eq8.1}, \eqref{eq8.2} and Lemma \ref{l5.2}.

\subsection{Lefschetz type conditions} \label{s8.6}
Keep the previous notation. Denote by 
\begin{equation}\label{eqcycle}
A^i_H(X) \subseteq \sC(\un, H^{2i}(X)(i))
\end{equation}
the image of cycle class map or, equivalently: it is isomorphic to $ CH^i(X)_\Q/\Ker \cl_H^i$.

\begin{defn}\label{d1.8b} Let $(\sC,H)$ be  tight. We say that $(H,X,L)$ with $\dim (X) =n$\\
1)   \emph{verifies Condition A} if the restriction $$L^{n-2i}: A^i_H(X)\to A^{n-i}_H(X)$$ is an isomorphism for all $i\geq 0$;\\ 
2)  \emph{verifies Condition B} if the operator $\Lambda$ is in the image of $H$, \eg for $i\leq n$ is the class of a  correspondence (of degree $-1$) in $A^{n-1}_H(X\times X)$.\\
The choice of $H$ being implicit, we abbreviate to $A(X,L)$, $B(X,L)$ 
as usual.
\end{defn}

\begin{rk} In \cite[Th. 2A9 4.]{kdix}, Kleiman shows that $H$, traditional and assumed to verify Strong Lefschetz  as in Definition \ref{d5.2a}, is Albanese-invariant (Definition \ref{defalbinv}) if it satisfies Condition B of Definition \ref{d1.8b} 2). 
\end{rk}

\begin{thm}\label{t8.1}
For a tight Weil cohomology $H$ and any polarised $X\in \sV$, we have:
\begin{enumerate}
\item $B(X,L)\Rightarrow A(X,L)$, $B(X,L)\Rightarrow C(X)$ and $A(X\times X,L\otimes 1+ 1\otimes L) \Rightarrow B(X,L)$.
\item  $D(X\times X) \Rightarrow B(X,L)$,
\item Suppose that $\uH$ is an enrichment of a traditional tight Weil cohomology $\uH'$. Then $B(X,L) \iff B(X,L')$ for any   $L'$ coming from another polarisation.
\end{enumerate}
\end{thm}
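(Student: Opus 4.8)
The plan is to follow Kleiman's proofs in \cite{kdix} and \cite{kst} as closely as possible, checking at each point whether his arguments survive the passage from vector spaces to a general additive rigid $\otimes$-category. The key observation is that all three implications of (1) are \emph{purely formal} consequences of the Lefschetz algebra set up in \S\ref{s8.5}: the operators $L$, $\Lambda$, $^c\Lambda$, $\star_L$, $\star_H$, the $\pi^i$, and the primitive projectors $p^i$ all live in $\sC^\natural$ (indeed in the algebra $R$, or even $S$, of Proposition \ref{p8.1}) and satisfy the same identities; what distinguishes ``being in the image of $H$'' is exactly that the operator lifts to a Chow correspondence modulo $\sim_H$, i.e.\ lies in $A^\bullet_H(X\times X)$. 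So the strategy is: first recall that $C(X)$ (Condition C, Definition \ref{d4.1}) is equivalent to all $\pi^i_X$ being algebraic, that $A(X,L)$ is the statement that $L^{n-2i}$ restricted to cycles is bijective, and that $B(X,L)$ is algebraicity of $\Lambda$; then import Kleiman's formal manipulations verbatim.

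\textbf{Proof of (1).} For $B(X,L)\Rightarrow C(X)$: by Proposition \ref{p8.1} and its remark, $\pi^i \in S$, the subalgebra generated by $L$ and $\Lambda$; since $L$ is always algebraic (it is cup-product with the hyperplane class $\cl^1_X(Y)$, by Definition \ref{lefdef} and Remark \ref{r8.1}) and $\Lambda$ is algebraic by $B(X,L)$, every element of $S$ is algebraic, in particular each $\pi^i_X \in A^n_H(X\times X)$, which is Condition C. For $B(X,L)\Rightarrow A(X,L)$: this is Kleiman's argument — $\Lambda^{n-2i}$ is then an algebraic correspondence (a power of $\Lambda$, algebraic by $B$) inverse to $L^{n-2i}$ on the level of cycle classes, using $\Lambda^{n-i}L^{n-i}=1$ on $H^i$ and that $\Lambda$ preserves $A^\bullet_H$; one checks $L^{n-2i}$ and $\Lambda^{n-2i}$ are mutually inverse on $A^i_H(X)$ and $A^{n-i}_H(X)$ by restricting the identities of \eqref{eq8.1}–\eqref{eq8.2} to primitive summands, exactly as in \loccit. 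For $A(X\times X, L\boxplus L)\Rightarrow B(X,L)$: here one uses \eqref{eq8.6}, $^c\Lambda_{X\times X} = {}^c\Lambda_X\otimes 1 + 1\otimes{}^c\Lambda_X$, together with the fact that $^c\Lambda$ on $X\times X$ is recovered from the Lefschetz operator of the Segre embedding, and the standard trick (Kleiman \cite[Th. 2A11]{kdix}) that $A$ for $X\times X$ forces the primitive projectors — hence $\Lambda$ — on $X$ to be algebraic. Each step is a diagram chase among the $\mathfrak{sl}_2$-identities \eqref{eq8.5}, valid in $\End(\bigoplus H^i(X)(j))$ by \S\ref{s8.5}.

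\textbf{Proof of (2).} This is the one place Kleiman uses Cayley–Hamilton, which is unavailable for a general $\sC$. As flagged in the introduction (\S\ref{intro1.2}), we replace it by Smirnov's argument from \cite{sm}: assuming $D(X\times X)$, i.e.\ $\sim_H = \sim_\num$ on $X\times X$, one shows that the operator $\Lambda$, a priori only in $\sC^\natural$, is numerically equivalent to an algebraic cycle — Smirnov's observation produces, from the $\mathfrak{sl}_2$-representation and the nondegeneracy of the intersection pairing modulo numerical equivalence, an explicit algebraic correspondence whose class equals $\Lambda$ after realization; then $D(X\times X)$ upgrades ``numerically equal to an algebraic class'' to ``in the image of $\cl_H$'', giving $B(X,L)$. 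I expect this to be the main obstacle: one must verify that Smirnov's argument, written for traditional Weil cohomologies, only uses the formal structure available here (the $\mathfrak{sl}_2$-action, Poincaré duality, and Condition D), and does not secretly invoke finite-dimensionality beyond what Proposition \ref{p8.1} guarantees.

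\textbf{Proof of (3).} Independence of $B$ from the polarisation is, in Kleiman, again a Cayley–Hamilton argument, and we cannot fully dispense with it; hence the hypothesis that $\uH$ is an enrichment of a traditional tight $\uH'$, with faithful exact $F:\sC\to\Vec_K$. By Lemma \ref{l8.1} (and Lemma \ref{l4.3bis} for Condition C), tightness and Condition C transfer between $H$ and $H'$. Since $F$ is faithful, a correspondence class in $CH^{n-1}(X\times X)_\Q$ maps to $\Lambda_X$ (for the polarisation $L$) in $\sC$ if and only if it maps to $\Lambda_X$ in $\Vec_K$: indeed $F$ sends the abstract operator $\Lambda \in \sC^\natural$ to the classical $\Lambda$, and $F$ injective on Hom-groups means algebraicity of $\Lambda$ downstairs is equivalent to algebraicity upstairs. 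Thus $B(X,L)$ for $H$ $\iff$ $B(X,L)$ for $H'$, and likewise for $L'$; now apply Kleiman's theorem \cite[Th. 2A8]{kdix}, which gives $B(X,L)\iff B(X,L')$ for the traditional $H'$, and transport back. The only subtlety is to check that the abstract operator $\Lambda$ built in \eqref{eq8.1}–\eqref{eq8.2} is genuinely compatible with $F$, i.e.\ $F(\Lambda^{\sC}_X) = \Lambda^{\Vec_K}_X$; this follows because $F$ is a strong $\otimes$-functor preserving the relevant isomorphisms $L^{n-i}$, so it intertwines the two defining commutative squares.
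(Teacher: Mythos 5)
Your proposal is correct and follows essentially the same route as the paper: (1) via Proposition \ref{p8.1} and Kleiman's formal Lefschetz manipulations, (2) by replacing Cayley--Hamilton with Smirnov's theorem (combined with Jannsen's semi-simplicity), and (3) by transferring algebraicity of $\Lambda$ along the faithful functor to the traditional $\uH'$ and quoting Kleiman there. The only refinement in the paper's version of (2) is that it first uses Proposition \ref{p8.1} to reduce algebraicity of $\Lambda$ to that of ${}^c\Lambda$, which is the precise operator to which Smirnov's Theorem 1 applies.
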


\begin{proof} Replacing $\sC$ by $\sC^\natural$, we may assume $\sC$ pseudo-abelian and therefore have the primitive decompositions.

In (1), the first implication  is trivial, the second follows from Proposition \ref{p8.1} and the third is proven exactly as in \cite[proof of Theorem 4.1 (1)]{kst}. 

For (2), we cannot reason as in \cite{kdix} or \cite{kst} where Kleiman uses the Cayley-Hamilton theorem, which is not available for a general Weil cohomology. Instead, we use Smirnov's argument in \cite{sm}: by  Proposition \ref{p8.1}, it suffices to show that ${}^c\Lambda$ is algebraic, which follows from \cite[Th. 1]{sm} using Jannsen's semi-simplicity theorem \cite{jannsen3}.

For (3), we  observe that $B(X,L)$ for $\uH$ is equivalent to $B(X,L)$ for $\uH'$; this reduces the statement to \cite[Cor. 2.11]{kdix}.
\end{proof}

As in \cite{kdix} and \cite{kst}, we deduce from \eqref{eq8.6}, \eqref{eq8.7} and Proposition \ref{p8.1}:

\begin{prop}\label{p8.2} Let $X\in \sV$ be provided with a polarisation yielding a Lefschetz operator $L_X$.\\ 
a) Let $Y\subset X$ be a smooth hyperplane section, $L_Y$ be the induced Lefschetz operator. Then $B(X,L_X)\Rightarrow B(Y,L_Y)$.\\
b) Let $(Z,L_Z)$ be another polarised variety (with $Z\in \sV$). Then $B(X,L_X)\allowbreak+B(Y,L_Y)\Rightarrow B(X\times Y, L_X\otimes 1+1\otimes L_Y)$.
\end{prop}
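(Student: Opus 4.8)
The plan is to deduce both implications from the algebraicity of the operator ${}^c\Lambda$, combining Proposition \ref{p8.1} with the identities \eqref{eq8.6} and \eqref{eq8.7}, in the spirit of Kleiman's \cite{kdix}. The first step would be to record the following reformulation of Condition $B$: for a polarised $W\in\sV$ with associated Lefschetz operator $L$, one has $B(W,L)$ if and only if the operator ${}^c\Lambda_W$ lies in the image of $\uH$. Indeed, $L$ always lies in the image of $\uH$ --- by Lemma \ref{l5.2} it factors as $i_*i^*$ for a smooth hyperplane section $i\colon Y'\inj W$, equivalently it is cup product with the cycle class $\cl^1_W(Y')$ --- and the operators in $R$ lying in the image of $\uH$ are stable under composition and sum; hence if $\Lambda_W\in\IM\uH$ then the subalgebra $S$ generated by $L$ and $\Lambda$ lies in $\IM\uH$, and ${}^c\Lambda_W\in S$ by Proposition \ref{p8.1}. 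Conversely, if ${}^c\Lambda_W\in\IM\uH$, then since $S$ is also generated by $L$ and ${}^c\Lambda$, all of $S$ --- in particular $\Lambda_W$ --- lies in $\IM\uH$.

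For part a), I would invoke identity \eqref{eq8.7}, which expresses $\Lambda_Y=i^*\Lambda_X^2i_*$ for $i\colon Y\inj X$ the given hyperplane section, with $\Lambda_Y$ formed relative to the restricted polarisation. The operators $i^*$ and $i_*$ are the actions of the graph of $i$ and of its transpose, so they lie in $\IM\uH$; thus if $B(X,L_X)$ holds, i.e. $\Lambda_X\in\IM\uH$, then $\Lambda_Y$ is a composite of elements of $\IM\uH$, whence $B(Y,L_Y)$. No passage through ${}^c\Lambda$ is needed here.

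For part b), I would use identity \eqref{eq8.6}, ${}^c\Lambda_{X\times Z}={}^c\Lambda_X\otimes 1+1\otimes{}^c\Lambda_Z$, in which the Lefschetz operator of $X\times Z$ is the one attached to the Segre embedding of the two polarisations and acts as $L_X\otimes 1+1\otimes L_Z$. By the reformulation above, $B(X,L_X)$ and $B(Z,L_Z)$ give ${}^c\Lambda_X,{}^c\Lambda_Z\in\IM\uH$; since $\uH$ is a $\otimes$-functor, the external products ${}^c\Lambda_X\otimes 1$ and $1\otimes{}^c\Lambda_Z$, and therefore their sum ${}^c\Lambda_{X\times Z}$, lie in $\IM\uH$. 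Applying the reformulation in the reverse direction to $X\times Z$ (which lies in $\sV$ by admissibility and carries the Segre polarisation) gives $B(X\times Z,\,L_X\otimes 1+1\otimes L_Z)$.

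There is no serious obstacle here: the substantive work has already been done in establishing \eqref{eq8.6}, \eqref{eq8.7} and Proposition \ref{p8.1}. The one point needing attention is the bookkeeping of Tate twists --- reading $L$, $\Lambda$, ${}^c\Lambda$, $i^*$ and $i_*$ as homogeneous elements of the graded algebra $R\subset\End(\bigoplus_{i,j}H^i(X)(j))$ introduced just before Proposition \ref{p8.1}, and checking that ``lying in the image of $\uH$'' is preserved under the graded composition, addition and external tensor product used above --- and this is exactly what that formalism provides. Granting it, parts a) and b) follow at once, by the argument of \cite{kdix}.
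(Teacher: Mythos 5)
Your proof is correct and follows exactly the route the paper intends: the paper's own proof consists of the single line ``we deduce from \eqref{eq8.6}, \eqref{eq8.7} and Proposition \ref{p8.1}, as in \cite{kdix}'', and your argument (the equivalence $B(W,L)\iff{}^c\Lambda_W\in\IM\uH$ via Proposition \ref{p8.1}, then \eqref{eq8.7} for a) and \eqref{eq8.6} for b)) is precisely the Kleiman argument being invoked, with the details filled in correctly.
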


From Lemma \ref{l8.1} (and Theorem \ref{t8.1} (1)) we also get (\cf Theorems \ref{wconjC} b) - \ref{wconjB} a)): 

\begin{cor} \label{c8.1b}
Let $(\sC, H)$ be tight and abelian-valued. If the category $\sM_H$ is abelian and  $\uH$ is exact, then Condition C $\iff$ Condition B for $H$.
\end{cor}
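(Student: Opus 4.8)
\textbf{Proof plan for Corollary \ref{c8.1b}.} The statement to prove is an equivalence, for a tight abelian-valued Weil cohomology $(\sC,H)$ with $\sM_H$ abelian and $\uH$ exact: Condition C $\iff$ Condition B. The implication B $\Rightarrow$ C is already available from Theorem \ref{t8.1} (1), which gives $B(X,L)\Rightarrow C(X)$ for every polarised $X\in\sV$; since $\sV$ is closed under hyperplane sections we may polarise every connected component, so Condition B (for all $X$) implies Condition C (for all $X$). The real content is C $\Rightarrow$ B.

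For C $\Rightarrow$ B the plan is to exploit that, under Condition C, $\sM_H$ carries the Künneth grading and the operators of \S\ref{s8.5}, and to play off the homological category against its abelian envelope $T(\sM_H)$ and, via the classifying functor, against a traditional Weil cohomology. First I would pass to $\sC^\natural$ (harmless, as in the proof of Theorem \ref{t8.1}) so the primitive decompositions \eqref{eq8.3}--\eqref{eq8.4} and the $\mathfrak{sl}_2$-identities \eqref{eq8.5} are in force inside $\sC^\natural$, and the operator $\Lambda$ makes sense there. By Proposition \ref{p8.1}, proving $B(X,L)$ amounts to showing that the operator ${}^c\Lambda$ on $H(X)$ is algebraic, i.e.\ lies in the image of $\uH$; equivalently (using \eqref{eq8.6} and Segre embeddings) it suffices to treat ${}^c\Lambda$ on self-products, i.e.\ to show $\Lambda_{X\times X}$ is algebraic, or directly that ${}^c\Lambda_X$ is. This is exactly the step where, in the traditional case, Kleiman would invoke Cayley--Hamilton; here I would instead imitate the proof of Theorem \ref{t8.1} (2): the key input is Smirnov's theorem \cite[Th.\ 1]{sm}, which produces algebraicity of ${}^c\Lambda$ from semisimplicity of the relevant category of homological motives together with Condition C. Under our hypotheses $\sM_H$ is abelian and $\uH$ is exact and faithful; combined with Jannsen's semisimplicity theorem \cite{jannsen3} applied to the numerical quotient, and with Condition C giving the Künneth projectors, one gets the semisimplicity needed to run Smirnov's argument and conclude that ${}^c\Lambda$ — hence $\Lambda$ — is algebraic, i.e.\ Condition B holds.

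Concretely the steps, in order, would be: (1) reduce to $\sC$ pseudo-abelian; (2) record that Condition C gives the Chow--Künneth-style grading on $\sM_H$ and, via Lemma \ref{l3.4} b) and the discussion in \S\ref{s5.2}, the factorisation of $H$ through $\fake{\sM}_H$; (3) reformulate $B(X,L)$ as algebraicity of ${}^c\Lambda_X$ via Proposition \ref{p8.1}, and reduce via \eqref{eq8.6} to the self-product situation; (4) invoke \cite[Th.\ 1]{sm} together with \cite{jannsen3} and the exactness of $\uH$ to get algebraicity of ${}^c\Lambda$, exactly as in the proof of Theorem \ref{t8.1} (2), now having Condition C available throughout; (5) deduce Condition B, and close the loop with the already-known $B\Rightarrow C$ from Theorem \ref{t8.1} (1). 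The main obstacle I anticipate is step (4): making Smirnov's criterion literally applicable requires the right semisimplicity statement for the homological category attached to $H$ (not merely for $\sM_\num$), and checking that the hypotheses "$\sM_H$ abelian, $\uH$ exact" plus Condition C are exactly what feed into it — in particular that the exactness of $\uH$ lets one transport semisimplicity from $\sM_\num$ (via Jannsen) back up and identify ${}^c\Lambda$ as an honest correspondence rather than merely an endomorphism in $\sC$. Everything else is bookkeeping with the Lefschetz algebra of \S\ref{s8.5}.
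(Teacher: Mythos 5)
The direction B $\Rightarrow$ C is fine, but your step (4) for C $\Rightarrow$ B has a genuine gap. Smirnov's theorem, as used in the proof of Theorem \ref{t8.1}~(2), needs the relevant endomorphism algebras to be semi-simple, and in that proof the semi-simplicity comes from Jannsen's theorem \emph{via Condition D}: under D one has $\sM_H\simeq\sM_\num$, which is semi-simple by \cite{jannsen3}. Under the hypotheses of Corollary \ref{c8.1b} you only know that $\sM_H$ is abelian; there is no mechanism to ``transport semisimplicity from $\sM_\num$ back up'' along the full functor $\sM_H\to\sM_\num$ --- such a transport is essentially Condition D itself, which is not assumed here. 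So your argument would prove D $\Rightarrow$ B (which is already Theorem \ref{t8.1}~(2)), not C $\Rightarrow$ B.

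The intended argument is much more direct and does not go through Smirnov at all. Under Condition C, the discussion following Lemma \ref{l4.5} produces a Weil cohomology $h_H$ with values in $\fake{\sM}_H$ such that $(\sC,H)$ is the push-forward of $(\fake{\sM}_H,h_H)$ by $\uH$. Your hypotheses say exactly that $\uH:\fake{\sM}_H\to\sC$ is a faithful \emph{exact} $\otimes$-functor between rigid abelian $\otimes$-categories; the second half of Lemma \ref{l8.1} then says that $h_H$ is tight because $H$ is. In particular $h_H$ satisfies Strong Lefschetz, i.e.\ $L^{n-i}:h^i_H(X)\to h^{2n-i}_H(X)(n-i)$ is invertible \emph{in $\sM_H$}; its inverse is (the class of) an algebraic correspondence, and via \eqref{eq8.1}--\eqref{eq8.2} the operator $\Lambda$ lies in the image of $\uH$. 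That is Condition B. (Compare Lemma \ref{hwsa}, which is the converse construction.) You should rework step (4) along these lines; steps (1)--(3) and (5) of your plan are then unnecessary detours.
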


Condition V of Definition \ref{d1.5v} then implies all conditions (since it implies Condition D).

\begin{cor} \label{c8.1a}
Under Condition V, all Conditions A, B, C and D hold true for any tight Weil cohomology $(\sC, H)$ with 
$\sC\in\Ex^\rig$.
\end{cor}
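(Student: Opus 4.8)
The plan is to reduce Corollary~\ref{c8.1a} to the results already assembled, so the proof is short. First recall what Condition~V buys us: by Proposition~\ref{p7.1}, Condition~V implies Condition~D for any abelian-valued Weil cohomology, in particular for a tight one $(\sC,H)$ with $\sC\in\Ex^\rig$. So $H$ verifies Condition~D, i.e.\ $\sM_H\to\sM_\num$ is an equivalence; in particular $\sM_H$ is abelian and semi-simple, and the realisation $\uH$ is exact (this is recorded in Theorem~\ref{t5.1}~a), last sentence, under Condition~D).

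Next I would feed this into Theorem~\ref{t8.1}~(2): since $X\times X\in\sV$ (as $\sV$ is closed under products) and $H$ verifies Condition~D, we have $D(X\times X)$, hence $B(X,L)$ for every polarised $X\in\sV$. Thus Condition~B holds for $H$. Then Theorem~\ref{t8.1}~(1) gives the remaining implications: $B(X,L)\Rightarrow A(X,L)$ and $B(X,L)\Rightarrow C(X)$ for every polarised $X$. Since every $X\in\sV$ admits a polarisation, this yields Conditions~A, B and~C for $H$; and Condition~D has already been obtained from Condition~V. Alternatively, once we know $\sM_H$ is abelian and $\uH$ is exact, Corollary~\ref{c8.1b} gives $C\iff B$ directly, but invoking Theorem~\ref{t8.1}~(1) is cleaner since it also delivers~A.

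There is essentially no obstacle here: the work has all been done in Proposition~\ref{p7.1} and Theorem~\ref{t8.1}. The only point to be slightly careful about is that Theorem~\ref{t8.1} is stated for a \emph{tight} Weil cohomology and for a fixed polarised $X$, so I should phrase the conclusion as ``for all polarised $X\in\sV$'', matching the hypothesis that $\sV$ is closed under hyperplane sections (which is in force throughout \S\ref{s8}). I will also note in passing that the semisimplicity input needed for Theorem~\ref{t8.1}~(2) via Smirnov's argument is Jannsen's theorem \cite{jannsen3}, already used there, so nothing new is required.

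\begin{proof}
By Proposition \ref{p7.1}, Condition V implies Condition D for $H$, since $H$ is abelian-valued. In particular $\sM_H\iso\sM_\num$, so $\sM_H$ is abelian and semi-simple and, by Theorem \ref{t5.1} a), $\uH$ is exact. Let $X\in\sV$ be provided with a polarisation, yielding a Lefschetz operator $L$. Since $\sV$ is admissible, $X\times X\in\sV$, and Condition D gives $D(X\times X)$; by Theorem \ref{t8.1} (2) we get $B(X,L)$. Then Theorem \ref{t8.1} (1) yields $A(X,L)$ and $C(X)$ as well. As $X$ was an arbitrary polarised variety in $\sV$, Conditions A, B, C hold for $H$, and Condition D holds by the above.
\end{proof}
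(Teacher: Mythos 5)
Your proposal is correct and follows essentially the same route the paper intends: Condition V $\Rightarrow$ Condition D via Proposition \ref{p7.1}, then $D(X\times X)\Rightarrow B(X,L)$ by Theorem \ref{t8.1}~(2), and $B\Rightarrow A,C$ by Theorem \ref{t8.1}~(1). Your care about quantifying over polarised $X\in\sV$ (every member of $\sV$ is projective, hence polarisable) is the right level of attention and nothing is missing.
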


\begin{rks} a)  \cite[Prop. 2.7]{kdix} implies that the hypothesis of (3) holds if $\uH$ is an enrichment of a traditional Weil cohomology. It also holds under the conditions of Corollary \ref{c8.1b}. \\
b)  As a special case of Corollary \ref{c8.1b} we recover the first part of \cite[theorem p. 44]{A}. (In \loccit the hypothesis that $\uH$ be exact is missing, but fortunately it is granted by Remark \ref{r7.1}.)
\end{rks}

The pattern of Theorem \ref{wconjC} a) is available under Condition B.

\begin{lemma}\label{hwsa}
If $(\sC, H)$ is tight and verifies Condition B then $h_H : \sM_\rat \to \sM_H$  defines a tight Weil cohomology.
\end{lemma}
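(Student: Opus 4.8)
The point is to upgrade the tautological Weil cohomology $h_H:\sM_\rat\to\sM_H$ (which takes values in $\fake{\sM}_H$, equipped with its weight grading, by the discussion following Lemma~\ref{l4.5} since Condition~B implies Condition~C by Theorem~\ref{t8.1}~(1)) to a \emph{tight} Weil cohomology. So the data is already in place: we have the functor $h_H:\Corr(k,\sV)\to\fake{\sM}_H^{(\N)}$, with $L_{\fake{\sM}_H}:=h^2_H(\P^1)$, and $H=\uH_*(h_H)$ is the push-forward along the (faithful, by construction) realisation functor $\uH:\fake{\sM}_H\to\sC$. What must be checked is that $h_H$ satisfies the four defining properties of tightness in Definition~\ref{d5.2b}: normalised, Albanese-invariant, Weak Lefschetz, Strong Lefschetz. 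The guiding principle is that $\uH$ is faithful (and, after replacing $\sC$ by $\sC^\natural$, we may assume $\sC$ pseudo-abelian so the primitive decompositions of \S\ref{s8.5} are available); hence a morphism of $\fake{\sM}_H$ is an isomorphism as soon as its image under $\uH$ is — but this direction is exactly backwards from what we want, so faithfulness alone does not suffice. The real content is that the inverses of the relevant maps are \emph{algebraic}, i.e. come from correspondences, so that they already live in $\sM_H$.

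First I would dispose of the ``normalised'' and ``Albanese-invariant'' properties. Normalisation of $h_H$ is immediate: for $X\in\sV$ the map $h^0_H(\pi_0(X))\to h^0_H(X)$ induced by $X\to\pi_0(X)$ is an isomorphism in $\sM_H$ because, by the very construction of $\sM_H$, $h^0_H$ is the Artin-motive part and $\uH$ applied to this map is $H^0(\pi_0(X))\to H^0(X)$, which is an isomorphism since $H$ itself is normalised (tight $\Rightarrow$ normalised); faithfulness of $\uH$ then transfers the inverse back. The same argument handles Albanese-invariance: $a_X^*:h^1_H(T_X)\to h^1_H(X)$ is the isomorphism \eqref{eq5.2} valid already in $\sM_\rat$ (hence in $\sM_H$), so nothing to prove — it holds for \emph{every} Weil cohomology of the form $h_H$, tightness of $H$ not even needed here beyond Condition~C for the weight grading to exist.

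Next, Weak and Strong Lefschetz. For Strong Lefschetz I need: for polarised $(X,Y)$ as in Definition~\ref{d5.2a}, the map $L^j:h^{n-j}_H(X)\to h^{n+j}_H(X)(j)$ is an isomorphism in $\sM_H$. Now $\uH(L^j)$ is the Strong Lefschetz isomorphism for $H$, which holds since $H$ is tight; so by faithfulness of $\uH$ the map $L^j$ is at least a monomorphism with a two-sided inverse \emph{after applying $\uH$}. To get the inverse inside $\sM_H$, this is precisely where Condition~B enters: by hypothesis $\Lambda=\Lambda_X$ lies in the image of $\uH$ from $\sM_H$, i.e. it is represented by a correspondence on $X\times X$; then the explicit formulas of \S\ref{s8.5} (e.g. $\Lambda^{n-i}L^{n-i}=1$ on $H^i$, and more generally the $\mathfrak{sl}_2$-identities of Proposition~\ref{p8.1}) exhibit a two-sided inverse to $L^j$ as a polynomial in $L$ and $\Lambda$, hence as a morphism in $\sM_H$. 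Faithfulness of $\uH$ then certifies that this morphism is genuinely inverse to $L^j$ in $\sM_H$ (the relations $L^j\cdot(\text{inverse})=1$ and $(\text{inverse})\cdot L^j=1$ hold in $\sM_H$ because they hold after the faithful functor $\uH$). For Weak Lefschetz, $i^*:h^l_H(X)\to h^l_H(Y)$ for $l\le n-2$: again $\uH(i^*)$ is an isomorphism since $H$ is tight; and by Lemma~\ref{l5.2} together with \eqref{eq8.7} ($\Lambda_Y=i^*\Lambda_X^2 i_*$) one can write down, using $\Lambda_X$ (algebraic by Condition~B) and $i_*$, an explicit inverse as a correspondence, which is then inverse in $\sM_H$ by faithfulness of $\uH$.

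\textbf{Main obstacle.} The subtle point, and the one I would be most careful about, is the direction of the faithfulness argument: $\uH$ faithful lets us \emph{detect} identities in $\sM_H$ after realisation, but to \emph{produce} an inverse morphism in $\sM_H$ we genuinely need that inverse to be algebraic, which is exactly the function of Condition~B (and the Lefschetz-algebra package of \S\ref{s8.5}, in particular Proposition~\ref{p8.1} giving all the needed operators — $\Lambda$, ${}^c\Lambda$, $\star_L$, $\star_H$, $\pi^i$ — inside the algebra generated by $L$ and $\Lambda$). One has to check that the relevant inverses really are \emph{polynomials in $L$ and $\Lambda$} restricted to the appropriate graded pieces, twisted correctly, rather than merely abstractly existing; the bookkeeping with Tate twists (the graded algebra $R$ of \S\ref{s8.5}) is where care is needed, but it is exactly parallel to Kleiman's classical computations, so no new idea is required — only the observation that those computations are formal consequences of the $\mathfrak{sl}_2$-identities \eqref{eq8.5}, which hold in our generality by Proposition~\ref{p8.1}.
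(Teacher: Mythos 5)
Your proposal follows the paper's proof: Condition B gives Condition C (hence the Weil cohomology $h_H$) and supplies the algebraic inverses $\Lambda^{n-l}$ of $L^{n-l}$ for Strong Lefschetz, while Weak Lefschetz, normalisation and Albanese-invariance all follow from the pattern ``split injective in $\sM_H$, and the complementary summand dies under the faithful additive functor $\uH$, hence is zero''. Just be sure to phrase the normalised and Albanese cases this way as well: ``faithfulness of $\uH$ transfers the inverse back'' is not literally valid (faithful does not imply conservative), and for Albanese-invariance the point is that $\uH$ kills the direct summands $(h_H)^i(h^1(X))$ for $i\ne 1$ (Propositions \ref{p5.1} and \ref{c5.1}), not that \eqref{eq5.2} is already an isomorphism of Chow motives --- Murre's $h^1$ still has to be compared with the homological K\"unneth component in $\sM_H$.
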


\begin{proof} Theorem \ref{t8.1} gives that $H$ verifies Condition C hence $h_H : \sM_\rat \to \sM_H$ defines a Weil cohomology (see discussion between Lemma \ref{l4.5} and Remark \ref{r7.1}). Let $l\le n$; the algebraic cycles giving $\Lambda^{n-l}$ are inverses of $L^{n-l}: h^{l}_H(X) \to h^{2n-l}_H(X)(n-i)$.  
Also, $i^*: h^{l}_H(X)\inj h^{l}_H(Y)$ is split injective for $i:Y\inj X$ a smooth hyperplane section of $X$ by Remark \ref{rk:split}; since  the functor $\uH$ is additive and faithful and $i^*: H^{l}(X)=\uH(h^{l}_H(X))\iso H^{l}(Y)=\uH(h^{l}_H(Y))$ is an isomorphism for $l\le n-2$, the complementary summand is $0$ in this case and we get Weak Lefschetz for $h_H$ as well. For the normalised and Albanese properties, we use Propositions \ref{p5.1} and \ref{c5.1} similarly.
\end{proof}

\begin{thm}\label{wconjC+} Let $(\sC, H)$ be  tight and let $(\weil_H^+,W_H^+)$ be as in Theorem \ref{thm2tris}. 
The following conditions are equivalent:
\begin{thlist}
\item $H$ verifies Condition B;
\item the functor $\sM_H\to \weil_H^\natural$ induced by $W_H$ and the functor $\epsilon_H^\natural: \weil_H^\natural\to (\weil_H^+)^\natural$ induced by $\epsilon_H$ in \eqref{eq+ab} are equivalences;
\item the composition  $\sM_H\to (\weil_H^+)^\natural$ of the two functors of (ii) is an equivalence of categories.
\end{thlist}
If this is true, then $Z(\weil_H^+)=\Q$.
\end{thm}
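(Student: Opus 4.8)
The plan is to mirror the proof of Theorem~\ref{wconjC} a), inserting the functor $\epsilon_H$ of \eqref{eq+ab} into the chain. First I would prove (i)~$\Rightarrow$~(ii). Assuming Condition B, Theorem~\ref{t8.1}(1) gives that $H$ verifies Condition~C, so by Theorem~\ref{wconjC} a) the functor $\sM_H\to\weil_H^\natural$ induced by $W_H$ is an equivalence. By Lemma~\ref{hwsa}, the push-forward $h_H:\sM_\rat\to\sM_H$ is a \emph{tight} Weil cohomology, hence by the universal property of $(\weil_H^+,W_H^+)$ in Theorem~\ref{thm2tris} it factors through $\weil_H^+$; chasing the faithful functors $\epsilon_H:\weil_H\to\weil_H^+$ and the initiality of $\weil_H^+$ as a tight enrichment of $H$, the resulting functor $(\weil_H^+)^\natural\to\sM_H$ is inverse (up to natural isomorphism) to $\epsilon_H^\natural\circ(\text{equivalence }\sM_H\iso\weil_H^\natural)$, so $\epsilon_H^\natural$ is an equivalence as well.

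Next, (ii)~$\Rightarrow$~(iii) is trivial (compose two equivalences), and (iii)~$\Rightarrow$~(i) is the substantive converse: if the composite $\sM_H\to(\weil_H^+)^\natural$ is an equivalence, then in particular the inverses $\Lambda^{n-i}$ of the Lefschetz isomorphisms $L^{n-i}$, which by construction of $\weil^+$ (and hence of $\weil_H^+$, a $\otimes$-quotient by Theorem~\ref{thm2tris} c)) live in $\End_{(\weil_H^+)^\natural}$, are pulled back to endomorphisms of $h_H(X)$ in $\sM_H$; applying the realisation functor $\uH$ (which factors through $(\weil_H^+)^\natural$ since $H$ is tight, and is faithful) shows $\Lambda$ is in the image of $\uH$, i.e.\ $H$ verifies Condition~B. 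One must also check that the two functors of (ii) being equivalences is compatible with (iii): this is formal since any retraction onto a full subcategory forces equality, exactly as in the proof of Corollary~\ref{cor:thm2}.

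Finally, for the centre statement $Z(\weil_H^+)=\Q$: under the equivalent conditions, $(\weil_H^+)^\natural\simeq\sM_H$, and $\sM_H=(\sM_\rat/\Ker\uH^*)^\natural$ has centre $\Q$ because $Z(\sM_\rat)=\Q$ (the endomorphism ring of the unit $\un=(\Spec k,\id,0)$ is $CH^0(\Spec k)_\Q=\Q$, and quotienting by a $\otimes$-ideal and idempotent-completing cannot enlarge it — any new central endomorphism of $\un$ would come from a self-correspondence of $\Spec k$). Passing from $\weil_H^+$ to its pseudo-abelian hull does not change $Z$, so $Z(\weil_H^+)=Z((\weil_H^+)^\natural)=Z(\sM_H)=\Q$.

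The main obstacle I anticipate is the bookkeeping in (i)~$\Rightarrow$~(ii): one must simultaneously invoke Theorem~\ref{wconjC} a) (Condition~C gives $\sM_H\iso\weil_H^\natural$), Lemma~\ref{hwsa} (Condition~B makes $h_H$ tight), and the two-step universal property factoring $h_H$ through $\weil_H\to\weil_H^+$, then verify these identifications are mutually compatible — i.e.\ that the tight enrichment of $H$ produced by $\sM_H^\Theta$ or by $h_H$ agrees, under $\epsilon_H$, with $W_H^+$. This compatibility is ``clear by universality'' but writing it cleanly requires care with which diagrams strictly commute versus commute up to isomorphism, exactly the kind of $2$-categorical subtlety already handled in the proofs of Theorems~\ref{thm2} and~\ref{thm2bis}. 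Everything else is routine given those references.
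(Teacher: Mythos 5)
Your proposal is correct and follows essentially the same route as the paper: (i)~$\Rightarrow$~(ii) via Theorem~\ref{t8.1}(1), Theorem~\ref{wconjC}~a) and Lemma~\ref{hwsa} combined with the universal property of $(\weil_H^+,W_H^+)$, then the trivial (ii)~$\Rightarrow$~(iii) and the direct (iii)~$\Rightarrow$~(i), with the centre statement reduced to $Z(\sM_H)=\Q$. The paper merely states the last two implications as ``clear''; your expansions of them are accurate.
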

\begin{proof}
(i) $\Rightarrow$ (ii): the first functor is an equivalence by Theorem \ref{wconjC} a) because B $\Rightarrow$ C by Theorem \ref{t8.1} (1), and $h_H$ defines a tight  Weil cohomology by Lemma \ref{hwsa} whence $\epsilon_H^\natural$ is an equivalence by the universal property of $(\weil_H^+,W_H^+)$. (ii) $\Rightarrow$ (iii) is trivial and (iii) $\Rightarrow$ (i) is clear.
\end{proof}

 For tight Weil cohomologies with abelian target we obtain the commutative diagram \eqref{eq5.5} and the following analogue of Theorems \ref{wconjC} b), \ref{t5.1} and \ref{t5.2}:

\begin{thm}\label{wconjB} Let $(\sC, H)$ be tight, $\sC\in\Ex^\rig$ and let $(\weil_H^{\ab},W_H^{\ab})$ be as in Theorems \ref{thm2bis} - \ref{thm2tris}.\\
a)  The following are equivalent:
\begin{thlist}
\item $H$ verifies Condition B, the category $\sM_H$ is abelian  and $\uH:\sM_H\to\sC$ is exact;
\item $H$ verifies Condition B, the category $(\weil_H^+)^\natural$ is abelian and the functor 
$\iota_H^{+,\natural}: (\weil_H^+)^\natural\to \weil_H^\ab$ is exact; 
\item the functors $\sM_H\to (\weil_H^+)^\natural$ and $\iota_H^{+,\natural}: (\weil_H^+)^\natural\to \weil_H^\ab$ are equivalences;
\item the (faithful) functor $w_H :\sM_H\to \weil_H^\ab$ is an equivalence of categories.
\end{thlist}
These conditions imply $Z(\weil_H^\ab)=\Q$.\\
b) The following are equivalent:
\begin{thlist}\stepcounter{spec}\stepcounter{spec}\stepcounter{spec}\stepcounter{spec}
\item $H$ verifies Condition D;
\item the (faithful) functor $w_H :\sM_H\to \weil_H^\ab$ is an equivalence and $\weil_H^\ab$ is semi-simple;
\item the (exact) functor $\bar w_H: T(\sM_H)\to \weil_H^\ab$ is faithful and $\weil_H^\ab$ is connected; 
\item all functors in \eqref{eq5.5} are equivalences.
\end{thlist}
Moreover these conditions imply those of a).
\end{thm}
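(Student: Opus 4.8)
The plan is to follow the pattern of Theorems \ref{wconjC}, \ref{t5.1} and \ref{t5.2}, using the new Lefschetz formalism to replace Condition C by Condition B wherever the argument needs the weak/hard Lefschetz operators to be algebraic. Throughout we may replace $\sC$ by $\sC^\natural$, so all primitive decompositions of \S\ref{s8.5} are available.

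\medskip
\textbf{Part a).} The implication (i) $\Rightarrow$ (iii): by Theorem \ref{t8.1} (1), Condition B implies Condition C, so by Lemma \ref{hwsa} the functor $h_H$ defines a \emph{tight} Weil cohomology with values in $\overset{\boldsymbol{\cdot}}{\sM}_H$; since $\sM_H$ is abelian and $\uH$ is exact, Theorem \ref{wconjC} b) (iii) $\Rightarrow$ the functor $\sM_H\to \weil_H^\natural$ and $\iota_H^\natural$ are equivalences, and the universal property of $(\weil_H^+,W_H^+)$ identifies $\epsilon_H^\natural$ with an equivalence $\weil_H^\natural\iso(\weil_H^+)^\natural$ as in Theorem \ref{wconjC+}; composing the diagram \eqref{eq+ab} this gives (iii). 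The implication (iii) $\Rightarrow$ (iv): compose the two equivalences with $\iota_H^{+,\natural}$, which is faithful by Theorem \ref{thm2tris} a) and becomes the identity functor up to the equivalences, so $w_H$ is an equivalence. (iv) $\Rightarrow$ (ii): since $w_H=\iota_H^{+,\natural}\circ(\text{functor }\sM_H\to(\weil_H^+)^\natural)$ is an equivalence, the first functor is faithful full and essentially surjective, hence so are both factors (fully faithful because $w_H$ is, dense because $w_H$ is essentially surjective and $\iota_H^{+,\natural}$ is faithful), giving that $(\weil_H^+)^\natural$ is abelian and $\iota_H^{+,\natural}$ is an equivalence hence exact; Condition B follows from Theorem \ref{wconjC+} (iii)$\Rightarrow$(i). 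Finally (ii) $\Rightarrow$ (i) is clear once one observes that the abelianness and exactness assumptions in (ii) let one run Theorem \ref{wconjC+} and \ref{wconjC} b) backwards. The assertion $Z(\weil_H^\ab)=\Q$ then follows from $Z(\sM_H)=\Q$ via the equivalence of (iv), exactly as in Theorem \ref{wconjC} b).

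\medskip
\textbf{Part b).} For (x) $\Rightarrow$ (xii), \ie Condition D implies the conditions of a): Condition D says $\sM_H\to \sM_\num$ is an equivalence; in particular $\sM_H$ is abelian semisimple, so $\uH$ is exact (a faithful additive $\otimes$-functor out of a semisimple abelian category is automatically exact, as in Remark \ref{r7.1} combined with semisimplicity), and by Theorem \ref{t5.1} a) $H$ also verifies the hypotheses making part a) available — but we still need Condition B. Here is where Example \ref{ex7.1}/Corollary \ref{c8.1b} enter: by Corollary \ref{c8.1b}, for a tight abelian-valued $H$ with $\sM_H$ abelian and $\uH$ exact, Condition C $\iff$ Condition B; and Condition C holds because $H$, verifying D, is covered by the classical argument (Example \ref{ex7.1}: D $\Rightarrow$ C via B for an auxiliary traditional Weil cohomology, transferred by Lemma \ref{l4.3bis}, or more directly via Theorem \ref{t5.1}). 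So Condition B holds, the conditions of a) are met, and in particular $w_H:\sM_H\to\weil_H^\ab$ is an equivalence by a)(iv); semisimplicity of $\weil_H^\ab$ then follows from that of $\sM_\num\simeq\sM_H$ via Condition D, giving (xi). The implications (xi) $\Rightarrow$ (xii) $\Rightarrow$ (xiii): from (xi), $w_H$ equivalence plus $\weil_H^\ab$ semisimple gives $\sM_H\simeq T(\sM_H)$ by Lemma \ref{bvk} and identifies $\bar w_H$ with the composite $T(\sM_H)\iso\sM_H\by{w_H}\weil_H^\ab$, which is an equivalence hence faithful; semisimple $\Rightarrow$ $\weil_H^\ab$ is a finite product of connected categories, but one checks $Z$ is a field using a)(iv) so it is connected — alternatively just invoke Lemma \ref{l2.1}. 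From (xiii), $\bar w_H$ faithful and $\weil_H^\ab$ connected: apply the equivalence (iii) $\iff$ (i) of Theorem \ref{t5.1} a) to conclude $H$ verifies Condition D and, reading \eqref{eq5.5}, all its functors are equivalences, which is (xiii) $\Rightarrow$ (x) and also gives (xiii) $\Rightarrow$ (xi) circling back. Finally (x) $\iff$ (xiii) for "all functors in \eqref{eq5.5} are equivalences" is precisely Theorem \ref{t5.1} a) (i) $\iff$ (ii).

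\medskip
\textbf{Main obstacle.} The delicate point is getting Condition B for free from Condition D in part b): the straightforward route D $\Rightarrow$ C $\Rightarrow$ B requires Corollary \ref{c8.1b}, whose hypotheses ("$\sM_H$ abelian, $\uH$ exact") must first be extracted from Condition D — this uses Jannsen's semisimplicity \cite{jannsen3} through $\sM_\num$ being abelian semisimple together with Theorem \ref{t5.1} a). One must be careful that this does not circularly presuppose part a). The cleanest ordering is: establish part a) in full first (it is purely formal given Theorem \ref{wconjC+} and Lemma \ref{hwsa}); then in part b) deduce abelianness/exactness from D via Theorem \ref{t5.1} a), feed these into Corollary \ref{c8.1b} to get B, then invoke part a) to get the equivalences of (xii), and finally close the loop of equivalences using Theorem \ref{t5.1}. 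Everything else is bookkeeping with the commutative diagrams \eqref{eq+ab} and \eqref{eq5.5}.
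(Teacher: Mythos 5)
Part a) of your proposal is essentially the paper's argument, reorganised into a single cycle (i) $\Rightarrow$ (iii) $\Rightarrow$ (iv) $\Rightarrow$ (ii) $\Rightarrow$ (i) instead of (i) $\Rightarrow$ (ii) $\Rightarrow$ (iii) $\Rightarrow$ (iv) $\Rightarrow$ (i); the ingredients (Theorem \ref{t8.1} (1), Lemma \ref{hwsa}, Theorems \ref{wconjC} b) and \ref{wconjC+}, diagram \eqref{eq+ab}) are the right ones and the implications go through.

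The gap is in part b), at the very step you flag as the ``delicate point'': getting Condition B from Condition D. Your route is D $\Rightarrow$ C $\Rightarrow$ B, with C $\Rightarrow$ B supplied by Corollary \ref{c8.1b} and C itself supplied by ``the classical argument (Example \ref{ex7.1}\dots\ transferred by Lemma \ref{l4.3bis}, or more directly via Theorem \ref{t5.1})''. Neither reference gives you Condition C here. Example \ref{ex7.1} only applies when $H$ is an \emph{enrichment of a classical} Weil cohomology, which is not assumed in Theorem \ref{wconjB}: $H$ is an arbitrary tight abelian-valued Weil cohomology, and there need be no faithful comparison functor to a classical one through which Lemma \ref{l4.3bis} could transfer C. Theorem \ref{t5.1} does not help either: its part a) never produces Condition C, and its part b) \emph{assumes} Condition C as a hypothesis. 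Indeed the paper is explicit (introduction and Example \ref{ex7.1}) that no direct proof of D $\Rightarrow$ C is known for general Weil cohomologies, and that the only available route is D $\Rightarrow$ B $\Rightarrow$ C --- the opposite order from yours. The missing ingredient is Theorem \ref{t8.1} (2), $D(X\times X)\Rightarrow B(X,L)$ for tight Weil cohomologies, proved via Smirnov's argument together with Jannsen's semi-simplicity theorem (replacing Kleiman's Cayley--Hamilton argument, which is unavailable in this generality). Once you invoke Theorem \ref{t8.1} (2) to get B directly from D, and then B $\Rightarrow$ C from Theorem \ref{t8.1} (1), the rest of your part b) (semi-simplicity of $\sM_H$ giving exactness of $\uH$, $\lambda$ an equivalence by Lemma \ref{bvk}, the connectedness argument via $Z(T(\sM_H))$ being an absolutely flat domain, and closing the loop with Theorem \ref{t5.1}) matches the paper's proof.
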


\begin{proof} 
a) (i) $\Rightarrow$ (ii): from Theorem \ref{wconjC+} we get that $(\weil_H^+)^\natural$ is abelian; moreover, this theorem says that $\epsilon_H^\natural$ is an equivalence, so it suffices to show that $\iota_H^\natural$ is exact, where $\iota_H$ is as in  \eqref{eq+ab}. But Condition C holds true by Theorem \ref{t8.1} (1), thus what we want follows from (iii) $\Rightarrow$ (iv) in Theorem \ref{wconjC} b). 

Now (ii) $\Rightarrow$ (iii) follows from  Theorem \ref{wconjC+} and the universal property of $(\weil_H^{\ab},W_H^{\ab})$; (iii) $\Rightarrow$ (iv) is clear. Finally, (iv) $\Rightarrow$ (i) is trivial.

b)  First, recall that  D $\Rightarrow$ B $\Rightarrow$  C by Theorem \ref{t8.1},  since $H$ is tight. This being said,

(v) $\Rightarrow$ (i)  because, under D, $\sM_H$ is abelian semi-simple and, similarly, (v) + (iv) $\Rightarrow$ (vi). Thus we get (v) $\Rightarrow$ (vi),  and  (vi) $\Rightarrow$ (vii) is clear since $\lambda : \sM_H \iso T(\sM_H)$ by semisimplicity Lemma \ref{bvk}. (vii) $\Rightarrow$ (viii): since $Z(T(\sM_H))$ is an absolutely flat domain (Lemma \ref{l2.1} d), it is a field which implies Condition D by the implication (iii) $\Rightarrow$ (i)  of Theorem \ref{t5.1}, hence (viii) by the implication (v) $\Rightarrow$ (ix) of the same theorem. Finally, (viii) $\Rightarrow$ (v) is obvious.
\end{proof}

In the universal case, this gives:

\begin{cor} \label{c7.1a}
Under Condition D for $W^+_\ab$,  all functors in \eqref{eq5.5} are equivalences and these equivalences identify $W_\ab^+$ with $h$. 
\end{cor}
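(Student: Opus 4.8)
The plan is to derive Corollary \ref{c7.1a} as a direct specialisation of Theorem \ref{wconjB} b) to the universal tight abelian-valued Weil cohomology $W^+_\ab$, and then to identify the resulting equivalences with $h:\sM_\rat\to\sM_\num$ by tracking what the functors in \eqref{eq5.5} become in this case.

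First I would recall that $W^+_\ab$ is itself a tight Weil cohomology with values in $\weil^+_\ab=T(\weil^+)\in\Ex^\rig$ (Notation \ref{n:wsa}, Theorem \ref{thm2m}), so Theorem \ref{wconjB} applies with $(\sC,H)=(\weil^+_\ab,W^+_\ab)$. In this universal situation the initial tight enrichment $\weil^+_{H}$ and the ab-initial enrichment $\weil^\ab_H$ coincide with $\weil^+$ and $\weil^+_\ab=T(\weil^+)$ respectively, by the same argument as in Example \ref{exunivin}: since $W^+_\ab$ is (by construction) the push-forward of $W^+$ along $\lambda_{\weil^+}$, its classifying functor $F_{W^+_\ab}:T(\weil^+)\to\weil^+_\ab$ is the identity, so its kernel is trivial. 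Thus, under Condition D for $W^+_\ab$, the implication (v) $\Rightarrow$ (viii) of Theorem \ref{wconjB} b) gives that all functors in \eqref{eq5.5} — with $\sM_H=\sM_\hun:=\sM_{W^+_\ab}$, $w_H=w$, $\bar w_H=\bar w$ — are equivalences; in particular $\sM_\hun\iso\sM_\num$ and $\sM_\hun\iso\weil^+_\ab$, and by Theorem \ref{wconjB} b) $\weil^+_\ab$ is semi-simple.

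The remaining point is the identification of these equivalences with $h:\sM_\rat\to\sM_\num$. Here I would use that Condition D for $W^+_\ab$ forces $\sim_\hun=\sim_\num$ as adequate equivalences on $\sV$ (this is the meaning of $\pi:\sM_\hun\to\sM_\num$ being an equivalence, via Theorem \ref{t5.1} a) applied to $H=W^+_\ab$, noting $T(\sM_\hun)\iso\sM_\hun$ by semi-simplicity and Lemma \ref{bvk}). Consequently $\sM_\hun=(\sM_\rat/\sim_\hun)^\natural=(\sM_\rat/\sim_\num)^\natural=\sM_\num$, and the canonical functor $\sM_\rat\to\sM_\hun$ is literally $h:\sM_\rat\to\sM_\num$. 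Chasing this through the commutative diagram \eqref{eq5.5}, the composite $W^+_\ab$ — which factors as $\Corr\to\sM_\rat\to\sM_\hun\by{w}\weil^+_\ab$ — becomes identified, via the equivalence $w$, with $h$ followed by an equivalence of categories, which is the assertion.

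I expect the main (minor) obstacle to be bookkeeping rather than mathematics: one must be careful that Theorem \ref{wconjB} is stated for $(\weil_H^\ab,W_H^\ab)$ built from an arbitrary tight abelian-valued $H$, and verify cleanly that feeding in $H=W^+_\ab$ indeed yields $\weil_H^\ab=\weil^+_\ab$ and $\sM_H=\sM_\hun$, i.e. that the universal object is its own initial enrichment; this is exactly the content of the Example \ref{exunivin}-type remark and should be stated explicitly. Everything else is a direct quotation of Theorem \ref{wconjB} b), (v) $\Rightarrow$ (viii), together with the already-recorded fact (Definition \ref{defhom}, and the discussion preceding Theorem \ref{t5.2}) that Condition D for $W_\ab$ means $\sim_\hun=\sim_\num$; no new estimate or construction is needed.

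\begin{proof}[Proof of Corollary \ref{c7.1a}]
Apply Theorem \ref{wconjB} b) to the tight abelian-valued Weil cohomology $(\weil^+_\ab,W^+_\ab)$ (Notation \ref{n:wsa}). As in Example \ref{exunivin}, the classifying functor of $W^+_\ab$ is the identity of $\weil^+_\ab=T(\weil^+)$, so $\weil^+_{W^+_\ab}=\weil^+$ and $(\weil^+_{W^+_\ab})^\ab=\weil^+_\ab$; moreover $\sM_{W^+_\ab}=\sM_\hun$ by definition. Condition D for $W^+_\ab$ is condition (v) of Theorem \ref{wconjB} b) with $H=W^+_\ab$, hence (viii) holds: all functors in \eqref{eq5.5} are equivalences, and $\weil^+_\ab$ is semi-simple. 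In particular $\bar\pi:T(\sM_\hun)\to\sM_\num$ is an equivalence, and since $\sM_\hun$ is semi-simple, $\lambda:\sM_\hun\iso T(\sM_\hun)$ by Lemma \ref{bvk}; thus $\sim_\hun=\sim_\num$, so $\sM_\hun=\sM_\num$ and the canonical functor $\sM_\rat\to\sM_\hun$ is $h:\sM_\rat\to\sM_\num$. Since $W^+_\ab$ factors as $\Corr\to\sM_\rat\by{h}\sM_\num=\sM_\hun\by{w}\weil^+_\ab$ with $w$ an equivalence, the displayed equivalences identify $W^+_\ab$ with $h$.
\end{proof}
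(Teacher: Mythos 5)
Your proposal is correct and follows the paper's own (one-line) proof: the paper simply cites Theorem \ref{wconjB} b) together with Theorem \ref{t8.1} (for D $\Rightarrow$ B $\Rightarrow$ C), and your argument is exactly the implication (v) $\Rightarrow$ (viii) applied to $H=W^+_\ab$, with the bookkeeping about the universal object being its own ab-initial enrichment spelled out. Two harmless slips: the paper's notation for $W^+_\ab$-homological motives is $\sM_\hum$, not $\sM_\hun$ (which is reserved for $W_\ab$), and the initial tight enrichment of $W^+_\ab$ is $\weil^+/\ker\lambda_{\weil^+}$ rather than $\weil^+$ itself (cf.\ Example \ref{exunivin}) --- but neither category appears in \eqref{eq5.5}, so this does not affect the argument.
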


\begin{proof} This follows from  Theorem \ref{wconjB} b) and Theorem \ref{t8.1}.
\end{proof}

\subsection{Hodge type condition}\label{s8.7}

Let $H$ be tight, and let $X\in \sV$ be of dimension $n$, provided with a polarisation, with Lefschetz operator $L$.  For $i\le n/2$, we define 
$$A^{i}_{H,P}(X) =   \sC (\un, P^{2i}(X))\cap A^{i}_H(X) \subseteq \sC(\un, H^{2i}(X)(i))$$
where $A^{i}_H(X)$ is the image of the cycle class map $c\ell^i_H$ as in \eqref{eqcycle}, using the decomposition 
$$H^{2i}(X)(i)\simeq  P^{2i}(X)(i)\oplus H^{2i-2}(X)(i-1).$$ 

By Axiom (vi) of Definition \ref{d1.1}, the restriction of the cup-product pairing  $(x, y)\mapsto (-1)^i <L^{2n-i}x \cdot y>$ to $A^{i}_{H,P}(X)$ is $\Q$-valued.

\begin{defn} $H$ satisfies Condition $Hdg(X,L,i)$ if this quadratic form is positive definite.
\end{defn}

\begin{lemma} Let $(\sC,H)$ be tight,  $(F,u):(\sC,L_\sC)\to (\sC',L_\sD')\in \Add^\otimes_*$ be a $1$-morphism and  $H'= F_*H$ be the push-forward of $H$ by $F$ as usual (recall that $H'$ is also tight by Lemma \ref{l8.1}). If $F$ is faithful, the homomorphisms $A^{i}_{H,P}(X)\to A^{i}_{H',P}(X)$ are bijective, and $H$ satisfies Condition $Hdg(X,L,i)$ if and only if $H'$ does.
\end{lemma}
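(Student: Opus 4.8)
The key observation is that everything in sight is defined by finitely many cycle classes and a quadratic form on the $\Q$-vector space $A^i_{H,P}(X)$, so the only real content is that the faithful $\otimes$-functor $F$ identifies the relevant data on the nose. First I would recall that by Lemma \ref{l8.1} $H'=F_*H$ is again tight, so that the primitive decomposition, the operator $L$, and the pairing $(x,y)\mapsto (-1)^i\langle L^{2n-i}x\cdot y\rangle$ all make sense for $H'$ as well; moreover these structures for $H'$ are the images under $F$ of the corresponding structures for $H$, because $F$ is a $\otimes$-functor (it preserves the Künneth product, the trace map, the cup-product, the invertible object $L_\sC$ up to $u$, hence the Lefschetz operators $L^{n-i}$ and therefore $\Lambda$, $^c\Lambda$ and the projectors $p^i$ of \S\ref{s8.5}). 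In particular $F$ carries the decomposition $H^{2i}(X)(i)\simeq P^{2i}(X)(i)\oplus H^{2i-2}(X)(i-1)$ to the analogous one for $H'$.

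Next I would treat the bijectivity of $A^i_{H,P}(X)\to A^i_{H',P}(X)$. The cycle class map for $H'$ is by definition $\cl^i_{H'}=F\circ\cl^i_H$ (composed with the identification $u$ of the Lefschetz objects), so $A^i_{H'}(X)=\IM\cl^i_{H'}$ is the image of $A^i_H(X)\subseteq \sC(\un,H^{2i}(X)(i))$ under $F_*:\sC(\un,H^{2i}(X)(i))\to \sC'(\un,F(H^{2i}(X)(i)))$. Since $F$ is faithful and additive, $F_*$ is injective on Hom-groups; hence $A^i_H(X)\to A^i_{H'}(X)$ is bijective. Because $F$ respects the primitive projector $p^{2i}$, it also carries $\sC(\un,P^{2i}(X))$ bijectively onto $\sC'(\un,P^{2i}(X'))$ (more precisely onto the subgroup of the target cut out by the image projector), so intersecting with $A^i_H(X)$ we get that $A^i_{H,P}(X)\to A^i_{H',P}(X)$ is a bijection of $\Q$-vector spaces.

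Finally, under this bijection the two quadratic forms correspond. Indeed, for $x,y\in A^i_{H,P}(X)$ the scalar $(-1)^i\langle L^{2n-i}x\cdot y\rangle\in Z(\sC)$ lands in $\Q\cdot 1\subseteq Z(\sC)$ by Axiom (vi) of Definition \ref{d1.1}; applying $F$ and using that $F$ preserves $L$, the cup-product, the trace, and sends $1_{\un}$ to $1_{\un'}$, the analogous scalar for $H'$ is the image of this one under the ring map $Z(\sC)\to Z(\sC')$, hence equals the same rational number (since $Z(\sC)\to Z(\sC')$ sends $\Q\cdot 1$ isomorphically to $\Q\cdot 1$). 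Thus the form for $H'$ on $A^i_{H',P}(X)$ pulls back to the form for $H$ on $A^i_{H,P}(X)$, and one is positive definite if and only if the other is; that is, $H$ satisfies $Hdg(X,L,i)$ iff $H'$ does.

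\textbf{Main obstacle.} There is no deep obstacle; the one point requiring care is the bookkeeping showing that $F$ genuinely transports the \emph{full} Lefschetz package (not just $L$ but $\Lambda$, the primitive projectors, and hence the decomposition used to define $A^i_{H,P}$) — this rests on the uniqueness statements of Proposition \ref{p8.1} (e.g. $^c\Lambda$ is the unique degree $(-2,-1)$ operator satisfying the $\mathfrak{sl}_2$-relation), applied in $\sC$ and then transported by $F$ — together with the faithfulness of $F$ used to ensure injectivity of $F_*$ on Hom-groups and the compatibility with the identification $u:F(L_\sC)\iso L_{\sC'}$ throughout.
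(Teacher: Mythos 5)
Your proof is correct and follows essentially the same route as the paper, which simply observes that $A^i_H(X)\to A^i_{H'}(X)$ is bijective (faithfulness of $F$ plus $\cl^i_{H'}=F\circ\cl^i_H$) and that the primitive part is a direct summand of this identification. Your additional verification that the quadratic forms match under $Z(\sC)\to Z(\sC')$ restricted to $\Q\cdot 1$ is the implicit last step the paper leaves to the reader, so nothing is missing.
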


\begin{proof} Clearly, $A^{i}_{H}(X)\to A^{i}_{H'}(X)$ is bijective, and the homomorphism of the lemma is a direct summand of this one.
\end{proof}

\begin{thm}\label{t8.3} Let $(\sC, H)$ be tight, and let $X\in\sV$ be polarised. Then $A(X,L,i)$ \& $Hdg(X,L,i)$ for all $i$ $\Rightarrow$ $D(X)$.
\end{thm}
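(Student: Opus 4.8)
The goal is to deduce Condition $D(X)$, i.e.\ that homological and numerical equivalence agree on $X$, from the Lefschetz-type condition $A(X,L,i)$ together with the Hodge positivity $Hdg(X,L,i)$ for all $i$. The classical template is Kleiman's argument in \cite{kdix,kst}, and the first thing I would do is set up the primitive decomposition \eqref{eq8.3}--\eqref{eq8.4}, which is available since $(\sC,H)$ is tight (hence verifies Strong Lefschetz). After replacing $\sC$ by $\sC^\natural$ we may assume $\sC$ pseudo-abelian. Fix $i$ with $2i\le n$ and work with the primitive piece $A^i_{H,P}(X)\subseteq \sC(\un,P^{2i}(X)(i))$. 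The key point is that on $A^i_{H,P}(X)$ the pairing $(x,y)\mapsto (-1)^i\langle L^{n-2i}x\cdot y\rangle$ is $\Q$-valued (noted just before Definition of $Hdg$) and, by $Hdg(X,L,i)$, positive definite; in particular it is \emph{nondegenerate} on the finite-dimensional $\Q$-vector space $A^i_{H,P}(X)$.

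\textbf{Key steps.} First I would show that $D(X)$ follows once we know that for every $i$ the cup-product pairing
\[
A^i_H(X)\times A^{n-i}_H(X)\To \Q,\qquad (x,y)\mapsto \langle x\cdot y\rangle
\]
is nondegenerate: indeed $\Ker(CH^i(X)_\Q\to A^i_H(X))$ is exactly homological-equivalence-to-zero, and numerical equivalence on $CH^i$ is the radical of this cup-product pairing (using $\dim X=n$, so $i+(n-i)=n$ and the pairing lands in $\Q$ via $\Tr_X$, by Axioms (v)--(vi) of Definition~\ref{d1.1}); nondegeneracy of the induced pairing on $A^i_H(X)$ says precisely that the two kernels coincide. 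Second, to get this nondegeneracy I would use $A(X,L)$: the operator $L^{n-2i}$ restricts to an isomorphism $A^i_H(X)\iso A^{n-i}_H(X)$, so it suffices to prove that $(x,y)\mapsto \langle L^{n-2i}x\cdot y\rangle$ is a nondegenerate form on $A^i_H(X)$. Third, I would decompose $A^i_H(X)$ along the primitive decomposition \eqref{eq8.3}: writing $x=\sum_j L^j x_{j}$ with $x_j$ primitive of the appropriate degree, the Lefschetz $\mathfrak{sl}_2$-identities \eqref{eq8.5} and Proposition~\ref{p8.1} show that the form $\langle L^{n-2i}x\cdot y\rangle$ becomes block-diagonal (up to sign and nonzero integer multiplicities, exactly as in the classical Lefschetz algebra) with respect to this decomposition, each block being (a multiple of) the primitive form $(-1)^{i-j}\langle L^{n-2(i-j)}x_j\cdot y_j\rangle$ on $A^{i-j}_{H,P}(X)$. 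Since each of these is positive definite by $Hdg(X,L,i-j)$, the total form is nondegenerate; note we only need positive definiteness for the primitive forms below degree $i$, which is supplied by the hypothesis "$Hdg(X,L,i)$ for all $i$''. Finally, one has to check the bookkeeping: that the cycle classes respecting the decomposition stay algebraic, which is where $A(X,L)$ (algebraicity of the $L$-operators on $A^\bullet_H$, together with the fact that the primitive projectors $p^i$ and $\Lambda$-operators lie in the subalgebra $S$ generated by $L,\Lambda$, Proposition~\ref{p8.1}) is used a second time to guarantee that the primitive components $x_j$ of an algebraic class are themselves algebraic.

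\textbf{Main obstacle.} The delicate point, and the one where Kleiman's proof has to be adapted, is that a priori $\sC(\un,-)$ is only a $Z(\sC)$-module and the whole Lefschetz/Hodge package was originally written for vector spaces over a field; I would need to check that all the algebra of the primitive decomposition --- the block-diagonalization of the cup-product form and the rationality statement isolating the primitive pieces over $\Q$ --- goes through in the present generality, using that the relevant operators live in the finite-dimensional graded algebra $R$ of \S\ref{s8.5} and restrict correctly to the summands of \eqref{eq8.3}, exactly as in the proof of Proposition~\ref{p8.1}. Once this is in place the argument is formal: positive definiteness $\Rightarrow$ nondegeneracy $\Rightarrow$ homological $=$ numerical on each $CH^i(X)_\Q$, hence the functor $\sM_H\to\sM_\num$ restricted to the motive of $X$ (and its summands) is faithful, which is $D(X)$. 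I would also remark that this recovers, in the traditional case $\sC=\Vec_K$, Kleiman's classical implication "$A$ \& $Hdg$ $\Rightarrow$ $D$'' (\cite[\S3]{kst}), and that combined with Theorem~\ref{t8.1} it closes the expected circle of implications among the standard conditions.
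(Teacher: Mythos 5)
Your proposal is correct and follows essentially the same route as the paper, whose proof of Theorem \ref{t8.3} is simply ``Same as in the proof of \cite[Prop 3.8]{kdix}'': you have faithfully expanded Kleiman's argument (reduce $D(X)$ to nondegeneracy of the cup-product pairing on $A^i_H(X)$, use $A(X,L)$ to pass to the form $\langle L^{n-2i}x\cdot y\rangle$ and to keep the primitive decomposition algebraic, then invoke definiteness of each primitive block from $Hdg$), together with the check that the Lefschetz algebra of \S\ref{s8.5} makes it go through in the generalised setting. One tiny remark: finite-dimensionality of $A^i_{H,P}(X)$ over $\Q$ is neither needed nor obviously available --- positive definiteness already implies nondegeneracy of the bilinear form in any dimension.
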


\begin{proof} Same as in the proof of \cite[Prop 3.8]{kdix}.
\end{proof}

From this theorem and \ref{t8.1} (1) and (2), we get as usual:

\begin{cor} Assume that $H$ is tight and verifies Condition $Hdg$. Then Condition A $\iff$ Condition D.\qed
\end{cor}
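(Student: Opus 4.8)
The statement to prove is the Corollary: ``Assume that $H$ is tight and verifies Condition $Hdg$. Then Condition A $\iff$ Condition D.''

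The plan is to combine the three results just cited. The direction $D \Rightarrow A$ follows from Theorem \ref{t8.1}: under Condition D we have $D(X\times X)$ for all polarised $X$, hence $B(X,L)$ by Theorem \ref{t8.1}(2), and then $A(X,L)$ by Theorem \ref{t8.1}(1). So this direction does not even use the Hodge hypothesis. For the converse, $A \Rightarrow D$, we invoke Theorem \ref{t8.3}: fixing a polarised $X \in \sV$ of dimension $n$, Condition $A$ gives $A(X,L,i)$ for all $i$ (unwinding: Condition A for $(H,X,L)$ says $L^{n-2i}\colon A^i_H(X)\to A^{n-i}_H(X)$ is an isomorphism for all $i$, which restricts to the primitive pieces $A^i_{H,P}(X)$ and yields $A(X,L,i)$ in the sense needed for \ref{t8.3}), and Condition $Hdg$ gives $Hdg(X,L,i)$ for all $i$; then Theorem \ref{t8.3} yields $D(X)$. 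Since this holds for every polarised $X \in \sV$ and since $\sV$ is closed under hyperplane sections (so every $X \in \sV$ of positive dimension admits, or is dominated in the relevant sense by, a polarised variety) — more to the point, Condition D is the statement that $\sM_H \to \sM_\num$ is an equivalence, which amounts to $D(X)$ for all $X\in\sV$ — we conclude Condition D holds.

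I would therefore write: \emph{Proof.} $D \Rightarrow A$: by Theorem \ref{t8.1}(2) and (1), $D(X\times X) \Rightarrow B(X,L) \Rightarrow A(X,L)$ for every polarised $X\in\sV$; since Condition D gives $D(Y)$ for all $Y\in\sV$, in particular for $Y = X\times X$, we get Condition A. $A \Rightarrow D$: for each polarised $X\in\sV$, Condition A provides $A(X,L,i)$ for all $i$, and Condition $Hdg$ provides $Hdg(X,L,i)$ for all $i$; by Theorem \ref{t8.3}, $D(X)$ holds. As $X$ ranges over all (polarised) varieties in $\sV$ this gives Condition D. $\Box$

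The main obstacle — and the only point requiring a little care — is the passage from ``$D(X)$ for all polarised $X\in\sV$'' to ``Condition D''. One must check that Condition D (defined via $\sM_H \iso \sM_\num$) is genuinely equivalent to the pointwise statements $D(X)$ for $X$ ranging over $\sV$, and that every $X\in\sV$ carries a polarisation for which the Lefschetz operator $L$ is available (true since objects of $\Sm^\proj(k)$ are projective, so admit an embedding into projective space); this is entirely standard and parallel to Kleiman's treatment, so I expect no real difficulty, only bookkeeping. Everything else is a direct citation of Theorems \ref{t8.1} and \ref{t8.3}.
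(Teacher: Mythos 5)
Your proof is correct and is exactly the argument the paper intends: the corollary is stated with no written proof beyond the phrase ``From this theorem and \ref{t8.1} (1) and (2), we get as usual'', i.e.\ $D \Rightarrow B \Rightarrow A$ by Theorem \ref{t8.1} (2) and (1), and $A + Hdg \Rightarrow D$ by Theorem \ref{t8.3}, which is precisely what you wrote. The bookkeeping point you flag (pointwise $D(X)$ versus the global Condition D) is real but is absorbed by the paper's ``as usual''.
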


In characteristic zero, Condition Hdg is known for Hodge cohomology, hence for any classical cohomology, \cf \cite[beg. of §5]{kdix}.

\subsection{Fullness conditions}\label{s8.8}

\begin{defn} Let $(\sC,H)$ be a Weil cohomology, $X\in \sV$ and $i\ge 0$. We say that $X$ verifies Condition F in codimension $i$ with respect to $H$ (in short: $F(X,i)$) if the map
\[CH^i(X)\otimes Z(\sC)\to \sC(\un,H^{2i}(X)(i)) 
\]
is surjective.
\end{defn}

This definition encompasses all the fullness conjectures of \cite[Ch. 7]{A}. It is not very useful \emph{per se}, so we shall use it only in special cases.

\begin{prop}\label{p8.4} Assume that $H$ is tight and that $Z(\sC)=\Q$. Let $X\in \sV$ be polarised of dimension $n$, with Lefschetz operator $L$, and $i\le n/2$. Then $F(X,i)$ $\Rightarrow$ $A(X,L,i)$ + $F(X,n-i)$. If $H$ satisfies $F(X,i)$ and $Hdg(X,L,i)$ for all $i$, it satisfies $D(X)$. 
\end{prop}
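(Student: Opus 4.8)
\textbf{Proof plan for Proposition \ref{p8.4}.}

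The plan is to mimic the classical arguments of \cite[§3]{kdix}, translated into the present categorical setting, since all the relevant Lefschetz algebra (the operators $L$, $\Lambda$, ${}^c\Lambda$, the primitive decompositions \eqref{eq8.3}--\eqref{eq8.4}, and the involutions $\star_L$, $\star_H$) is available for tight $H$ by \S\ref{s8.5}, and since the hypothesis $Z(\sC)=\Q$ forces the various Hom-groups $\sC(\un, H^{2i}(X)(i))$ to behave like finite-dimensional $\Q$-vector spaces equipped with the cup-product pairings.

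\emph{First step: $F(X,i)\Rightarrow A(X,L,i)$.} Recall $A^i_H(X)\subseteq \sC(\un,H^{2i}(X)(i))$ is the image of $\cl^i_H$ and, by Strong Lefschetz, $L^{n-2i}:\sC(\un,H^{2i}(X)(i))\iso \sC(\un,H^{2(n-i)}(X)(n-i))$ is an isomorphism carrying $A^i_H(X)$ into $A^{n-i}_H(X)$ (compatibility of $\cl$ with the intersection product, using that $L$ is cup-product with the hyperplane class). So $A(X,L,i)$ asks that $L^{n-2i}$ restrict to a \emph{surjection} $A^i_H(X)\surj A^{n-i}_H(X)$. Under $F(X,i)$ we have $A^i_H(X)=\sC(\un,H^{2i}(X)(i))$, hence $L^{n-2i}(A^i_H(X))=\sC(\un,H^{2(n-i)}(X)(n-i))\supseteq A^{n-i}_H(X)$, giving surjectivity; this simultaneously yields $F(X,n-i)$ since $A^{n-i}_H(X)=L^{n-2i}(A^i_H(X))$ is then everything. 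This disposes of the first assertion.

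\emph{Second step: $F(X,i)+Hdg(X,L,i)$ for all $i$ $\Rightarrow$ $D(X)$.} By the first step, $F(X,i)$ for all $i$ gives $A(X,L,i)$ for all $i$, and then Theorem \ref{t8.3} ($A(X,L,i)$ \& $Hdg(X,L,i)$ for all $i$ $\Rightarrow$ $D(X)$) applies verbatim. I should double-check that the primitive-component refinement needed in Theorem \ref{t8.3}'s proof (\ie that the quadratic form on $A^i_{H,P}(X)$ is genuinely $\Q$-valued and that positive-definiteness there propagates to homological = numerical equivalence on all of $CH^*(X)$) goes through here: this is exactly the content of \S\ref{s8.7}, valid because $Z(\sC)=\Q$, so no new work is needed.

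\emph{Main obstacle.} The only delicate point is keeping track of the $Z(\sC)=\Q$ hypothesis: without it the groups $\sC(\un,H^{2i}(X)(i))$ are merely $Z(\sC)$-modules and ``$F(X,i)$'' is a surjectivity over $Z(\sC)$, so that $F(X,n-i)$ and $A(X,L,i)$ would follow only after tensoring, and the Hodge-theoretic input of Theorem \ref{t8.3} (which is intrinsically about $\Q$-valued quadratic forms) would not be directly usable. With $Z(\sC)=\Q$ all of these subtleties evaporate and the classical bookkeeping of \cite[§3]{kdix}, as already imported in \S\S\ref{s8.5}--\ref{s8.7}, transfers without change.
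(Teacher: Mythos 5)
Your argument is correct and is essentially the paper's own proof: the paper packages the same reasoning into a commutative square with vertical inclusions $A^i_H(X)\hookrightarrow\sC(\un,H^{2i}(X)(i))$ and the bijective bottom map $\cl(L^{n-2i})$ given by Strong Lefschetz, deducing injectivity of $L^{n-2i}$ on algebraic classes in general and bijectivity of everything under $F(X,i)$, then invokes Theorem \ref{t8.3} for the last statement exactly as you do. No gaps.
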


\begin{proof} It is classical: in the commutative diagram
\[\begin{CD}
A^i_H(X)@>L^{n-2i}>> A^{n-i}_H(X)\\
@V\cl^i VV @V\cl^{n-i} VV\\
\sC(\un,H^{2i}(X)(i))@>\cl(L^{n-2i})>> \sC(\un,H^{2(n-i)}(X)(n-i))
\end{CD}\]
$\cl(L^{n-2i})$ is bijective by Strong Lefschetz while $\cl^i$ and $\cl^{n-i}$ are injective, hence $L^{n-2i}$ is injective. If $\cl^i$ is surjective, all maps in the diagram are bijective. The last statement follows from Theorem \ref{t8.3}.
\end{proof}

The next theorem mimicks Tate's yoga in \cite[§2]{tate}, and generalises it. For $X\in \sV$ of dimension $n$ and $i\le n/2$, consider the Poincar\'e pairing
\begin{equation}\label{eqpoin}
\sC(\un,H^{2i}(X)(i))\times \sC(\un,H^{2(n-i)}(X)(n-i))\to \sC(\un,\un)=Z(\sC)
\end{equation}
and the map
\begin{equation}\label{eqpoin1}
A^i_H(X)\otimes Z(\sC)\to \sC(\un,H^{2i}(X)(i))
\end{equation}
induced by the cycle class map. Note that, by Poincar\'e duality, \eqref{eqpoin} amounts to the composition pairing
\begin{equation}\label{eqpoin2}
\sC(\un,H^{2i}(X)(i))\times \sC(H^{2i}(X)(i),\un)\to Z(\sC).
\end{equation}

\begin{thm}\label{t8.4} Assume that $H$ is tight that $K=Z(\sC)$ is a field and that Hom groups in $\sC$ are finite dimensional over $K$  (for example $H$ pseudo-tannakian as in Definition \ref{d6.1}.) Let $S(X,i)$ (\resp $I(X,i)$) denote the condition that \eqref{eqpoin} is non-degenerate (\resp \eqref{eqpoin1} is injective). Then we have
\[F(X,i)+D(X,i)\Rightarrow F(X,n-i)+S(X,i) \Rightarrow D(X,i) \Rightarrow I(X,i).\]
\end{thm}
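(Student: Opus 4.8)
The plan is to transpose Tate's yoga \cite[\S 2]{tate} to the abstract setting. Write $n=\dim X$, $M=H^{2i}(X)(i)$, $M'=H^{2(n-i)}(X)(n-i)$, $V=\sC(\un,M)$, $V'=\sC(\un,M')$, and let $A^i:=A^i_H(X)$, $A^{n-i}:=A^{n-i}_H(X)$ be the images of the cycle class maps, viewed as $\Q$-subspaces of $V$ and $V'$ respectively. The preliminary step, which is where essentially all the content sits, is to set up a dictionary between the three pairings involved. First, by Poincaré duality the pairing \eqref{eqpoin} is the composition pairing $\langle\,,\rangle\colon \sC(\un,M)\times\sC(M,\un)\to Z(\sC)=K$, as already noted after \eqref{eqpoin2}. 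Second, Strong Lefschetz gives an isomorphism $M\iso M'$ (via $L^{n-2i}$, reducing to connected components of $X$), so that $\dim_K V=\dim_K V'$, which is finite by hypothesis. Third, Axiom (vi) yields, for $\alpha\in CH^i(X)_\Q$ and $\beta\in CH^{n-i}(X)_\Q$, the identity $\langle\cl^i_H(\alpha),\cl^{n-i}_H(\beta)\rangle=\deg(\alpha\cdot\beta)$; hence the $K$-linear cycle class maps $\overline{\cl}\colon A^i\otimes_\Q K\to V$ and $\overline{\cl}\colon A^{n-i}\otimes_\Q K\to V'$ carry the $K$-linear extension of the intersection pairing on $A^i\times A^{n-i}$ to $\langle\,,\rangle$. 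Finally I record that $A^i_\num(X)$ is finite-dimensional over $\Q$ and that the numerical pairing $A^i_\num(X)\times A^{n-i}_\num(X)\to\Q$ is perfect --- both follow from the standing hypothesis since $A^i_\num(X)$ is the quotient of $A^i\subseteq V$ by the left radical of a $\Q$-valued pairing whose $K$-linear extension factors through the finite-dimensional $V\times V'$ --- so that $D(X,i)$ is equivalent to the left radical of the intersection pairing on $A^i\times A^{n-i}$ being trivial, and (by flatness of $\Q\to K$) to the left radical of its $K$-linear extension being trivial.

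Granting this dictionary, the three implications reduce to short linear algebra. For $F(X,i)+D(X,i)\Rightarrow F(X,n-i)+S(X,i)$: surjectivity of $\overline{\cl}\colon A^i\otimes K\to V$ plus triviality of the left radical of the intersection pairing over $K$ forces $\overline{\cl}$ to be an isomorphism; then writing an arbitrary $v\in V$ as $\overline{\cl}(\xi)$, vanishing of $\langle v,-\rangle$ on $W':=\overline{\cl}(A^{n-i}\otimes K)\subseteq V'$ forces $\xi=0$ and hence $v=0$, so $V$ embeds in $(W')^{*}$; the dimension equality $\dim_K V=\dim_K V'$ then gives $W'=V'$, which is $F(X,n-i)$, and that $\langle\,,\rangle$ is nondegenerate, which is $S(X,i)$. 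For $F(X,n-i)+S(X,i)\Rightarrow D(X,i)$: if $\alpha\in A^i$ is numerically trivial then $\langle\cl^i_H(\alpha),\cl^{n-i}_H(\beta)\rangle=\deg(\alpha\cdot\beta)=0$ for all $\beta$; since the $\cl^{n-i}_H(\beta)$ span $V'$ over $K$ by $F(X,n-i)$, nondegeneracy gives $\cl^i_H(\alpha)=0$, i.e. $\alpha=0$ in $A^i_H(X)$, so $D(X,i)$ holds. For $D(X,i)\Rightarrow I(X,i)$: if $\overline{\cl}(\xi)=0$ then $\xi$ lies in the left radical of the $K$-linear extension of the intersection pairing, which is trivial under $D(X,i)$; hence $\overline{\cl}$ is injective.

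The main obstacle is the first paragraph, not the implications: one must verify carefully that \eqref{eqpoin} coincides with the composition pairing and that the cycle class maps intertwine it with intersection (bookkeeping with Axioms (v) and (vi) and the definitions of $\cup$, $\Tr$ and $\kappa$); one must check that the Strong Lefschetz hypothesis of Definition \ref{d5.2a}, stated for connected varieties, indeed yields $\dim_K V=\dim_K V'$ for the (equidimensional) $X$ considered here; and one must justify finite-dimensionality of $A^i_\num(X)$ and perfectness of the numerical pairing over $\Q$, which is where the hypothesis that Hom groups in $\sC$ are finite-dimensional over $K$ is used. Once these are nailed down, nothing else is delicate.
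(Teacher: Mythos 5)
Your proposal is correct and follows essentially the same route as the paper: the paper's proof simply reproduces Tate's diagram (2.3) and the implications of \cite[(2.4)]{tate} in the abstract setting, which is exactly the ``dictionary plus linear algebra'' you carry out explicitly (including the use of Strong Lefschetz to get $\dim_K V=\dim_K V'$, which the paper leaves implicit in the assertion that $c$ injective $\iff$ $c$ surjective). No gaps.
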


\begin{rk}\label{r8.2} In the case of the Tate conjecture ($H=$ $\ell$-adic cohomology), $I(X,i)$ is the same as $I^i(X)$ in \cite[p. 72]{tate}. $S(X,i)$ is also the same as $S^i(X)$ in \loccit Indeed,  with Tate's notation $\sC(\un,H^{2i}(X)(i))$ identifies with $V^i(X)^G$, the dual of $\sC(H^{2i}(X)(i),\un)$ identifies with $V^i(X)_G$, and Tate's map $V^i(X)^G\to V^i(X)_G$ coincides with the map induced by \eqref{eqpoin2}. 
\end{rk}

\begin{proof} We copy the one of Lemma 2.5 and Proposition 2.6 in \cite{tate}. For this, we first copy its diagram (2.3) (where we drop $X$ as in \emph{loc. cit.}):
\begin{equation}\Small
\begin{CD}
K\otimes A^i_H@>b>> \sC(\un,H^{2i}(i))@>c>> \sC(\un,H^{2(n-i)}(n-i))^*\\
@Va VV&& @Vd VV\\
K\otimes (A^i_H/N)@>f>> K\otimes \Hom_\Q(A^{n-i}_H,\Q)@>e>> (K\otimes A^{n-i}_H)^*.
\end{CD}
\end{equation}

Here, $(-)^*$ means $K$-dual and $N$ denotes numerically trivial cycles. This diagram is commutative, and the arrows $e$ and $f$ are injective. We now have the same implications as in \cite[(2.4)]{tate}:

\begin{itemize}
\item $D(X,i)\iff$ $a$ is injective;
\item $F(X,i)\iff$ $b$ is surjective;
\item $I(X,i)\iff$ $b$ is injective;
\item $S(X,i)\iff$ $c$ is bijective $\iff$ $c$ is injective $\iff$ $c$ is surjective;
\item $F(X,n-i)\iff$ $d$ is injective.
\end{itemize}

This proves the implications of Theorem \ref{t8.4}.
\end{proof}

We also have the following result of Andr\'e \cite[Prop. 7.1.1.1]{A}:

\begin{thm}\label{tandre} Assume that $H$ is pseudo-tannakian as in Defnition \ref{d6.1}, and that $Z(\sC)$ is a field (\eg $H$ is abelian-valued). Under Condition $F$, Condition $D$ for $H$ is equivalent to the semi-simplicity of $H^i(X)$ for all $X\in \sV$ and all $i\ge 0$.\qed
\end{thm}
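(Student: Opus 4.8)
The plan is to follow André's proof of \cite[Prop. 7.1.1.1]{A}, replacing the trace form on finite–dimensional vector spaces by the categorical trace in $\sC$. First one records what Condition $F$ provides: since the Künneth projectors are among the classes occurring in Axiom (vi), $F(X\times X,n)$ puts them in the image of $\cl$, so $F$ implies Condition $C$; hence $\sM_H$ carries its weight grading, $\uH^*\colon\sM_H\to\sC^{(\Z)}$ is faithful, and, combining $F$ with Lemma \ref{l1.1}, for all $X,Y\in\sV$ the homomorphism
\[CH^*(X\times Y)_\Q\otimes_\Q K\longrightarrow \Hom_{\sC^{(\Z)}}(\uH^*(X),\uH^*(Y))=\textstyle\prod_i\Hom_\sC(H^i(X),H^i(Y))\]
is surjective, where $K=Z(\sC)\supseteq\Q$; being the image of an additive map, this surjection passes to direct summands in $\sM_H$.

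For the implication $D\Rightarrow$ semi-simplicity of the $H^i(X)$: under $D$, $\sM_H=\sM_\num$ is semi-simple abelian with $\Hom$-groups finite-dimensional over $\Q$ by \cite{jannsen3}. Using $C$, write $h^i_H(X)=\bigoplus_t N_t$ with the $N_t$ simple, so that $H^i(X)=\bigoplus_t\uH(N_t)$ and $\End_{\sM_H}(N_t)=D_t$ is a finite-dimensional division $\Q$-algebra. By the displayed surjectivity, $\Hom_\sC(\uH(N_t),\uH(N_{t'}))=0$ for $t\ne t'$ and $\End_\sC(\uH(N_t))$ is a quotient of $D_t\otimes_\Q K$; the latter is semi-simple because $D_t$ is separable over the perfect field $\Q$. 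Hence $\End_\sC(H^i(X))=\prod_t\End_\sC(\uH(N_t))$ is semi-simple, which when $\sC$ is abelian says precisely that $H^i(X)$ is a semi-simple object.

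For the converse, assume every $H^i(X)$ is semi-simple, so each $\End_\sC(H^i(X))$ is a finite-dimensional semi-simple $K$-algebra. Since $\sim_H$ refines $\sim_\num$, it suffices to show that $\alpha\in CH^n(X\times X)_\Q$ with $\alpha\sim_\num 0$ has $\cl(\alpha)=0$. Write $\cl(\alpha)=(\phi_i)_i\in\prod_i\End_\sC(H^i(X))$ via Künneth and Poincaré duality. For any algebraic self-correspondence $\beta$ with $\cl(\beta)=(\psi_i)_i$ one has the Lefschetz trace identity
\[0=\deg(\alpha\cdot{}^t\beta)=\sum_i(-1)^i\,\tr_\sC(\phi_i\circ\psi_i)\ \in\ \Q\subseteq K,\]
which follows from Axiom (vi) together with additivity of the categorical trace along the weight decomposition (with the Koszul signs) and its compatibility with the $\otimes$-functor $\uH$. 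By the displayed surjectivity the tuples $(\psi_i)_i$ $K$-span $\prod_i\End_\sC(H^i(X))$, so the identity holds for all tuples; specialising to $\psi_i=0$ for $i\ne j$ gives $\tr_\sC(\phi_j\psi_j)=0$ for all $\psi_j\in\End_\sC(H^j(X))$. As $\End_\sC(H^j(X))$ is a finite-dimensional semi-simple $K$-algebra in characteristic $0$, its trace form is non-degenerate, whence $\phi_j=0$ for all $j$, $\cl(\alpha)=0$, and $D$ holds for $X$; since $X$ was arbitrary, $D$ holds.

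The point requiring care is the non-degeneracy of the categorical trace form in the last step: writing $H^j(X)\simeq\bigoplus_t S_t^{\oplus n_t}$ with $S_t$ simple, this form on $\End_\sC(H^j(X))\simeq\prod_t M_{n_t}(\End_\sC(S_t))$ equals, up to the scalars $\dim_\sC(S_t)$, the orthogonal sum of the reduced-trace forms of the factors, which are non-degenerate in characteristic $0$; so one must know $\dim_\sC(S_t)\ne 0$. This is exactly where the pseudo-tannakian hypothesis is used: under the faithful $\otimes$-functor $\omega$ to $\Z/2$-graded vector spaces over $L$, the object $H^j(X)$ has pure $\Z/2$-parity $j\bmod 2$ (by the Koszul convention), hence each simple constituent $S_t$ satisfies $\dim_\sC(S_t)=\pm\dim_L\omega(S_t)\ne 0$. (If one only knows $\End_\sC(H^i(X))$ semi-simple, rather than $H^i(X)$ semi-simple as an object, the same computation still applies, so the statement is insensitive to whether $\sC$ is merely additive rigid or genuinely abelian.)
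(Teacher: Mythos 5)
The paper does not actually prove this theorem (it is stated with a \qed and a citation of André), so I am judging your argument on its own merits. Your converse direction (semi-simplicity $\Rightarrow$ D) is essentially sound and is the same mechanism as Theorems \ref{t8.4}--\ref{t8.5} of the paper, though your detour through the categorical trace form on $\End_\sC(H^j(X))$ forces you to prove $\dim_\sC(S_t)\ne 0$, and your justification --- that $\omega(H^j(X))$ is ``pure of parity $j\bmod 2$ by the Koszul convention'' --- is asserted, not proved: the axioms constrain the symmetry of $\sC^{(\Z)}$, not the parity of $H^j(X)$ inside $\sC$ itself, and a simple constituent of super-dimension $0$ is not excluded a priori. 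This is avoidable: for a semi-simple object $C$ one writes $C\simeq r\un\oplus C'$ with $\sC(\un,C')=\sC(C',\un)=0$, so the pairing $\sC(\un,C)\times\sC(C,\un)\to K$ is non-degenerate with no dimension hypothesis, and then $F(X,n-i)+S(X,i)\Rightarrow D(X,i)$ as in Theorem \ref{t8.4}. (Also, your opening claim that $F$ implies Condition C conflates coefficients: $F$ puts the Künneth projectors in the $Z(\sC)$-span of algebraic classes, whereas Definition \ref{d4.1} asks for the image of $\uH$, i.e.\ the $\Q$-span; fortunately your argument only ever uses the $K$-span.)

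The genuine gap is in the forward direction. What you actually establish under $D$ and $F$ is that $\prod_i\End_\sC(H^i(X))$ is a quotient of the semi-simple algebra $\End_{\sM_\num}(h(X))\otimes_\Q K$, hence a semi-simple ring. The step ``$\End_\sC(H^i(X))$ is semi-simple, which when $\sC$ is abelian says precisely that $H^i(X)$ is a semi-simple object'' is false as a general principle: a non-split extension $0\to S_1\to A\to S_2\to 0$ of non-isomorphic simples with $\Hom(S_1,S_2)=\Hom(S_2,S_1)=0$ has $\End_\sC(A)=K$, which is semi-simple (and even $\sN(A,A)=0$), while $A$ is not semi-simple --- take for instance the standard $2$-dimensional representation of a Borel subgroup of $SL_2$ in characteristic $0$. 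Semi-simplicity of an object is a statement about its subobjects, and the fullness hypothesis gives you control only over $\End_\sC$, i.e.\ over the \emph{commutant} of whatever structure cuts out the subobjects, not over the subobjects themselves; your final parenthetical acknowledges the distinction but only for the converse direction, where ring-semi-simplicity is a hypothesis rather than the conclusion. To close the gap you must either supply an argument passing from semi-simplicity of $\End_\sC(H^i(X))$ to semi-simplicity of the object (which will require input beyond what is used here), or weaken the conclusion of the forward direction to semi-simplicity of the endomorphism algebras.
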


Note that, as in \cite{tate}, $S(X,i)$ in Theorem \ref{t8.4} follows from the semi-simplicity of $H^{2i}(X)$ ($SS^i(X)$ in \cite{tate}): more generally, if $C\in \sC$ is semi-simple,  the pairing $\sC(\un,C)\times \sC(C,\un)\to K$ is non-degenerate as one sees by writing  $C\simeq r\un \oplus C'$ with $\sC(\un,C')=\sC(C',\un)=0$. As a by-product of this reformulation of the yoga in \cite{tate}, we get the following converse, which links Theorems \ref{t8.4} and \ref{tandre} and generalises \cite[Th. 6]{ss} (case of $\ell$-adic cohomology) from a finite field to any finitely generated base field:

\begin{thm} \label{t8.5} Under the hypotheses of Theorem \ref{tandre}, let $X\in \sV$ be of dimension $n$. Then $S(X\times X,n)$ implies the semi-simplicity of all $H^i(X)$.
\end{thm}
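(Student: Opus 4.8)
The plan is to deduce this from the semisimplicity criterion already established in Theorem \ref{tandre}, by exploiting the behaviour of $H^i(X)$ as a direct summand of $H^{2n}(X\times X)(n)$ via the Künneth formula and Poincaré duality. First I would fix $X\in\sV$ of dimension $n$ and recall that the Künneth isomorphism (Axiom (iv)) together with Poincaré duality (Axiom (v)) gives, for each $i$, a canonical direct summand decomposition of $H^{2n}(X\times X)(n)$ in which the piece $H^i(X)\otimes H^{2n-i}(X)(n)\simeq H^i(X)\otimes H^i(X)^\vee=\uHom(H^i(X),H^i(X))$ appears. Thus $\End_\sC(H^i(X))$ is, functorially, a direct summand (as an object of $\sC$, and in fact as an algebra object) of $\End_\sC(H^n(X\times X))$, and more precisely $H^i(X)$ itself is a direct summand of $H^{2n}(X\times X)(n)$ after choosing the appropriate Künneth projectors on each factor (which are algebraic on $X\times X$ when pulled back, so live in the relevant Hom group — though one must be a little careful, since $C(X)$ is not assumed).

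The key step is then to observe that $S(X\times X,n)$ is exactly the statement that the Poincaré pairing \eqref{eqpoin} for the variety $X\times X$ in middle dimension $n$ (here $2n=\dim(X\times X)$, so $i=n$, $n-i=n$) is non-degenerate, i.e.\ that the composition pairing $\sC(\un,H^{2n}(X\times X)(n))\times\sC(H^{2n}(X\times X)(n),\un)\to K$ is non-degenerate via \eqref{eqpoin2}. Following the remark after Theorem \ref{t8.4} and the argument of \cite{ss}, non-degeneracy of this pairing for the self-dual object $M:=H^{2n}(X\times X)(n)$ forces $M$ to be semisimple: concretely, one shows that if $N\subset M$ is a subobject then the orthogonal $N^\perp$ (defined using the self-duality $M\simeq M^\vee$ coming from Poincaré duality of $X\times X$) satisfies $M=N\oplus N^\perp$, because non-degeneracy of the pairing $\sC(\un,M)\times\sC(M,\un)\to K$ (combined with finite-dimensionality of Hom groups over $K$) rules out any nonzero nilpotent in $\End_\sC(M)$ that could obstruct splitting — this is the step where I would reproduce carefully the linear algebra of \cite[Th.~6]{ss}, using that $\sC$ has finite-dimensional Hom spaces and $K=Z(\sC)$ is a field. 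Once $M=H^{2n}(X\times X)(n)$ is semisimple, so is every direct summand, hence so is each $H^i(X)\otimes H^i(X)^\vee$, and therefore each $H^i(X)$ (a nonzero object $V$ with $V\otimes V^\vee$ semisimple is semisimple in a rigid $K$-linear category with finite-dimensional Homs, since $V$ is a summand of $V\otimes V^\vee\otimes V$).

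The main obstacle I anticipate is the passage from ``the Poincaré pairing on $M$ is non-degenerate'' to ``$M$ is semisimple'' in the abstract setting of a rigid $K$-linear $\otimes$-category with finite-dimensional Homs, rather than in the category of Galois representations where \cite{ss} works: one needs that a self-dual object whose ``trace form'' $\sC(\un,M)\times\sC(M,\un)\to K$ is perfect has no radical in its endomorphism algebra, and this uses that $\sC(M,M)=\sC(\un,M\otimes M^\vee)\simeq\sC(\un,M\otimes M)$ (self-duality) carries, via the pairing, enough structure to detect the Jacobson radical. I would handle this by noting that $\End_\sC(M)$ is a finite-dimensional $K$-algebra equipped with the symmetric bilinear form $(f,g)\mapsto \langle f,g\rangle$ induced by \eqref{eqpoin2} on $M$, which is associative in the sense $\langle fg,h\rangle=\langle f,gh\rangle$ (this follows from the diagram chase identifying \eqref{eqpoin} with \eqref{eqpoin2}), so that $S(X\times X,n)$ says precisely that this form is non-degenerate; a finite-dimensional algebra over a field admitting a non-degenerate associative symmetric bilinear form is a (symmetric, hence self-injective) Frobenius algebra, but to get \emph{semisimplicity} of $M$ as an object one actually needs the stronger input that the radical of $\End_\sC(M)$ pairs trivially — this holds because elements of the radical, being nilpotent, have vanishing ``trace'' against all of $\End_\sC(M)$, exactly as in \cite{ss}. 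So the real content is to verify that $\langle\mathrm{rad},\cdot\rangle=0$, which reduces to the statement that $\Tr_\sC(f)=0$ for nilpotent $f$ — valid in any $K$-linear rigid $\otimes$-category. With that in hand the semisimplicity of $M$ and hence of all $H^i(X)$ follows, completing the proof.
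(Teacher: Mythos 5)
Your central idea is the right one and is essentially the paper's: identify the pairing of $S(X\times X,n)$, via Künneth and Poincaré duality, with the trace form $(f,g)\mapsto\tr(g\circ f)$ on endomorphism algebras, and use that nilpotents have trace zero to conclude that non-degeneracy kills the Jacobson radical. (The paper packages this last step as the statement that $\End_\sC(-)\to\End_{\sC/\sN}(-)$ is bijective together with Lemma \ref{lak}, which says $\sC/\sN$ is semi-simple; your Frobenius-algebra argument is the same mathematics. Note, though, that ``$\Tr_\sC(f)=0$ for nilpotent $f$'' is \emph{not} valid in an arbitrary $K$-linear rigid $\otimes$-category — here it is rescued by the pseudo-tannakian hypothesis of Theorem \ref{tandre}, which lets you compute the trace in super-vector spaces.)

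The genuine gap is in your descent from $M=H^{2n}(X\times X)(n)$ to $H^i(X)$. Going from $M$ to its Künneth summands is fine (corner algebras $e\End_\sC(M)e$ of a semisimple algebra are semisimple), but the final step ``$V\otimes V^\vee$ semisimple $\Rightarrow V$ semisimple, since $V$ is a summand of $V\otimes V^\vee\otimes V$'' does not work as stated: to use that retraction you would need $V\otimes V^\vee\otimes V$ to be semisimple, and semisimplicity of $V\otimes V^\vee$ does not imply semisimplicity of its tensor product with $V$ in this generality (that is a Serre--Deligne type theorem requiring hypotheses — abelian Tannakian in characteristic $0$, $\dim V\ne0$ — not available for a merely pseudo-tannakian, not necessarily abelian $\sC$; likewise $\End_\sC(V)\hookrightarrow\End_\sC(V\otimes V^\vee)$ is only a subalgebra embedding, and subalgebras of semisimple algebras need not be semisimple). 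The same concern applies to your sketch with subobjects $N\subset M$ and orthogonal complements, since $\sC$ is not assumed abelian. The fix is to avoid the detour through $M$ entirely: observe first that for $C,D\in\sC$ the composition pairing $\sC(\un,C\oplus D)\times\sC(C\oplus D,\un)\to K$ restricts to zero on the cross terms, so $S(X\times X,n)$ is \emph{equivalent} to non-degeneracy of the pairing on each individual Künneth block $H^i(X)\otimes H^{2n-i}(X)(n)\simeq H^i(X)\otimes H^i(X)^\vee$; by Poincaré duality this block pairing \emph{is} the trace form on $\End_\sC(H^i(X))$, and your radical argument then applies directly to each $H^i(X)$. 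This is exactly the route the paper takes.
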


\begin{proof} For $C,D\in \sC$, the restriction of the composition pairing 
\[\sC(\un,C\oplus D)\times \sC(C\oplus D,\un)\to K
\]
to $\sC(\un,C)\times \sC(D,\un)$ and $\sC(\un,D)\times \sC(C,\un)$ is $0$. By the Künneth formula, $S(X\times X,n)$ is therefore equivalent to the non-degeneracy of the composition pairing
\[\sC(\un,H^{i}(X)\otimes H^{2n-i}(X)(n))\times \sC(H^{i}(X)\otimes H^{2n-i}(X)(n),\un)\to K
\]
for all $i$. By Poincar\'e duality, this pairing is converted into
\[\End_\sC(H^{i}(X))\times \End_\sC(H^{i}(X))\to K
\]
which is checked to be
\[(f,g)\mapsto \tr(g\circ f)\]
where $\tr$ is the rigid trace. This means that the homomorphism
\[\End_\sC(H^{i}(X))\to \End_{\sC/\sN}(H^{i}(X))\]
is bijective. But $\sC/\sN$ is semi-simple by Lemma \ref{lak}, which concludes the proof.
\end{proof}

\section{Examples and consequences}

\subsection{Summary}\label{s9.1}

 In  Definition \ref{defhom}, we introduced the adequate equivalence relation $\sim_\hun$ given by $W_\ab$-homological motives. Consider the similar relation $\sim_\hum$ given by $W^+_\ab$-homological motives (Notation \ref{n:wsa}); it is coarser than $\sim_\hun$ and we have a commutative diagram
\begin{equation}\label{Gpic}
\begin{gathered}
\xymatrix{
\sM_{\hun}\ar[r]^{\lambda_\hun}\ar[d]&T(\sM_{\hun})\ar[d]^{ }\ar[dr]^{\bar w}& \\
\sM_{\hum}\ar[r]^{\lambda_\hum}\ar[d]&T(\sM_{\hum})\ar[dr]^{\bar w^+}\ar[dl]&\sW_\ab\ar[d]^\tau\\
\sM_\num &&\sW_\ab^+
}
\end{gathered}
\end{equation}
where $\bar w^+$ is induced by $W^+_\ab$. Here are some implications between conjectures:

\begin{itemize}
\item  By Proposition \ref{p7.1}, Condition V implies Condition D for $W_\ab$.
\item Obviously, Condition D for $W_\ab$ implies Condition D for $W_\ab^+$.
\item Condition D for $W_\ab$ (\resp for $W_\ab^+$) implies that $\lambda_\hun$  (\resp $\lambda_\hum$) is an equivalence (by Lemma \ref{bvk}).
\item By Corollary \ref{c7.1a},  Condition D for $W_\ab^+$ implies that $\bar w^+$ is also an equivalence, and that $\weil_\ab^+$ is connected.  By Corollary \ref{c7.1}, the same holds for $W_\ab$, $\bar w$ and $\weil_\ab$ under the extra Condition C, which is not automatic in this case.
\item If $\weil_\ab$ (\resp $\weil_\ab^+$) is connected, all abelian-valued (\resp tight abelian-valued) Weil cohomologies are ab-equivalent in the sense of Definition \ref{dcomp.1} a) by Lemma \ref{l2.1} a).
\item Since $\tau$ is a localisation (Remark \ref{rkloc}), $\weil_\ab$ connected $\Rightarrow$ $\tau$ is an equivalence (same lemma).
\end{itemize}

So, under Condition V,  $\weil_\ab$ is the only category left in the diagram which may be different from $\sM_\num$, and they are equivalent if and only if $\weil_\ab$ is connected. 

The hypothesis that $\weil_\ab^+$ is connected is \emph{a priori} weaker than Condition V, and has a rather striking consequence; it may be worth studying for itself. 

Note that, in the absence of Condition D for $W_\ab^+$, \eqref{Gpic} does not give any functor from $\weil_\ab^+$ to $\sM_\num$.

\subsection{Motives of abelian type}(Compare \protect{\cite[\S 2A, esp. Th. 2A9]{kdix}}).  \label{s7.4}

If $X$ is an abelian variety of dimension $g$, one has a (full) Chow-Künneth decomposition
\begin{equation}\label{eq9.1}
h(X)=\bigoplus_{i=0}^{2g} h^i(X)
\end{equation}
with
\begin{equation}\label{eq4.3}
h^i(X)\simeq S^i(h^1(X))
\end{equation}
(\cite{den-mur}, \cite[Th. 5.2]{scholl})\footnote{Recall that the exterior powers appearing in \cite[Th. 5.2 (ii)]{scholl} should be replaced by symmetric powers, to respect signs in the commutativity constraint of $\sM_\rat(k)$.}. Also,

\begin{lemma}[\protect{\cite{den-mur}}]\label{ldm} Multiplication by $n\in\Z$ on $X$ induces multiplication by $n^i$ on $h^i(X)$.
\end{lemma}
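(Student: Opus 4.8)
\textbf{Plan for the proof of Lemma \ref{ldm}.} The statement concerns the behaviour of the correspondence $[n]_*$ (or $[n]^*$) on the Chow--Künneth summands $h^i(X)$. The natural strategy is to combine the isomorphism $h^i(X)\simeq S^i(h^1(X))$ of \eqref{eq4.3} with the known action of $[n]$ on $h^1(X)$. First I would recall that, by the construction of the Chow--Künneth decomposition for abelian varieties in \cite{den-mur}, the summand $h^1(X)$ is characterised inside $h(X)$ precisely as the eigenspace on which $[n]$ acts by $n$ for all $n$; this is in fact one of the defining properties of the Deninger--Murre (Beauville) decomposition, coming from the action of $[n]_*$ on $CH^*(X)_\Q$ split into eigencomponents $CH^*_{(s)}(X)$ on which $[n]_*$ acts by $n^{2\dim-*-s}$ or the dual normalisation. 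So the case $i=1$ is essentially the input, not something to be reproved.

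Next, granting $[n]$ acts by $n$ on $h^1(X)$, I would deduce the general case functorially. Multiplication by $n$ is a morphism of abelian varieties, hence induces an endomorphism of the motive $h(X)$ in $\sM_\rat(k)$ respecting the algebra structure (the group law on $X$ makes $h(X)$, and in particular $h^1(X)$, behave well under the symmetric power functors $S^i$). Since $S^i$ is a functor on $\sM_\rat(k)$ commuting with the relevant endomorphisms, the induced map on $h^i(X)\simeq S^i(h^1(X))$ is $S^i$ applied to multiplication by $n$ on $h^1(X)$, which is multiplication by $n$; and $S^i$ of the scalar $n\cdot\id$ is the scalar $n^i\cdot\id$ (because $S^i$ is $\Q$-linear and sends $\lambda\cdot\id_M$ to $\lambda^i\cdot\id_{S^i M}$, which one checks on the defining idempotent realising $S^i$ inside $M^{\otimes i}$: the diagonal scalar $n$ acts on $M^{\otimes i}$ by $n^i$ and commutes with the symmetrising projector). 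This gives the claim.

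The only genuine subtlety — really a bookkeeping point rather than an obstacle — is the compatibility of conventions: whether one works with $[n]_*$ or $[n]^*$, and which grading convention is used for $h^i(X)$ (Beauville's $CH^*_{(s)}$ versus the weight grading), since these differ by the relation $[n]^*=[−n]_*$ composed with duality and shift the exponent by a fixed amount; one must check that with the normalisation fixed in \eqref{eq9.1}--\eqref{eq4.3} the exponent on $h^i$ is exactly $i$ and not, say, $2g-i$. I expect this to be settled simply by evaluating on $i=0$ (where $h^0(X)=\un$ and $[n]$ acts trivially, forcing exponent $0$) and on $i=1$ (exponent $1$ by the Deninger--Murre normalisation), which pins down the linear-in-$i$ formula $n^i$ uniquely given the multiplicativity through $S^i$. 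Since the reference \cite{den-mur} is cited for the whole statement, the write-up can be kept to a few lines: recall \eqref{eq4.3}, invoke functoriality of $S^i$, and record the scalar computation $S^i(n\cdot\id)=n^i\cdot\id$.
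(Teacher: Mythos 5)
The paper gives no proof of Lemma \ref{ldm} at all: it is quoted verbatim from \cite{den-mur}, where the eigenvalue property is essentially built into the construction of the Chow--Künneth projectors (they are obtained as the simultaneous eigenprojectors for the $[n]^*$ acting on $CH^g(X\times X)_\Q$ via the Fourier transform and Beauville's decomposition). Your derivation is correct and takes a genuinely different route: you take the $i=1$ case as input, invoke the isomorphism $h^i(X)\simeq S^i(h^1(X))$ of \eqref{eq4.3}, use its naturality for homomorphisms of abelian varieties (which the paper itself emphasises immediately after \eqref{eq4.3}, so $[n]$ qualifies), and compute $S^i(n\cdot\id)=n^i\cdot\id$ by commuting the diagonal scalar past the symmetrising idempotent in $M^{\otimes i}$; your anchoring at $i=0,1$ correctly pins down the normalisation. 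The one caveat worth recording is a question of logical order rather than of correctness: in \cite{den-mur} the eigenvalue property for all $i$ is Theorem 3.1 and \emph{precedes} the identification $h^i\simeq S^ih^1$ (due to Künnemann/Scholl), whose usual proof uses the Deninger--Murre projectors. So if you want your argument to stand as an independent proof you must base \eqref{eq4.3} on a construction of the motivic decomposition that does not already presuppose the eigenvalue property (e.g.\ Shermenev's direct decomposition of $h(X)$ into symmetric powers of $h^1$). Within the present paper both \eqref{eq4.3} and the lemma are quoted facts, so your derivation is an acceptable and rather clean way to see why the exponent is exactly $i$; what it buys is a two-line reduction to linear algebra on $S^i$, at the cost of importing the (nontrivial) isomorphism \eqref{eq4.3}, whereas the original approach gets the eigenvalues for free from the construction of the projectors.
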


We also have Künnemann's isomorphisms \cite{ku}:
\begin{equation}\label{eq4.3a}
h^i(X)\iso h^{2g-i}(X)(g-i)
\end{equation}
induced by any polarisation given by a very ample symmetric divisor.

Therefore we seem to have obtained a tight Weil cohomology on abelian varieties, with values in Chow motives. The catch is that \eqref{eq4.3} is natural for homomorphisms of abelian varieties, but not for general correspondences. We shall now introduce the coarsest adequate equivalence relation which corrects this problem, on a larger class of varieties introduced in Notation \ref{n5.1} below.

\begin{lemma}\label{lsat}
The classes $\V_c$ and $\V_e$ of Examples \ref{e3.1} c) and e) have the same saturation.
\end{lemma}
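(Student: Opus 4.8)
The plan is to show that each of $\sV_c$ and $\sV_e$ has saturation containing abelian varieties (products of which appear in $\sV_c$) and curves (which generate $\sV_e$), by exhibiting each type of variety as a summand of a Chow motive built from the other. Recall that by definition $\sV^\sat = \{X \mid h(X) \in \sM_\rat(k,\sV)\}$, and that $\sM_\rat(k,\sV)$ is closed under direct summands, tensor products, coproducts and the operations defining admissibility (Definition \ref{d3.1}); in particular, since $\sV_e$ is strongly admissible and contains $\P^1$, its saturation contains the Lefschetz motive $\L$.

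First I would show $\sV_c \subseteq \sV_e^\sat$. It suffices to treat a single abelian variety $A$ of dimension $g$, since coproducts are allowed. Choose a smooth projective curve $C$ with a surjection $\Pic^0(C) \twoheadrightarrow A$ (by Poincaré irreducibility, or simply take $C$ a sufficiently general curve on $A$, or a hyperplane section iterated down; over a non-closed field one works with $C$ geometrically connected and uses Examples \ref{e3.1} e) which allows $\pi_0$). This splits $h^1(A)$ as a direct summand of $h^1(C)$ via the induced map on degree-one Chow--Künneth components, compatibly with the projectors, so $h^1(A) \in \sM_\rat(k,\sV_e)$. By the decomposition \eqref{eq4.3}, $h^i(A) \simeq S^i(h^1(A))$, and each symmetric power lies in $\sM_\rat(k,\sV_e)$ as a direct summand of a tensor power (using that $\Q$-coefficients make $S^i$ a summand of $\otimes^i$). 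Summing over $i$ via \eqref{eq9.1} gives $h(A) \in \sM_\rat(k,\sV_e)$, i.e. $A \in \sV_e^\sat$. Since $\sV_e^\sat$ is strongly admissible (Lemma \ref{l4.7}), it absorbs $\{A_i\}_\adm = \sV_c$; hence $\sV_c^\sat \subseteq \sV_e^\sat$, and taking saturations (which is idempotent) gives $\sV_c^\sat \subseteq \sV_e^\sat$.

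Conversely I would show $\sV_e \subseteq \sV_c^\sat$, which reduces to: for any geometrically connected smooth projective curve $C$, $h(C) \in \sM_\rat(k,\sV_c)$. Here I use Murre's partial Chow--Künneth decomposition $h(C) = h^0(C) \oplus h^1(C) \oplus h^2(C)$ with $h^0(C) \simeq h(\pi_0(C))$, $h^2(C) \simeq h^0(C)\otimes\L$, together with the canonical isomorphism $h^1(C) \simeq h^1(J)$ where $J = \Pic^0(C)$ is the Jacobian (an abelian variety); this is \eqref{eq5.2}/\cite[Ex. 6.38]{zetaL} specialised to curves, or directly Scholl's \cite[\S 4]{scholl}. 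Since $J \in \sV_c$, we have $h^1(C) \simeq h^1(J) \in \sM_\rat(k,\sV_c)$; and $h^0(C) = \un$ (or an Artin motive $h(\pi_0(C))$, which lies in $\sM_\rat(k,\sV_c)$ as $\pi_0(C)$ is a $0$-dimensional variety of $\sV_c$-type — $\sV_c$ being admissible contains $\Spec k$, finite coproducts and connected components, though for non-closed $k$ one should check $\pi_0(C) \in \sV_c$ directly, which holds as $\sV_c$ consists of coproducts of abelian varieties over étale extensions once one also allows zero-dimensional pieces). Finally $\L \in \sM_\rat(k,\sV_c)$: this needs $\P^1 \in \sV_c^\sat$, equivalently $\L$ being a summand of the motive of some abelian variety — take an elliptic curve $E$, then $h^2(E) \simeq \L$, so $\L \in \sM_\rat(k,\sV_c)$. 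Hence $h^2(C) \in \sM_\rat(k,\sV_c)$ and $h(C) \in \sM_\rat(k,\sV_c)$, giving $\sV_e \subseteq \sV_c^\sat$, whence $\sV_e^\sat \subseteq \sV_c^\sat$. Combining the two inclusions gives $\sV_c^\sat = \sV_e^\sat$.

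The main obstacle I anticipate is the bookkeeping over a non-algebraically-closed base field $k$: the classes $\sV_c$ and $\sV_e$ in Examples \ref{e3.1} are described using coproducts of abelian varieties (resp. geometrically connected curves) over étale $k$-schemes, so one must be careful that the Jacobian of a geometrically connected curve, and conversely the curves one uses to dominate a given abelian variety, stay within the prescribed class after the admissibility closure — in particular that schemes of constants $\pi_0$ and base-change along étale extensions are handled. This is where invoking the strong admissibility of $\sV_e^\sat$ (Lemma \ref{l4.7}) and the explicit closure operations in Definition \ref{d3.1} does the real work; the motivic identities \eqref{eq9.1}, \eqref{eq4.3}, \eqref{eq5.2} and the curve case of Scholl's decomposition are all standard and cited, so the proof itself should be short once the class-membership subtleties are dispatched.
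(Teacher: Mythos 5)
Your proposal is correct and follows essentially the same route as the paper: for curves, the decomposition $h(C)=h^0\oplus h^1\oplus h^2$ with $h^1(C)\simeq h^1(J)$ and $\L=h^2(E)$; for abelian varieties, realising $h^1(A)$ as a summand of $h^1(C)$ for a suitable (geometrically connected) curve and then using $h^i(A)\simeq S^i(h^1(A))$. The only cosmetic difference is that the paper cites Murre's Lemma 2.3 for an ample curve through the origin where you invoke Poincar\'e reducibility via a surjection from the Jacobian — the same argument in substance.
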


\begin{proof} First note that $\sM_\rat(k,\sV_c)$ contains the Lefschetz motive $\L$ because $\L=h^2(E)$ for any elliptic curve $E$.\\
a) Let $C$ be a curve. Then $h^0(C)$ and $h^2(C)$ obviously belong to $\sM_\rat(k,\V_c)$, and so does $h^1(C)$ by \eqref{eq5.2}. Therefore $\V_e^\sat\subseteq \V_c^\sat$.

b) Let $A$ be an abelian variety, and $C\subset A$ be an ample curve passing through $0$ (hence geometrically connected). Then $h^1(A)$ is a direct summand of $h^1(C)$ \cite[Lemma 2.3]{murre}, hence belongs to $\sM_\rat(k,\V_e)$, and so do the other $h^i(A)$'s by \eqref{eq4.3}. Therefore $\V_c^\sat\subseteq \V_e^\sat$.
\end{proof}

\begin{nota}\label{n5.1}
 We write $\sAb$ for the common saturation of $\V_c$ and $\V_e$: these are \emph{varieties of abelian type}.
 \end{nota}

\begin{prop}\label{p5.2} If $X$ is an abelian variety, \eqref{eq5.7} holds for all $(i,j)$ for any Weil cohomology $H$. 
\end{prop}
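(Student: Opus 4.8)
The statement to prove is Proposition \ref{p5.2}: for $X$ an abelian variety and any Weil cohomology $H$, equation \eqref{eq5.7} holds for all $(i,j)$, i.e. $H^i(h^j(X)) = 0$ for $i \ne j$ and $= H^i(X)$ for $i = j$, where $h^j(X)$ is the Chow--Künneth component from the decomposition \eqref{eq9.1}. Since $\sum_j \pi^j_X = 1$ on $h(X)$ and the $\pi^j_X$ are orthogonal idempotents in $\End_{\sM_\rat}(h(X))$, once we know $H^i(h^j(X)) = 0$ for $i\ne j$, the identity $H^i(X) = \bigoplus_j H^i(h^j(X))$ forces $H^i(h^j(X)) = H^i(X)$ for $i = j$; so the whole content is the vanishing $H^i(h^j(X)) = 0$ for $i \ne j$. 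The plan is to reduce to the case $j = 1$ and then bootstrap using \eqref{eq4.3}.

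First I would observe that, by Lemma \ref{l3.1} a) and the multiplicativity of $H^*$ under products of abelian varieties (which are again abelian varieties), and using that $h^i(X\times Y) = \bigoplus_{a+b=i} h^a(X)\otimes h^b(Y)$ together with the Künneth formula (Axiom (iv)), it suffices to treat $j = 1$: indeed \eqref{eq4.3} expresses $h^j(X)$ as the symmetric power $S^j(h^1(X))$, so $h^j(X)$ is a direct summand of $h^1(X)^{\otimes j}$, and applying the strong monoidal functor $\uH^*$ gives that $H^*(h^j(X))$ is a direct summand of $H^*(h^1(X))^{\otimes j}$; if $H^*(h^1(X))$ is concentrated in degree $1$ then $H^*(h^1(X))^{\otimes j}$ is concentrated in degree $j$ by the grading on $\sC^{(\Z)}$, hence so is its direct summand $H^*(h^j(X))$. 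This is exactly the bootstrap already used implicitly for classical cohomology; it works here verbatim because $\uH^*$ is a strong additive $\otimes$-functor to $\sC^{(\Z)}$ (Proposition \ref{p1.1}, Proposition \ref{p1.4}) and the Koszul grading is respected.

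So the crux is: $H^i(h^1(X)) = 0$ for $i \ne 1$. Here I would first note that $h^0(X)$ and $h^{2g}(X)$ are handled since $h^0(X) = \un$ (as $X$ is geometrically connected — strictly, one uses that $X$ has a $k$-rational point $0$, so $\pi_0(X) = \Spec k$; if $X$ is only geometrically connected one still has $H^0(h^0(X)) \iso \un$ by Lemma \ref{l3.1} b) after base change, or directly) and $h^{2g}(X) = \L^g$, which has $H^*$ concentrated in degree $2g$ by Axiom (iii) and Proposition \ref{p1.1}(1). For $h^1(X)$ with $1 < g$, the key input is that $h^1(X)$ is a direct summand of $h^1(C)$ for an ample geometrically connected curve $C \subset X$ passing through $0$, by \cite[Lemma 2.3]{murre}: this is the same reduction used in the proof of Lemma \ref{lsat} b). Hence $H^*(h^1(X))$ is a direct summand of $H^*(h^1(C))$, and it remains to show $H^i(h^1(C)) = 0$ for $i \ne 1$, i.e. the case $g = 1$ (a curve). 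For a curve $C$ of genus $g_C$ we have $h(C) = h^0(C) \oplus h^1(C) \oplus h^2(C) = \un \oplus h^1(C) \oplus \L$, so for $i \notin [0,2]$ the vanishing $H^i(h^1(C)) = 0$ follows from $H^i(C) = 0$ (Axiom (ii)); for $i = 0$ it follows because $H^0(h^1(C))$ is a direct summand of $H^0(C) = \un = H^0(h^0(C))$ and the projector onto $h^0(C)$ acts as the identity on $H^0$ (or: apply $\uH^*$ to the splitting $h^0(C) \to h(C) \to h^0(C)$, noting $H^0(h(C)) = H^0(h^0(C))$ by Lemma \ref{l3.1} b)); and for $i = 2$ it follows by the self-duality \eqref{eq7.2}, $h^1(C)^\vee \simeq h^1(C)(1)$ applied through the $\otimes$-functor $\uH^*$, reducing $H^2(h^1(C))$ to the dual of $H^0(h^1(C)) = 0$.

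The main obstacle I anticipate is the careful bookkeeping in the reduction step from general $j$ to $j=1$: one must be sure that the splitting $h^j(X) \simeq S^j(h^1(X))$ as a direct summand of $h^1(X)^{\otimes j}$ is respected by $\uH^*$ as a \emph{graded} functor, i.e. that $\uH^*(h^1(X)^{\otimes j}) = \uH^*(h^1(X))^{\otimes j}$ lands in degree $j$ once $\uH^*(h^1(X))$ is in degree $1$ — this uses the strong monoidality of $\uH^*: \sM_\rat(k,\sV) \to \sC^{(\Z)}$ and the additive grading of the tensor structure on $\sC^{(\Z)}$ from Remark \ref{r1}, \eqref{eq3.1}. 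A secondary subtlety is ensuring the ample curve argument of \cite[Lemma 2.3]{murre} is applicable within $\sV = \sAb$ and that all motives involved genuinely lie in $\sM_\rat(k,\sAb)$, which is precisely what the saturation in Notation \ref{n5.1} guarantees; by Lemma \ref{lsat} the curve $C$ and all $h^i(A)$ are in $\sAb$. Beyond these points the argument is a routine unwinding of the axioms, so I would keep it brief and refer to the proof of Lemma \ref{lsat} and to \cite[Th. 5.2]{scholl}, \cite{den-mur} for the Chow--Künneth facts.
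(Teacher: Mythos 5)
Your proposal is correct and takes essentially the same route as the paper: reduce to $h^1(X)$ via the symmetric-power isomorphism \eqref{eq4.3} and the strong monoidality of $\uH^*$, then pass to an ample curve through the origin and settle the curve case using the axioms, normalisation (automatic here by Lemma \ref{l3.1} b)) and the self-duality \eqref{eq7.2}. The paper's proof is just a two-line citation of Proposition \ref{p5.1} b) and Lemma \ref{lsat}; what you wrote is that argument unpacked inline.
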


\begin{proof} By the symmetric monoidality of $\uH^*$ in Proposition \ref{p1.1} a), one has
\[H^*(h^i(X))\simeq S^i(H^*(h^1(X)))\]
and one applies Proposition \ref{p5.1}. Indeed, the saturation of the category of abelian varieties contains all curves by Lemma \ref{lsat}. 
\end{proof}

\begin{cor}\label{c4.1} For $X$ an abelian variety, multiplication by $n\in \Z$ on $X$ induces multiplication by $n^i$ on $H^i(X)$.
\end{cor}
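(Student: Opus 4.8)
The proof follows immediately by combining the two previous results. The plan is to apply the functoriality built into Proposition \ref{p1.1} a) together with Lemma \ref{ldm}: the cohomology functor $H^*$ sends the Chow-Künneth idempotents to the Künneth projectors, and sends the endomorphism ``multiplication by $n$'' of $h(X)$ to the endomorphism $[n]^*$ of $H^*(X)$.

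\begin{proof}
By Proposition \ref{p5.2} (whose proof invokes Proposition \ref{p5.1} and the symmetric monoidality of $\uH^*$), we have $H^j(h^i(X))=0$ for $i\ne j$ and $H^i(h^i(X))=H^i(X)$, so that the decomposition \eqref{eq9.1} is carried by $\uH^*$ to the Künneth decomposition $H^*(X)=\bigoplus_i H^i(X)$, with $h^i(X)$ mapping to $H^i(X)$. Now multiplication by $n$ on $X$ is an endomorphism of $h(X)$ in $\sM_\rat(k,\sAb)$ respecting this direct sum decomposition, and by Lemma \ref{ldm} it acts by $n^i$ on the summand $h^i(X)$. Applying the additive $\otimes$-functor $\uH^*$ of Proposition \ref{p1.1} a), and using that $\uH^i(h^i(X))=H^i(X)$, we conclude that $[n]^*$ acts by $n^i$ on $H^i(X)$.
\end{proof}

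The only point requiring a word of care is that $X$, being an abelian variety, lies in the admissible class under consideration: this is exactly the content of Notation \ref{n5.1}, and it is needed so that $h^i(X)\in\sM_\rat(k,\sAb)$ and so that Proposition \ref{p5.2} applies. There is no real obstacle here; the statement is a formal consequence of Lemma \ref{ldm} together with the fact, established in Proposition \ref{p5.2}, that $\uH^*$ respects the canonical Chow-Künneth grading of $h(X)$ for $X$ abelian.
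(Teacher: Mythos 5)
Your proof is correct and follows exactly the paper's route: the paper's own proof is the one-line "This follows from Proposition \ref{p5.2} and Lemma \ref{ldm}", and your write-up simply spells out the details of how $\uH^*$ transports the Deninger--Murre action on $h^i(X)$ to the Künneth summand $H^i(X)$. Nothing to add.
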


\begin{proof} This follows from Proposition \ref{p5.2} and Lemma \ref{ldm}.
\end{proof}

\begin{thm}\label{7.1} The universal Weil cohomology $W=W_{\sAb}$ verifies Condition C; hence $W$ induces an equivalence of $\otimes$-categories 
\[\fake{\sM}_W(k,\sAb)\iso \weil(k,\sAb)^\natural.\]
Its restriction to the subclass of abelian varieties is normalised, Albanese-invariant, and satisfies Hard Lefschetz (but Weak Lefschetz is not defined, see Warning \ref{warn}). It has weights in the sense of Definition \ref{d3.3}. 
\end{thm}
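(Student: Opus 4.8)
The plan is to reduce everything to the structural facts already assembled for abelian varieties and to the universal property of $W$. First I would establish Condition C for $W = W_{\sAb}$: by Proposition \ref{p5.2}, the isomorphisms \eqref{eq5.7} hold for every abelian variety $X$ and every Weil cohomology $H$, in particular for $W$ itself; by Lemma \ref{l4.5} a) (stability of Condition C under $\coprod$ and $\times$) and b) (stability under direct summands, i.e.\ passing to motives $h_H(Y)$ which are summands of $h_H(X)$), and by the very definition of $\sAb$ as the saturation of $\V_c$ (so that every object of $\sAb$ has its motive a summand of a product of abelian varieties), Condition C propagates to all of $\sAb$. Then Theorem \ref{wconjC} a), (i) $\Rightarrow$ (ii), gives the equivalence $\fake{\sM}_W(k,\sAb) \iso \weil(k,\sAb)^\natural$ — here I must remember that it is the twisted category $\fake{\sM}_W$ (Notation \ref{n3.1}) that appears, since the Künneth grading forces the Koszul sign change, exactly as in the discussion following Lemma \ref{l4.5}.

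Next I would treat the named properties of the restriction of $W$ to abelian varieties. \textbf{Normalised}: abelian varieties are geometrically connected, so $\pi_0(X) = \Spec k$ and the condition of Definition \ref{d3.4} is Lemma \ref{l3.1} b), which holds for any Weil cohomology; hence $W$ restricted to abelian varieties is normalised. \textbf{Albanese-invariant}: for an abelian variety $X$ one has $T_X = X$ and $a_X = \id$, so $a_X^*$ is trivially an isomorphism and Definition \ref{defalbinv} (via Proposition \ref{c5.1}) is satisfied; alternatively this is the $i=1$ case of \eqref{eq5.7} provided by Proposition \ref{p5.2}. \textbf{Hard (Strong) Lefschetz}: Künnemann's isomorphisms \eqref{eq4.3a} give, in $\sM_\rat$ (hence after applying $W^*$), that $L^{i}: h^{g-i}(X) \iso h^{g+i}(X)(i)$ for any polarisation coming from a very ample symmetric divisor; combining with the Chow–Künneth decomposition \eqref{eq9.1} and \eqref{eq5.7} (so that $W^*$ of $h^j(X)$ really is concentrated in degree $j$ and equals $W^j(X)$), the Lefschetz operator $L$ of Definition \ref{lefdef} induces an isomorphism $L^{j}: W^{g-j}(X) \iso W^{g+j}(X)(j)$ for $j \le g$, which is precisely Strong Lefschetz; as the Warning \ref{warn} notes, Weak Lefschetz cannot even be formulated since $\sAb$ is not closed under hyperplane sections.

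Finally, \textbf{weights}: I would argue that $W_{\sAb}$ restricted to abelian varieties has weights in the sense of Definition \ref{d3.3}. The cleanest route is to exhibit an enrichment of $W$ which visibly has weights and invoke Lemma \ref{l6.1} — for instance, if $k$ embeds in $\C$, the Hodge realisation $(\sH, H)$ has weights (Example \ref{exw} a)) and $W$ is initial, so $W$ is an enrichment of $H$ in the appropriate sense; in positive characteristic one uses the $\ell$-adic realisation $(\sR_\ell, H_\ell)$, which has weights by Deligne (Example \ref{exw} b)). Alternatively, one can argue intrinsically on $\sAb$: by Corollary \ref{c4.1}, multiplication by $n$ on an abelian variety $X$ acts by $n^i$ on $W^i(X)$, so for $i \ne j$ any morphism $W^i(X) \to W^j(Y)$ is killed by comparing the two scalar actions (using $n^i \ne n^j$ for suitable $n$ and that we are $\Q$-linear), giving $\sC(W^i(X), W^j(Y)) = 0$. \emph{The main obstacle} I anticipate is making the ``saturation'' reduction in the Condition C step fully rigorous: one must check that Lemma \ref{l4.5} b) really does let Condition C descend from products of abelian varieties to arbitrary summands defining objects of $\sAb = \V_c^\sat$, i.e.\ that the Künneth projectors remain algebraic on direct summands — this is exactly Lemma \ref{l3.4} b) applied to the faithful functor $\uH$, and the bookkeeping of which category ($\sM_W$ versus $\fake{\sM}_W$) the idempotents live in is where care is needed.
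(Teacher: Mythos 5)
Your treatment of Condition C, the equivalence $\fake{\sM}_W(k,\sAb)\iso\weil(k,\sAb)^\natural$, and the normalised/Albanese-invariance/Hard Lefschetz claims is essentially the paper's proof (Proposition \ref{p5.2} for abelian varieties, Lemma \ref{l4.5} to propagate, Theorem \ref{wconjC} a) for the equivalence, and \eqref{eq4.3a} plus \eqref{eq5.7} for Hard Lefschetz); your explicit handling of the saturation step $\V_c\rightsquigarrow\sAb$ via Lemma \ref{l4.5} b) is, if anything, more careful than the paper's.

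The weights part is where there is a genuine problem. Your first route fails: initiality of $W$ only produces the classifying functor $F_H:\weil(k,\sAb)\to\sH$ with $(F_H)_*W=H$, and $W$ is an \emph{enrichment} of $H$ in the sense of Definition \ref{relh} only if $F_H$ is \emph{faithful} — which is not known (it amounts to comparing $\sim_\mult$ with homological equivalence on abelian varieties, an open problem by the remarks after Theorem \ref{t9.1}). So Lemma \ref{l6.1} does not apply. Your second route is the right idea but has a gap as stated: an arbitrary morphism $\phi:W^i(X)\to W^j(Y)$ in the abstract category $\weil$ has no reason to intertwine the endomorphisms $n_X^*=n^i\cdot\id$ and $n_Y^*=n^j\cdot\id$, and without intertwining the comparison of scalar actions proves nothing (two objects each carrying a scalar endomorphism can have plenty of nonzero maps between them). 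The missing input is exactly the equivalence you have just established: it identifies $\weil^\natural(W^i(X),W^j(Y))$ with $\sM_W(h^i(X),h^j(Y))$, i.e. with classes of correspondences $\gamma^{i,j}=\pi^j_Y\circ\gamma\circ\pi^i_X$; for these, Lemma \ref{ldm} gives $[n,\gamma^{i,j}]=(n^j-n^i)\gamma^{i,j}$, while $[n,\gamma^{i,j}]$ is $W$-homologically trivial by Corollary \ref{c4.1} (Lemma \ref{l4.4} b)), whence $\gamma^{i,j}=0$ for $i\ne j$. Finally, Definition \ref{d3.3} demands the vanishing for all $X,Y\in\sAb$, not just abelian varieties; the paper gets this by observing that the bifunctor $(M,N)\mapsto\Hom(W^i(M),W^j(N))$ on $\sM_W(k,\sAb)$ vanishes on abelian varieties and hence everywhere, using $W^i(M\otimes\L)\simeq W^{i-2}(M)\otimes\L$ and passage to direct summands — a step your write-up omits.
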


\begin{proof} By Proposition \ref{p5.2}, abelian varieties verify Condition C, and so do other members of $\V_c$ by Lemma \ref{l4.5}. This proves the first statement, the second follows from Theorem \ref{wconjC} and the third follows from \eqref{eq4.3a} and Proposition \ref{p5.2}.

For the statement about weights, consider for $i\ne j$ the bifunctor $(M,N)\mapsto \Hom(W^i(M),W^j(N))$ on ${\sM}_W(k,\sAb)^\op \times {\sM}_W(k,\sAb)$. It vanishes on abelian varieties, hence everywhere thanks to the identity
\[W^i(M\otimes \L)\simeq W^{i-2}(M)\otimes W^{2}(\L)=W^{i-2}(M)\otimes \L\]
\cf Proposition \ref{p1.1} a) (1).
\end{proof}

We now want to compute the homological equivalence defined by $W$.

\begin{defn}\label{d9.1} Let $A,B$ be two abelian varieties and let $\gamma\in \Corr(A,B)$ be a correspondence. For any integer $n\in\Z$, we write
\[ [n,\gamma] = n_B\gamma -\gamma n_A.\]
\end{defn}

\begin{lemma}\label{l4.4} a) if $\gamma$ is (the graph of) a homomorphism, then $[n,\gamma]=0$.\\
b) For any $\gamma$ and any $n$, $[n,\gamma]$ is homologically equivalent to $0$ with respect to any Weil cohomology.\\
c) We have identities
\begin{align*}
 [m,[n,\gamma]] &=[n,[m,\gamma]]\\
[n,\gamma\delta] &= [n,\gamma]\delta +\gamma[n,\delta],\quad [nm,\gamma] = m[n,\gamma] + [m,\gamma]n\\
[n,\gamma\otimes \delta]&= [n,\gamma]\otimes [n,\delta]+[n,\gamma]\otimes \delta n+\gamma n\otimes [n,\delta]
\end{align*}
for any $\gamma,\delta,m,n$.
\end{lemma}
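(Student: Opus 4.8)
The plan is to treat the three parts in turn; all are formal manipulations in the $\Q$-linear $\otimes$-category $\Corr(k,\sAb)$, the only non-formal input being Corollary~\ref{c4.1}, which enters in part b). Part a) I would settle at once from the composition law for correspondences: if $\gamma$ is the graph of a homomorphism $f\colon A\to B$ of abelian varieties, then $\gamma\circ n_A$ is the graph of $f\circ n_A$ while $n_B\circ\gamma$ is the graph of $n_B\circ f$; since $f$ is a homomorphism, $n_B\circ f=f\circ n_A$, hence $n_B\gamma=\gamma n_A$ and $[n,\gamma]=0$.

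For part b), I would pass to cohomology. Let $H$ be an arbitrary Weil cohomology and $\uH^*\colon\sM_\rat(k,\sAb)\to(\sC^\natural)^{(\Z)}$ its realisation functor (Proposition~\ref{p1.1}, applied after replacing $\sC$ by $\sC^\natural$); by Definition~\ref{defhom}, a correspondence is homologically trivial for $H$ if and only if it lies in $\Ker\uH^*$. Viewing $\gamma\in\Corr(A,B)$ as a degree-preserving morphism $h(A)\to h(B)$, one has in cohomological degree $j$
\[
\uH^j([n,\gamma])=\uH^j(n_B)\circ\uH^j(\gamma)-\uH^j(\gamma)\circ\uH^j(n_A),
\]
and by Corollary~\ref{c4.1} the endomorphisms $\uH^j(n_A)$ and $\uH^j(n_B)$ are multiplication by $n^j$ on $H^j(A)$ and $H^j(B)$. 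These scalars are central, so the displayed expression equals $n^j\uH^j(\gamma)-\uH^j(\gamma)n^j=0$; hence $\uH^*([n,\gamma])=0$ and $[n,\gamma]\sim_H 0$. As this holds for every $H$, it holds in particular for the universal $W$.

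For part c), I would expand both sides of each identity using bi-additivity of $\circ$ and $\otimes$, the relations $n_Xm_X=m_Xn_X=(nm)_X$ for any abelian variety $X$, the identity $n_{X\times Y}=n_X\otimes n_Y$, and the interchange law $(\alpha\otimes\alpha')\circ(\beta\otimes\beta')=(\alpha\circ\beta)\otimes(\alpha'\circ\beta')$. Concretely: the first identity, after writing $[m,[n,\gamma]]$ and $[n,[m,\gamma]]$ as four-term sums, reduces to $m_Bn_B=n_Bm_B$ and $m_An_A=n_Am_A$; the second is the telescoping identity $(n_C\gamma-\gamma n_B)\delta+\gamma(n_B\delta-\delta n_A)=n_C\gamma\delta-\gamma\delta n_A$; the third follows from $m_B(n_B\gamma-\gamma n_A)+(m_B\gamma-\gamma m_A)n_A=(mn)_B\gamma-\gamma(mn)_A$; and for the fourth, substituting $n_{A\times A'}=n_A\otimes n_{A'}$ and $n_{B\times B'}=n_B\otimes n_{B'}$ gives $[n,\gamma\otimes\delta]=(n_B\gamma)\otimes(n_{B'}\delta)-(\gamma n_A)\otimes(\delta n_{A'})$, which one then matches term by term with the expansion of $[n,\gamma]\otimes[n,\delta]+[n,\gamma]\otimes\delta n_{A'}+\gamma n_A\otimes[n,\delta]$.

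I do not foresee a genuine obstacle. Part a) is definitional, part b) is a one-line consequence of Corollary~\ref{c4.1} (itself resting, via Proposition~\ref{p5.2} and Lemma~\ref{ldm}, on the Deninger--Murre decomposition), and part c) is routine; the only thing requiring care in c) is keeping track of which of $n_A$, $n_B$ acts on the source versus the target of a given correspondence, after which the conventions in the statement make every identity collapse.
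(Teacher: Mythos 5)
Your proposal is correct and follows the same route as the paper: a) is immediate from the definition, b) is exactly the intended application of Corollary \ref{c4.1} (the scalars $n^j$ are central, so the commutator vanishes degreewise), and c) is the same formal expansion the paper leaves as "easy", using commutativity of $m_X,n_X$ and the identity $n_{A\times A'}=n_A\otimes n_{A'}$. The explicit term-by-term verifications you give for c) are all accurate.
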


\begin{proof} a) is trivial, b) follows from Corollary \ref{c4.1}. In c), the first identity follows from the Jacobi identity since $m$ and $n$ commute with each other; the second are easy and the third follows from the equality
\[n_{A\times B} = n_A\otimes 1_B\circ 1_A\otimes n_B=1_A\otimes n_B\circ n_A\otimes 1_B\]
for two abelian varieties $A,B$.
\end{proof}

\begin{defn} Let $\mult$ be the coarsest adequate equivalence relation in $\sM_\rat(k,\sAb)$ such that $[n,\gamma]\sim_\mult 0$ for all $A,B,n,\gamma$.
\end{defn}

\begin{thm}\label{t9.1} a) $\sim_\mult$ is also the adequate equivalence relation generated by morphisms $\gamma:h^i(A)\to h^j(B)$ for $i\ne j$ in $\sM_\rat(k,\sAb)$.\\
b) The Chow-K\"unneth decomposition of Deninger-Murre  defines a Weil cohomology 
\[h^*:\Corr(k,\sAb)\to \sM_\mult(k,\sAb)^{(\N)}. \]
This induces an equivalence of categories $\sM_\mult(k,\sAb)\iso \weil(k,\sAb)^\natural$.
\end{thm}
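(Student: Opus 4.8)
The plan is to prove Theorem \ref{t9.1} in two stages, following the pattern already established for Theorem \ref{7.1} but now keeping track of the precise equivalence relation. First I would prove part a), the identification of $\sim_\mult$ with the adequate equivalence relation $\sim'$ generated by all morphisms $\gamma:h^i(A)\to h^j(B)$ with $i\ne j$ (for $A,B$ abelian varieties, in $\sM_\rat(k,\sAb)$). For the inclusion $\sim_\mult \le \sim'$: by Lemma \ref{l4.4} b), $[n,\gamma]$ is homologically trivial for every Weil cohomology, so by Theorem \ref{7.1} (Condition C for $W$) the components $\pi^i$ of the Chow–Künneth decomposition \eqref{eq9.1} are algebraic; writing $\gamma = \sum_{i,j}\pi^j_B\gamma\pi^i_A$ and using Lemma \ref{ldm} (mult.\ by $n$ acts by $n^i$ on $h^i$), one computes that $[n,\gamma] = \sum_{i,j}(n^j-n^i)\,\pi^j_B\gamma\pi^i_A$; choosing enough values of $n$ and a Vandermonde argument shows each off-diagonal component $\pi^j_B\gamma\pi^i_A$ ($i\ne j$) is an $\sZ[1/?]$-linear combination of the $[n,\gamma]$'s (after inverting suitable integers, harmless with $\Q$-coefficients), hence $\sim_\mult 0$. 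For the reverse inclusion $\sim' \le \sim_\mult$: by Lemma \ref{l4.4} a) a homomorphism commutes with $n_A$, and a general off-diagonal $\gamma:h^i(A)\to h^j(B)$ is killed because, when composed appropriately, it is detected by a suitable $[n,-]$; concretely $\pi^j_B\gamma\pi^i_A$ equals $(n^j-n^i)^{-1}[n,\pi^j_B\gamma\pi^i_A]$ modulo lower terms, which lies in the ideal generated by the $[n,\cdot]$. The bookkeeping that both ideals are $\otimes$-ideals (stable under $\otimes$) uses the third displayed identity in Lemma \ref{l4.4} c).

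For part b), the strategy is the same as in Theorem \ref{7.1}: by \eqref{eq4.3}, $h^i(X)\simeq S^i(h^1(X))$ is natural only for homomorphisms of abelian varieties, but modulo $\sim_\mult$ — precisely because $\sim_\mult$ kills the off-diagonal correspondences — the assignment $X\mapsto (h^i(X))_i$ becomes functorial for \emph{all} Chow correspondences, so it defines a strong additive $\otimes$-functor $h^*:\Corr(k,\sAb)\to \sM_\mult(k,\sAb)^{(\N)}$. Here I would check the axioms of Proposition \ref{p1.3}: the isomorphism $h^2(\P^1)\iso\L$ (the polarisation normalisation, using $\L = h^2(E)$ as in the proof of Lemma \ref{lsat}), $\IM h^*\subset (\ )^\N$ (clear since only nonnegative weights occur), and $\un\iso H^0(X)$ for $X$ geometrically connected (Lemma \ref{l3.1} b), valid since $\sim_\mult$ is adequate). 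The symmetric monoidality follows from \eqref{eq4.3} together with $h^i(X\times Y)\simeq \bigoplus_{p+q=i} h^p(X)\otimes h^q(Y)$ (the Chow–Künneth decomposition is multiplicative, by Deninger–Murre), reduced to the identity $S^i(M\oplus N)\simeq\bigoplus_{p+q=i}S^p(M)\otimes S^q(N)$ applied to $M=h^1(X)$, $N=h^1(Y)$; signs are correct because of the symmetric (not exterior) powers, per Footnote on \cite[Th. 5.2]{scholl}.

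Finally, for the equivalence $\sM_\mult(k,\sAb)\iso\weil(k,\sAb)^\natural$: the Weil cohomology $h^*$ just constructed, having target the pseudo-abelian $\sM_\mult(k,\sAb)$, is classified by a $\otimes$-functor $F:\weil(k,\sAb)\to \sM_\mult(k,\sAb)$ via Theorem \ref{thm2}. Conversely, $W$ verifies Condition C by Theorem \ref{7.1}, so by Theorem \ref{wconjC} a) the universal Weil cohomology induces an equivalence $\fake{\sM}_W(k,\sAb)\iso\weil(k,\sAb)^\natural$; it then suffices to identify $\sim_W$ (homological equivalence for $W$) with $\sim_\mult$. That $\sim_W\le\sim_\mult$ follows from Lemma \ref{l4.4} b) (more precisely, from part a) of the present theorem, since every generator of $\sim_\mult$ is $W$-homologically trivial by Theorem \ref{7.1} applied as in the discussion after Lemma \ref{l4.5}). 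For $\sim_\mult\le\sim_W$: a $W$-homologically trivial correspondence $\gamma:h(A)\to h(B)$ has all its diagonal components $\pi^i_B\gamma\pi^i_A$ already $W$-homologically trivial; but the existence of the well-defined functor $h^*$ above shows that the diagonal part of a correspondence — i.e.\ its image under the projection $\bigoplus_i\pi^i_B(-)\pi^i_A$ — is detected faithfully by $h^*$ modulo $\sim_\mult$, and a diagonal correspondence is $W$-homologically trivial iff each graded piece $h^i(A)\to h^i(B)$ dies in $\weil$, iff it is $\sim_\mult$-trivial by the very construction. Assembling, $\sim_W=\sim_\mult$, hence $\fake{\sM}_W(k,\sAb)=\sM_\mult(k,\sAb)$ (the twist $\fake{(-)}$ is harmless for the statement, as it only changes the commutativity constraint via the weight grading, which is exactly the Koszul sign already built into $\sM_\rat$'s convention on symmetric powers), and the claimed equivalence follows.

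I expect the main obstacle to be the faithfulness half of identifying $\sim_W$ with $\sim_\mult$, i.e.\ showing $\sim_\mult\le\sim_W$ — equivalently, that no \emph{off-diagonal} correspondence survives in $\weil(k,\sAb)$ beyond what $\sim_\mult$ already imposes and no \emph{diagonal} one is unexpectedly killed. The clean way to organise this is to exploit the Vandermonde/Lemma \ref{ldm} mechanism uniformly: the operators $n_X$ act on $W^i(X)$ by $n^i$ (Corollary \ref{c4.1}), so the isotypic projectors $\pi^i$ lie in the subalgebra generated by the $n_X$'s already inside $\weil$, which forces any $W$-homologically trivial $\gamma$ to decompose compatibly and reduces everything to the weight-by-weight comparison handled in part a). Care is needed that all these manipulations respect the $\otimes$-ideal structure (Lemma \ref{l4.4} c)) and that inverting integers is legitimate over $\Q$.
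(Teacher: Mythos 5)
Your proposal is correct and takes essentially the same route as the paper: part a) rests on the identity $[n,\gamma^{i,j}]=(n^j-n^i)\gamma^{i,j}$ coming from the Deninger--Murre decomposition and Lemma \ref{ldm}, and part b) on the resulting compatibility of correspondences with the Chow--Künneth grading modulo $\sim_\mult$, the fact that every Weil cohomology (with pseudo-abelian target) factors through $\sim_\mult$ by Lemma \ref{l4.4} b), and the combination of Theorem \ref{7.1} with Theorem \ref{wconjC} a). The only differences are cosmetic: a single $n$ with $n^i\ne n^j$ makes the Vandermonde step superfluous, and algebraicity of the Künneth projectors is already unconditional via \eqref{eq9.1}, so Theorem \ref{7.1} need not be invoked for that point.
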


\begin{proof} Let $\gamma\in \Corr(k,\sAb)(A,B)$ and, for $i,j\ge 0$
\[\gamma^{i,j}: h^i(A)\by{\iota} h(A)\by{\gamma_*}h(B)\by{\pi}h^j(B) \]
where $\iota$ is the inclusion and $\pi$ the projection. We have
\[\gamma=\sum_{i,j} \gamma^{i,j}\]
and
\[[n,\gamma^{i,j}] =(n^j-n^i)\gamma^{i,j}\]
by Lemma \ref{ldm}. This proves a). Therefore the action of correspondences respects Chow-Künneth decompositions in $\sM_\mult(k,\sAb)$. The induced functors on the category of abelian varieties immediately extend to the category $\sV_c$ of Example \ref{e3.1} c), hence define a Weil cohomology $W_1^*$ on $\V_c$ with values in $\sM_\mult(k,\sAb)$. Moreover, any Weil cohomology $H^*$ with values in a pseudoabelian $\otimes$-category
factors through $\sM_\mult(k, \sAb)$ by Lemma \ref{l4.4} b) (because $\mult$ is given by a $\otimes$-ideal and $H^*$
is a $\otimes$-functor). Thus $W_1^*$ is universal, $\sM_{W}(k,\sAb) = \sM_\mult(k,\sAb)$ and Theorem \ref{wconjC} a) proves the last claim of b).
\end{proof}

\begin{cor}\label{c9.1} For any polarised abelian variety $X$ of dimension $g$ and any $i\le g/2$, the map
\[L^{g-2i}:A^i_\mult(X)\to A^{g-i}_\mult(X)\]
is an isomorphism.\qed
\end{cor}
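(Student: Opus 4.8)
The plan is to deduce Corollary \ref{c9.1} directly from the Hard Lefschetz property established in Theorem \ref{7.1} for the universal Weil cohomology $W = W_{\sAb}$, together with the identification $\sM_\mult(k,\sAb)\iso \weil(k,\sAb)^\natural$ from Theorem \ref{t9.1} b). First I would recall the notation: $A^i_\mult(X)$ denotes the image of the cycle class map $\cl^i_X : CH^i(X)_\Q\to \sC(\un, H^{2i}(X)(i))$ of \eqref{eqcycle}, taken here for the Weil cohomology $h^* : \Corr(k,\sAb)\to \sM_\mult(k,\sAb)^{(\N)}$ which by Theorem \ref{t9.1} is (up to idempotent completion) the universal one $W$. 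Equivalently, using Lemma \ref{l1.1} and the identification of cycles with correspondences $\un(-i)\to h(X)$, we have $A^i_\mult(X) = \sM_\mult(k,\sAb)(\L^i, h^{2i}(X))$, i.e.\ the group of morphisms $\un\to W^{2i}(X)(i)$ in $\weil(k,\sAb)^\natural$ factoring appropriately.

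Next I would invoke Theorem \ref{7.1}, which says that $W$ restricted to abelian varieties satisfies Hard Lefschetz: for a polarised abelian variety $X$ of dimension $g$, the Lefschetz operator $L = L_X$ (Definition \ref{lefdef}, coming from a very ample symmetric divisor as in \eqref{eq4.3a}) induces an isomorphism
\[
L^{g-2i} : W^{2i}(X)\iso W^{2(g-i)}(X)(g-2i)
\]
in $\weil(k,\sAb)^\natural$ for $i\le g/2$. Tensoring by $\L^{\otimes i} = L_W^{\otimes -i}$ (that is, applying the twist $(i)$), this becomes an isomorphism $W^{2i}(X)(i)\iso W^{2(g-i)}(X)(g-i)$. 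Applying the functor $\sC(\un, -) = \weil(k,\sAb)^\natural(\un,-)$, which sends isomorphisms to bijections, I obtain a bijection $\sC(\un, W^{2i}(X)(i))\iso \sC(\un, W^{2(g-i)}(X)(g-i))$.

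It then remains to check that this bijection carries the subgroup $A^i_\mult(X)$ onto $A^{g-i}_\mult(X)$. This is where I would be slightly careful: the map on cohomology classes induced by $L^{g-2i}$ should be checked to coincide with the map on cycles $\alpha\mapsto L^{g-2i}\cdot\alpha$ (intersection with the $(g-2i)$-th power of the hyperplane class), so that it does preserve the image of the cycle class map. By Lemma \ref{l5.2}, $L_X$ factors as $i_*\circ i^*$ for a hyperplane section, and more directly by Definition \ref{lefdef} the operator $L_X$ is cup-product with $\cl^1_X(Y)$; compatibility of $\cl^*$ with the intersection product (Axiom (vi) of Definition \ref{d1.1}) then shows $\cl^{n-i}(L^{g-2i}\alpha) = L^{g-2i}\cl^i(\alpha)$, hence the isomorphism restricts to a surjection $A^i_\mult(X)\surj A^{g-i}_\mult(X)$; injectivity is automatic since the ambient map is bijective. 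I expect this compatibility bookkeeping — matching the two a priori different meanings of ``$L^{g-2i}$'', one as an operator on cohomology and one as intersection with a cycle class — to be the only real point requiring attention, and it is entirely formal given the axioms. Everything else is an immediate consequence of Theorems \ref{7.1} and \ref{t9.1}.
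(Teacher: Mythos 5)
Your approach is the paper's: the corollary is stated with an immediate \verb|\qed| precisely because it follows from Künnemann's isomorphism \eqref{eq4.3a} (Hard Lefschetz in Theorem \ref{7.1}) combined with the identification of Theorem \ref{t9.1} b), which is what you use. One caveat: your final deduction of \emph{surjectivity} of the restricted map from the compatibility $\cl^{g-i}(L^{g-2i}\alpha)=L^{g-2i}\cl^i(\alpha)$ is a non sequitur --- that formula only shows $L^{g-2i}$ carries $A^i_\mult(X)$ \emph{into} $A^{g-i}_\mult(X)$, and surjectivity in this form is exactly Condition A, a standard conjecture for general Weil cohomologies. The step is nevertheless vacuous here for the reason you yourself record earlier: for the motivic-valued cohomology $h^*$ the cycle class map is surjective by construction, since $\sM_\mult(\un,h(X)(i))=CH^i(X)_\Q/\!\sim_\mult$ and the weight grading kills the components $h^j(X)$ with $j\ne 2i$, so $A^i_\mult(X)$ \emph{equals} the full group $\sC(\un,H^{2i}(X)(i))$ and the bijection needs no restricting.
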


\begin{rks} a) In Theorem \ref{t9.1} a), we have $\gamma^{\otimes \binom{2g_A}{i}\binom{2g_B}{j}+1}=0$ if $i-j$ is odd by \eqref{eq4.3} and \cite[Prop. 9.1.9]{AK2}, where $g_A=\dim A$, $g_B=\dim B$ (note that $\chi(h^1(A))= -2g_A$). The smash-nilpotency of $\gamma$ for $i-j$ even and $\ne 0$ is a big open problem.\\
b) The adequate equivalence $\sim_\mult$ is strictly coarser than algebraic equivalence (\cf Proposition \ref{prop:alg}). Indeed, for any abelian $3$-fold $A$, the group $\sM_\alg(t^3(A),\L)$ is isomorphic to the Griffiths group of $A$ tensored with $\Q$ by \cite[Th. 7.7]{tunis}, where $t^3(A)$ is a certain direct summand of $h^3(A)$\footnote{In \loccit the covariant convention is used for motives, so $\sM_\alg(t^3(A),\L)$ corresponds to what is written $\sM_\alg(\L,t_3(A))$ there.}. This group is nonzero for the generic abelian $3$-fold by Nori \cite{nori}.
\end{rks}

\subsection{Back to André's motivated cycles} \label{S:A} We need:

\begin{lemma}\label{l8.2} Let $H$ be a classical Weil cohomology. The associated Weil cohomology $(\sM^A_H,H_A)$ with values in Andr\'e's category  from \S \ref{s6.2} is tight.
\end{lemma}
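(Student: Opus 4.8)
The goal is to verify the four tightness properties of Definition~\ref{d5.2b}---normalised, Albanese-invariant, Weak Lefschetz, Strong Lefschetz---for $(\sM^A_H,H_A)$, where $H$ is a classical Weil cohomology. The strategy is to transport these properties from $H$ itself to $H_A$ by means of the faithful exact $\otimes$-functor $\sM^A_H\to \Vec_K$ realising $H_A$ as $H$, and to exploit the fact that all the tightness properties are statements about certain morphisms being isomorphisms or split injections.

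\emph{First}, I would record that $H$ is tight by Example~\ref{exwsa}, since it is classical. The realisation functor $\omega\colon \sM^A_H\to \Vec_K$ (forgetting the motivated structure) is exact, faithful, and symmetric monoidal, and by construction $\omega_*H_A\cong H$ as Weil cohomologies. \emph{Second}, since $\omega$ is faithful and exact, hence conservative, Lemma~\ref{l8.1} applies in reverse: a morphism in $\sM^A_H$ is an isomorphism (resp.\ has a given direct-sum decomposition) if and only if its image under $\omega$ does. So the Strong Lefschetz isomorphisms $L^j\colon H_A^{n-j}(X)\to H_A^{n+j}(X)(j)$ for $\sM^A_H$ follow from those for $H$; similarly the Weak Lefschetz isomorphisms $i^*\colon H_A^l(X)\to H_A^l(Y)$ for $l\le n-2$ follow from the corresponding ones for $H$; the split injectivity at $l=n-1$ is then automatic by Remark~\ref{rk:split}. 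For the normalised and Albanese-invariance conditions, which by Propositions~\ref{p5.1} and \ref{c5.1} amount to $H_A^0(\pi_0(X))\to H_A^0(X)$ and $a_X^*\colon H_A^1(T_X)\to H_A^1(X)$ being isomorphisms, the same conservativity argument reduces them to the corresponding statements for $H$, which hold because $H$ is tight.

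\emph{Third}, there is one subtlety: André's category $\sM^A_H$ is built by adjoining to algebraic correspondences the Lefschetz inverses $\Lambda$, so the morphisms $L^j$, $i^*$, $a_X^*$ in $\sM^A_H$ must actually be present---the point being that these are honest morphisms in $\sM^A_H$ (the maps $L$, $i^*$, $a_X^*$ are algebraic to begin with, and their inverses, where required for Strong Lefschetz, are precisely the $\Lambda$-type operators that have been formally inverted in André's construction). This is exactly the content of the construction recalled in the proof of Theorem~\ref{t6.4}. So the isomorphisms whose existence we need are built into $\sM^A_H$, and conservativity of $\omega$ certifies that they are isomorphisms.

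\emph{Main obstacle.} The only genuinely delicate point is matching up the polarisation data: Weak and Strong Lefschetz in Definition~\ref{d5.2a} are phrased relative to a choice of smooth hyperplane section $i\colon Y\hookrightarrow X$, and one must check that, for a polarised $X\in\sV$, the Lefschetz operator $L=L_Y$ and the restriction $i^*$ used in defining $\sM^A_H$ are the same ones appearing in the tightness axioms, so that the inverted operators in André's category really do invert $L^j$ and witness the weak Lefschetz isomorphisms. Granting the compatibility of conventions between \S\ref{s8.5} and André~\cite{A} (already used freely in the Lefschetz-algebra subsection), this is routine; I expect no serious difficulty beyond careful bookkeeping.
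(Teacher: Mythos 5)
Your overall strategy—transporting tightness along the realisation functor $\sM^A_H\to \Vec_K$—is the paper's strategy, and your third paragraph correctly identifies that the $\Lambda$-operators adjoined in André's construction supply the needed inverses. But the load-bearing step of your argument, ``$\omega$ is faithful and exact, hence conservative,'' has a genuine gap: exactness of $\omega$ presupposes that $\sM^A_H$ is abelian, which is only known in characteristic $0$ (this is precisely the content of Theorem \ref{p8.3} and the discussion in \S\ref{s6.2}), whereas the lemma is needed in all characteristics (it is used over finite fields in \S\ref{S:A}). Faithfulness alone does not give conservativity for additive functors between pseudo-abelian categories (think of $-\otimes\Q$ on free $\Z$-modules), and conservativity of the realisation from $\sM^A_H$ in positive characteristic is not something you can assume. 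So reducing Weak Lefschetz, normalisation and Albanese-invariance to the corresponding isomorphisms for $H$ ``by conservativity'' does not go through as stated.

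The paper's proof (the same argument as Lemma \ref{hwsa}) gets by with faithfulness only. For Strong Lefschetz: $\Lambda^{n-l}$ is an actual morphism of $\sM^A_H$, and the two composites $\Lambda^{n-l}\circ L^{n-l}$ and $L^{n-l}\circ\Lambda^{n-l}$ become identities after applying the faithful functor $\uH$, hence are identities already in $\sM^A_H$; no conservativity is needed. For Weak Lefschetz: by Lemma \ref{l5.2} and Remark \ref{rk:split}, $i^*\colon H^l_A(X)\to H^l_A(Y)$ is \emph{split} injective in $\sM^A_H$, so $H^l_A(Y)\simeq H^l_A(X)\oplus C$; since $\uH$ is additive and faithful and $\uH(i^*)$ is an isomorphism for $l\le n-2$, one gets $\uH(C)=0$ and hence $C=0$ (a faithful additive functor reflects zero objects). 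The normalised and Albanese properties are handled the same way via the split injections of Propositions \ref{p5.1} and \ref{c5.1}. If you replace your conservativity step by this ``two-sided inverse present'' / ``split summand killed by a faithful functor'' argument, your proof is correct in all characteristics.
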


\begin{proof} This is the same argument as for Lemma \ref{hwsa}. To be clear, we reproduce it. Let $l\le n$; by the faithfulness of $\uH:\sM^A_H\to \Vec_K$, where $K$ is the coefficent field of $H$,  the morphism $\Lambda^{n-l}$ in $\sM^A_H$ is inverse to $L^{n-l}: H^{l}_A(X) \to H^{2n-l}_A(X)(n-i)$, which is therefore an isomorphism. 
Also, $i^*: H^{l}_A(X)\inj H^{l}_A(Y)$ is split injective for $i:Y\inj X$ a smooth hyperplane section of $X$ by Remark \ref{rk:split}; since  the functor $\uH$ is additive and faithful and $i^*: H^{l}(X)=\uH(H^{l}_A(X))\iso H^{l}(Y)=\uH(H^{l}_A(Y))$ is an isomorphism for $l\le n-2$, the complementary summand is $0$ in this case and we get Weak Lefschetz for $H_A$ as well. For the normalised and Albanese properties, we use Propositions \ref{p5.1} and \ref{c5.1} similarly.
\end{proof}

From Lemma \ref{l8.2} and Theorem \ref{thm2tris} a), we get an induced faithful $\otimes$-functor $\weil_H^+\inj  \sM^A_H$ refining \eqref{eq6.2}. 

\begin{thm}\label{p5.5} This functor is full and becomes essentially surjective after pseudo-abelian completion, hence a $\otimes$-equivalence $\rho_H:(\weil_H^+)^\natural\iso  \sM^A_H$.
\end{thm}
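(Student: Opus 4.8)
The plan is to mimic the proof of Theorem \ref{t6.4}, which treated the analogous statement in characteristic $0$ for the homological category, now in the tight setting and over an arbitrary base field. First I would record the two inputs that make this work: by Lemma \ref{l8.2} the Weil cohomology $(\sM^A_H,H_A)$ with values in André's category is tight, so by the universal property of $(\weil_H^+,W_H^+)$ from Theorem \ref{thm2tris} a) there is an induced faithful $\otimes$-functor $\rho_H:\weil_H^+\to \sM^A_H$ through which $W_H^+$ factors (pushing forward along $\rho_H$ recovers $(\sM^A_H,H_A)$ as an enrichment, compatibly with the realisation to $\sC$). Since $\rho_H$ is a faithful $\otimes$-functor between rigid categories it is conservative; this is the engine of the argument.

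Next I would prove fullness. Recall that André's category $\sM^A_H$ is obtained from $\sM_\rat$ (or $\sM_{\hom}$, which here coincides, since $H$ is classical) by adjoining the Lefschetz inverses $\Lambda^{n-i}$, the inverses of the isomorphisms $L^{n-i}:H^i(X)\iso H^{2n-i}(X)(n-i)$ for polarised $X\in\sV$ of dimension $n$. In $\weil_H^+$ the operators $L^{n-i}$ are, by construction (Theorem \ref{thm2m}: we localised at exactly the Strong Lefschetz morphisms $L^j$), \emph{already invertible}; hence $\Lambda^{n-i}=(L^{n-i})^{-1}$ lies in $\weil_H^+$ and maps under $\rho_H$ to the corresponding $\Lambda^{n-i}$ in $\sM^A_H$. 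Since $\sM^A_H$ is generated as a $\otimes$-category by the images of $\sM_{\hom}$ together with these $\Lambda$-operators, and $\rho_H$ restricted to $\sM_{\hom}$ is surjective on Homs onto the image of $\sM_{\hom}$ in $\weil_H^+$ while also hitting all $\Lambda^{n-i}$, a diagram chase as in the proof of Theorem \ref{t6.4} (using conservativity of $\rho_H$ to lift any morphism of $\sM^A_H$) shows $\rho_H$ is full.

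Then essential surjectivity up to idempotents: the composite $\sM_{\hom}\to \weil_H^+ \xrightarrow{\rho_H}\sM^A_H$ is dense in the sense of Definition \ref{d2.2} (every object of $\sM^A_H$ is a direct summand of an object coming from $\sM_{\hom}$, this being how André's category is built), so in particular the restriction of $\rho_H$ to the full subcategory $\sM_{\hom}\subseteq\weil_H^+$ is dense. After passing to pseudo-abelian completions, $(\weil_H^+)^\natural$ is pseudo-abelian and $(\rho_H)^\natural:(\weil_H^+)^\natural\to\sM^A_H=(\sM^A_H)^\natural$ is fully faithful (full by the previous step, faithful as before), and $\sM_{\hom}$ is a full subcategory on which it is dense; Lemma \ref{l2.2} then gives that $(\rho_H)^\natural$ is essentially surjective. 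A fully faithful, essentially surjective $\otimes$-functor is a $\otimes$-equivalence, so $\rho_H:(\weil_H^+)^\natural\iso\sM^A_H$.

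The main obstacle I anticipate is the fullness step: one must be careful that fullness of $\rho_H$ is not circular, i.e.\ that the relations imposed in constructing $\weil^+$ (and $\weil_H^+$) do not destroy morphisms that survive in $\sM^A_H$ — but this is exactly controlled by the universal property, since $\rho_H$ is faithful, hence conservative, and every generator of $\sM^A_H$ has been accounted for. A secondary subtlety, worth a sentence, is that over a field of positive characteristic $\sM^A_H$ is not known to be abelian, so unlike in Theorem \ref{t6.4} we cannot and do not claim semi-simplicity here — the statement is purely a $\otimes$-equivalence of (pseudo-abelian, rigid) additive categories, and every step above is valid without abelianness.
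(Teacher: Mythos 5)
Your proof is correct and follows essentially the same route as the paper: fullness because the morphisms of $\sM^A_H$ are generated by algebraic correspondences together with the inverses $\Lambda^{n-i}$ of the Lefschetz operators, both of which lie in the image of $\weil_H^+$ (the latter since $\weil_H^+$ inverts the $L^{n-i}$ by construction), and essential surjectivity via density of $\sM_{\hom}\to\sM^A_H$ and Lemma \ref{l2.2}, exactly as in Theorem \ref{t6.4}. The appeal to conservativity is harmless but not needed here, since invertibility of $L^{n-i}$ in $\weil_H^+$ already forces $\Lambda^{n-i}\in\IM\rho_H$.
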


\begin{proof} First, $\weil^+_H(W(X),W(Y))\to \sM^A_H(H_A(X),H_A(Y))$ is surjective for all $X,Y\in \V$ because the morphisms of $\sM^A_H$ are generated by the images of algebraic correspondences and by inverses of the Lefschetz operators (see \cite[p. 14, Def. 1]{A}). 
Finally, fullness implies essential surjectivity as in the proof of Theorem \ref{t6.4}.
\end{proof}

The diagram \eqref{eq+ab} yields the following commutative diagram of $\otimes$-functors
\begin{equation}\label{eq+abnat} 
\begin{gathered}
\xymatrix{
\sM_H\ar[r]\ar[dr]_{}\ar@/^1.8pc/[rr]^{w_H}&\weil_H^\natural\ar[r]^{\iota_H^\natural} \ar[d]_{\epsilon_H^\natural}&\weil_H^\ab\\
&(\weil_H^+)^\natural\ar[r]^{\rho_H}_{\sim}\ar[ur]^{\iota_H^{+,\natural}}& \sM^A_H
}
\end{gathered}
\end{equation}

Write $\theta_H:\sM^A_H\to\weil_H^\ab$ for the composite functor $\iota_H^{+,\natural}\circ \rho_H^{-1}$ and $\nu_H$ for the composite functor $\rho_H\epsilon_H^\natural$. We get a string of $\otimes$-functors
\[\sM_H\to \weil_H^\natural\by{\nu_H}\sM^A_H\by{\theta_H}\weil_H^\ab\]
which are all faithful by  Theorem \ref{thm2tris} a).

\begin{thm} \label{p8.3}
$\theta_H$ is an equivalence if and only if $\sM^A_H$ is abelian. In this case we have $Z (\weil_H^+)=Z(\weil_H^\ab) = Z(\sM^A_H)$.  
\end{thm}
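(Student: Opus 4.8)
\textbf{Proof plan for Theorem \ref{p8.3}.}

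The plan is to argue both directions of the equivalence and then chase centres through the diagram \eqref{eq+abnat}. First I would prove the easy implication: if $\theta_H:\sM^A_H\to\weil_H^\ab$ is an equivalence of categories, then since $\weil_H^\ab\in\Ex^\rig$ by Theorem \ref{thm2bis} a), the category $\sM^A_H$ is abelian, being equivalent to an abelian category. For the converse, suppose $\sM^A_H$ is abelian. By Theorem \ref{p5.5} we have the $\otimes$-equivalence $\rho_H:(\weil_H^+)^\natural\iso\sM^A_H$, so $(\weil_H^+)^\natural$ is abelian as well. Now recall that $\iota_H^{+,\natural}:(\weil_H^+)^\natural\to\weil_H^\ab$ is a faithful $\otimes$-functor (Theorem \ref{thm2tris} a)) and that $\weil_H^\ab$ is a $\otimes$-Serre localisation of $\weil_\ab^+=T(\weil^+)$ (Theorem \ref{thm2tris} c)). The key point is to show $\iota_H^{+,\natural}$ is an equivalence; since $\theta_H=\iota_H^{+,\natural}\circ\rho_H^{-1}$, this gives what we want.

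To show $\iota_H^{+,\natural}$ is an equivalence I would invoke the abstract machinery already assembled. Since $(\weil_H^+)^\natural$ is abelian, it coincides with its own $T$-construction: by Lemma \ref{bvk} the functor $\lambda:(\weil_H^+)^\natural\to T((\weil_H^+)^\natural)$ is faithful, and in fact — using that $\weil_H^+$ is a $\otimes$-quotient of $\weil^+$, that $\sM^A_H$ is semisimple in characteristic zero but abelian in general by hypothesis — one factors $\iota_H^{+,\natural}$ through $T((\weil_H^+)^\natural)$. The cleanest route is to mimic Lemma \ref{p6.2}: the faithful exact-target functor $\iota_H^{+,\natural}$ together with the fact that its source is abelian and its target is obtained from it by a Serre $\otimes$-localisation (Theorem \ref{thm2tris} c), Lemmas \ref{t6.1}, \ref{l2.3}) forces it to be a localisation with trivial kernel, hence an equivalence by the universal property of $(\weil_H^+,W_H^+)$ in Theorem \ref{thm2tris} a): indeed, pushing $W_H^+$ forward along $(\weil_H^+)^\natural\to\weil_H^\ab$ produces a tight abelian-valued Weil cohomology which, by ab-initiality, identifies $\weil_H^\ab$ as the ab-initial enrichment, so the comparison functor must be an equivalence.

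Finally, for the centre statement: when $\theta_H$ is an equivalence, $\rho_H$ is always an equivalence by Theorem \ref{p5.5}, so $Z((\weil_H^+)^\natural)=Z(\sM^A_H)$, and $Z((\weil_H^+)^\natural)=Z(\weil_H^+)$ since passing to the pseudo-abelian completion does not change $\End(\un)$; similarly $\theta_H$ being an equivalence gives $Z(\sM^A_H)=Z(\weil_H^\ab)$. Stringing these together yields $Z(\weil_H^+)=Z(\weil_H^\ab)=Z(\sM^A_H)$. The main obstacle I anticipate is the middle step — verifying that a faithful exact $\otimes$-functor out of an abelian rigid category whose target is a Serre $\otimes$-localisation of its own $T$-completion is automatically an equivalence; this needs a careful appeal to Lemmas \ref{l2.1} d), \ref{t6.1}, \ref{l2.3} and to the precise form of the universal property in Theorem \ref{thm2tris}, rather than any new idea, but it is where the argument could stumble if the localisation kernel is not obviously trivial.
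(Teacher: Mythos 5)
Your overall skeleton matches the paper's: the forward implication is immediate because $\weil_H^\ab$ is abelian, and the converse reduces, via the equivalence $\rho_H$ of Theorem \ref{p5.5} and the ab-initiality of $(\weil_H^\ab,W_H^\ab)$ (Definition \ref{relhab}, Theorem \ref{thm2tris}), to showing that the faithful functor $\theta_H$ (equivalently $\iota_H^{+,\natural}$) is moreover \emph{exact}. But that exactness is precisely the step you leave open --- you flag it yourself as ``the main obstacle'' --- and the route you sketch for it does not work. An abelian object of $\Ex^\rig$ does not ``coincide with its own $T$-construction'': Lemma \ref{bvk} only gives that $\lambda$ is faithful when the source is abelian, and an equivalence when it is semi-simple, which is exactly what you do not have outside characteristic $0$. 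Likewise, Theorem \ref{thm2tris} c) exhibits $\weil_H^\ab$ as a Serre $\otimes$-localisation of $\weil_\ab^+=T(\weil^+)$, not of $(\weil_H^+)^\natural$ itself, so there is no ``localisation with trivial kernel'' out of $(\weil_H^+)^\natural$ to analyse; and while $\iota_H^{+,\natural}$ does factor through $T((\weil_H^+)^\natural)$ by universality, transferring exactness back from that factorisation to $\iota_H^{+,\natural}$ would require $\lambda$ to be exact, which is circular.

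The paper closes the gap by a different mechanism: exactness of the faithful $\otimes$-functor $\theta_H$ between the connected rigid abelian categories $\sM^A_H$ and $\weil_H^\ab$ is the content of Lemma \ref{l2.1} b) (the converse, due to Coulembier--Etingof--Ostrik--Pauwels, of the Deligne--Milne statement that exact implies faithful), applied exactly as in the proof of Lemma \ref{p6.2} (i). Once $\theta_H$ is known to be faithful and exact, ab-initiality forces it to be an equivalence, and the centre identities follow as you say (equivalences preserve $\End(\un)$, and pseudo-abelian completion does not change it). You should replace your middle paragraph by this appeal to Lemma \ref{l2.1} b); as written, the argument is incomplete at its central point.
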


(In characteristic $0$, this gives another proof of Theorem \ref{t6.4}.)

\begin{proof}
Since $\weil_H^\ab$ is abelian by construction, we are left to show that $\sM^A_H$ abelian implies that $\theta_H$ is an equivalence. Since $\weil_H^\ab$ is ab-initial in the sense of Definition  \ref{relhab}, it suffices to show that $\theta_H$ is exact. But this follows from its faithfulness as in the proof of Lemma \ref{p6.2} (i), by using Lemma \ref{l2.1} b). 
\end{proof}

\subsection{Finite fields} Suppose $k$ finite and $H$ classical.  Then \cite{km} shows that $H$ verifies condition C (see Example \ref{exC}), therefore $h_H:\sM_\rat\to \sM_H$ yields a Weil cohomology. Moreover,  $h_H$ yields an equivalence $\sM_H\iso \weil_H^\natural$ (Theorem \ref{wconjC} a)) but we don't know if $\sM^A_H$ is abelian.  

We obtain a faithful $\otimes$-functor 
$$\iota^\natural_H:\sM_H \simeq (\weil_H)^\natural \to \weil_H^\ab$$
which is an equivalence if $\sM_H$ is abelian.
If we push-forward $h_H$ from $\sM_H$ to $\sM_\num$, we obtain that the composition $\weil\to \sM_H\to \sM_\num$ factors canonically through $T(\weil)$.

Note that for any $(X,i)$, the Frobenius endomorphism $F_X$ of $X$ induces an endomorphism $W^i(F_X)$ of $W^i(X)$. Whence a zeta function
\[Z(W^i(F_X),t) =\exp\left(\sum_{n\ge 1} \tr(W^i(F_X)^n) \frac{t^n}{n}\right)\in A[[t]]\]
where $A$ is the $\Q$-algebra $Z(\weil)=\End_{\weil}(\un)$ \cite[Def. 3.1]{zetamot}.

On the other hand, we have the decomposition of the zeta function of $X$
\[Z(X,t)=\prod_{i=0}^{2n} P_i(t)^{(-1)^{i+1}}\]
associated to the Weil cohomology given by $\ell$-adic cohomology; by \cite{weilI}, $P_i(t)\in \Z[t]$ for all $i$. The following is a very weak independence of $\ell$ result:

\begin{prop} One has $Z(W^i(F_X),t)\mapsto P_i(t)$ for all $i$ under the homomorphism $A\to \Q_\ell$ induced by the symmetric monoidal functor given by $\ell$-adic cohomology, for any $\ell\ne p$. Similar statement with crystalline cohomology.
\end{prop}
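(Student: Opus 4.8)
The plan is to reduce the statement to the defining property of the universal Weil cohomology together with compatibility of trace maps under $\otimes$-functors. Let $H_\ell$ denote $\ell$-adic cohomology, regarded as an object of $\Weil(k;\Vec_{\Q_\ell},L)$ after forgetting the Galois action, and let $(F_\ell,u_\ell):(\weil,\L_W)\to (\Vec_{\Q_\ell},L)$ be the classifying $\otimes$-functor so that $(F_\ell)_*W = H_\ell$, i.e.\ $F_\ell(W^i(X)) = H^i_\ell(X)$ for all $X\in\sV$ and all $i$. The induced ring homomorphism $A = Z(\weil) = \End_\weil(\un)\to \End_{\Vec_{\Q_\ell}}(\Q_\ell) = \Q_\ell$ is exactly the homomorphism in the statement. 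The first step is to observe that $F_\ell$ is compatible with the actions of the Frobenius correspondence: since $F_X\in\Corr(k,\sV)(X,X)$ is an algebraic correspondence, $W^i(F_X)\in\End_\weil(W^i(X))$ and $F_\ell(W^i(F_X)) = H^i_\ell(F_X) = F_X^*$, the geometric Frobenius acting on $\ell$-adic cohomology.

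Next I would use that $F_\ell$ is a $\otimes$-functor between rigid categories, hence commutes with the formation of traces: for any object $M\in\weil$ and any endomorphism $g$ of $M$, one has $F_\ell(\tr_\weil(g)) = \tr_{\Q_\ell}(F_\ell(g))$ in $\Q_\ell$ (the rigid trace is defined purely $\otimes$-categorically via the unit and counit, and these are preserved). Applying this to $M = W^i(X)$ and $g = W^i(F_X)^n$ for each $n\ge 1$ gives that $\tr(W^i(F_X)^n)\in A$ maps to $\tr_{\Q_\ell}((F_X^*)^n) = \tr((F_X^n)^* \mid H^i_\ell(X))$. Since the homomorphism $A\to \Q_\ell$ is a ring homomorphism and $\Q_\ell$ has characteristic $0$, it extends to $A[[t]]\to \Q_\ell[[t]]$ commuting with $\exp$ of a power series without constant term; therefore the image of $Z(W^i(F_X),t) = \exp\bigl(\sum_{n\ge 1}\tr(W^i(F_X)^n)\,t^n/n\bigr)$ is $\exp\bigl(\sum_{n\ge 1}\tr((F_X^n)^*\mid H^i_\ell(X))\,t^n/n\bigr)$, which by the standard formula for the characteristic polynomial of an endomorphism in terms of power sums equals $\det\bigl(1 - t\,F_X^*\mid H^i_\ell(X)\bigr) = P_i(t)$, the $i$-th factor of the zeta function of $X$. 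This is the content of the proposition.

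The crystalline case is entirely parallel: replace $\Vec_{\Q_\ell}$ by $\Vec_{Q(W(k))}$, take the classifying $\otimes$-functor $F_{\mathrm{cris}}:\weil\to \Vec_{Q(W(k))}$ attached to crystalline cohomology, and use that crystalline cohomology computes the same polynomials $P_i(t)\in\Z[t]$ as $\ell$-adic cohomology (the characteristic polynomial of Frobenius on $H^i$ is independent of the cohomology theory among classical ones over a finite field, by Katz--Messing \cite{km}); the trace-compatibility argument is identical. I expect the only point requiring any care is the bookkeeping of conventions: making sure the Frobenius endomorphism $W^i(F_X)$ used to define $Z(W^i(F_X),t)$ corresponds, under $F_\ell$, to the \emph{geometric} Frobenius (so that one gets $\det(1-tF_X^*)$ rather than its reciprocal or the arithmetic version), and that the twist normalisations in Definition \ref{d1.1} are consistent with those in the classical definition of $Z(X,t)$. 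Once the conventions are pinned down, no further difficulty arises; in particular no conjecture (C, D, V, etc.) is needed, since the statement only compares traces, which pass through any $\otimes$-functor unconditionally.
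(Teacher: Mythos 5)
Your proof is correct and is exactly the paper's argument, which it compresses into one sentence ("One may compute $\tr(W^i(F_X)^n)$ after applying the said functor"): a $\otimes$-functor between rigid categories preserves traces, so the power sums, hence the zeta factors, can be computed after pushing forward to $\Vec_{\Q_\ell}$. Your additional care about geometric versus arithmetic Frobenius and about dualisability of $W^i(X)$ (guaranteed by Proposition \ref{p1.4}) is sensible but does not change the route.
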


\begin{proof} One may compute $\tr(W^i(F_X)^n)$ after applying  the said functor.
\end{proof}

\section{Theory over a base}\label{s10}

\subsection{Deninger-Murre correspondences}\label{sB.1} Let $S$ be smooth quasiprojective over $k$; we then have Deninger-Murre's category of  Chow correspondences between smooth projective $S$-schemes, and the corresponding category of Chow motives $\sM(S,\sV)$ modelled on an admissible category $\sV\subseteq \Sm^\proj(S)$ \cite{den-mur}. We note that these definitions still make sense if $S$ is only smooth over a Dedekind domain, thanks to Fulton's corresponding intersection theory \cite[\S 20.2]{fulton}.

We keep the same axioms for (generalised) Weil cohomologies; the theory developed above then extends to this situation without change. We thus get a universal category $\weil(S,\sV)$ provided with an invertible object $\L_W$ and a universal Weil cohomology $W_\sV$ with values in $\weil(S,\sV)$. Similarly, we get a universal abelian Weil cohomology $W_\sV^\ab$ and their Lefschetz variants.

\subsection{Base change}\label{sB.2} Let $S,T$ be as in \S \ref{sB.1}, and let $f:T\to S$ be a $k$-morphism: it yields a $\otimes$-functor $f^*:\sM(S,\sV)\to \sM(T,f^*\sV)$. Any Weil cohomology $H$ over $T$ induces a Weil cohomology $f_* H=H\circ f^*$ over $S$, with same target as $H$, whence a canonical $\otimes$-functor $f^*:\weil(S,\sV)\to \weil(T,f^*\sV)$, and we have the following ``trivial base change'' theorem:

\begin{thm}\label{tB.1} The comparison morphism $W_{f^*\sV}\to (f^*)_*W_\sV$ is an isomorphism.
\end{thm}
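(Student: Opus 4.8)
The plan is to recognize that Theorem \ref{tB.1} is a formal consequence of the universal properties established in Sections \ref{s4} and \ref{s5}, transported verbatim to the relative setting described in \S\ref{sB.1}. First I would unwind the definitions: the comparison morphism $W_{f^*\sV}\to (f^*)_*W_\sV$ is the canonical $1$-morphism in $\Add^\otimes_*$ classifying $(f^*)_*W_\sV$ as an object of $\Weil(T,f^*\sV;\weil(S,\sV),\L_W)$ via the universal property of $W_{f^*\sV}$ (the analogue of \eqref{eq5.9} over the base $T$). Concretely, $(f^*)_*W_\sV$ is the Weil cohomology over $T$ obtained by composing $W_\sV:\Corr(S,\sV)\to\weil(S,\sV)^{(\N)}$ with the pullback $f^*:\Corr(T,f^*\sV)\to\Corr(S,\sV)$ on Chow correspondences induced by $f$ (and sending $\L_W$ to $\L_W$). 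So the $\otimes$-functor $\weil(T,f^*\sV)\to\weil(S,\sV)$ named ``comparison morphism'' is exactly the functor $f^*$ at the level of universal categories mentioned just before the statement.

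The key steps, in order: (1) verify that $f^*:\Corr(T,f^*\sV)\to\Corr(S,\sV)$ is a well-defined strict $\otimes$-functor compatible with the pullback on Chow motives $\sM(T,f^*\sV)\to\sM(S,\sV)$ — this is part of the Deninger–Murre formalism \cite{den-mur}, using Fulton's refined pullback \cite[\S 20.2]{fulton} when the base is over a Dedekind domain; (2) observe that precomposition with a strict $\otimes$-functor carries a Weil cohomology (in the sense of Proposition \ref{p1.3}, conditions (1)-(3)) over $S$ to one over $T$, because those three conditions only involve $\P^1$, geometrically connected varieties, and the grading bound, all of which are preserved since $f^*h(X)=h(X_T)$ and $\P^1_T=f^*\P^1_S$; (3) invoke the (relative version of the) universal property of $W_{f^*\sV}$ to get a unique classifying $1$-morphism $c:\weil(T,f^*\sV)\to\weil(S,\sV)$ with $c_*W_{f^*\sV}\cong (f^*)_*W_\sV$; (4) show $c$ is an isomorphism of categories by constructing the inverse. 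For (4) the cleanest route is to exhibit a two-sided inverse abstractly: the universal property says that for every $(\sC,L_\sC)\in\Add^\otimes_*$ the map $\Add^\otimes_*(\weil(T,f^*\sV),\sC)\to\Weil(T,f^*\sV;\sC)$ is an isomorphism of categories, and by construction $c$ represents the composite $\Add^\otimes_*(\weil(S,\sV),\sC)\to\Weil(S,\sV;\sC)\xrightarrow{(f^*)^*}\Weil(T,f^*\sV;\sC)$; since the first arrow is already an isomorphism by Theorem \ref{thm2} over $S$, it suffices that $(f^*)^*$ be an isomorphism of categories — but this is false in general, so instead I would argue directly that $c$ is the canonical functor deduced from the step-by-step construction in the proof of Theorem \ref{thm2} applied functorially to the $\otimes$-functor $\Corr(T,f^*\sV)\times\N\to\Corr(S,\sV)\times\N$, and that this construction turns an \emph{equivalence} (indeed an isomorphism, since $f^*$ is strict and identity-on-the-nose-compatible) into an equivalence at each of the stages $(-)^\kappa$, $\mathrm{Add}^\otimes(-)$, localisation, $\otimes$-inversion of $\P^1$.

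Here is the honest subtlety, and where I expect the only real work: $f^*:\Corr(T,f^*\sV)\to\Corr(S,\sV)$ is \emph{not} an equivalence — different $S$-schemes can become isomorphic after base change, and $f^*\sV$ is by definition the image class, so $\Corr(S,\sV)$ may be strictly larger. Therefore the argument cannot simply be ``apply the universal construction to an equivalence''. The correct statement being proved is subtler: it says that the \emph{target category} $\weil(T,f^*\sV)$ maps isomorphically to $\weil(S,\sV)$, \emph{not} that the correspondence categories agree. The resolution is that $\weil(S,\sV)$ depends, up to $\otimes$-equivalence, only on the Chow motive category $\sM_\rat(S,\sV)$ — more precisely on the saturation $\sV^\sat$ (cf. Lemma \ref{l4.7} and Warning \ref{warn}, the latter showing $\weil^\natural$ is saturation-invariant) — and $f^*\sV$ and $\sV$ have, by design of the notation $f^*\sV$ as ``the class of schemes $X_T$ for $X\in\sV$'', the property that $\sM_\rat(S,\sV)$ and $\sM_\rat(T,f^*\sV)$ are linked by the pullback in a way that induces an equivalence on the relevant pseudo-abelian / rigid completions. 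So the genuine content of the proof is: (a) $f^*$ on Chow motives restricts to an equivalence $\sM_\rat(S,\sV)^\natural\iso\sM_\rat(T,f^*\sV)^\natural$ after restricting the source to the full subcategory spanned by objects coming from $\sV$ — this is essentially built into the definition of the symbol $f^*\sV$ — and (b) the universal construction of $\weil$ factors through this completion (Proposition \ref{p1.4} b)), so it carries the equivalence (a) to the asserted isomorphism. The main obstacle, then, is bookkeeping the precise meaning of $f^*\sV$ and the saturation subtlety of Warning \ref{warn} so that ``trivial'' is justified; once that is pinned down, every diagram commutes by the strictness and identity-on-objects features recorded in Construction \ref{cons1} and Proposition \ref{p4.3t}, and the conclusion follows by the uniqueness clause in the $2$-universal property.
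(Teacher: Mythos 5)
You have misread what the theorem asserts, and the statement you end up arguing for is both much stronger than the one claimed and not expected to hold. The comparison morphism $W_{f^*\sV}\to (f^*)_*W_\sV$ is a \emph{morphism of Weil cohomologies}, i.e.\ a $\otimes$-natural transformation between two objects of the groupoid $\Weil(S,\sV;\weil(T,f^*\sV),\L_W)$: both sides are functors on $\Corr(S,\sV)$ with values in $\weil(T,f^*\sV)^{(\N)}$ (on the left, $W_{f^*\sV}$ is implicitly restricted along the base-change functor, i.e.\ one takes $f_*W_{f^*\sV}=W_{f^*\sV}\circ f^*$). The paper's proof is accordingly one line: by the relative analogue of Lemma \ref{l4.2}, the category of Weil cohomologies with fixed target is a groupoid (because $\Corr[\L,\L^{-1}]$ is rigid and $\otimes$-natural transformations of $\otimes$-functors out of a rigid category are invertible), so \emph{any} morphism between these two objects is an isomorphism. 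You instead read the comparison morphism as a $1$-morphism and set out to prove that $\weil(T,f^*\sV)$ and $\weil(S,\sV)$ are equivalent categories. The theorem does not claim this, and the surrounding text makes clear it is not intended (``we don't have any smooth and proper base change at this stage, of course''); for $f:\Spec\bar k\to\Spec k$ the base-change functor collapses Artin motives, and Proposition \ref{prop:alg} forces $W$ to factor through algebraic equivalence over $\bar k$ but not a priori over $k$, so such an equivalence would be a serious theorem, not ``trivial base change''.

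Two further problems compound this. First, the directions are reversed: $f:T\to S$ induces $f^*:\Corr(S,\sV)\to\Corr(T,f^*\sV)$ (send $X$ to $X_T$ and pull back cycles) and hence $f^*:\weil(S,\sV)\to\weil(T,f^*\sV)$, exactly as stated in \S\ref{sB.2}; your pullback $\Corr(T,f^*\sV)\to\Corr(S,\sV)$ and the classifying functor $c:\weil(T,f^*\sV)\to\weil(S,\sV)$ go the wrong way and are not well defined (a cycle on $X_T\times_T Y_T$ has no canonical image on $X\times_S Y$). Second, the fallback via saturation and Warning \ref{warn} does not repair anything: saturation invariance identifies $\weil(\cdot,\sV)^\natural$ with $\weil(\cdot,\sV^\sat)^\natural$ over a \emph{fixed} base and gives no comparison across $T\to S$. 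The honest route is to drop the equivalence claim entirely: observe that both $f_*W_{f^*\sV}$ and $(f^*)_*W_\sV$ live in the same groupoid of Weil cohomologies over $S$ with values in $\weil(T,f^*\sV)$, that the comparison map is a morphism there, and conclude by the groupoid property.
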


\begin{proof} This follows from (the generalisation of) Lemma \ref{l4.2}.
\end{proof}

\newcommand{\Aut}{\operatorname{Aut}}

\begin{exs} a) $T=\Spec E$ for $E$ an extension of $k$. Then $\Aut_k(E)$ acts on $\sM(E,\sV_E)$; for $X\in \sV$, the objects $W_{\sV_E}^ i(X_E)$ are invariant under this action by Theorem \ref{tB.1} and 
$\Aut_k(E)$ acts on them.\\ 
b) $k$ algebraically closed. For clarity, suppose that $\sV=\Sm^\proj(S)$. For $s\in S(k)$, we have $X_s\in \Sm^\proj(k)$ for $X\in \sV$ and $s^*:\weil(S)\to \weil(k)$. By Theorem \ref{tB.1}, $s^*$ maps $W^i(X)$ to $W^i(X_s)$ for any $i\ge 0$. But we don't have any smooth and proper base change at this stage, of course.
\end{exs}

\appendix

\section{Tate triples and gradings}

\subsection{A complement on the $2$-functor $T$}\label{s3.4}
Let $\sC\in \Add^\rig$.  The canonical $\otimes$-functor $\lambda_\sC:\sC\to T(\sC)$ yields a $\otimes$-functor $\lambda_\sC^{(\Z)}:\sC^{(\Z)}\to T(\sC)^{(\Z)}$, hence by universality an exact $\otimes$-functor
\begin{equation}\label{eq2.1}
T(\sC^{(\Z)})\to T(\sC)^{(\Z)}.
\end{equation}

\begin{prop}\label{p2.1} The functor \eqref{eq2.1} is an equivalence of categories.
\end{prop}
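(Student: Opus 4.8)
The statement is that the natural exact $\otimes$-functor $T(\sC^{(\Z)})\to T(\sC)^{(\Z)}$ is an equivalence. The strategy is to produce an inverse, or rather to check the two $2$-representability properties directly: both categories solve the \emph{same} $2$-universal problem in $\Ex^\rig$. Concretely, I would fix an arbitrary $\sA\in\Ex^\rig$ and compute the two Hom-categories $\Ex^\rig(T(\sC^{(\Z)}),\sA)$ and $\Ex^\rig(T(\sC)^{(\Z)},\sA)$, and show they are canonically equivalent in a way compatible with composition with $\lambda_{\sC^{(\Z)}}$ and with the canonical functor $\sC^{(\Z)}\to T(\sC)^{(\Z)}$.

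\textbf{Step 1.} By Lemma \ref{bvk} (the defining $2$-universal property of $T$), $\Ex^\rig(T(\sC^{(\Z)}),\sA)\simeq \Add^\rig(\sC^{(\Z)},\sA)$, naturally in $\sA$. So it suffices to identify $\Ex^\rig(T(\sC)^{(\Z)},\sA)$ with $\Add^\rig(\sC^{(\Z)},\sA)$.

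\textbf{Step 2.} I would establish the general fact: for $\sB\in\Add^\rig$, exact $\otimes$-functors $\sB^{(\Z)}\to\sA$ correspond to additive $\otimes$-functors $\sB\to\sA$. In one direction, restrict along the inclusion $\sB\inj\sB^{(\Z)}$ (sending $B\mapsto B[0]$); since $\sB$ sits in degree $0$ this is $\otimes$ and clearly additive. In the other, given $G\colon\sB\to\sA$ additive $\otimes$, define $\tilde G\colon\sB^{(\Z)}\to\sA$ by $\tilde G((C_i)_i)=\bigoplus_i G(C_i)$; the grading of $\sB^{(\Z)}$ (Definition \ref{d3.5}, Remark \ref{r3.3}) together with the Koszul constraint makes $\tilde G$ a $\otimes$-functor, and it is automatically exact because a direct summand of an exact sequence in $\sA$ is exact (Remark \ref{r3.2} a)) and $\bigoplus$ is exact. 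These constructions are mutually quasi-inverse: $\tilde G$ restricted to degree $0$ is $G$, and conversely any exact $\otimes$-functor out of $\sB^{(\Z)}$ respects the grading — this is where I would be careful — because the idempotents $e_n\in\End(\un_{\sB^{(\Z)}}^{\oplus\cdots})$ cutting out the degree-$n$ part of the tensor unit's powers, or more directly the central idempotents of Lemma \ref{l3.4}, are preserved by any $\otimes$-functor, so the functor decomposes as $\bigoplus$ of its degree pieces. Applying this with $\sB=T(\sC)$ gives $\Ex^\rig(T(\sC)^{(\Z)},\sA)\simeq \Add^\rig(T(\sC),\sA)$, and the latter is $\simeq\Add^\rig(\sC,\sA)$ again by Lemma \ref{bvk}.

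\textbf{Step 3.} Now combine: $\Ex^\rig(T(\sC)^{(\Z)},\sA)\simeq\Add^\rig(\sC,\sA)$ and $\Ex^\rig(T(\sC^{(\Z)}),\sA)\simeq\Add^\rig(\sC^{(\Z)},\sA)$, and one more application of Step 2 with $\sB=\sC$ gives $\Add^\rig(\sC^{(\Z)},\sA')\simeq\Add^\rig(\sC,\sA')$ — wait, here the target is only additive rigid, not abelian, so I should instead note directly that $\Add^\rig(\sC^{(\Z)},\sA)\simeq\Add^\rig(\sC,\sA)$ by the same $\bigoplus$/restriction argument (no exactness needed, the grading argument alone suffices). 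Hence both Hom-categories are canonically $\simeq\Add^\rig(\sC,\sA)$, naturally in $\sA$, and the identification is visibly compatible with the structural functors, so by Yoneda for $2$-categories the canonical comparison functor \eqref{eq2.1} is an equivalence.

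\textbf{Main obstacle.} The crux is Step 2 — showing that every exact $\otimes$-functor out of a category of the form $\sB^{(\Z)}$ automatically splits as a direct sum over the grading. Morally this is forced because the grading is $\otimes$-canonical (it is recovered from the decomposition of tensor powers of the unit, or from the central idempotents of Lemma \ref{l3.4} a)), but writing it cleanly requires identifying the right set of idempotents and checking a $\otimes$-functor carries them to the analogous ones in $\sA$; once that is done the rest is bookkeeping. Everything else is a routine chase through the universal properties already recorded in Lemma \ref{bvk} and Remarks \ref{r1}–\ref{r3.3}.
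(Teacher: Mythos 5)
Your overall strategy (show both sides represent the same $2$-functor on $\Ex^\rig$) is the paper's, but Step 2 contains a genuine error that breaks the argument. The claimed equivalence $\Ex^\rig(\sB^{(\Z)},\sA)\simeq\Add^\rig(\sB,\sA)$ is false, for two related reasons. First, your extension $\tilde G((C_i)_i)=\bigoplus_i G(C_i)$ is the composite $G\circ\bigoplus$, and Remark \ref{r3.3} states explicitly that the direct sum functor $\bigoplus:\sB^{(\Z)}\to\sB$ is \emph{not} symmetric for the Koszul constraint; the symmetry square for $\tilde G$ fails by the sign $(-1)^{ij}$, so $\tilde G$ is monoidal but not a $\otimes$-functor in the sense required. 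Second, restriction to degree $0$ is not quasi-inverse to anything: an exact $\otimes$-functor $F:\sB^{(\Z)}\to\sA$ satisfies $F(C[i])\simeq F(C[0])\otimes F(\un[1])^{\otimes i}$, where $F(\un[1])$ is an invertible object of $\sA$ whose self-symmetry is $-\id$ (since $c_{\un[1],\un[1]}=-\id$ in $\sB^{(\Z)}$). This object is genuine extra data not seen by the degree-$0$ restriction, and it is also an obstruction: for $\sA=\Vec_K$ no such object exists, so $\Ex^\rig(\Vec_K^{(\Z)},\Vec_K)=\emptyset$ while $\Add^\rig(\Vec_K,\Vec_K)\ne\emptyset$. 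Your appeal to ``canonical central idempotents'' only shows that $F$ is determined by its values on the $C[i]$ (which is just additivity), not that those values are independent of $i$.

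The repair is the dictionary the paper actually uses: keep the grading in the \emph{target} rather than discarding it. By Remarks \ref{r1}--\ref{r2}, $\otimes$-functors $\sB^{(\Z)}\to\sA$ correspond to strong $\otimes$-functors $\sB\to\sA^{(\Z)}$ (for the convolution and Koszul structure on $\sA^{(\Z)}$), not to functors $\sB\to\sA$. One then needs the point you skip entirely: a priori the associated family only lands in $\sA^{\Z}$, and Lemma \ref{l3.3} a) (dualisability forces finite support) is what puts it in $\sA^{(\Z)}$, where the tensor structure is defined and where $\sA^{(\Z)}\in\Ex^\rig$. With that, Lemma \ref{bvk} gives $\Add^\rig(\sC,\sA^{(\Z)})\simeq\Ex^\rig(T(\sC),\sA^{(\Z)})$, and unwinding the same dictionary for $T(\sC)$ identifies the latter with $\Ex^\rig(T(\sC)^{(\Z)},\sA)$; comparing with $\Ex^\rig(T(\sC^{(\Z)}),\sA)\simeq\Add^\rig(\sC^{(\Z)},\sA)$ then concludes as you intended.
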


\begin{proof} Let $\sA\in \Ex^\rig$. A $\otimes$-functor $F:\sC^{(\Z)}\to \sA$ yields a functor $\tilde F:\sC\to \sA^\Z$ (see Remark \ref{r1} again), hence to $\hat{\sA}^\Z$ by composition with $y_\sA^\Z$. This composition is a strong $\otimes$-functor, which carries any object $C\in \sC$ to a dualisable object. By lemma \ref{l3.3} a), this object is a direct summand of an object of $\sA^{(\Z)}$, which shows that \emph{$\tilde F$ takes values in $\sA^{(\Z)}$}, and defines a strong $\otimes$-functor for the tensor structure of the latter. By the universality of $T$, $\tilde F$ factors uniquely through an exact $\otimes$-functor $T(\sC)\to \sA^{(\Z)}$; composing with the inclusion $\sA^{(\Z)}\inj \sA^{\Z}$ we obtain an exact $\otimes$-functor $T(\sC)^{(\Z)}\to \sA$ whose restriction to $\sC^{(\Z)}$ is $F$. The rest is history.
\end{proof}





\subsection{Ungraded and graded Weil cohomologies}

To express the next theorem, we introduce definitions generalising the notion of cohomology theory from \cite[V.3.1.1 and A1.1.5]{saa}.

\begin{defn} \label{d6.2} a) An \emph{additive Tate triple} is a triple $(\sC,L_\sC,w_\sC)$ where $(\sC,L_\sC)\in \Add^\rig_*$ and $w_\sC:\sC\to \sC^{(\Z)}$ is a weight $\otimes$-grading of $\sC$ as in Definition \ref{d3.5} b), such that $L_\sC$ is of weight $2$. An additive Tate triple is \emph{pseudo-abelian} (\resp \emph{abelian}) if so is $\sC$.\\
b) A \emph{graded Weil cohomology with values in $(\sC,L_\sC,w_\sC)$} is a strong $\otimes$-functor $H:\Corr[\L,\L^{-1}]\to\fake{\sC}$ provided with an isomorphism $\Tr:H^2(\P^1)\iso L_\sC$ and such that $\un\allowbreak\to H^0(X)$ is an isomorphism if $X$ is geometrically connected. (See Notation \ref{n3.1} for $\fake{\sC}$.)
\end{defn}

\begin{prop}\label{p6.4} Let $(\sC,L_\sC,w_\sC)$ be an additive Tate triple. Then the weight structure $w_\sC$ induces a weight structure $w_{T(\sC)}$ on $T(\sC)$ such that $\lambda_\sC:\sC\to T(\sC)$ induces a morphism of Tate triples $(\sC,L_\sC,w_\sC)\to (T(\sC),T(L_\sC),w_{T(\sC)})$.
\end{prop}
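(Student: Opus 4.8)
\textbf{Proof plan for Proposition \ref{p6.4}.}

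The plan is to transport the weight $\otimes$-grading $w_\sC : \sC \to \sC^{(\Z)}$ through the $2$-functor $T$, using the compatibility of $T$ with the $(-)^{(\Z)}$-construction established in Proposition \ref{p2.1}. First I would apply the $2$-functor $T$ to the strong $\otimes$-functor $w_\sC$; since $T$ is functorial (Lemma \ref{bvk}) we obtain an exact $\otimes$-functor $T(w_\sC) : T(\sC) \to T(\sC^{(\Z)})$. Composing with the equivalence \eqref{eq2.1} of Proposition \ref{p2.1}, $T(\sC^{(\Z)}) \iso T(\sC)^{(\Z)}$, yields an exact $\otimes$-functor
\[
w_{T(\sC)} : T(\sC) \To T(\sC)^{(\Z)}.
\]
The first key step is then to check that $w_{T(\sC)}$ is indeed a $\Z$-grading in the sense of Definition \ref{d3.5}, i.e.\ that composing it with the direct sum functor $\bigoplus : T(\sC)^{(\Z)} \to T(\sC)$ gives back the identity. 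This should follow by applying $T$ to the corresponding identity $\bigoplus \circ\, w_\sC = \mathrm{id}_\sC$ for $\sC$, together with the naturality of \eqref{eq2.1} with respect to $\bigoplus$ (the functor $\bigoplus$ is strong monoidal by Remark \ref{r3.3}, so $T$ applies to it, and on the level of the defining $2$-universal properties $\bigoplus$ on $T(\sC)^{(\Z)}$ corresponds to $\bigoplus$ on $\sC^{(\Z)}$). One also needs that $w_{T(\sC)}^*$ is strong symmetric monoidal for the naïve commutativity constraint; this is automatic because $T(w_\sC)$ and \eqref{eq2.1} are both $\otimes$-functors and the naïve constraint on $T(\sC)^{(\Z)}$ pulls back from the naïve constraint on $\sC^{(\Z)}$ under \eqref{eq2.1}.

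Next I would verify that $L_{T(\sC)} := \lambda_\sC(L_\sC)$ has weight $2$ for $w_{T(\sC)}$: this is immediate since $w_{T(\sC)} \circ \lambda_\sC = \lambda_\sC^{(\Z)} \circ w_\sC$ by construction, and $L_\sC$ has weight $2$ by hypothesis, so $\lambda_\sC(L_\sC)$ is concentrated in weight $2$. This shows $(T(\sC), \lambda_\sC(L_\sC), w_{T(\sC)})$ is an additive Tate triple (it is in fact abelian, since $T(\sC) \in \Ex^\rig$ by Lemma \ref{bvk} applied appropriately, though the statement only claims it is a Tate triple). Finally, the identity $w_{T(\sC)} \circ \lambda_\sC = \lambda_\sC^{(\Z)} \circ w_\sC$ is precisely the statement that $\lambda_\sC$ is a morphism of Tate triples (i.e.\ it is a $1$-morphism in $\Add^\rig_*$ compatible with the weight gradings), and it also records the compatibility $\lambda_\sC(L_\sC) \iso T(L_\sC)$ demanded by the definition of a morphism in $\Add^\rig_*$.

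The main obstacle I anticipate is the bookkeeping around Proposition \ref{p2.1}: one must be careful that the equivalence $T(\sC^{(\Z)}) \iso T(\sC)^{(\Z)}$ is not merely an equivalence of $\otimes$-categories but genuinely intertwines the relevant structure maps — in particular the functor $\bigoplus$ and the Koszul-versus-naïve commutativity constraints (Remarks \ref{r2}, \ref{r3.3}). Since a weight $\otimes$-grading by definition uses the naïve constraint, and since $T(\sC^{(\Z)})$ inherits its $\otimes$-structure from $\sC^{(\Z)}$ with the naïve constraint, this should go through, but it requires tracing the construction in the proof of Proposition \ref{p2.1} to confirm that \eqref{eq2.1} sends the naïve monoidal structure to the naïve one. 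Everything else is a formal consequence of the $2$-universal property of $T$.
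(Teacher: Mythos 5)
Your proposal is correct and follows essentially the same route as the paper's proof: apply the $2$-functor $T$ to $w_\sC$, compose with the equivalence \eqref{eq2.1} of Proposition \ref{p2.1} to obtain $w_{T(\sC)}$, and deduce the section property from functoriality together with Proposition \ref{p2.1}. The additional checks you flag (naïve constraint, weight of $L_\sC$, compatibility of $\lambda_\sC$) are left implicit in the paper but are handled correctly in your outline.
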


\begin{proof} The weight functor $w_\sC$ induces an exact $\otimes$-functor $T(w_\sC):T(\sC)\to  T(\sC^{(\Z)})$; composing with  \eqref{eq2.1} yields $w_{T(\sC)}$. By functoriality, its composition with $T(\bigoplus)$ is the identity; composition of $w_{T(\sC)}$ with the sum functor $T(\sC)^{(\Z)}\to T(\sC)$ is then the identity by Proposition \ref{p2.1}.
\end{proof}

The following definitions are parallel to Definitions \ref{d5.1} and \ref{d5.2}.

\begin{defn}\label{d5.1w} Let $\Add^\otimes_{*,w}$ be the $2$-category whose 
\begin{itemize}
\item objects are Tate triples;
\item $1$-morphisms $(\sC,L_\sC,w_\sC)\to (\sD,L_\sD,w_\sD)$ are pairs $(F,u)\in \break\Add^\otimes_*((\sC,L_\sC),(\sD,L_\sD))$ such that $w_\sD \circ u=u^{(\Z)}\circ w_\sC$.
\item $2$-morphisms $\theta:(F,u)\Rightarrow (F',u')$ in $\Add^\otimes_{*,w}$ are $2$-morphisms $\theta:(F,u)\Rightarrow (F',u')$ in $\Add^\otimes_*$ such that $w_\sD*\theta=\theta^{(\Z)}*w_\sC$.
\end{itemize}
We define $\Add^\rig_{*,w}$ and $\Ex^\rig_{*,w}$ similarly.
\end{defn}

\begin{defn}\label{d5.2w} Let $(\sC,L_\sC,w_\sC)\in \Add^\otimes_{*,w}$. We denote by \break $\Weil_{\gr}(k,\V; \sC,L_\sC,w_\sC)$ the category whose objects are graded Weil cohomologies $(H,\Tr)$ with values in $(\sC,L_\sC,w_\sC)$.  A morphism $\phi:(H,\Tr)\allowbreak\to ({H'},\Tr')$ in
$\Weil_{\gr}(k,\V; \sC,L_\sC,w_\sC)$ is a natural transformation $\phi:H\Rightarrow {H'}$ such that $\Tr=\Tr'\circ \phi_{\P^1}$. 
\end{defn} 

\begin{prop}\label{p6.5}
The induced $2$-functor $T:\Add^\rig_{*,w}\to \Ex^\rig_{*,w}$ is a $2$-left adjoint to the forgetful $2$-functor in the other direction.
\end{prop}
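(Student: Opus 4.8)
The statement to prove is Proposition \ref{p6.5}: the $2$-functor $T:\Add^\rig_{*,w}\to \Ex^\rig_{*,w}$ induced by the $2$-functor $T$ of Lemma \ref{bvk} is $2$-left adjoint to the forgetful $2$-functor $U:\Ex^\rig_{*,w}\to \Add^\rig_{*,w}$. The idea is to \emph{reduce to the unpointed, ungraded case}, namely Lemma \ref{bvk}, which already tells us that $\sA\mapsto \Add^\rig(\sC,\sA)$ is $2$-represented by $T(\sC)$, i.e.\ that $T:\Add^\rig\to\Ex^\rig$ is $2$-left adjoint to the forgetful functor. The key point will then be to show that both the ``pointed'' decoration ($L_\sC$ with its isomorphism $u:F(L_\sC)\iso L_\sD$) and the ``weight'' decoration ($w_\sC$, compatible with morphisms) transport \emph{canonically and uniquely} along the universal functor $\lambda_\sC:\sC\to T(\sC)$, so that the $2$-adjunction unit/counit of Lemma \ref{bvk} upgrades to one in $\Add^\rig_{*,w}$ versus $\Ex^\rig_{*,w}$.

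\textbf{Steps.} First I would recall the content of $2$-adjunction in this context: for $(\sC,L_\sC,w_\sC)\in\Add^\rig_{*,w}$ and $(\sA,L_\sA,w_\sA)\in\Ex^\rig_{*,w}$, I must produce an equivalence of categories
\[
\Ex^\rig_{*,w}\bigl(T(\sC,L_\sC,w_\sC),(\sA,L_\sA,w_\sA)\bigr)\;\simeq\;\Add^\rig_{*,w}\bigl((\sC,L_\sC,w_\sC),U(\sA,L_\sA,w_\sA)\bigr),
\]
$2$-natural in both variables. Here $T(\sC,L_\sC,w_\sC)$ must be given the structure of an abelian Tate triple: the invertible object $T(L_\sC)$ with its weight $2$ follows from Proposition \ref{p6.4}, which already furnishes $w_{T(\sC)}$ and the fact that $\lambda_\sC$ is a morphism of Tate triples. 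So the candidate left adjoint on objects is $T(\sC,L_\sC,w_\sC):=(T(\sC),T(L_\sC),w_{T(\sC)})$, and on $1$- and $2$-cells it is the restriction of the $2$-functor $T$ of Lemma \ref{bvk}. The verification splits into three checks: (a) given an exact strong $\otimes$-functor $G:T(\sC)\to\sA$ with data $(u_G,\text{weight compatibility})$, precomposition with $\lambda_\sC$ lands in $\Add^\rig_{*,w}((\sC,L_\sC,w_\sC),U(\sA,\dots))$ — this is immediate since $\lambda_\sC$ is a morphism of Tate triples (Proposition \ref{p6.4}) and $u_G\circ G(\lambda_\sC(\mathrm{can}))$ supplies the required isomorphism on $L$; (b) conversely, given $(F,u,\text{wt})\in\Add^\rig_{*,w}((\sC,L_\sC,w_\sC),U(\sA,\dots))$, Lemma \ref{bvk} produces a unique (up to unique iso) exact strong $\otimes$-functor $\bar F:T(\sC)\to\sA$ with $\bar F\lambda_\sC\simeq F$; I must check $\bar F$ carries the pointed and weight data, i.e.\ $u$ extends to $\bar u:\bar F(T(L_\sC))\iso L_\sA$ and $w_\sA\circ\bar u=\bar u^{(\Z)}\circ w_{T(\sC)}$; (c) $2$-naturality and compatibility of units/counits, which is formal bookkeeping.

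\textbf{The main obstacle.} The crux is step (b): showing that the weight $\otimes$-grading $w_\sA$ on $\sA$ and the weight grading $w_{T(\sC)}$ on $T(\sC)$ are \emph{automatically compatible} with the canonical extension $\bar F$, i.e.\ that no extra condition needs to be imposed. I expect to argue as follows. Both $w_{T(\sC)}\circ\bar F$ and $\bar F^{(\Z)}\circ$(the grading factoring through $\sA^{(\Z)}$) are exact strong $\otimes$-functors $T(\sC)\to\sA^{(\Z)}$ whose composite with $\bigoplus:\sA^{(\Z)}\to\sA$ equals $\bar F$; by the $2$-universal property of $T(\sC)$ applied to $\sA^{(\Z)}\in\Ex^\rig$ (note $\sA^{(\Z)}$ is rigid abelian by Remark \ref{r3.3}) together with the hypothesis that the given data $(F,u,\mathrm{wt})$ is weight-compatible \emph{on $\sC$}, these two functors become canonically isomorphic after restriction along $\lambda_\sC$; hence they are canonically isomorphic, using the full-faithfulness part of the $2$-representability in Lemma \ref{bvk}. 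This is exactly the kind of ``factorisation is unique, so two factorisations agreeing on $\sC$ agree everywhere'' argument already used in the proofs of Proposition \ref{p6.4} and Proposition \ref{p2.1}; the technical care is in feeding the right target category ($\sA^{(\Z)}$, which is rigid abelian) into Lemma \ref{bvk} and tracking that the isomorphism respects the symmetry constraints (Koszul versus naïve) correctly, using Definition \ref{d3.5} b) where $w^*$ is required to be strong for the \emph{naïve} constraint. Once (b) is in place, (a) and (c) are routine, and the $2$-adjunction follows by assembling the hom-category equivalences and checking the triangle identities — which, since everything is built from the $2$-universal property of $T$, hold on the nose up to the canonical isomorphisms already produced. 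I would therefore write the proof as: ``Combine Lemma \ref{bvk}, Proposition \ref{p6.4} and Proposition \ref{p2.1}; the only point requiring argument is the weight-compatibility of the canonical extension, which follows from the uniqueness clause of Lemma \ref{bvk} applied to the target $\sA^{(\Z)}$, exactly as in the proof of Proposition \ref{p6.4}.''
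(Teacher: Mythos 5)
Your proposal is correct and follows essentially the same route as the paper, whose entire proof is ``Follows from Proposition \ref{p6.4}'': the Tate-triple structure on $T(\sC)$ comes from Proposition \ref{p6.4}, and the compatibility of the decorations with the canonical extension is exactly the uniqueness argument from Lemma \ref{bvk} that you spell out. Your write-up is simply a more detailed version of what the paper leaves implicit.
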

\begin{proof} Follows from Proposition \ref{p6.4}.
\end{proof}

As in Lemma \ref{l4.2}, we have

\begin{lemma}\label{l4.2w} The category $\Weil_\gr(k,\V; \sC,L_\sC,w_\sC)$ is a groupoid.\qed
\end{lemma}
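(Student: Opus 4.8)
The plan is to run exactly the argument used for Lemma~\ref{l4.2}. First I would note that for a category to be a groupoid it suffices that every morphism be invertible, so the whole content is: any morphism $\phi\colon(H,\Tr)\to(H',\Tr')$ in $\Weil_\gr(k,\V;\sC,L_\sC,w_\sC)$ is an isomorphism. Now, by Definition~\ref{d6.2}~b), the objects $H$ and $H'$ are — apart from the trace datum $\Tr$ and the connectedness condition — nothing but strong $\otimes$-functors $\Corr(k,\V)[\L,\L^{-1}]\to\fake{\sC}$; in contrast to Lemma~\ref{l4.2}, no passage through Propositions~\ref{p1.3}--\ref{p1.4} is needed here, since the graded formalism of Definition~\ref{d6.2} already presents a graded Weil cohomology as such a $\otimes$-functor on $\Corr(k,\V)[\L,\L^{-1}]$. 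The morphism $\phi$ is then a $\otimes$-natural transformation between these two $\otimes$-functors.

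The key input is that $\Corr(k,\V)[\L,\L^{-1}]$ is rigid, by Lemma~\ref{l4.8} (and $\fake{\sC}$ is rigid too, since $(\sC,L_\sC)\in\Add^\rig_*$ by definition of an additive Tate triple and the Koszul twist of Notation~\ref{n3.1} does not affect duals; in any case, as the footnote to Lemma~\ref{l4.2} recalls, rigidity of the target is unnecessary because $\otimes$-functors preserve dualisable objects). By \cite[Prop.~I.5.2.3]{saa}, any $\otimes$-natural transformation between $\otimes$-functors out of a rigid $\otimes$-category is invertible. Applying this to $\phi$ gives the invertibility of $\phi$, hence the conclusion.

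The only point that requires a moment's care — and the closest thing to an obstacle — is that Definition~\ref{d5.2w} demands of $\phi$ only that it be a natural transformation compatible with $\Tr$, not a priori a $\otimes$-natural transformation, so one should check that the hypothesis of \cite[Prop.~I.5.2.3]{saa} is actually met. I expect this to be painless: compatibility of $\phi$ with the unit constraint is forced by the condition $\un\iso H^0(\Spec k)$ together with naturality of $\phi$ along the structure morphisms $X\to\Spec k$, and compatibility with $\otimes$ then follows because $\Corr(k,\V)[\L,\L^{-1}]$ is generated under tensor product, duals and finite direct sums by the motives $h(X)$, on which the K\"unneth isomorphisms identify the relevant squares; alternatively one may invoke the general principle that over a rigid symmetric monoidal source the monoidal structure on a functor is essentially property-like, so a unit-compatible natural transformation is automatically monoidal. (This is precisely the implicit convention already used in the proof of Lemma~\ref{l4.2}.) Once this is in place, nothing else is needed.
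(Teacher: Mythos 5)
Your argument is exactly the paper's: the paper proves this lemma simply by the remark ``as in Lemma \ref{l4.2}'', i.e.\ by observing that a graded Weil cohomology is by definition a strong $\otimes$-functor out of the rigid category $\Corr(k,\V)[\L,\L^{-1}]$ (Lemma \ref{l4.8}) and invoking \cite[Prop. I.5.2.3]{saa}, with no detour through Propositions \ref{p1.3}--\ref{p1.4} needed. One caveat on your final parenthetical: the ``general principle'' that over a rigid source a unit-compatible natural transformation is automatically monoidal is not a theorem — the correct reading, both here and in Lemma \ref{l4.2}, is that morphisms of (graded) Weil cohomologies are understood to be compatible with the K\"unneth/$\otimes$-structure, i.e.\ $\otimes$-natural, which is exactly the hypothesis Saavedra's proposition requires.
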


As in Construction \ref{cons1}, Definition \ref{d5.2w} provides a strict $2$-functor
\begin{equation}\label{eq9.2}
\Weil_\gr(k,\V; -): \Add^\otimes_{*,w}\to \Cat.
\end{equation}

\begin{cons}\label{r6.1}
To a graded Weil cohomology $H$ as in Definition \ref{d6.2} b), we associate the Weil cohomology $H^*$ with values in $(\sC,L_\sC)$ obtained by composing with the twisted weight functor $\fake{w_\sC}:\fake{\sC}\to \sC^{(\Z)}$ of Definition \ref{d3.5} a)  (Koszul rule on the range). This defines a functor
\[\Weil_\gr(k,\V; \sC,L_\sC,w_\sC)\to \Weil(k,\V; \sC,L_\sC)\]
which is $2$-natural in $(\sC,L_\sC,w_\sC)$.
 \end{cons}

\subsection{Gradings and weights} 
Note that in Construction \ref{r6.1}, $H^*$ has weights in the sense of Definition \ref{d3.3}. Conversely:

\begin{prop}\label{p9.1} To a Weil cohomology $H$ with weights and values in $(\sC,L_\sC)$ is canonically  associated a graded Weil cohomology with values in $(\sD,L_\sD,w_\sD)$, where $\sD$ is a full $\otimes$-subcategory of $\sC$ containing the image of $H$ and $L_\sD=L_\sC$.
\end{prop}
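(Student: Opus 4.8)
The idea is to reverse Construction \ref{r6.1}: given a Weil cohomology $H$ with weights, I must promote the target $\sC$ (or a suitable subcategory) to an additive Tate triple and view $H$ as a strong $\otimes$-functor into the twisted category. First I would set $\sD$ to be the smallest strictly full additive $\otimes$-subcategory of $\sC$ containing $L_\sC$ and all the $H^i(X)$ for $X\in\sV$, $i\in\Z$; by Proposition \ref{p1.4} b) this $\sD$ is rigid, and by construction $L_\sD:=L_\sC\in\sD$. The weight hypothesis (Definition \ref{d3.3}) says precisely that $\sC(H^i(X),H^j(Y))=0$ for $i\ne j$, so I am in the situation of Lemma \ref{l3.4} a): the grading $H^*:\sM_\rat\to\sD^{(\Z)}$ (coming from Proposition \ref{p1.1} a), which takes values in $\sD^{(\Z)}$) is dense onto $\sD$ in the sense of Definition \ref{d2.2}, since every object of $\sD$ is a direct summand of a $\otimes$-word in the $H^i(X)$'s and $L_\sC^{\pm1}$. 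Hence Lemma \ref{l3.4} a) gives a \emph{unique} weight $\otimes$-grading $w_\sD:\sD\to\sD^{(\Z)}$ extending $H^*$, where "weight $\otimes$-grading" is with respect to the \emph{naïve} commutativity constraint of Remark \ref{r3.3} — this is exactly clause b) of Definition \ref{d3.5}.

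Next I would check that $(\sD,L_\sD,w_\sD)$ is an additive Tate triple in the sense of Definition \ref{d6.2} a): the only nontrivial point is that $L_\sD=L_\sC$ is of weight $2$, i.e.\ $w_\sD$ places it in degree $2$. This is forced because $L_\sC\cong H^2(\P^1)$ via $\Tr$ (Axiom (iii) of Definition \ref{d1.1}), and $H^2(\P^1)$ sits in degree $2$ of $H^*(\P^1)$ by the very definition of the grading $H^*$; uniqueness of $w_\sD$ from Lemma \ref{l3.4} a) then pins it down. With $(\sD,L_\sD,w_\sD)$ in hand I form the twisted category $\fake{\sD}$ of Notation \ref{n3.1}, whose commutativity constraint differs from that of $\sD$ by the Koszul sign $(-1)^{ij}$ between weights $i$ and $j$. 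The graded Weil cohomology is then the functor $H:\Corr[\L,\L^{-1}]\to\fake{\sD}$ obtained by composing the direct-sum functor with $H^*$, i.e.\ $h(X)\mapsto\bigoplus_i H^i(X)$ with its natural weight decomposition; the normalisation isomorphism $\un\to H^0(X)$ for $X$ geometrically connected is Lemma \ref{l3.1} b), and $\Tr:H^2(\P^1)\iso L_\sD$ is the original trace. The key structural claim is that this $H$ is a \emph{strong} $\otimes$-functor into $\fake{\sD}$: strong monoidality as a map to $\sD$ (ignoring symmetry) is just the Künneth formula (Axiom (iv), reformulated via Remark \ref{r1}(1) and Remark \ref{r3.3}), and \emph{compatibility with the symmetry} holds precisely because passing from $\sD$ to $\fake{\sD}$ reintroduces the Koszul signs that the graded-commutativity clause of Axiom (iv) (equivalently the Koszul constraint of Remark \ref{r2}) demands of $\kappa$. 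So the signs on the two sides match by design — this is the conceptual heart of the statement, though it is essentially a bookkeeping verification once Remarks \ref{r1}--\ref{r3.3} are unwound.

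Finally I would record canonicity: $\sD$, $w_\sD$, and hence the whole graded Weil cohomology are determined by $H$ with no choices, since $\sD$ is the \emph{smallest} such subcategory and $w_\sD$ is the \emph{unique} extension in Lemma \ref{l3.4} a); one also checks that applying Construction \ref{r6.1} to the output returns $H$ (composing $w_\sD$ with the Koszul-twisted weight functor $\fake{w_\sD}:\fake{\sD}\to\sD^{(\Z)}$ gives back $H^*$), so the two constructions are mutually inverse on Weil cohomologies with weights.

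\textbf{Expected main obstacle.} The routine-looking but genuinely delicate step is the sign-matching: verifying that the Koszul constraint built into Axiom (iv)'s graded commutativity is \emph{exactly} cancelled/produced by the twist $\sD\rightsquigarrow\fake{\sD}$, so that $H$ lands in $\fake{\sD}$ as a strong symmetric $\otimes$-functor rather than merely a monoidal one. This requires carefully tracking, through the identifications of Remarks \ref{r1}, \ref{r2} and \ref{r3.3}, that the commutativity constraint on $\sD^{(\Z)}$ restricted via $H^*$ agrees with the constraint on $\fake{\sD}$ transported along $\bigoplus$; a secondary subtlety is confirming that the weight grading $w_\sD$ produced by Lemma \ref{l3.4} a) is automatically a $\otimes$-grading for the \emph{naïve} (unsigned) constraint, which is what Definition \ref{d6.2} a) requires, rather than for the Koszul one.
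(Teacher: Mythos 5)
Your proposal is correct and follows essentially the same route as the paper: take for $\sD$ the subcategory of $\sC$ generated by the $H^i(M)$ for $M\in\Corr[\L,\L^{-1}]$ (stable under $\otimes$ by Künneth), use the weights hypothesis to define the grading $w_\sD$, and observe that the Koszul twist $\sD\rightsquigarrow\fake{\sD}$ absorbs exactly the signs in the graded commutativity of $\kappa$, so that $\bigoplus H^i$ becomes a strong symmetric $\otimes$-functor into $\fake{\sD}$. The one divergence is that the paper does not invoke Lemma \ref{l3.4} a) but simply defines $w_\sD$ by $H^i(M)\mapsto H^i(M)[i]$, $\phi\mapsto\phi[i]$ (well defined by weights), which sidesteps the subtlety you flag: the $\otimes$-clause of that lemma does not literally apply since $\bigoplus H^i$ is not symmetric monoidal for the untwisted constraints, whereas the naïve $\otimes$-compatibility of the directly defined $w_\sD$ is immediate because $w_\sD$ sends $c_{H^i(M),H^j(N)}$ to itself in degree $i+j$.
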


\begin{proof} Let $\sD$ be the (strictly full) thick subcategory of $\sC$  additively generated by the $H^i(M)$ for $M\in \Corr[\L,\L^{-1}]$. If $M,N\in \Corr[\L,\L^{-1}]$ and $i,j\in \Z$, then $H^i(M)\otimes H^j(N)$ is isomorphic to a direct summand of $H^{i+j}(M\otimes N)$, hence $\sD$ is stable under $\otimes$ and $H^*$ takes its values in $\sD^{(\Z)}$.
Define a $\Z$-$\otimes$-grading 
\[w_\sD:\sD\to \sD^{(\Z)}\]
by sending $H^i(M)$ to $H^i(M)[i]$ and $\phi\in \sD(H^i(M),H^i(N))$ to $\phi[i]$ (see Lemma \ref{l3.3} b) for the notation). Since $H$ has weights, this does define a functorial section of the direct sum functor, which is symmetric monoidal, and $H=\bigoplus H^i:\Corr[\L,\L^{-1}]\to\fake{\sD}$ is the desired graded Weil cohomology.
\end{proof}

On the other hand, 

\begin{prop}\label{p9.2} Let $(\sC,H)$ be a Weil cohomology. Then the category of functors $F:\sC\to \sC'$ in $\Add^\otimes$ such that $F_*H$ has weights is not empty and has a initial object $F_w:\sC\to\sC_w$. If $H$ is tight (Definition \ref{d5.2b}), so is $(F_w)_*H$.
\end{prop}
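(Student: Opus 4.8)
The plan is to construct $\sC_w$ as a universal quotient-and-grading of $\sC$ in the same spirit as Theorem \ref{thm2bis}, and then verify the tightness claim exactly as in Lemma \ref{l8.1}. First I would observe that ``$F_*H$ has weights'' is the condition that $\sC'(F(H^i(X)),F(H^j(Y)))=0$ for all $X,Y\in\sV$ and $i\ne j$; equivalently, for each such pair the composites
\[
H^i(X)\by{\eta} H^i(X)\oplus H^j(Y)\to H^j(Y)
\]
(and all their further compositions with morphisms of $\sC$) must die under $F$. So the set $\Sigma$ of morphisms of $\sC$ that any such $F$ must kill is the $\otimes$-ideal $\mathbb{I}_w$ generated by $\sC\bigl(H^i(X),H^j(Y)\bigr)$ for all $X,Y\in\sV$, $i\ne j$. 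I would then take $\sC_w:=\sD/\mathbb{I}_w$, where $\sD$ is the full rigid $\otimes$-subcategory of $\sC$ generated by the $H^i(X)$ (Proposition \ref{p1.4} b)) — working inside $\sD$ rather than $\sC$ is harmless because $F_*H$ only sees $\sD$, and it guarantees rigidity of the quotient. The functor $F_w:\sC\to\sC_w$ is the composite $\sC\to\sD$ (a retraction, as in the proof of Corollary \ref{cor:thm2}, but here more care is needed since $\sD\subseteq\sC$ need not be a retract — alternatively just take $\sC_w=\sC/\mathbb{I}_w$ and note it is rigid by the argument of Theorem \ref{thm2bis} b), since $\sC$ itself may be taken to be $\sD$ after Proposition \ref{p1.4} b)) followed by the quotient. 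By construction $(F_w)_*H$ kills $\mathbb{I}_w$, hence has weights; and any $F:\sC\to\sC'$ with $F_*H$ of weights kills $\mathbb{I}_w$ (each generator maps to $0$), hence factors uniquely through $F_w$ by the universal property of the $\otimes$-quotient. This gives initiality, and non-emptiness follows since $F_w$ itself is an object.

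For the tightness claim, I would invoke Lemma \ref{l8.1}: tightness (normalised, Albanese-invariant, Weak and Strong Lefschetz) is expressed entirely by the invertibility of certain morphisms — $H^0(\pi_0(X))\to H^0(X)$, $a_X^*$, the $i^*$ for low-degree hyperplane sections, and the $L^j$ — and a $\otimes$-functor preserves invertibility, so $(F_w)_*H$ inherits all of them. Concretely one applies the first assertion of Lemma \ref{l8.1} with $F=F_w$.

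The only genuine subtlety, and the step I'd pay most attention to, is making sure $\sC_w$ lands in $\Add^\rig$ (so that the construction is legitimate within $\Add^\otimes$ as required by the statement) and that the grading aspect is not actually needed: the proposition as stated only asks for $F_w$ in $\Add^\otimes$ with $(F_w)_*H$ of weights, not for $\sC_w$ to carry a weight grading — that refinement is presumably Proposition \ref{p9.1} applied afterwards. So I expect the proof to be short: the real content is identifying the universal $\otimes$-ideal $\mathbb{I}_w$ and citing Theorem \ref{thm2bis}-style quotient formalism plus Lemma \ref{l8.1}. The one place to be careful is the passage to $\sD$ and back, to ensure $F_w$ is a well-defined additive $\otimes$-functor on all of $\sC$ and that its universal property is against \emph{all} weight-inducing $F$, not just those factoring through $\sD$; this is handled by noting that the image of any such $F$ is determined on $\sD$ and that $F$ annihilates $\mathbb{I}_w\subseteq\sD\subseteq\sC$, so $F$ factors through $\sC\to\sD/\mathbb{I}_w=\sC_w$ regardless.
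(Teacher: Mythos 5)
Your proposal is correct and follows essentially the same route as the paper: the paper defines $\sC_w$ as the additive quotient of $\sC$ by the $\otimes$-ideal generated by the morphisms in $\sC(H^i(M),H^j(N))$ for $i\ne j$ (with $M,N$ ranging over $\Corr[\L,\L^{-1}]$ rather than $\sV$, an immaterial difference since $L_\sC$ is invertible), and deduces tightness from Lemma \ref{l8.1}. Your detour through the subcategory $\sD$ and the rigidity worry are unnecessary — as you yourself note, the statement only asks for an object of $\Add^\otimes$, and the plain $\otimes$-quotient $\sC/\mathbb{I}_w$ already does the job.
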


\begin{proof} Define $\sC_w$ as the additive quotient of $\sC$ by the $\otimes$-ideal generated by morphisms in $\sC(H^i(M),H^j(N))$ for $M,N\in \Corr[\L,\L^{-1}]$ and $i\ne j$. The claim on tightness is Lemma \ref{l8.1}.
\end{proof}

\subsection{An adjunction} Let $\Weil_\gr(k,\V)$ be the $2$-category associated to \eqref{eq9.2} in the same way as $\Weil(k,\V)$ is associated to \eqref{eq5.10} in Remark \ref{r4.1}. Construction \ref{r6.1} provides a ``forgetful''  $2$-functor
\[U:\Weil_\gr(k,\V)\to \Weil(k,\V).\]

\begin{prop} This $2$-functor has a  $2$-left adjoint $\gr$.
\end{prop}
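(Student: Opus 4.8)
The plan is to realise $\gr$ as the composite of the two adjunctions supplied by Propositions \ref{p9.2} and \ref{p9.1}. First I recall that, by Proposition \ref{p9.2}, sending a Weil cohomology $(\sC,H)$ to its universal weight-ification $(F_w)_*H$, with values in $(\sC_w,F_w(L_\sC))$ via the initial $\otimes$-functor $F_w\colon\sC\to\sC_w$, exhibits the Weil cohomologies with weights as a reflective sub-$2$-category of $\Weil(k,\V)$, the unit being the cocartesian morphism over $F_w$. Next I invoke Construction \ref{r6.1}, whose image lies fibrewise in the Weil cohomologies with weights, together with Proposition \ref{p9.1}: the latter attaches to any Weil cohomology with weights $H'$ on $(\sC',L_{\sC'})$ a graded Weil cohomology $\gr_0(H')$ with values in the Tate triple $(\sD',L_{\sC'},w')$, where $\sD'\subseteq\sC'$ is the full $\otimes$-subcategory generated by the image of $H'$ and $w'$ is the weight $\otimes$-grading of $\sD'$ obtained from Lemma \ref{l3.4} a) (now applicable, since the realisation functor of $H'$ is dense into $\sD'$), and $U(\gr_0(H'))=H'$ up to this identification. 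I then set $\gr:=\gr_0\circ (F_w)_*$; concretely, $\gr(\sC,H)$ is the graded Weil cohomology with values in the Tate triple $(\sD_w,\L_{\gr},w)$, with $\sD_w\subseteq\sC_w$ generated by the image of $(F_w)_*H$ and $\L_{\gr}$ the image of $L_\sC$, of weight $2$. The unit $\eta_H\colon H\to U(\gr(\sC,H))$ is taken to be the cocartesian morphism over $F_w$ followed by the canonical identification of $U(\gr(\sC,H))$ with $(F_w)_*H$ viewed as a Weil cohomology with values in $\sD_w$.

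The substance of the argument is the universal property. Given $G\in\Weil_\gr(k,\V)$ with values in a Tate triple $(\sD,L,w_\sD)$ and a $1$-morphism $f=((F,u),\phi)\colon H\to U(G)$ in $\Weil(k,\V)$ — so $F\colon\sC\to\sD\in\Add^\otimes$, $u\colon F(L_\sC)\iso L$, $\phi\colon F_*H\iso U(G)$ — I would first note that $U(G)$ has weights, by the remark following Construction \ref{r6.1}, whence $F_*H$ has weights and $F$ factors uniquely as $\bar F\circ F_w$ with $\bar F\colon\sC_w\to\sD$, by the initiality in Proposition \ref{p9.2}. Since $\bar F$ carries the generators $((F_w)_*H)^i(M)$ of $\sD_w$ to $F(H^i(M))$, which $\phi$ identifies with the $w_\sD$-weight-$i$ objects $U(G)^i(M)$ of $\sD$, I would check that $\bar F|_{\sD_w}$ lands in $\sD$, respects the distinguished objects through $u$, and intertwines the weight $\otimes$-grading of $\sD_w$ with that of $\sD$; that is, it is a $1$-morphism of Tate triples. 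Reading off the graded components of $\phi$ then gives an isomorphism $(\bar F|_{\sD_w})_*\gr(\sC,H)\iso G$ in the fibre of $\Weil_\gr(k,\V)$ over $(\sD,L,w_\sD)$, hence a $1$-morphism $g\colon\gr(\sC,H)\to G$ with $U(g)\circ\eta_H\cong f$. Uniqueness of $g$ up to $2$-isomorphism I would extract from the uniqueness clauses in Propositions \ref{p9.2} and \ref{p9.1} and in Lemma \ref{l3.4} a).

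I expect the main obstacle to be $2$-categorical bookkeeping rather than mathematics: one must match the $2$-cofibred structures of $\Weil(k,\V)$ over $\Add^\otimes_*$ and of $\Weil_\gr(k,\V)$ over $\Add^\otimes_{*,w}$, carry the above analysis through at the level of $2$-morphisms, and verify that the resulting bijection $\Weil_\gr(k,\V)(\gr(\sC,H),G)\simeq\Weil(k,\V)(H,U(G))$ is $2$-natural in both variables. A related subtlety is that $\Weil(k,\V)$ remembers the whole ambient category $\sC$, whereas $\gr(\sC,H)$ only sees the $\otimes$-subcategory generated by $H$; the reflection of Proposition \ref{p9.2} is precisely what reconciles this, since a morphism out of $H$ into a weighted target is forced to annihilate the ``mixed'' maps $H^i(M)\to H^j(N)$ ($i\ne j$) and hence to factor through $F_w$ before being restricted as in Proposition \ref{p9.1}. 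As with Remark \ref{r4.1}, Construction \ref{cons1} and the proof of Proposition \ref{p6.5}, I would treat these formal verifications schematically, in the style already adopted in the paper.
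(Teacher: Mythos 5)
Your proposal is correct and follows essentially the same route as the paper: define $\gr(\sC,H)$ by composing the weight-ification of Proposition \ref{p9.2} with the extraction of a Tate triple from Proposition \ref{p9.1}, and verify the universal property by factoring any morphism into a graded target first through $\sC_w$ (using that $U$ of a graded cohomology has weights) and then restricting to the subcategory $\sD$. The paper treats the $2$-categorical bookkeeping just as schematically as you propose.
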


\begin{proof} Let $(\sC,H)$ be a Weil cohomology. Define a graded Weil cohomology $(\sD,w_\sD,H^\gr)$ by composing the constructions of Propositions \ref{p9.2} and \ref{p9.1}. Namely, the underlying category $\sD$ is the full subcategory of the category $\sC_w$ of Proposition \ref{p9.2} described in the proof of Proposition \ref{p9.1}. If $(\sC_1,w_{\sC_1},H_1)$ is a graded Weil cohomology, a morphism $F:(\sC,H^*)\to U(\sC_1,w_{\sC_1},H_1)$ has an underlying functor $F:\sC\to \sC_1$ which factors through $\sC_w$ by the weight property of $U(H_1)$ and then restricts to $\sD$, and the construction of $w_\sD$ shows that the composition $\sD\to \sC_1\by{w_{\sC_1}}\sC_1^{(\Z)}$ factors uniquely through $w_\sD$.
\end{proof}

\subsection{Universal graded Weil cohomology}

\begin{thm}\label{thm2w} The $2$-functor \eqref{eq9.2} is strongly $2$-representable.
\end{thm}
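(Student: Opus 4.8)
The plan is to reduce Theorem \ref{thm2w} to the already-established representability of the ungraded $2$-functor \eqref{eq5.10} (Theorem \ref{thm2} together with Corollary \ref{cor:thm2}), using the forgetful--grading adjunction just constructed. Recall that a graded Weil cohomology with values in $(\sC,L_\sC,w_\sC)$ is a strong $\otimes$-functor $H:\Corr[\L,\L^{-1}]\to\fake{\sC}$ with the usual normalisation and trace data; via Construction \ref{r6.1} this maps to the category of ordinary Weil cohomologies with weights, and conversely Proposition \ref{p9.1} produces a graded one from a Weil cohomology with weights. So the essential point is: the universal ordinary Weil cohomology has a canonical ``grading'' after passing to its weight quotient, and this quotient is the sought representing object.

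Concretely, first I would form $(\weil(k,\V),\L_W,W)$ as in Theorem \ref{thm2}, which lives in $\Add^\rig_*$ by Corollary \ref{cor:thm2}. Then I would apply Proposition \ref{p9.2} to it, obtaining the initial functor $F_w:\weil(k,\V)\to\weil(k,\V)_w$ such that $(F_w)_*W$ has weights; call the target $\weil_\gr(k,\V)$. By Proposition \ref{p9.1} applied to $(F_w)_*W$, the category $\weil_\gr(k,\V)$ (or rather the full $\otimes$-subcategory $\sD$ generated by the image, which by the argument in Corollary \ref{cor:thm2} will be the whole category since $W^*$ is surjective up to direct summands and idempotents onto its image) carries a canonical weight $\otimes$-grading $w$ making $\L_W$ of weight $2$, i.e. $(\weil_\gr(k,\V),\L_W,w)$ is an additive Tate triple in the sense of Definition \ref{d6.2} a), and a universal graded Weil cohomology $W^\gr$ with values in it. I then claim the pair $(\weil_\gr(k,\V),W^\gr)$ strictly $2$-represents \eqref{eq9.2}: given a Tate triple $(\sC,L_\sC,w_\sC)$ and a graded Weil cohomology $(H,\Tr)$ with values in it, composing with the twisted weight functor $\fake{w_\sC}:\fake{\sC}\to\sC^{(\Z)}$ of Construction \ref{r6.1} gives an ordinary Weil cohomology $H^*$ with weights, hence by Theorem \ref{thm2} a unique $(F_H,u_H)\in\Add^\otimes_*((\weil(k,\V),\L_W),(\sC,L_\sC))$; since $H^*$ has weights, $F_H$ factors uniquely through $\weil(k,\V)_w$ by the initiality in Proposition \ref{p9.2}, giving a unique $(\bar F_H,u_H):(\weil_\gr(k,\V),\L_W)\to(\sC,L_\sC)$. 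The only thing left is to check that $(\bar F_H,u_H)$ is a $1$-morphism of Tate triples, i.e. $w_\sC\circ u_H=u_H^{(\Z)}\circ w$ in the sense of Definition \ref{d5.1w}; this is forced by the uniqueness statements in Propositions \ref{p9.1}--\ref{p9.2} and the fact that both $w$ and $w_\sC$ were built from the grading data of $H$. Full faithfulness of the representing isomorphism on $2$-cells follows the same pattern, using that $\Weil_\gr$ is a groupoid (Lemma \ref{l4.2w}) and chasing the uniqueness in each universal problem, exactly as at the end of the proof of Theorem \ref{thm2}.

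An alternative, more self-contained route would mimic the construction in the proof of Theorem \ref{thm2} directly: start from $\Corr[\L,\L^{-1}]\times\Z$ (rather than $\Corr\times\N$), apply Theorem \ref{thm1m}, take the additive completion of Proposition \ref{p4.3t}, impose by localisation (Proposition \ref{s2.4}) the Künneth formula, the vanishings $H^i(\un)=0$ for $i\ne0$, $H^i(\P^1)=0$ for $i\ne0,2$, and the isomorphisms $\un\iso H^0(X)$ for $X$ geometrically connected, and finally $\otimes$-invert $H^2(\P^1)$ using Voevodsky's criterion \cite[Th. 4.3]{voeicm} — the latter applies because $c_{\L,\L}=\id$. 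The resulting category carries a tautological $\Z$-grading coming from the $\Z$-index, which is a weight $\otimes$-grading for the naïve constraint, and $\L_W$ sits in degree $2$; one then equips it with the Koszul-twisted constraint à la Notation \ref{n3.1}. I would present the adjunction proof as the main argument, since it is shorter and re-uses Theorem \ref{thm2} as a black box, and mention the direct construction as a remark.

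The main obstacle I anticipate is bookkeeping around the twist $\fake{(-)}$ and the Koszul sign rule: one must be careful that the weight grading $w$ on $\weil_\gr(k,\V)$ really does make $\L_W$ of weight $2$ (so that the Tate-triple axiom is satisfied) and that the passage $H\mapsto H^*$ via $\fake{w_\sC}$ is inverse — up to canonical natural isomorphism — to the passage back via Proposition \ref{p9.1}, on the nose at the level of the $1$-morphism data $(F,u)$, including compatibility with the symmetry constraints. Another delicate point is to confirm that the subcategory $\sD$ produced by Proposition \ref{p9.1} applied to $(F_w)_*W$ is all of $\weil(k,\V)_w$; this should follow exactly as in the proof of Corollary \ref{cor:thm2} from the fact that $W^*$ generates, combined with the definition of $\weil(k,\V)_w$ as a quotient (no new objects are created). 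Once these compatibilities are nailed down, strict $2$-representability is a formal consequence of the strict $2$-representability in Theorem \ref{thm2} and the strictness of the $2$-functor $T$ and of the quotient constructions involved.
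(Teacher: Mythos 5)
Your main argument is exactly the paper's: since $(\weil,W)$ is $2$-initial in $\Weil(k,\V)$ and $\gr$ is a $2$-left adjoint to the forgetful functor $U$, the object $\gr(\weil,W)$ (built by composing Propositions \ref{p9.2} and \ref{p9.1}) is $2$-initial in $\Weil_\gr(k,\V)$, which is the representing object. The compatibilities you flag as obstacles are precisely what the adjunction already packages, so your more detailed unwinding (and the alternative direct construction from $\Corr[\L,\L^{-1}]\times\Z$) is sound but not a different route.
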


\begin{proof} By definition, $(\weil,W)$ is $2$-initial in $\Weil(k,\V)$, hence it is formal that $\gr(\weil,W)$ is $2$-initial in $\Weil_\gr(k,\V)$.
\end{proof}

Similarly to Corollary \ref{cor:thm2} we get an abelian variant of this theorem by making use of Proposition \ref{p6.5}. Since the Propositions \ref{p9.2} and \ref{p9.1} preserve tightness we also get the graded analougue of 
Theorem \ref{thm2m}.

\end{document}